\documentclass[12pt]{amsart}
\usepackage{fullpage}
\usepackage{amssymb,amsmath,amsthm,bbold,mathtools,bm,extpfeil}
\usepackage{enumitem}
\usepackage{comment}
\usepackage{tikz} 
\usetikzlibrary{cd}
\usetikzlibrary{arrows}

\newtheorem{defn}{Definition}[section]
\newtheorem{exmp}[defn]{Example}
\newtheorem{rmk}[defn]{Remark}
\newtheorem{lem}[defn]{Lemma}
\newtheorem{prop}[defn]{Proposition}
\newtheorem{thm}[defn]{Theorem}
\newtheorem{cor}[defn]{Corollary}
\numberwithin{equation}{section}
\newenvironment{sproof}{\begin{proof}[Sketch of proof]}{\end{proof}}
\theoremstyle{plain}

\DeclareMathOperator{\deltaK}{\delta_{\mathrm{K}}}

\newcommand{\bbV}{\mathbb{V}}

\newcommand{\bfJ}{\mathbf{J}}
\newcommand{\bfCEAlgd}{\mathbf{CE}_{\mathbf{Algd}}}
\newcommand{\bfCESp}{\mathbf{CE}_{\mathbf{Sp}}}

\newcommand{\fg}{\mathfrak{g}}

\newcommand{\fJ}{\mathfrak{J}}

\newcommand{\sC}{\mathcal{C}}
\newcommand{\sD}{\mathcal{D}}
\newcommand{\sE}{\mathcal{E}}

\newcommand{\sI}{\mathcal{I}}
\newcommand{\sJ}{\mathcal{J}}
\newcommand{\sK}{\mathcal{K}}
\newcommand{\sL}{\mathcal{L}}
\newcommand{\sM}{\mathcal{M}}
\newcommand{\sN}{\mathcal{N}}
\newcommand{\sO}{\mathcal{O}}
\newcommand{\sT}{\mathcal{T}}

\newcommand{\R}{\mathbb{R}}

\newcommand{\Z}{\mathbb{Z}}
\newcommand{\N}{\mathbb{N}}

\newcommand{\Der}{\mathrm{Der}}

\newcommand{\Sym}{\mathrm{Sym}}

\newcommand{\Hom}{\mathrm{Hom}}
\newcommand{\Gr}{\mathrm{Gr}}
\newcommand{\MC}{\mathrm{MC}}
\newcommand{\CE}{\mathrm{CE}}
\newcommand{\id}{\mathrm{id}}
\newcommand{\End}{\mathrm{End}}
\newcommand{\pbw}{\mathrm{pbw}}
\newcommand{\fem}{\mathrm{fem}}

\newcommand{\wSym}{\widehat{\Sym}}
\newcommand{\wHom}{\widehat{\Hom}}
\newcommand{\wotimes}{\widehat{\otimes}}
\newcommand{\wOmega}{\widehat{\Omega}}

\newcommand{\Linfty}{\texorpdfstring{$L_\infty$}{L-infinity}}
\newcommand{\D}{\texorpdfstring{$D$}{D}}

\usepackage{hyperref}
\usepackage[alphabetic,initials]{amsrefs}

\title{From $L_\infty$ algebroids to $L_\infty$ spaces: Part I}
\author{Alberto S. Cattaneo}
\address{Department of Mathematics, University of Zurich, Winterthurerstrasse 190, CH-8057 Zurich, Switzerland}
\email{cattaneo@math.uzh.ch}
\author{Shuhan Jiang}
\address{Department of Mathematics, University of Zurich, Winterthurerstrasse 190, CH-8057 Zurich, Switzerland}
\email{shuhan.jiang@math.uzh.ch}

\begin{document}
	
	\begin{abstract}
		The notion of $L_\infty$ spaces over dg manifolds is developed. An equivalence between the category of transitive $L_\infty$ algebroids and that of $L_\infty$ spaces is established, and this equivalence detects weak equivalences. Moreover, a faithful functor from $L_\infty$ algebroids to $L_\infty$ spaces is constructed, which also detects weak equivalences.
	\end{abstract}

	\maketitle
	\setcounter{tocdepth}{1}
	\tableofcontents
	
	\section{Introduction}
	
	$L_\infty$ spaces were introduced by Costello as an approach to derived stacks motivated by problems in quantum field theory \cites{costello2011geometric}. In combination with the Batalin--Vilkovisky formalism \cites{Batalin1977,Batalin1981}, $L_\infty$ spaces provide a convenient framework for describing quantization in the formal neighborhood of a family of classical solutions parameterized by a smooth manifold \cites{costello2011renormalization}. Applications to non-linear sigma models recover classical invariants of the target manifold, such as the $\hat{A}$-genus \cites{grady2014one} and the Witten genus \cites{costello2010geometric,costello2011geometric}. In this case, the relevant family of classical solutions consists of constant maps, parameterized by the target manifold.
	
	In many quantum field theory applications, however, the family of classical solutions of interest is not parameterized by a smooth manifold, but rather by a derived smooth manifold. Such situations are not directly covered by Costello’s original formulation: the functor of points of an $L_\infty$ space over a smooth manifold can encode derived structure only in the nilpotent directions \cite{grady2015spaces}. The present paper addresses this limitation by adapting Costello’s notion of $L_\infty$ spaces to the setting of derived manifolds, or more precisely, to that of dg manifolds \cites{Behrend2020thx, carchedi2023derivedmanifoldsdifferentialgraded}.
	
	$L_\infty$ algebroids serve as a homotopical generalization of dg Lie algebroids in which the differential $l_1$, the binary Lie bracket $l_2$, and the unary anchor map $\rho_1$ are extended into hierarchies of higher multilinear operations $\{l_n\}_{n\ge 1}$ and $\{\rho_n\}_{n\ge 1}$ satisfying higher coherent identities, just as $L_\infty$ algebras generalize dg Lie algebras \cites{stasheff1993introduction,lada1995strongly}. They appear in geometry as models of singular foliations and higher Dirac structures \cites{severa2001some, zambon2012algebras, laurent2020universal}, in string theory as geometric formulations of higher gauge field content \cite{sati2012twisted}, and in AKSZ theories as target spaces \cite{alexandrov1997geometry}. $L_\infty$ algebroids also generalize Lie $n$-algebroids \cites{bonavolontaponcin2013,shengzhu2017}. The latter form a special class of $L_\infty$ algebroids whose underlying graded vector bundles are concentrated in degrees $1-n,\dots,0$. 
	More generally, $L_\infty$ algebroids can be defined over a dg manifold $\sM$ by allowing a zeroth anchor map $\rho_0$, which coincides with the cohomological vector field on $\sM$ \cites{bruce2011algebroids, bandiera2020shifted}. 
	
	There exists a close parallel between $L_\infty$ spaces and Lie-theoretic structures in higher differential geometry, as revealed by a result of Grady and Gwilliam \cite{grady2020lie}. They show that to any dg Lie algebroid one can associate an $L_\infty$ space in a functorial manner compatible with weak equivalences.  It is therefore natural to expect that a theory of $L_\infty$ spaces over dg manifolds should develop in correspondence with the existing notion of $L_\infty$ algebroids over dg manifolds. The main results of our paper confirm this expectation.
	
	\subsection{Main results}
	
	To study $L_\infty$ algebroids and $L_\infty$ spaces over dg manifolds in a unified way, we introduce the intermediate notion of \emph{quasi-dg manifolds} from a commutative algebraic perspective. Roughly speaking, a quasi-dg manifold is a ringed space $\sC = (C, \sO_\sC)$ whose structure sheaf $\sO_\sC$ is a sheaf of commutative differential graded algebras, such that, modulo a proper dg ideal $\sI_\sC \subset \sO_\sC$, $\sC$ reduces to an ordinary dg manifold. Using Chevalley–Eilenberg type functors, we establish the following equivalences between categories:
	\begin{enumerate}
		\item $\mathbf{L_\infty Algd} \cong \mathbf{QdgM}^{\mathrm{bp}}$ between the category of $L_\infty$ algebroids over dg manifolds and that of quasi-dg manifolds with base-preserving morphisms;
		\item $\mathbf{L_\infty Algd}_{\mathrm{fib}} \cong \mathbf{QdgM}_{\mathrm{fib}}^{\mathrm{bp}}$ between the category of transitive $L_\infty$ algebroids over dg manifolds and that of fibrant quasi-dg manifolds with base-preserving morphisms;
		\item $\mathbf{L_\infty Sp} \cong \mathbf{QdgM}_{\mathrm{fib}}^{\mathrm{bp}}$ between the category of $L_\infty$ spaces over dg manifolds and that of fibrant quasi-dg manifolds with base-preserving morphisms.
	\end{enumerate}
	Moreover, we prove that all these equivalences detect weak equivalences. Combining the last two, we obtain our first main result:
	\begin{thm}[Corollary \ref{main1}]
		There is an equivalence between the category of transitive $L_\infty$ algebroids over dg manifolds and that of $L_\infty$ spaces over dg manifolds, which detects weak equivalences.
	\end{thm}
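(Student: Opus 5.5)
The plan is to obtain the statement by composing the two equivalences (2) and (3) listed above through the intermediate category $\mathbf{QdgM}^{\mathrm{bp}}_{\mathrm{fib}}$ of fibrant quasi-dg manifolds with base-preserving morphisms. Concretely, I will use the Chevalley--Eilenberg type functor $\bfCEAlgd\colon \mathbf{L_\infty Algd}\to \mathbf{QdgM}^{\mathrm{bp}}$ of (1). It sends an $L_\infty$ algebroid $(L,\{l_n\},\{\rho_n\})$ over a dg manifold $\sM$ to the ringed space whose structure sheaf is the completed symmetric algebra $\wSym_{\sO_\sM}(L^\vee[-1])$, with differential built from $\rho_0,\rho_n,l_n$ and with $\sI$ the augmentation ideal. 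I will also use the functor $\bfCESp\colon \mathbf{L_\infty Sp}\to\mathbf{QdgM}^{\mathrm{bp}}_{\mathrm{fib}}$ of (3), given by the Chevalley--Eilenberg complex of the curved $L_\infty$ algebra over the (jet/de Rham type) algebra attached to the dg manifold.

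First I check that $\bfCEAlgd$ restricts to a functor $\mathbf{L_\infty Algd}_{\mathrm{fib}}\to\mathbf{QdgM}^{\mathrm{bp}}_{\mathrm{fib}}$. That is, transitivity (surjectivity of $\rho_1$ onto $T\sM$, suitably interpreted in the graded setting) should correspond exactly to fibrancy of the quasi-dg manifold. Since (1) is an equivalence and both $\mathbf{L_\infty Algd}_{\mathrm{fib}}$ and $\mathbf{QdgM}^{\mathrm{bp}}_{\mathrm{fib}}$ are full subcategories, the restriction is automatically fully faithful. Essential surjectivity then amounts to the converse implication: every quasi-dg manifold $\bfCEAlgd(L)$ that is fibrant comes from a transitive $L$. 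Given (2) as stated, the composite
\[
\bfCESp^{-1}\circ \bfCEAlgd\colon \mathbf{L_\infty Algd}_{\mathrm{fib}}\longrightarrow \mathbf{L_\infty Sp},
\]
where $\bfCESp^{-1}$ denotes a quasi-inverse, is an equivalence of categories.

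Next I deal with weak equivalences. "Detects weak equivalences" means that a morphism $f$ is a weak equivalence if and only if its image is. Weak equivalences in $\mathbf{QdgM}^{\mathrm{bp}}_{\mathrm{fib}}$ are defined so that both (2) and (3) detect them. A quasi-inverse of an equivalence that detects weak equivalences also detects them: for $g$ in the target, write $g\cong F(f)$ up to natural isomorphisms. Isomorphisms are weak equivalences and weak equivalences satisfy 2-out-of-3, so $g$ is a weak equivalence iff $F(f)$ is, iff $f$ is. Composing two functors that detect weak equivalences clearly gives one that does.

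The main obstacle lies in the inputs rather than the formal composition. One must establish that (3) is really an equivalence: given a fibrant quasi-dg manifold, one has to produce a splitting identifying $\sO_\sC$ with a completed symmetric algebra over the jet-type algebra of the base, with a curved $L_\infty$ structure. I would attempt this by choosing a formal exponential map or connection on the dg base $\sM$, which exists by fibrancy. Independence of choices up to isomorphism would then be shown by the usual homotopy-transfer argument on the $\sI$-adic filtration. Taking (2) and (3) as given from the body of the paper, the corollary itself follows from the formal argument above.
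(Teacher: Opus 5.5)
Your argument is correct and essentially the paper's own: Corollary \ref{main1} is obtained by restricting $\bfCEAlgd$ (Theorem \ref{CE1}, which detects fibrant objects and weak equivalences) to transitive algebroids and composing with a quasi-inverse of $\bfCESp$ (Theorem \ref{CE2}); your remarks on quasi-inverses and composites of functors that detect weak equivalences supply bookkeeping the paper leaves implicit. One caveat concerns only your aside on how (3) would be proved. The paper does not use a formal exponential map, which exists without any fibrancy assumption, and it needs no independence-of-choices argument. Instead it strictifies via Lemma \ref{transtr}, uses Proposition \ref{kerrho} to see that $\ker\rho_1$ is a bundle, and chooses a splitting $\sigma$ of $\rho_1$; the resulting curved $L_\infty$ algebra lives over $(\wOmega_\sM, D_\sM)$ rather than over a jet algebra.
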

	
	Another advantage of working with quasi-dg manifolds is that they naturally admit the language of infinite jets and $D$-modules. In the appendices, we provide a thorough account of these notions in the setting of graded manifolds. With these tools in hand, we introduce the notion of the \emph{jet space} of a quasi-dg manifold, which gives rise to a functor
	\[
	\bfJ^\infty \colon \mathbf{QdgM} \longrightarrow \mathbf{QdgM}_{\mathrm{fib}},
	\]
	called the \emph{jet space functor}. The infinite jet prolongation $j^\infty$ defines a natural transformation from $\bfJ^\infty$ to the identity functor. Since $j^\infty$ is a weak equivalence of quasi-dg manifolds, $\bfJ^\infty$ detects weak equivalences. Together with the preceding categorical equivalences, this yields our second main result:
	
	\begin{thm}[Corollary \ref{main2}]
	    One can construct a faithful functor
		\[
		\mathbf{Fib}\colon \mathbf{L_\infty Algd} \longrightarrow \mathbf{L_\infty Sp} \cong \mathbf{L_\infty Algd}_{\mathrm{fib}},
		\]
		which detects weak equivalences.
	\end{thm}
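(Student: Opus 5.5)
The functor $\mathbf{Fib}$ will be built as a composite of functors already announced in the introduction:
\[
\mathbf{L_\infty Algd} \xrightarrow{\ \bfCEAlgd\ } \mathbf{QdgM}^{\mathrm{bp}} \xrightarrow{\ \bfJ^\infty\ } \mathbf{QdgM}_{\mathrm{fib}}^{\mathrm{bp}} \xrightarrow{\ \bfCESp^{-1}\ } \mathbf{L_\infty Sp},
\]
followed if desired by the equivalence $\mathbf{L_\infty Sp}\cong \mathbf{L_\infty Algd}_{\mathrm{fib}}$ of the first main theorem. The first and last arrows are the Chevalley--Eilenberg equivalences (1) and (3). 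The middle arrow is the jet space functor $\bfJ^\infty$, restricted to base-preserving morphisms. Equivalences of categories are faithful and detect weak equivalences by the results preceding this corollary. Since a composite of faithful functors is faithful, and a composite of functors detecting weak equivalences again detects them, it suffices to verify two things about the middle arrow: that it is well defined, and that it is faithful and detects weak equivalences.

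\textbf{Well-definedness.} First check that $\bfJ^\infty$ sends base-preserving morphisms to base-preserving morphisms. The jet space of $\sC=(C,\sO_\sC)$ should live over the same underlying dg manifold $\sO_\sC/\sI_\sC$. A morphism that induces the identity on the reduced dg manifold then induces, by functoriality of jets, a morphism of jet spaces that is also the identity modulo the ideals. Second, recall from the construction that $\bfJ^\infty(\sC)$ is fibrant.

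\textbf{Weak equivalences.} Use the natural transformation $j^\infty\colon \bfJ^\infty\Rightarrow \id$, whose components are weak equivalences. For $f\colon \sC\to\sD$ we have the naturality square $f\circ j^\infty_\sC = j^\infty_\sD\circ \bfJ^\infty(f)$. By two-out-of-three for weak equivalences of quasi-dg manifolds, $f$ is a weak equivalence if and only if $\bfJ^\infty(f)$ is. This yields both preservation and detection.

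\textbf{Faithfulness.} This is the step I expect to be the main obstacle, since being a weak equivalence does not by itself give injectivity on morphisms. The approach I would try first is to show that each $j^\infty_\sC$ is an epimorphism in $\mathbf{QdgM}$. On the level of structure sheaves this means the pullback $(j^\infty_\sC)^*\colon \sO_\sC\to \sO_{\bfJ^\infty\sC}$ is injective. That holds because $j^\infty$ admits a section, or at least because the jet algebra contains $\sO_\sC$ as the subalgebra of functions of order zero (the ``evaluation at the zeroth jet''). Granting this, if $\bfJ^\infty(f)=\bfJ^\infty(g)$, then
\[
f\circ j^\infty_\sC = j^\infty_\sD\circ\bfJ^\infty(f) = j^\infty_\sD\circ\bfJ^\infty(g) = g\circ j^\infty_\sC,
\]
and hence $f=g$. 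If injectivity of $(j^\infty)^*$ is delicate globally, because of the completion and the sheaf conditions, I would verify it locally in graded coordinates using the explicit description of jets from the appendices. There the pullback is the inclusion of coefficient functions into formal power series in the jet variables, and the local statements glue since morphisms of ringed spaces are determined locally.
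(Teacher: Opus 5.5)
Your proposal is correct and follows the paper's route: $\mathbf{Fib}=\bfCESp^{-1}\circ\bfJ^\infty\circ\bfCEAlgd$, with detection of weak equivalences obtained from Theorems~\ref{CE1}, \ref{CE2}, \ref{jetfun} and two-out-of-three applied to the naturality square of $j^\infty$, exactly as in the paper. Your faithfulness argument makes explicit a step the paper only asserts, and it goes through once stated precisely: $j^\infty\colon\sO_\sC\to\fJ^\infty_{\sO_\sC}$ has a left inverse (a retraction, not a section, given by projecting to form degree $0$ and evaluating at the zeroth jet), so $f_*j^\infty$ is injective and $\bfJ^\infty(F)=\bfJ^\infty(G)$ forces $F^\sharp=G^\sharp$; likewise, $\bfJ^\infty(F)$ is base-preserving for every $F$ simply because it acts on functions by $(\Gr^0F)^\sharp$, and ``base-preserving'' should not be read as ``inducing the identity on the base''.
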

	
	We refer to $\mathbf{Fib}$ as a \emph{fibrant replacement functor}. The terminology is due to the fact that $L_\infty$ spaces over a fixed dg manifold form a category of fibrant objects.
	We prove this result in a companion paper \cite{jiangCFO}.
	
	\subsection{Future work and applications}
	
	As the title suggests, the present paper is the first in a series. In subsequent work, we will construct a fibrant replacement functor for representations of $L_\infty$ algebroids and apply it to the study of their characteristic classes and shifted symplectic structures. The advantage of passing to $L_\infty$ spaces is that these structures become more transparent and computationally tractable. We will also study the Maurer–Cartan functor of $L_\infty$ spaces over dg manifolds.
	
	One of the main motivations for our work comes from applications to the problem of globalizing the partition function and observables of a quantum field theory over its moduli space of classical solutions within the Batalin–Vilkovisky formalism. This problem has been studied, in a language different from that of $L_\infty$ spaces but closely related to it, in the context of AKSZ sigma models, where one considers the moduli space of constant maps \cites{cattaneofelder2001, bonechi2012poisson, grady2017batalin, cattaneo2019globalization}, as well as in two-dimensional Yang–Mills theory and Chern–Simons theory, where one considers the smooth locus of the moduli space of flat connections \cites{irasomnev2019, mnevwernli2025}. The framework developed in the present paper, together with its sequels, is intended to provide an approach to the globalization problem in the presence of both derived and stacky singularities.
	
	The two main classes of field theories we have in mind for applications are cohomological field theories, as discussed in \cite{jiangjost2026cohft} in a formulation suited to applications of $L_\infty$ spaces over dg manifolds, and AKSZ theories with cotangent derived targets. Both classes admit one-loop exact quantization. The latter is technically more tractable and is expected to produce interesting invariants of dg manifolds \cite{jiangAKSZ}.

	\subsection{Acknowledgments}
	
	The authors would like to thank Ezra Getzler, Owen Gwilliam, Mathieu Stiénon, Maosong Xiang, and Ping Xu for valuable discussions and correspondence.
	
	The authors acknowledge partial support of the SNF Grant No. 200021 227719 and of the Simons Collaboration on Global Categorical Symmetries. This research was (partly) supported by the NCCR SwissMAP, funded by the Swiss National Science Foundation. This article is based upon work from COST Action 21109 CaLISTA, supported by COST (European Cooperation in Science and Technology) (www.cost.eu), MSCA-2021-SE-01-101086123 CaLIGOLA, and MSCA-DN CaLiForNIA-101119552.
	
	\subsection{Notation}
	
	We work throughout in the real smooth setting.
	
	\begin{description}[leftmargin=3cm, labelsep=1cm]
		\item[cdga] Commutative differential graded algebra
		\item[dg manifold] Differential graded manifold
		\item[dg vector bundle] Differential graded vector bundle
		\item[\(\sM\)] Dg manifold (or graded manifold)
		\item[\(\sO_\sM\)] Structure sheaf of \(\sM\)
		\item[\(\sT_\sM\)] Tangent sheaf of \(\sM\)
		\item[\(\wOmega_\sM\)] Sheaf of completed differential forms on \(\sM\)
		\item[\(\sD_\sM\)] Sheaf of differential operators on \(\sM\)
		\item[\(\sJ^\infty_\sM\)] Sheaf of infinite jets on \(\sM\)
	\end{description}

	\section{Dg vector bundles over dg manifolds}
	
	In this section, we review dg manifolds and dg vector bundles over them. In particular, we prove a Whitehead-type theorem for dg vector bundles.
	
	\subsection{Differential graded manifolds}
	
	Let $V=\bigoplus_{i\in \Z} V_i$ be a graded vector space of finite total rank. Suppose that $V$ is concentrated in non-negative degrees, i.e., $V_i = 0$ for all $i < 0$. We denote by $(V_0, \sO_V)$ the ringed space whose structure sheaf assigns each open $U \subset V_0$ the graded commutative algebra
	\[
	\sO_V(U)=C^\infty(U) \otimes_{\Sym(V_0^\vee)} \Sym(V^\vee),
	\]
	where $V^\vee$ denotes the dual graded vector space of $V$, and $\Sym(V^\vee)$ denotes the free graded commutative algebra generated by $V^\vee$.
	
	\begin{defn}
		A \emph{(non-positively) graded manifold} is a ringed space $\sM=(M,\sO_\sM)$, where $M$ is a smooth manifold and $\sO_\sM$ is a sheaf of graded commutative algebras, which is locally isomorphic to $(V_0, \sO_V)$ for some graded vector space $V$ as above.
	\end{defn}
	
	Let $\sM$ be a graded manifold. For each open $U \subset M$, define
	\[
	\sT_\sM(U) \coloneqq \operatorname{Der}(\sO_\sM(U)),
	\]
	where $\operatorname{Der}(\cdot)$ denotes the graded Lie algebra of derivations of a graded commutative algebra. 
	The assignment $\sT_\sM$ defines a sheaf, called the \emph{tangent sheaf} of $\sM$. Global sections of $\sT_\sM$ are called \emph{vector fields} on $\sM$.

	\begin{defn}
		A \emph{dg manifold} is a graded manifold $\sM$ together with a degree $1$ vector field $Q_{\sM}$ over $\sM$ satisfying $[Q_{\sM}, Q_{\sM}] = 0$.
	\end{defn}
	The vector field $Q_{\sM}$ defines a differential on $\sO_{\sM}$, making it into a sheaf of commutative differential graded algebras (cdgas).
	
	\begin{defn}
		A \emph{morphism} of dg manifolds $\sM \to \sN$ is a morphism of ringed spaces
		\[
		(f, f^\sharp) \colon (M, \sO_{\sM}) \longrightarrow (N, \sO_{\sN}),
		\]
		where $f \colon M \to N$ is smooth, $f^\sharp \colon \sO_{\sN} \to f_* \sO_{\sM}$ is a morphism of sheaves of cdgas. Here $f_*$ denotes the direct image functor associated to $f$.
	\end{defn}
	
	By a slight abuse of notation, we denote a dg manifold morphism $(f, f^\sharp)$ simply by $f$.
	
	\begin{rmk}
		By Batchelor's theorem, dg manifolds are essentially curved $L_\infty$ bundles in disguise. They form a category of fibrant objects \cite{Behrend2020thx}: a morphism $f\colon \sM \to \sN$ is called
		\begin{itemize}
			\item  a \emph{weak equivalence} if 
			$f^\sharp_N\colon \sO_\sN(N) \rightarrow \sO_\sM(M)$
			is a quasi-isomorphism;
			\item  a \emph{fibration} if it is a submersion of graded manifolds. 
		\end{itemize}
		Formally inverting weak equivalences in this category produces an $\infty$-category equivalent to the $\infty$-category of derived manifolds \cite{carchedi2023derivedmanifoldsdifferentialgraded}.
	\end{rmk}
	
	\subsection{Differential graded vector bundles}

	Let $\sM$ be a graded manifold. 
	\begin{defn}
		A \emph{graded vector bundle} over $\sM$ is a locally free graded (left) $\sO_\sM$-module of finite total rank.
	\end{defn}
	
	The following result for graded vector bundles over $\sM$ is not trivial, despite first appearances, since $\sM$ is not a locally ringed space. 
	
	\begin{prop}\label{kerrho}
		Let $\sE_1$ and $\sE_2$ be graded vector bundles over $\sM$. Let $\rho\colon \sE_1 \to \sE_2$ be a surjective morphism of graded $\sO_\sM$-modules. Then $\ker \rho$ is a graded vector bundle over $\sM$.
	\end{prop}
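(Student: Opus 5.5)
The plan is to show that $\rho$ admits a right inverse locally, so that $\ker\rho$ is locally a direct summand of a free module; a direct summand of a free graded module over a non-positively graded algebra of this kind is then shown to be free. The argument is local, so I fix a point $x\in M$ and a neighborhood $U$ on which $\sM$ is isomorphic to $(V_0,\sO_V)$ and both $\sE_1,\sE_2$ are free, with homogeneous bases $e_1,\dots,e_n$ and $f_1,\dots,f_m$. Write $A=\sO_\sM(U)$ and $\sI\subset A$ for the ideal generated by the coordinates of nonzero degree. Then $A/\sI=C^\infty(U)$, and $\sI$ is locally nilpotent degreewise: in each fixed degree only finitely many monomials occur, but $\sI$ itself need not be nilpotent. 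This is exactly the "not locally ringed" difficulty.

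\textbf{Step 1 (splitting).} Each $f_j$ has a homogeneous preimage $s(f_j)\in\sE_1(U)$. This uses surjectivity as a morphism of sheaves, after possibly shrinking $U$ and gluing local preimages with a partition of unity on $M$; partitions of unity exist since $\sO_\sM$ is a fine sheaf of $C^\infty_M$-modules. Extending $A$-linearly gives $s\colon\sE_2\to\sE_1$ with $\rho s=\id$. Hence $\sE_1|_U\cong\ker\rho|_U\oplus\sE_2|_U$, and $\ker\rho|_U$ is the image of the degree $0$ idempotent $p=\id-s\rho$. In particular $\ker\rho$ is finitely generated, projective, and graded.

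\textbf{Step 2 (reduction to the body).} Pass to $\bar A=A/\sI=C^\infty(U)$. The reduction $\bar p$ is an idempotent on the graded bundle $\bar\sE_1$, which is a sum of ordinary vector bundles degree by degree. Its image in each degree is an ordinary vector bundle of locally constant rank $k_i$ (the rank of a smooth idempotent matrix is continuous and integer-valued). After shrinking $U$, choose homogeneous sections $\bar g_1,\dots,\bar g_k$ of $\operatorname{im}\bar p$ forming a frame. Lift them to $g_a=p(\tilde g_a)$ with $\tilde g_a\in\sE_1(U)$ arbitrary homogeneous lifts, so that $g_a\in\ker\rho$.

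\textbf{Step 3 (graded Nakayama, the main obstacle).} I must show that $g_1,\dots,g_k$ together with $s(f_1),\dots,s(f_m)$ form a basis of $\sE_1|_U$. That gives freeness of $\ker\rho|_U$ with basis $\{g_a\}$, since $p$ kills the $s(f_j)$ and fixes the $g_a$. The transition matrix $T$ to the basis $e_i$ is homogeneous of degree $0$ in the appropriate sense. Its reduction mod $\sI$ is invertible on $U$ by Step 2. So the key claim is:
\begin{quote}
a degree-$0$ matrix over $A$ whose reduction mod $\sI$ is invertible is itself invertible.
\end{quote}
I would prove this by writing $T=T_0(1+N)$, where $T_0$ is a lift of the reduction with $C^\infty$ entries and $N$ has entries in $\sI$. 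I then show the Neumann series $\sum(-N)^r$ terminates entrywise. The entry $(i,j)$ of $N^r$ has a fixed degree $\deg e_j-\deg e_i$ and is a sum of products of $r$ elements of $\sI$. Because all generators of $\sI$ have strictly negative degree and the entry degrees are bounded (the rank is finite), only finitely many $r$ contribute. This degree bookkeeping, which replaces the missing locality of the ring, is where care is needed. Granting it, $T$ is invertible and the proposition follows.
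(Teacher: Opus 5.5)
Your argument is correct, but it takes a genuinely different route from the paper.

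\textbf{The paper's route.} It first invokes Batchelor's theorem and homotopy invariance to identify $\sE_i \cong \Sym(E^\vee)\otimes E_i$ over the body $M$. This makes $\ker\rho$ a degreewise finite-rank bundle on $M$ with a $\Sym(E^\vee)$-module structure. It then shows that each fiber, a direct summand of a free module over the super polynomial ring $\Sym(E_x^\vee)$, is free by a super Quillen--Suslin lemma (classical Quillen--Suslin plus Bass's lemma for the nilpotent odd ideal). Finally it extends a fiber basis to local sections, completes it with a frame of $\sE_2$, and argues that the comparison map $\widetilde{\phi}$ remains invertible near $x$.

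\textbf{Your route.} You stay in one chart and split $\rho$ with a partition of unity. You then reduce modulo the negative-degree ideal to an ordinary smooth idempotent, lift a frame of its image, and finish with a graded Nakayama argument. Since $A_0 = C^\infty(U)$ and $\sI = A_{<0}$, the matrix $N$ has no nonzero entries between basis elements of equal degree. So $N$ is strictly triangular with respect to degree, hence nilpotent.

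\textbf{What each buys.} Your route is more elementary: it needs neither the Batchelor identification nor Quillen--Suslin. It also produces a \emph{homogeneous} basis directly. By contrast, Quillen--Suslin a priori gives only an ungraded basis of the fiber, while the paper afterwards uses the degrees $d_i$ of its basis elements. In fact your degree argument, applied at a point where $\Sym(E_x^\vee)_0=\R$, already shows graded projective modules there are free. Moreover, the paper's last step (invertibility of $\widetilde{\phi}_y$ for $y$ near $x$, phrased through a determinant in the non-local ring $\Sym(E_y^\vee)$) implicitly rests on the same degree-triangularity you make explicit. The paper's route mainly buys a fiberwise picture over the body, consistent with the Batchelor identifications it reuses for global-section statements.

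\textbf{A detail to make explicit.} The reductions of $s(f_1),\dots,s(f_m)$ form a frame of $\ker\bar p=\operatorname{im}\bar s$, since $\bar\rho\bar s=\id$. Together with the $\bar g_a$, this is what guarantees that the reduction of $T$ is square in each degree and invertible.
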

	
	We begin by stating a super version of the Quillen–Suslin theorem needed for the proof, which we were unable to locate in the literature.
	
	\begin{lem}[Super Quillen–Suslin theorem]\label{sqs}
		Every finitely generated projective module over the super polynomial ring $\R[x_1, \dots, x_n, \theta_1, \dots, \theta_m]$ is free, where $x_1, \dots, x_n$ denote the even variables and $\theta_1, \dots, \theta_m$ denote the odd variables.
	\end{lem}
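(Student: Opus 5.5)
Write $A=\R[x_1,\dots,x_n,\theta_1,\dots,\theta_m]$, $A_0=\R[x_1,\dots,x_n]$, and $N=(\theta_1,\dots,\theta_m)\subset A$ for the ideal generated by the odd variables. The approach is to reduce to the classical Quillen--Suslin theorem over $A_0=A/N$ and then lift a basis, using that $N$ is nilpotent ($N^{m+1}=0$). If the statement concerns graded modules (the relevant case for Proposition \ref{kerrho}), I would work throughout in the category of graded modules with degree-preserving maps, so that a graded projective module is a direct summand of a graded free module $\bigoplus_i A[k_i]$.

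Let $P$ be a finitely generated projective $A$-module. Then $\bar P := P/NP = P\otimes_A A_0$ is a finitely generated projective $A_0$-module. By the Quillen--Suslin theorem it is free; in the graded setting one either applies the graded version of Quillen--Suslin over the polynomial ring with homogeneous generators (Gubeladze and others), or notes that after forgetting to the $\mathbb{Z}/2$ or ungraded setting it suffices. I would state the result in the form actually needed: freeness with a homogeneous basis. So choose a (homogeneous) basis $\bar e_1,\dots,\bar e_r$ of $\bar P$, lift the elements to (homogeneous) $e_1,\dots,e_r\in P$, and let $\phi\colon F=A^r\to P$ be the corresponding map.

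Next I show $\phi$ is an isomorphism, via Nakayama for a nilpotent ideal. \emph{Surjectivity:} the cokernel $C$ satisfies $C=NC$, so $C=N^{m+1}C=0$. \emph{Injectivity:} $\phi$ is surjective onto the projective module $P$, so it splits, $F\cong P\oplus K$ with $K$ finitely generated. Reducing mod $N$ gives $\bar F\cong \bar P\oplus \bar K$. Since $\bar\phi$ is an isomorphism, $\bar K=0$, i.e.\ $K=NK$, so $K=0$ again by nilpotence. Hence $P\cong A^r$ is free, with homogeneous basis in the graded case.

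I expect the main obstacle to be the precise form of Quillen--Suslin needed for the reduction step, in particular that in the graded case the basis of $\bar P$ can be taken homogeneous. If the paper's polynomial ring carries a nontrivial $\mathbb{Z}$-grading on even variables, I would cite the graded Quillen--Suslin theorem. Alternatively, if the grading is nonnegatively concentrated, I would use the elementary fact that a finitely generated graded projective module over a connected graded ring is free, applied where applicable. All the remaining steps are formal consequences of the nilpotence of $N$, and they hold equally for graded commutative (super) rings, since the Nakayama argument uses no commutativity beyond $A$ being graded commutative.
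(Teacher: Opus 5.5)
Your proposal is correct and takes essentially the paper's route: use that the ideal generated by the odd variables is nilpotent, pass to $P/NP$ over $\R[x_1,\dots,x_n]$, apply the classical Quillen--Suslin theorem there, and lift freeness back to $P$. Where the paper cites Bass's lemma (for $I\subset\operatorname{rad}(R)$, projectives are isomorphic iff they are isomorphic modulo $I$), you prove the needed nilpotent case directly, via the lifted-basis map and Nakayama applied to its cokernel and its split kernel; your remarks on homogeneous bases go beyond what the stated lemma and the paper's proof address.
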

	
	\begin{proof}
		For convenience, we denote the exterior algebra $\R[\theta_1,\dots,\theta_m]$ by $\Lambda_m$. The super polynomial ring can then be identified with the tensor product
		\[
		R \coloneqq \R[x_1,\dots,x_n] \otimes_{\R} \Lambda_m.
		\]
		
		Let $I \subset R$ denote the nilpotent ideal $\R[x_1,\dots,x_n] \otimes \Lambda_m^{>0}$, where $\Lambda_m^{>0}$ is the maximal ideal of $\Lambda_m$ generated by the odd variables. Then
		\[
		R/I \cong \R[x_1,\dots,x_n].
		\]
		
		Let $M$ be a finitely generated projective $R$-module. Then $M/IM \cong M \otimes_R R/I$ is a finitely generated projective $R/I$-module. The classical Quillen–Suslin theorem then implies that $M/IM$ is a free $R/I$-module \cites{quillen1976projective,suslin1976projective}.
		
		Since $I$ is nilpotent, $I \subset \operatorname{rad}(R)$. A lemma of Bass (see Lemma 2.4 in \cite{bass1961projective}) states that for projective $R$-modules $P$ and $P'$,
		\[
		P \cong P' \quad \text{if and only if} \quad P/IP \cong P'/IP'.
		\]
		In particular, a projective $R$-module $P$ is free if and only if $P/IP$ is a free $R/I$-module. Applying this criterion to $M$ completes the proof.
	\end{proof}
	
	To apply the super Quillen–Suslin theorem, we use the geometric language of graded vector bundles. By Batchelor's theorem, there exists a positively graded vector bundle $E$ over $M$ such that
	\[
	\sO_\sM \cong \Sym(E^\vee),
	\]
	where $\Sym(E^\vee)$ is understood as the corresponding locally free sheaf on $M$.
	\begin{lem}\label{geomvecbund}
		Let $\sE_1$ and $\sE_2$ be graded vector bundles over $\sM$. Then there exist graded vector bundles $E_1$ and $E_2$ over $M$ such that
		\begin{equation}\label{batchE}
			\sE_i \cong \Sym(E^\vee)\otimes E_i,\quad i=1,2.
		\end{equation}
		Moreover,
		\[
		\Hom(\sE_1,\sE_2)
		\cong
		\Hom
		\bigl(\Sym(E^\vee)\otimes E_1,\,
		\Sym(E^\vee)\otimes E_2\bigr),
		\]
		where the first $\Hom$ denotes the set of morphisms of graded $\sO_\sM$-modules over $\sM$, and the second $\Hom$ denotes the set of morphisms of graded $\Sym(E^\vee)$-modules over $M$.
	\end{lem}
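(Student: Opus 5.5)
The plan has two parts. First, fix the Batchelor identification $\sO_\sM \cong \Sym(E^\vee)$ and show that each $\sE_i$ is, up to isomorphism, induced from a graded vector bundle on the body $M$. Second, observe that the Hom statement then follows formally, because $\sM$ and the ringed space $(M,\Sym(E^\vee))$ are the same ringed space.

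Let $\sI \subset \sO_\sM$ be the ideal sheaf generated by elements of positive degree, i.e.\ the nilpotent ideal corresponding to $\Sym^{>0}(E^\vee)$. Then $\sO_\sM/\sI \cong C^\infty_M$.

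For a graded vector bundle $\sE_i$, put $E_i := \sE_i/\sI\sE_i$. This is a locally free graded $C^\infty_M$-module of finite total rank, hence the sheaf of sections of a graded vector bundle over $M$. Next we need a splitting $E_i \to \sE_i$ of the projection $\sE_i \to E_i$, as a morphism of $C^\infty_M$-modules, where $C^\infty_M \subset \Sym(E^\vee)$ is the degree-zero part of the Batchelor model. Such a splitting exists for two reasons:
\begin{itemize}
\item $\sE_i$ is a fine sheaf of $C^\infty_M$-modules, so splittings can be chosen locally and glued with a partition of unity;
\item locally $\sE_i$ is free, and a local frame gives a local splitting.
\end{itemize}
Any convex combination of splittings is again a splitting, so the partition-of-unity gluing is legitimate.

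Extending the splitting $\Sym(E^\vee)$-linearly gives a morphism
\[
\phi_i \colon \Sym(E^\vee)\otimes E_i \longrightarrow \sE_i .
\]
Locally both sides are free of the same rank, and $\phi_i$ reduces to the identity modulo $\sI$. Since $\sI$ is nilpotent locally, with filtration by polynomial degree, a Nakayama-type argument applies: on a local frame, $\phi_i$ is given by a matrix of the form (identity $+$ nilpotent), which is invertible. Hence $\phi_i$ is an isomorphism, which gives \eqref{batchE}.

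The main obstacle is this step of lifting a splitting and checking invertibility. The delicate point is that $\sM$ is not locally ringed, so invertibility must be argued with the explicit degree filtration rather than by appeal to local rings. Local finiteness of the degrees of $E$ ensures that only finitely many filtration steps occur, so the inverse is a finite geometric series.

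For the Hom statement, a morphism of graded $\sO_\sM$-modules $\sE_1 \to \sE_2$ is the same as a morphism of graded $\Sym(E^\vee)$-modules after transporting along the fixed isomorphism $\sO_\sM \cong \Sym(E^\vee)$ and the isomorphisms $\phi_1,\phi_2$. Composing with $\phi_2^{-1}$ and $\phi_1$ therefore gives the bijection, and it is compatible with composition. Note that morphisms on the right need not preserve the tensor decomposition: they are arbitrary $\Sym(E^\vee)$-linear maps, which is exactly what the statement asserts.
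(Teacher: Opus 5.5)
Your approach is sound and differs from the paper's, but the invertibility step rests on a false claim. The paper also takes $E_i$ to be the restriction of $\sE_i$ to $M$, which is your $\sE_i/\sI\sE_i$. It then obtains \eqref{batchE} by citing homotopy invariance of graded vector bundles over split graded manifolds (Proposition A.6 of \cite{seol2025atiyah}). You instead prove this special case by hand: you glue $C^\infty_M$-linear splittings with a partition of unity and extend them $\Sym(E^\vee)$-linearly. That is self-contained and legitimate, since $\sO_{\sM,0}\cong C^\infty_M$ here and so the $C^\infty_M$-module structure you use is intrinsic.

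For the Hom part, the paper does slightly more than transport of structure. It splits a morphism into $C^\infty_M$-linear components $\rho^n\colon E_1\to\Sym^n(E^\vee)\otimes E_2$ and identifies these with honest bundle maps, so that $\underline{\rho}$ can be evaluated fiberwise; Proposition \ref{kerrho} needs this. Your conjugation by $\phi_1,\phi_2$ gives the sheaf-level bijection, which suffices for the lemma as literally stated.

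The error is that $\sI$ is \emph{not} nilpotent in this setting. Here $\sI$ is the negative-degree part of $\sO_\sM$. Graded manifolds in the paper are $\Z$-graded, so $E^\vee$ may contain even generators in degrees $-2,-4,\dots$, which are genuine polynomial variables in $\Sym(E^\vee)$. Moreover, $\Sym(E^\vee)$ is not complete for the polynomial-degree filtration, so no Nakayama or completeness argument is available either (think of $1-t$ in $\R[t]$).

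What actually makes the matrix unipotent is homogeneity:
\begin{itemize}
\item Take a local frame $\{f_k\}$ of $\sE_i$ with $|f_k|=d_k$, and let $\epsilon_k$ be its reduction modulo $\sI$.
\item Write $\phi_i(1\otimes\epsilon_j)=\sum_k a_{kj}f_k$, with $a_{kj}$ homogeneous of degree $d_j-d_k$ and $a_{kj}\equiv\delta_{kj}$ modulo $\sI$.
\item Since $\sO_\sM$ lives in non-positive degrees, $a_{kj}\neq 0$ forces $d_k\ge d_j$.
\item If $d_k=d_j$, then $a_{kj}\in\sO_{\sM,0}$, which maps isomorphically onto $\sO_\sM/\sI=C^\infty_M$, so $a_{kj}=\delta_{kj}$ exactly.
\item Hence the matrix is $1+N$ with $N$ strictly raising frame degree, so $N^r=0$, where $r$ is the number of distinct $d_k$.
\end{itemize}
The finiteness that terminates your geometric series therefore comes from the finite total rank of $\sE_i$. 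It does not come from nilpotency of $\sI$ or from the degrees of $E$. With this replacement your proof goes through.
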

	
	\begin{proof}
		The graded vector bundles $E_i$ are obtained by restricting $\sE_i$ to $M$. The isomorphisms \eqref{batchE} then follow from homotopy invariance of graded vector bundles over split graded manifolds; see Proposition A.6 in \cite{seol2025atiyah}.
		
		We can then identify a graded $\sO_\sM$-module morphism $\rho\colon\sE_1\to\sE_2$ with a family of $C^\infty$-linear sheaf morphisms
		\[
		\rho^n \colon E_1 \longrightarrow \Sym^n(E^\vee)\otimes E_2,\qquad n\ge 0,
		\]
		where all bundles are understood as the corresponding locally free sheaves on $M$. Since each $E_i$ has finite total rank, we can further identify these sheaf morphisms with morphisms of graded vector bundles on $M$ 
		\[
		\underline{\rho}^n \colon E_1 \longrightarrow \Sym^n(E^\vee)\otimes E_2,\qquad n\ge 0.
		\]
		
		Consequently, $\rho$ can be identified with a morphism of graded vector bundles on $M$
		\[
		\underline{\rho}\colon\Sym(E^\vee)\otimes E_1 \longrightarrow \Sym(E^\vee)\otimes E_2,
		\]
		which is $\Sym(E^\vee)$-linear. 
	\end{proof}
	
	We can now prove Proposition \ref{kerrho}.
	
	\begin{proof}[Proof of Proposition \ref{kerrho}]
		It follows from Lemma \ref{geomvecbund} that the short exact sequence of graded $\sO_\sM$-modules 
		\[
		0 \longrightarrow \ker \rho \longrightarrow \sE_1 \longrightarrow \sE_2 \longrightarrow 0.
		\]
		can be identified with the short exact sequence of graded $\Sym(E^\vee)$-modules 
		\[
		0 \longrightarrow \ker \underline{\rho} \longrightarrow \Sym(E^\vee) \otimes E_1 \longrightarrow  \Sym(E^\vee) \otimes E_2 \longrightarrow 0.
		\]
		Thus, $\ker \underline{\rho}$ is a graded vector bundle on $M$ equipped with an induced graded $\Sym(E^\vee)$-module structure from $\Sym(E^\vee) \otimes E_1$. For each $x \in M$, the fiber $(\ker \underline{\rho})_x$ is a direct summand of the free graded $\Sym(E^\vee_x)$-module $\Sym(E^\vee_x) \otimes_{\mathbb{R}} (E_1)_x$ of finite total rank, and hence finitely generated and projective over the super polynomial ring $\Sym(E^\vee_x)$. By Lemma \ref{sqs}, $(\ker \underline{\rho})_x$ is a free graded $\Sym(E^\vee_x)$-module of finite total rank. 
		
		Let $\{e_{x,i}\}_{i=1}^{p}$ be a basis of $(\ker \underline{\rho})_x$ over $\Sym(E_x^\vee)$, where $p$ denotes the total rank of $(\ker \underline{\rho})_x$. Since $\ker \underline{\rho}$ is a graded vector bundle over $M$, we may extend $\{e_{x,i}\}_{i=1}^p$ to sections $\{e_i\}_{i=1}^p$ of $\ker \underline{\rho}|_U$ for some open neighborhood $U \ni x$ satisfying 
		\[
		e_i(x)=e_{x,i}, \qquad i=1, \dots p.
		\]
		
		For convenience, set
		\[
		R\coloneqq\Sym(E^\vee|_U),
		\qquad
		P\coloneqq\ker \underline{\rho}|_U.
		\]
		Then $\{e_i\}_{i=1}^p$ induces a graded $R$-module morphism
		\[
		\phi\colon\bigoplus_{i=1}^p R[-d_i]\longrightarrow P,
		\]
		where $d_i=\deg(e_i)$, $i=1, \dots, p$. After shrinking $U$ further if necessary, choose a local frame $\{e_i\}_{i=p+1}^{p+q}$ of the graded $R$-module $R \otimes E_2|_U$, where $q$ is the total rank of $E_2$. This induces a graded $R$-module isomorphism
		\[
		P\oplus \bigoplus_{i=p+1}^{p+q} R[-d_i]
		\cong
		P \oplus \left(R \otimes E_2|_U\right)
		\cong 
		R \otimes E_1|_U
		\cong
		\bigoplus_{i=1}^{p+q} R[-d_i],
		\]
		where $d_i=\deg(e_i)$, $i=p+1, \dots, p+q$. 
		
		Now define
		\[
		\widetilde{\phi}\colon
		\bigoplus_{i=1}^{p+q} R[-d_i]
		\cong
		\bigoplus_{i=1}^{p} R[-d_i]
		\oplus
		\bigoplus_{i=p+1}^{p+q} R[-d_i]
		\xrightarrow{(\phi,\mathrm{id})}
		P\oplus
		\bigoplus_{i=p+1}^{p+q} R[-d_i]
		\cong
		\bigoplus_{i=1}^{p+q} R[-d_i].
		\]
		By construction, the fiber map $\widetilde{\phi}_x$ is invertible. Hence, after shrinking $U$ once more if necessary, the determinant $\det(\widetilde{\phi}_y) \in R_y$ is invertible for all $y\in U$, and therefore $\widetilde{\phi}$ is an isomorphism over $U$. It follows that $\phi$ is also an isomorphism over $U$.		
	\end{proof}  
	
	\begin{lem}\label{globtens}
		The tensor product $\sE_1 \otimes_{\sO_\sM} \sE_2$ of two graded vector bundles $\sE_1$ and $\sE_2$ is again a graded vector bundle. Moreover,
		\[
		\bigl(\sE_1 \otimes_{\sO_\sM} \sE_2\bigr)(M)
		\cong
		\sE_1(M)\otimes_{\sO_\sM(M)} \sE_2(M).
		\]
	\end{lem}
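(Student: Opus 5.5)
The plan has two parts: a local statement, that the tensor product is locally free of finite total rank, and a global statement, that taking global sections commutes with the tensor product. Both reduce to facts about finitely generated projective modules, using Lemma \ref{geomvecbund} together with a partition of unity on $M$.

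\emph{Local freeness.} Choose an open cover $\{U_\alpha\}$ of $M$ on which both $\sE_1$ and $\sE_2$ are free, with homogeneous frames $\{e_i\}$ and $\{f_j\}$ respectively. On each $U_\alpha$, the presheaf tensor product $U\mapsto \sE_1(U)\otimes_{\sO_\sM(U)}\sE_2(U)$ is then free with frame $\{e_i\otimes f_j\}$, since tensor products of free modules are free. In particular this presheaf is already a sheaf on $U_\alpha$. It follows that the sheafification $\sE_1\otimes_{\sO_\sM}\sE_2$ is locally free of total rank $\operatorname{rk}\sE_1\cdot\operatorname{rk}\sE_2$, so it is a graded vector bundle.

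\emph{Global sections.} The main step is to show that $\sE_i(M)$ is a finitely generated projective $A$-module, where $A:=\sO_\sM(M)$. By Lemma \ref{geomvecbund}, $\sE_i\cong\Sym(E^\vee)\otimes E_i$ for an ordinary graded vector bundle $E_i$ over $M$. Choose a graded vector bundle $F_i$ with $E_i\oplus F_i$ trivial; this is possible because each homogeneous component is an ordinary finite-rank bundle on a manifold, which has a complement in a trivial bundle (finite covering dimension). Then
\[
\sE_i\oplus\bigl(\Sym(E^\vee)\otimes F_i\bigr)\cong \Sym(E^\vee)\otimes(E_i\oplus F_i)\cong\sO_\sM^{\,N_i}
\]
as graded $\sO_\sM$-modules, up to degree shifts of the free generators. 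Taking global sections shows that $\sE_i(M)$ is a direct summand of a finite free graded $A$-module.

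\emph{Comparison map.} There is a natural map
\[
\Phi\colon \sE_1(M)\otimes_A\sE_2(M)\longrightarrow(\sE_1\otimes_{\sO_\sM}\sE_2)(M),
\]
and it is additive in each variable, i.e.\ compatible with direct sums. For $\sE_1=\sE_2=\sO_\sM$, up to shifts, $\Phi$ is the identity $A\otimes_A A\cong A$. By additivity, $\Phi$ is an isomorphism for finite free modules. Both sides take the decomposition $\sE_i\oplus\sE_i'\cong\sO_\sM^{N_i}$ to compatible direct sum decompositions, so $\Phi$ for $(\sE_1,\sE_2)$ is a direct summand of the isomorphism $\Phi$ for free modules, hence itself an isomorphism. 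Here I use that global sections commute with finite direct sums, and that the sheaf tensor product commutes with finite direct sums because sheafification is additive.

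\emph{Main obstacle.} The delicate point is the complement step: making sure the splitting $\sE_i\oplus\sE_i'\cong\sO_\sM^{N_i}$ exists globally as $\sO_\sM$-modules, since $\sM$ is not locally ringed and Swan-type arguments cannot be applied directly. Passing through Batchelor's splitting via Lemma \ref{geomvecbund} reduces this to the classical complement result for ordinary vector bundles, applied degreewise.
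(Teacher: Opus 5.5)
Your proof is correct, but it takes a different route from the paper. Both of you treat local freeness the same way: the paper calls it obvious, and you supply the frame argument.

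For the global-sections statement, the paper argues in one line. It cites the classical fact that, on an ordinary manifold, global sections of vector bundles commute with tensor products (Theorem 12.39 of Nestruev). It then transfers this through the Batchelor-type identifications $\sE_i\cong\Sym(E^\vee)\otimes E_i$ of \eqref{batchE}.

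You use \eqref{batchE} only once, to turn a complement of the ordinary bundle $E_i$ in a trivial bundle into a sheaf-level decomposition $\sE_i\oplus\sE_i'\cong\bigoplus_k\sO_\sM[-d_k]$. You then prove the graded statement directly from the naturality and additivity of the comparison map. This is essentially the proof of the classical theorem, carried out over $A=\sO_\sM(M)$.

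The paper's route is shorter, but it leaves the transfer step implicit. Nestruev's result concerns tensor products over $C^\infty(M)$ of sections of finite-rank bundles. Here the tensor product is over $A\cong\Gamma(\Sym(E^\vee))$, and $\Sym(E^\vee)$ generally has infinitely many graded pieces. So one must argue degreewise and rewrite $\Gamma(\Sym(E^\vee)\otimes E_1)\otimes_A\Gamma(\Sym(E^\vee)\otimes E_2)$ as $\Gamma(\Sym(E^\vee))\otimes_{C^\infty(M)}\Gamma(E_1)\otimes_{C^\infty(M)}\Gamma(E_2)$.

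Your route avoids this bookkeeping and is self-contained apart from the classical complement theorem. It also gives, as a by-product, that $\sE_i(M)$ is a finitely generated projective graded $A$-module. The paper uses this fact implicitly later, for example in the proof of Proposition~\ref{fqifjse}. The same additivity argument would also prove Lemma~\ref{globpull}.
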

	
	\begin{proof}
		The first statement is obvious. For the second, recall that on smooth manifolds the global sections functor on vector bundles commutes with tensor products; see Theorem 12.39 of \cite{nestruev2020}. The claim then follows from the identifications in \eqref{batchE}.
	\end{proof}
	
	Let $f\colon \sM_1 \rightarrow \sM_2$ be a graded manifold morphism. Let $\sE_2$ be a graded vector bundle over $\sM_2$. We call the graded $\sO_{\sM_1}$-module
	\[
	f^* \sE_2 \coloneqq \sO_{\sM_1} \otimes_{f^{-1}\sO_{\sM_2}} f^{-1} \sE_2
	\]
	the \emph{pullback} of $\sE_2$ along $f$, where $f^{-1}$ denotes the inverse image functor associated to $f$.
	\begin{lem}\label{globpull}
		$f^* \sE_2$ is a graded vector bundle over $\sM_1$. Moreover,
		\begin{equation}\label{globpullid}
			f^*\sE_2(M_1) \cong \sO_{\sM_1}(M_1) \otimes_{\sO_{\sM_2}(M_2)} \sE_2(M_2).
		\end{equation}
	\end{lem}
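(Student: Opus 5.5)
The local freeness part is immediate from the definitions. Choose an open cover $\{U_\alpha\}$ of $M_2$ on which $\sE_2|_{U_\alpha} \cong \bigoplus_i \sO_{\sM_2}|_{U_\alpha}[-d_i]$. On $f^{-1}(U_\alpha)$ we then have $f^{-1}\sE_2 \cong \bigoplus_i f^{-1}\sO_{\sM_2}[-d_i]$. Hence
\[
f^*\sE_2|_{f^{-1}(U_\alpha)} \cong \bigoplus_i \sO_{\sM_1}|_{f^{-1}(U_\alpha)}[-d_i],
\]
since $\sO_{\sM_1} \otimes_{f^{-1}\sO_{\sM_2}} (-)$ commutes with finite direct sums and shifts. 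The total rank is the same as that of $\sE_2$, so $f^*\sE_2$ is a graded vector bundle over $\sM_1$.

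For the global sections identity \eqref{globpullid}, I will exhibit $\sE_2(M_2)$ as a direct summand of a finite free module and then use additivity. First I would show that $\sE_2(M_2)$ is a finitely generated projective $\sO_{\sM_2}(M_2)$-module, by realizing $\sE_2$ as a direct summand of a trivial graded vector bundle $\sO_{\sM_2}^{\oplus N}$ (with shifts). Using Lemma \ref{geomvecbund}, write $\sE_2 \cong \Sym(E^\vee) \otimes E_2$ with $E_2$ a graded vector bundle over $M_2$. By the classical result (finite covering dimension plus partitions of unity) there is a graded vector bundle $F$ on $M_2$ with $E_2 \oplus F$ trivial. Then $\Sym(E^\vee) \otimes (E_2 \oplus F)$ is a trivial graded vector bundle over $\sM_2$, and $\sE_2$ is a direct summand of it as an $\sO_{\sM_2}$-module. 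Let $\sF := \Sym(E^\vee)\otimes F$, so that $\sE_2 \oplus \sF \cong \bigoplus_i \sO_{\sM_2}[-d_i]$.

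With this splitting in hand, both sides of \eqref{globpullid} are additive in the bundle: pullback commutes with direct sums, and so do global sections and the tensor product. There is a natural comparison map
\[
\sO_{\sM_1}(M_1) \otimes_{\sO_{\sM_2}(M_2)} \sE_2(M_2) \longrightarrow f^*\sE_2(M_1),
\qquad a \otimes s \mapsto a \otimes f^{-1}s .
\]
This map is compatible with direct sums. For $\sO_{\sM_2}[-d]$ it is an isomorphism, since $f^*\sO_{\sM_2} = \sO_{\sM_1}$ and the left side reduces to $\sO_{\sM_1}(M_1)[-d]$. Applying this to $\sE_2 \oplus \sF$ shows the comparison map is an isomorphism for the trivial bundle. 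A natural map whose direct sum with another natural map is an isomorphism is itself an isomorphism, so the map for $\sE_2$ is an isomorphism.

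I expect the main obstacle to be producing the complement $\sF$ globally. Everything else is formal. Since $M_2$ is an ordinary smooth manifold, the complement follows from the Batchelor-type identification in Lemma \ref{geomvecbund} together with the smooth Serre--Swan theorem: take $F$ to be the orthogonal complement of $E_2$ in a trivial bundle, with respect to a fiber metric on each graded component. The other point to check is that $f^*\sE_2$, defined via the sheaf-theoretic $f^{-1}$ and tensor product, has the expected global sections on trivial summands. This is handled by the identification $f^*\sO_{\sM_2}\cong\sO_{\sM_1}$ and the fact that sheafification does not affect finite free modules.
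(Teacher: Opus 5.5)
Your proof is correct, but it is organized differently from the paper's. The paper handles the global-sections identity in one line. It takes the classical smooth statement $\Gamma(f_0^*E)\cong C^\infty(M_1)\otimes_{C^\infty(M_2)}\Gamma(E)$ (Theorem 12.43 in Nestruev), where $f_0$ is the underlying smooth map, and transports it through the Batchelor-type identifications \eqref{batchE}. Spelled out, this amounts to rewriting $f^*\sE_2\cong \sO_{\sM_1}\otimes_{C^\infty_{M_1}} f_0^*E_2$ via the canonical degree-zero subsheaves $C^\infty\subset\sO$, since $f^\sharp$ is in general not compatible with the Batchelor splittings, and then cancelling tensor factors.

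You instead take only one input from the smooth case: the existence of a complement $F$ with $E_2\oplus F$ trivial. This makes $\sE_2$ a direct summand of a finite free $\sO_{\sM_2}$-module. You then run the additivity and naturality argument for the canonical comparison map $a\otimes s\mapsto a\otimes f^{-1}s$ directly over $\sO_{\sM_2}(M_2)$ and $\sO_{\sM_1}(M_1)$. In effect, you reprove the graded analogue of the textbook theorem rather than reducing to it.

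Your route has several advantages:
\begin{itemize}
\item it is self-contained;
\item it never has to track how $f^\sharp$ interacts with the splittings;
\item it identifies the isomorphism explicitly as the natural map;
\item it shows along the way that $\sE_2(M_2)$ is finitely generated projective over $\sO_{\sM_2}(M_2)$, a fact the paper later uses without comment (e.g.\ in the proof of Proposition \ref{fqifjse}).
\end{itemize}
The paper's route is shorter but leaves the reduction to the reader.

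Both arguments rest on the same standard input: vector bundles over $M_2$ admit complements, which requires a finite trivializing cover. So nothing is missing on your side relative to the paper.
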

	\begin{proof}
		The first statement is obvious. The second holds in the setting of smooth manifolds; see Theorem 12.43 of \cite{nestruev2020}. The result then follows from the identifications in \eqref{batchE}.
	\end{proof}
	\begin{rmk}
		Using \eqref{globpullid}, one obtains a canonical identification
		\begin{equation}\label{pulltan}
			f^*\sT_{\sM_2}(M_1) \cong \Der\left(\sO_{\sM_2}(M_2), \sO_{\sM_1}(M_1)\right),
		\end{equation}
		where $\Der(\sO_{\sM_2}(M_2), \sO_{\sM_1}(M_1))$ denotes the space of vector fields along $f$, i.e., $\R$-linear maps
		\[
		X\colon \sO_{\sM_2}(M_2) \to \sO_{\sM_1}(M_1)
		\]
		satisfying the Leibniz rule
		\[
		X(gh) = X(g)\, f^\sharp_{M_2}(h) + (-1)^{|X||g|} f^\sharp_{M_2}(g)\, X(h),
		\]
		for all $g,h \in \sO_{\sM_2}(M_2)$. See \cite{seol2025atiyah}.
	\end{rmk}
	
	From now on, let $\sM$ be a dg manifold.
	\begin{defn}
		A \emph{dg vector bundle} over $\sM$ is a graded vector bundle $\sE$ equipped with a differential $d_\sE$ making $\sE$ into a dg module over $\sO_\sM$. 
	\end{defn}
	We denote a dg vector bundle by $\sE/\sM$, or simply by $\sE$ when its base is clear from context. 
	
	\begin{exmp}
		The tangent sheaf $\sT_\sM$ is naturally a dg vector bundle, with differential given by the adjoint action of the cohomological vector field:
		\[
		d_{\sT_\sM} \coloneqq [Q_\sM, \cdot].
		\]
	\end{exmp}
	
	Given a dg manifold morphism $f \colon \sM_1 \to \sM_2$ and a dg vector bundle $\sE_2$ over $\sM_2$, the pullback graded vector bundle $f^*\sE_2$ carries a canonical dg vector bundle structure. Its differential is given by
	\[
	d_{f^*\sE_2}(g\otimes e)=
	f^*d_{\sE_2}(g\otimes e)=
	Q_{\sM_1}(g)\otimes e
	+
	(-1)^{|g|}g\otimes f^{-1}d_{\sE_2}(e),
	\]
	for all $g\in \sO_{\sM_1}$ and $e\in f^{-1}\sE_2$.
	
	\begin{defn}
		A \emph{morphism} between dg vector bundles $\sE_1/\sM_1 \rightarrow \sE_2/\sM_2$  consists of a pair $\phi=(f, \phi^\sharp)$, where $f\colon \sM_1 \rightarrow \sM_2$ is a dg manifold morphism and 
		\[
		\phi^\sharp\colon \sE_1 \longrightarrow f^*\sE_2 
		\]
		is a morphism of dg $\sO_{\sM_1}$-modules. Such $\phi$ is called \emph{base-fixing} if $f = \id$.
	\end{defn}

	\begin{exmp}
		Given any dg manifold morphism $f\colon \sM_1 \to \sM_2$, there exists a canonical dg vector bundle morphism $Tf\colon \sT_{\sM_1} \to \sT_{\sM_2}$, with $(Tf)^\sharp\colon  \sT_{\sM_1} \rightarrow  f^*\sT_{\sM_2}$ defined by
		\[
		(Tf)^\sharp(X)(\cdot) = X(f^\sharp(\cdot)),
		\]
		where we use the identification \eqref{pulltan}. 
	\end{exmp}

    \begin{exmp}\label{sdts}
    	We define the following dg vector bundles over $\sM$ associated to a dg vector bundle $\sE/\sM$:
    	
    	\begin{itemize}
    		\item the \emph{suspension}:
    		\[
    		\sE[1] \coloneqq \bigoplus_{i \in \mathbb{Z}} \sE_{i+1},
    		\]
    		equipped with the differential
    		\[
    		d_{\sE[1]} \coloneqq d_\sE;
    		\]
    		
    		\item the \emph{dual}:
    		\[
    		\sE^\vee \coloneqq \mathrm{Hom}_{\sO_\sM}(\sE, \sO_\sM),
    		\]
    		where $\Hom_{\sO_\sM}$ denotes the internal Hom in the category of graded (left) $\sO_\sM$-modules, equipped with the differential
    		\[
    		d_{\sE^\vee}(\alpha)(X)
    		= Q_\sM\bigl(\alpha(X)\bigr)
    		- (-1)^{|\alpha|}\alpha\bigl(d_\sE(X)\bigr);
    		\]
    		
    		\item the \emph{$n$-th tensor power}:
    		\[
    		\sE^{\otimes n}
    		\coloneqq \underbrace{\sE \otimes_{\sO_\sM} \cdots \otimes_{\sO_\sM} \sE}_{n \text{ times}},
    		\]
    		equipped with the differential
    		\begin{align*}
    			d_{\sE^{\otimes n}}(X_1 \otimes \cdots \otimes X_n)
    			&=
    			\sum_{i=1}^n
    			(-1)^{|X_1|+\cdots+|X_{i-1}|}
    			X_1 \otimes \cdots \otimes d_\sE(X_i) \otimes \cdots \otimes X_n;
    		\end{align*}
    		
    		\item the \emph{$n$-th symmetric power}:
    		\[
    		\mathrm{Sym}^n_{\sO_\sM}(\sE)
    		\coloneqq \{X \in \sE^{\otimes n} \mid \sigma(X)=X \ \text{for all } \sigma \in \Sigma_n \},
    		\]
    		where $\Sigma_n$ denotes the symmetric group on $n$ letters, equipped with the differential
    		\[
    		d_{\mathrm{Sym}^n_{\sO_\sM}(\sE)}
    		\coloneqq d_{\sE^{\otimes n}}\big|_{\mathrm{Sym}^n_{\sO_\sM}(\sE)}.
    		\]
    	\end{itemize}
    	
    	All of these constructions are functorial in the category of dg vector bundles over $\sM$.
    \end{exmp}

	\begin{defn}
		A dg vector bundle morphism $\phi\colon \sE_1/\sM_1 \to \sE_2/\sM_2$ is a \emph{weak equivalence} if $f$ is a weak equivalence of dg manifolds and 
		\[
		\phi^\sharp_{M_1}\colon \sE_1(M_1) \rightarrow f^*\sE_2(M_1)
		\]
		is a quasi-isomorphism.
	\end{defn}
	
	A dg vector bundle $\sE/\sM$ is called \emph{weakly contractible} if the canonical morphism from the zero dg vector bundle over $\sM$ to $\sE$ is a weak equivalence; equivalently, the dg $\sO_\sM(M)$-module $\sE(M)$ is acyclic. 
	
	\begin{exmp}
		Let $\phi\colon \sE_1 \to \sE_2$ be a base-fixing morphism of dg vector bundles. The \emph{mapping cone} of $\phi$ is the dg vector bundle
		\[
		C(\phi)
		\coloneqq
		\left(
		\sE_1[1] \oplus \sE_2,
		\quad
		\begin{pmatrix}
			-d_{\sE_1[1]} & 0 \\
			\phi^\sharp & d_{\sE_2}
		\end{pmatrix}
		\right).
		\]
		A standard computation shows that $\phi$ is a weak equivalence if and only if $C(\phi)$ is weakly contractible.
	\end{exmp}

	A dg vector bundle $\sE/\sM$ is called \emph{contractible} if there exists an $\sO_\sM$-linear map $h\colon \sE \to \sE$ of degree $-1$ satisfying
	\[
	d_\sE h + h d_\sE = \id_\sE.
	\]
	We refer to such $h$ as a \emph{(homotopy) contraction}.
	
	\begin{prop}\label{wcc}
		Every weakly contractible dg vector bundle is contractible.
	\end{prop}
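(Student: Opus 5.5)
The idea is to build the contraction in two steps: first split $\sE$ locally into an acyclic part plus its image of $d$, using Proposition \ref{kerrho}, then obtain an $\sO_\sM$-linear contraction by homological algebra on global sections.

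Let $\sE/\sM$ be weakly contractible, so $A\coloneqq\sO_\sM(M)$ and $E\coloneqq\sE(M)$ give an acyclic dg $A$-module $E$. By Lemma \ref{geomvecbund} and Batchelor's theorem, $E$ is a finitely generated projective graded $A$-module (forgetting the differential), since global sections of a vector bundle over $M$ are projective by Serre--Swan. Moreover $\sE\mapsto\sE(M)$ is an equivalence onto such modules, compatible with internal Hom by Lemma \ref{globtens}. So it suffices to find a degree $-1$ $A$-linear $h\colon E\to E$ with $dh+hd=\id$. It then sheafifies, because $\End_{\sO_\sM}(\sE)(M)\cong\End_A(E)$.

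Consider the dg $A$-module $\End_A(E)=\Hom_A(E,E)$ with differential $[d,\cdot]$. If this complex is acyclic, the closed degree-$0$ element $\id$ is exact, i.e.\ $\id=[d,h]=dh+hd$, which is the required contraction. To prove acyclicity, note $\End_A(E)\cong E^\vee\otimes_A E$ as dg modules, via Lemma \ref{globtens} and the identification $\sE^\vee\otimes\sE\cong\End(\sE)$ for vector bundles. Acyclicity of $E^\vee\otimes_A E$ should follow from acyclicity of $E$ once $E^\vee$ is \emph{K-flat} (homotopically flat) over $A$, so that $E^\vee\otimes_A-$ preserves acyclic modules.

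The main obstacle is this flatness. $E^\vee$ is projective as a graded module, but since $A$ is a cdga that may be unbounded in the relevant direction, graded projectivity does not automatically give K-projectivity. I would exploit the grading: $\sO_\sM$ is non-positively graded, $\sE$ has finite total rank, and the Batchelor filtration of $\Sym(E^\vee)$ by polynomial degree is complete and finite in each degree. I would filter $E^\vee$ by $\Sym^{\ge n}(E^\vee)\cdot E^\vee$ and argue that each associated graded piece, a module over $C^\infty(M)$ with a twisted differential, is a bounded complex of projective $C^\infty(M)$-modules, hence K-projective, and then pass to the limit using the degreewise finiteness. If this fails, the alternative is to construct $h$ directly. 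Use Proposition \ref{kerrho} to show that $\ker d$ and $\operatorname{im} d$ are graded vector bundles: since $\ker d=\operatorname{im} d$ by acyclicity (after checking this at the sheaf level via partitions of unity), $d\colon\sE\to\ker d$ is surjective. Split $0\to\ker d\to\sE\to\ker d\to 0$ using projectivity of global sections to get a section $s$, and set $h=s\circ\pi$, where $\pi\colon\sE\to\ker d$ is the projection of a chosen splitting. Then correct $h$ so that $dh+hd=\id$, as in the classical argument for split exact complexes of projectives.
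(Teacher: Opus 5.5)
Your main line is the paper's own proof: $\sE$ is contractible iff $\id_\sE$ is exact in $\End(\sE)(M)\cong\sE(M)\otimes_{\sO_\sM(M)}\sE^\vee(M)$ (Lemma \ref{globtens}). This complex is acyclic because $\sE(M)$ is acyclic and $\sE^\vee(M)$ is homotopy flat. The paper gets that flatness directly from Lemma \ref{hflat}, applied to the dg vector bundle $\sE^\vee$.

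\textbf{The gap.} The argument you sketch for the flatness step does not work as written. Write $A=\sO_\sM(M)$ and let $A^{(\ge n)}$ denote Batchelor polynomial degree $\ge n$. Filtering $\sE^\vee(M)$ by $A^{(\ge n)}\cdot\sE^\vee(M)$ fails for two reasons.
\begin{enumerate}
\item It is not a filtration by subcomplexes. $Q_\sM$ sends a degree $-1$ generator $\xi$ into $A^0=C^\infty(M)$; for the derived zero locus of a section $s$ one has $Q_\sM\xi=\langle\xi,s\rangle$. So $Q_\sM$ lowers polynomial degree.
\item Even where such a filtration is $d$-stable, its graded pieces are killed by $A^{(\ge1)}$. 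Hence for an acyclic $N$ (here $N=\sE(M)$), $N\otimes_A\Gr^n$ only sees $N/A^{(\ge1)}N$. Acyclicity of $N/A^{(\ge1)}N$ does not follow from that of $N$. K-projectivity of the pieces over $C^\infty(M)$ is not what $\otimes_A$ needs.
\end{enumerate}

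\textbf{The filtration that works.} This is the one in the sketch of Lemma \ref{hflat}, by degree of the fiber generators. Write $\sE^\vee(M)\cong A\otimes_{C^\infty(M)}\Gamma(E')$, with $E'$ the graded vector bundle on $M$ from Lemma \ref{geomvecbund}. Set $F^p=\bigoplus_{i\ge p}A\otimes_{C^\infty(M)}\Gamma(E'_i)$.
\begin{itemize}
\item Since $A$ is non-positively graded and the differential has degree $+1$, it maps $\Gamma(E'_i)$ into $F^{i+1}$.
\item So $F^\bullet$ is a finite, graded-split filtration by dg submodules.
\item Its pieces are $\Gr^p=A\otimes_{C^\infty(M)}\Gamma(E'_p)$ with differential $Q_\sM\otimes 1$. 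This is well defined because $Q_\sM$ kills $C^\infty(M)$ for degree reasons.
\item Hence $N\otimes_A\Gr^p\cong N\otimes_{C^\infty(M)}\Gamma(E'_p)$ is acyclic by flatness of $\Gamma(E'_p)$, and induction on the finite filtration finishes.
\end{itemize}

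\textbf{The fallback also fails.} $d_\sE$ satisfies the Leibniz rule with respect to $Q_\sM$, so it is not $\sO_\sM$-linear. Consequently $\ker d_\sE$ and $\operatorname{im} d_\sE$ are not $\sO_\sM$-submodules, and Proposition \ref{kerrho} does not apply. There is no sequence $0\to\ker d\to\sE\to\ker d\to 0$ of $\sO_\sM$-modules to split. Since $d_\sE$ is only $C^\infty(M)$-linear, a splitting argument of this kind yields at best a $C^\infty(M)$-linear contraction. Upgrading it to an $\sO_\sM$-linear one is exactly the content of the proposition, and the $\End$-complex argument is what accomplishes it.
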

	
	To prove this proposition, we need the following useful lemma.
	
	\begin{lem}[Lemma 2.3 in \cite{seol2025atiyah}]\label{hflat}
		Let $\sE/\sM$ be a dg vector bundle. 
		Then $\sE(M)$ is a homotopy flat dg $\sO_\sM(M)$-module; that is, for any acyclic dg $\sO_\sM(M)$-module $A$, the tensor product $A \otimes_{\sO_\sM(M)} \sE(M)$ is again acyclic.
	\end{lem}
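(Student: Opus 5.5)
The plan is to reduce homotopy flatness to two facts: $\sE(M)$ is projective as a graded $\sO_\sM(M)$-module, and it admits a bounded, dg-stable filtration whose associated graded pieces are ``free with trivial differential'' in a suitable sense. Write $A\coloneqq\sO_\sM(M)$ and $P\coloneqq\sE(M)$.

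\textbf{Step 1 (graded projectivity).} By Lemma \ref{geomvecbund} we have $\sE\cong\Sym(E^\vee)\otimes E_1$ for a graded vector bundle $E_1$ over $M$. Hence $P\cong\Gamma(M,\Sym(E^\vee))\otimes_{C^\infty(M)}\Gamma(M,E_1)$ by Lemma \ref{globtens}. By Serre--Swan, $\Gamma(M,E_1)$ is a finitely generated projective $C^\infty(M)$-module, i.e.\ a direct summand of $C^\infty(M)^N$. Tensoring gives that $P$ is a direct summand of a free graded $A$-module $A\otimes_\R W$, with $W$ a finite-dimensional graded vector space. Consequently, for any dg $A$-module $N$, the functor $N\otimes_A P$ is exact on underlying graded modules. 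Moreover, $N\otimes_A(A\otimes W)\cong N\otimes_\R W$ as graded spaces.

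\textbf{Step 2 (filtration).} The differential of $P$ does not respect the splitting, so the key step is to find a filtration compatible with $d_\sE$. Both $A$ and $P$ are bounded above in degree: $\sO_\sM$ lives in degrees $\le 0$, and $E_1$ has finite total rank. Multiplication by $A$ can only lower degree, while $d_\sE$ raises degree by one. I would use the decreasing filtration
\[
F^kP\coloneqq A\cdot P^{\ge k},
\]
the $A$-submodule generated by elements of degree $\ge k$. It is a dg submodule, because $d(a\,p)=Q(a)p\pm a\,dp$ and $dp$ has degree $\ge k+1$. It is finite, since $P$ is generated by the finitely many degrees of $E_1$. Using the Batchelor splitting and Step 1, each graded piece $F^kP/F^{k+1}P$ should be a projective $A$-module generated in the single degree $k$. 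On such a piece the differential is forced to be $A$-linear up to $Q$, with no internal component. This would give $F^kP/F^{k+1}P\cong A\otimes_{C^\infty(M)}\Gamma(E_1^k)$ up to the $Q_\sM$-twisting, i.e.\ a direct summand of $A\otimes W_k$ with the differential $Q\otimes 1$.

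\textbf{Step 3 (conclusion).} For acyclic $N$, the complex $N\otimes_A(F^k/F^{k+1})$ is a direct summand of $N\otimes_\R W_k$, which is acyclic. By Step 1 the short exact sequences $0\to F^{k+1}\to F^k\to F^k/F^{k+1}\to 0$ remain exact after applying $N\otimes_A-$. A finite induction via the long exact sequence then gives that $N\otimes_A P$ is acyclic.

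The main obstacle is Step 2, specifically verifying that the graded pieces really are projective and generated in one degree. Here $A$ is not locally ringed and the splitting $P\cong A\otimes\Gamma(E_1)$ need not be compatible with $F^\bullet$. If this fails, the fallback is to prove homotopy flatness locally. On a cover by coordinate charts trivializing $\sE$, $P|_U$ is semifree of finite rank, hence homotopy projective. I would then glue using a partition of unity on $M$, which makes $P$ a homotopy retract of $\prod_\alpha P|_{U_\alpha}$ (finite by dimension), and note that homotopy flatness is preserved by retracts.
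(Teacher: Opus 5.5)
Your proposal is correct and follows essentially the same route as the paper's sketch. The paper filters $\sE(M)\cong\sO_\sM(M)\otimes_{C^\infty(M)}\Gamma(E)$ by generator degree, $F^p=\bigoplus_{i\ge p}\sO_\sM(M)\otimes_{C^\infty(M)}\Gamma(E_i)$, uses non-positivity of $\sO_\sM$ to see that $d_\sE$ preserves this finite filtration with $\Gr^p d_\sE=Q_\sM\otimes 1$, and deduces acyclicity of the tensor product from acyclicity of its graded pieces. The obstacle you flag in Step 2 does not actually arise: because $\sO_\sM(M)$ lives in degrees $\le 0$, your intrinsic filtration $\sO_\sM(M)\cdot P^{\ge k}$ is exactly $\bigoplus_{j\ge k}\sO_\sM(M)\otimes_{C^\infty(M)}\Gamma(E_1^j)$, so the Batchelor splitting is compatible with it and the partition-of-unity fallback is unnecessary.
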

	
	\begin{rmk}
		Using the mapping cone argument,  one sees that tensoring with $\sE(M)$ preserves quasi-isomorphisms of dg $\sO_\sM(M)$-modules.
	\end{rmk}
	\begin{sproof}
		The graded commutative algebra $\sO_{\sM}(M)$ is canonically a $C^\infty(M)$-algebra. 
		Recall that there exists a graded vector bundle $E = \bigoplus_{i \in \Z} E_i$ on $M$ such that
		\[
		\sE(M) \cong \sO_{\sM}(M) \otimes_{C^\infty(M)} \Gamma(E).
		\]
		This allows us to define a complete descending filtration on $\sE(M)$ by
		\[
		F^p \sE(M) \coloneqq \bigoplus_{i \ge p} \sO_{\sM}(M) \otimes_{C^\infty(M)} \Gamma(E_i).
		\]
		Since $\sO_{\sM}(M)$ is non-positively graded, the differential $d_\sE$ preserves this filtration, and $\Gr^p d_\sE = Q_\sM \otimes_{C^\infty(M)} 1$ for all $p$. 
		It follows that $\Gr^p\big(A \otimes_{\sO_{\sM}(M)} \sE(M)\big) \cong A \otimes_{C^\infty(M)} \Gamma(E_p)$ is acyclic for all $p$, and hence $A \otimes_{\sO_{\sM}(M)} \sE(M)$ is acyclic.
	\end{sproof}
	
	\begin{proof}[Proof of Proposition \ref{wcc}]
		Let $\sE/\sM$ be a dg vector bundle.  The graded vector bundle
		\[
		\End(\sE) \coloneqq \Hom_{\sO_\sM}(\sE, \sE)
		\]
		carries a canonical dg vector bundle structure, with differential
		\[
		d_{\End(\sE)}(h) = [d_\sE, h], \qquad h \in \End(\sE).
		\]
		One can see that $\sE$ is contractible if and only if $\id_\sE \in \End(\sE)$ is exact. Now assume that $\sE$ is weakly contractible. Note that
		\[
		\End(\sE) \cong \sE \otimes_{\sO_\sM} \sE^\vee.
		\]
		By Lemma \ref{globtens}, there is an isomorphism
		\[
		\End(\sE)(M) \cong \sE(M) \otimes_{\sO_\sM(M)} \sE^\vee(M).
		\]
		The latter complex is acyclic by Lemma \ref{hflat}. It follows that $\id_\sE$ must be exact.
	\end{proof}
	
	We conclude this section with three lemmas that will be used later.
	
	\begin{lem}\label{pbqi}
		Let $f \colon \sM_1 \to \sM_2$ be a weak equivalence of dg manifolds. Let $\sE_i/\sM_i$, $i=1,2$, be dg vector bundles. Under the natural isomorphism
		\[
		\Hom_{\sO_{\sM_1}}(f^*\sE_2,\sE_1)\cong \Hom_{\sO_{\sM_2}}(\sE_2,f_*\sE_1),
		\]
		a morphism $f^*\sE_2 \rightarrow \sE_1$	is a weak equivalence of dg vector bundles if and only if the induced map on global sections
		\[
		\sE_2(M_2) \longrightarrow \sE_1(M_1)
		\]
		is a quasi-isomorphism.
	\end{lem}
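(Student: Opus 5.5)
Let $\psi\colon f^*\sE_2 \to \sE_1$ be a morphism of dg $\sO_{\sM_1}$-modules, with adjoint $\psi^\flat\colon \sE_2 \to f_*\sE_1$. We factor the map on global sections and reduce to Lemma \ref{hflat}. Since $f$ is already a weak equivalence, $\psi$ is a weak equivalence exactly when $\psi_{M_1}\colon f^*\sE_2(M_1) \to \sE_1(M_1)$ is a quasi-isomorphism. By Lemma \ref{globpull} we have
\[
f^*\sE_2(M_1)\cong \sO_{\sM_1}(M_1)\otimes_{\sO_{\sM_2}(M_2)}\sE_2(M_2).
\]
Under this identification and the adjunction, the induced map $\psi^\flat_{M_2}\colon \sE_2(M_2)\to \sE_1(M_1)$ factors as
\[
\sE_2(M_2)\xrightarrow{\ \iota\ } \sO_{\sM_1}(M_1)\otimes_{\sO_{\sM_2}(M_2)}\sE_2(M_2)\xrightarrow{\ \psi_{M_1}\ } \sE_1(M_1),
\qquad \iota(e)=1\otimes e .
\]
The first step is to check that this factorization holds, and that the isomorphism of Lemma \ref{globpull} is compatible with differentials. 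The latter is immediate from the formula for $d_{f^*\sE_2}$ given above.

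By two-out-of-three for quasi-isomorphisms, the statement reduces to showing that $\iota$ is a quasi-isomorphism. Write
\[
\iota = f^\sharp_{M_2}\otimes_{\sO_{\sM_2}(M_2)} \id_{\sE_2(M_2)}\colon \sO_{\sM_2}(M_2)\otimes_{\sO_{\sM_2}(M_2)}\sE_2(M_2)\longrightarrow \sO_{\sM_1}(M_1)\otimes_{\sO_{\sM_2}(M_2)}\sE_2(M_2).
\]
Since $f$ is a weak equivalence, $f^\sharp_{M_2}$ is a quasi-isomorphism of dg $\sO_{\sM_2}(M_2)$-modules. By Lemma \ref{hflat}, together with the mapping cone remark after it, tensoring with the homotopy flat module $\sE_2(M_2)$ preserves quasi-isomorphisms. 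Hence $\iota$ is a quasi-isomorphism.

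The remaining care goes into the mapping cone step. The cone of $f^\sharp_{M_2}$ is an acyclic dg $\sO_{\sM_2}(M_2)$-module, and tensoring commutes with cones. Lemma \ref{hflat} is stated with $\sE(M)$ on the right, but by graded commutativity the side does not matter. I expect the main obstacle to be bookkeeping rather than substance: checking that the adjunction isomorphism matches the global-section factorization, including signs. One must verify that $\psi^\flat$ on global sections is precisely $e\mapsto \psi_{M_1}(1\otimes e)$; this follows by restricting the sheaf-level adjunction to global sections.
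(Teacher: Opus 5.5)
Your proposal is correct and is the same argument as the paper's. You identify $f^*\sE_2(M_1)\cong \sO_{\sM_1}(M_1)\otimes_{\sO_{\sM_2}(M_2)}\sE_2(M_2)$ via Lemma \ref{globpull}. You use Lemma \ref{hflat} with the cone remark to show that $e\mapsto 1\otimes e$ is a quasi-isomorphism because $f^\sharp_{M_2}$ is one. You then conclude by two-out-of-three from the factorization $\sE_2(M_2)\to f^*\sE_2(M_1)\to \sE_1(M_1)$.

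The bookkeeping you flag is exactly what the paper leaves implicit: that the adjunction restricts correctly to global sections, and that the identification respects the differentials.
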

	\begin{proof}
		Since $f^\sharp \colon \sO_{\sM_2}(M_2) \to \sO_{\sM_1}(M_1)$ is a quasi-isomorphism, it follows from Lemmas \ref{globpull} and \ref{hflat} that the canonical map
		\[
		\sE_2(M_2) \cong \sE_2(M_2) \otimes_{\sO_{\sM_2}(M_2)} \sO_{\sM_2}(M_2)
		\longrightarrow
		\sE_2(M_2) \otimes_{\sO_{\sM_2}(M_2)} \sO_{\sM_1}(M_1)
		\cong f^*\sE_2(M_1)
		\]
		is a quasi-isomorphism. The claim then follows from the fact that $\sE_2(M_2) \rightarrow \sE_1(M_1)$ is obtained as the composition
		\[
		\sE_2(M_2) \longrightarrow f^*\sE_2(M_1) \longrightarrow \sE_1(M_1),
		\]
		and the 2-out-of-3 property of quasi-isomorphisms.
	\end{proof}

	\begin{lem}\label{sdtspw}
		The operations listed in Example \ref{sdts} preserve base-fixing weak equivalences.
	\end{lem}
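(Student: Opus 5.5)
Fix a base-fixing weak equivalence $\phi\colon \sE_1 \to \sE_2$ over $\sM$. Since $f=\id$, we have $f^*\sE_2=\sE_2$, so $\phi$ being a weak equivalence means exactly that $\phi^\sharp_M\colon \sE_1(M)\to\sE_2(M)$ is a quasi-isomorphism of dg $\sO_\sM(M)$-modules. We treat the four operations of Example \ref{sdts} separately. For the \emph{suspension}, $\sE_i[1](M)=\sE_i(M)[1]$, and shifting preserves quasi-isomorphisms, so nothing needs to be done.

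For the \emph{tensor powers} we argue by induction on $n$. By Lemma \ref{globtens}, $\sE_i^{\otimes n}(M)\cong \sE_i(M)^{\otimes n}$, with tensor products taken over $A\coloneqq\sO_\sM(M)$. We factor $\phi^{\otimes n}$ as
\[
\sE_1(M)^{\otimes n}\xrightarrow{\phi\otimes\id}\sE_2(M)\otimes_A\sE_1(M)^{\otimes n-1}\xrightarrow{\id\otimes\phi^{\otimes n-1}}\sE_2(M)^{\otimes n}.
\]
By the remark following Lemma \ref{hflat}, tensoring with the global sections of a dg vector bundle preserves quasi-isomorphisms. Since $\sE_1^{\otimes n-1}$ and $\sE_2$ are dg vector bundles (Lemma \ref{globtens}), both arrows are quasi-isomorphisms.

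For \emph{symmetric powers} we use that we work over $\R$. Here $\Sym^n_{\sO_\sM}(\sE)$ is the image of the symmetrization idempotent $\pi=\frac1{n!}\sum_\sigma\sigma$, which commutes with the differential. Hence $\Sym^n(\sE)(M)$ is a direct summand, as a complex, of $\sE^{\otimes n}(M)$. The map $\phi^{\otimes n}$ is $\Sigma_n$-equivariant and therefore commutes with $\pi$. Consequently $\Sym^n\phi$ is a retract of the quasi-isomorphism $\phi^{\otimes n}$, and a retract of a quasi-isomorphism is again a quasi-isomorphism.

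For the \emph{dual} we use the mapping cone. Because $\phi$ is a weak equivalence, $C(\phi)$ is weakly contractible, and Proposition \ref{wcc} then makes it contractible with some $\sO_\sM$-linear contraction $h$. Dualizing gives $C(\phi)^\vee\cong C(\phi^\vee)[k]$ for a suitable shift $k$ (up to signs), and $\pm h^\vee$ is a contraction of it. Thus $C(\phi^\vee)$ is contractible, hence weakly contractible, and so $\phi^\vee$ is a weak equivalence.

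The main obstacle is this last step. One must carefully verify the sign conventions identifying $C(\phi)^\vee$ with a shift of $C(\phi^\vee)$, and check that $h^\vee$ satisfies $d h^\vee+h^\vee d=\id$ with respect to the dual differential defined in Example \ref{sdts}. The key point is that the $Q_\sM$-terms cancel because $h$ is $\sO_\sM$-linear, so that $[d_{\sE^\vee},h^\vee]=\pm[d_\sE,h]^\vee$. The other three cases are formal once Lemmas \ref{globtens} and \ref{hflat} are available.
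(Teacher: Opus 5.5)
Your proposal is correct and follows essentially the same route as the paper. The suspension case is immediate. Tensor powers are handled by factoring $\phi^{\otimes n}$ and invoking the homotopy flatness of Lemma~\ref{hflat}. Symmetric powers are realized as direct summands via the symmetrization projector. The dual case reduces via the mapping cone to weak contractibility, where Proposition~\ref{wcc} supplies an $\sO_\sM$-linear contraction $h$ whose dual $h^\vee$ contracts the dual bundle.

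Your extra remarks are exactly the checks the paper leaves as ``readily verified'': the identification $C(\phi)^\vee\cong C(\phi^\vee)[k]$, and the cancellation of the $Q_\sM$-terms thanks to the $\sO_\sM$-linearity of $h$.
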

	
	\begin{proof}
		For the suspension, the claim is immediate. For the tensor power (and consequently for the symmetric power construction), the result follows directly from Lemma \ref{hflat}. 
		
		More precisely, given a base-fixing weak equivalence $\phi\colon \sE_1 \rightarrow \sE_2$, the induced morphism $\phi \otimes \phi\colon \sE_1 \otimes_{\sO_\sM} \sE_1 \rightarrow \sE_2 \otimes_{\sO_\sM} \sE_2$ can be factored as
		\[
		\sE_1 \otimes_{\sO_\sM} \sE_1 \xlongrightarrow{\id_{\sE_1} \otimes \phi} \sE_1 \otimes_{\sO_\sM} \sE_2 \xrightarrow{\phi \otimes \id_{\sE_2}} \sE_2 \otimes_{\sO_\sM} \sE_2.
		\]
		Since each factor is a weak equivalence, their composition is a weak equivalence. By induction, the induced morphism $\sE_1^{\otimes n} \rightarrow \sE_2^{\otimes n}$ is a weak equivalence for all $n$. Consequently, the claim also holds for the symmetric power, since the symmetrization projector realizes the symmetric power as a direct summand of the tensor power, compatible with the induced morphisms.
		
		We now turn to the dual. Since the dual commutes with the mapping cone (up to suspension), it suffices to show that it preserves weak contractibility. By Proposition \ref{wcc}, every weakly contractible dg vector bundle $\sE$ admits a contraction $h\colon \sE \to \sE$. One readily verifies that its dual map $h^\vee\colon \sE^\vee \to \sE^\vee$ satisfies
		\[
		d_{\sE^\vee} h^\vee + h^\vee d_{\sE^\vee} = \id_{\sE^\vee}.
		\]
		Therefore, $\sE^\vee$ is contractible, hence weakly contractible.
	\end{proof}

	\begin{lem}[Section 3 in \cite{seol2025atiyah}]\label{twe}
		If $f\colon \sM_1 \to \sM_2$ is a weak equivalence of dg manifolds, then $Tf\colon \sT_{\sM_1} \to \sT_{\sM_2}$ is a weak equivalence of dg vector bundles.
	\end{lem}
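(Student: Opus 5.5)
The plan is to reduce to trivial fibrations using the category-of-fibrant-objects structure on dg manifolds, and then to handle trivial fibrations through the relative tangent bundle, which I will show is weakly contractible by dualizing.

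\emph{Step 1: factorization.} By the factorization lemma (Brown) in the category of fibrant objects of dg manifolds \cite{Behrend2020thx}, write $f = p \circ s$. Here $p\colon \sP \to \sM_2$ is a fibration and $s\colon \sM_1 \to \sP$ is a section of a trivial fibration $q\colon \sP \to \sM_1$. Since $f$ and $s$ are weak equivalences, $p$ is a trivial fibration by 2-out-of-3. Functoriality gives $Tp \circ Ts = Tf$ and $Tq \circ Ts = T(\id) = \id$. Weak equivalences of dg vector bundles satisfy 2-out-of-3 on global sections: one uses Lemma \ref{globpull} and Lemma \ref{hflat} to compare $f^*$ of global sections, as in the proof of Lemma \ref{pbqi}. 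Hence it suffices to prove the claim when $f$ is a trivial fibration (a surjective submersion that is a weak equivalence).

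\emph{Step 2: trivial fibrations.} Let $p\colon \sP\to\sN$ be a trivial fibration. Since $p$ is a submersion, $(Tp)^\sharp\colon \sT_\sP \to p^*\sT_\sN$ is a surjective morphism of dg vector bundles over $\sP$. By Proposition \ref{kerrho}, its kernel $\sT_{\sP/\sN}$ is a dg vector bundle. The resulting short exact sequence
\[
0\longrightarrow \sT_{\sP/\sN}\longrightarrow \sT_\sP\longrightarrow p^*\sT_\sN\longrightarrow 0
\]
stays exact on global sections, by a partition of unity on $P$ together with the identifications \eqref{batchE}. Exactness identifies the mapping cone of $Tp$ with $\sT_{\sP/\sN}[1]$ up to weak equivalence. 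So $Tp$ is a weak equivalence iff $\sT_{\sP/\sN}$ is weakly contractible. By Lemma \ref{sdtspw}, duals preserve weak contractibility, so it is enough to show that the relative cotangent bundle $\Omega_{\sP/\sN}=\sT_{\sP/\sN}^\vee$ is weakly contractible.

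\emph{Step 3: the main obstacle.} It remains to show that $\Omega_{\sP/\sN}(P)$ is acyclic. I would first choose a section $\sigma$ of $p$, which exists for trivial fibrations in this category, or more generally pull back along any weak equivalence into $\sP$. By Lemma \ref{pbqi} (with Lemma \ref{hflat}), weak contractibility of $\Omega_{\sP/\sN}$ is equivalent to acyclicity of $\sigma^*\Omega_{\sP/\sN}(N)$. Locally, the submersion $p$ gives $\sO_\sP \cong \sO_\sN \otimes C^\infty(\text{fiber})[\text{fiber generators}]$, semi-free over $\sO_\sN$. Then $\sigma^*\Omega_{\sP/\sN}$ is the bundle of fiber generators along $\sigma$ (the linearization of the relative differential). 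I would filter $\sO_\sP(P)$ by powers of the ideal of $\sigma$ and compare associated graded pieces. The associated graded of the quasi-isomorphism $\sO_\sN(N)\to\sO_\sP(P)$ is $\Sym$ of the linear part over $\sO_\sN(N)$. Its acyclicity in positive weight forces the weight-one part, namely $\sigma^*\Omega_{\sP/\sN}(N)$, to be acyclic. Here I take the characteristic-zero fact that a $\Sym_{\sO_\sN(N)}$ of a homotopy-flat module is acyclic in positive weight iff the module is acyclic, using Lemma \ref{hflat} and Lemma \ref{sdtspw} for symmetric powers.

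Justifying the convergence of this filtration globally in the smooth (non-polynomial) fiber directions is the step I expect to be hardest. My first attempt would be to work with the completion along $\sigma$. That requires checking that completing does not change cohomology, which I would derive from the fiberwise contractibility of the body of a trivial fibration.
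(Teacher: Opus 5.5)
Your Steps 1 and 2 are sound. The paper gives no argument of its own here; it simply imports the lemma from \cite{seol2025atiyah}. Brown's factorization and 2-out-of-3, checked on global sections via Lemmas \ref{globpull} and \ref{hflat}, reduce you to a trivial fibration $p\colon\mathcal{P}\to\sN$. The exact sequence $0\to\sT_{\mathcal{P}/\sN}\to\sT_{\mathcal{P}}\to p^*\sT_\sN\to 0$, together with duality and pullback along $\sigma$, then reduces the claim to acyclicity of $V\coloneqq\sigma^*\Omega_{\mathcal{P}/\sN}(N)=I/I^2$, where $I=\ker\sigma^\sharp$.

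\textbf{The gap is in Step 3.} The hypothesis gives you only that $I(P)$ is acyclic, because $\sO_{\mathcal{P}}(P)=p^\sharp\sO_\sN(N)\oplus I(P)$ as complexes. You then treat the associated graded $\bigoplus_{k\ge1}I^k/I^{k+1}\cong\Sym^{\ge1}_{\sO_\sN(N)}(V)$ as acyclic. In effect you assume that the associated graded of a filtered quasi-isomorphism is a quasi-isomorphism, and that is the wrong direction. For a complete exhaustive filtration, acyclicity of $\Gr$ implies acyclicity of the complex, not conversely. Nothing in your argument rules out higher differentials $d_r\colon E_r^1\to E_r^{1+r}$ in the $I$-adic spectral sequence, coming from the part of $Q_{\mathcal{P}}$ that is nonlinear transversally to $\sigma$, which could cancel $H(V)$ against $H(\Sym^{1+r}V)$. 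The characteristic-zero fact you quote only covers the trivial step (that $V$ is a summand of $\Sym^{\ge1}V$). Settling convergence, which you single out as the hardest point, would not address this.

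\textbf{The missing idea is to use the non-positive grading.} Argue by descending induction on $m\le 0$; the start $m=0$ is automatic, since $V$ lives in degrees $\le 0$.
\begin{itemize}
\item Suppose $H^i(V)=0$ for $i>m$. Since $\sO_\sN(N)$ is non-positively graded and $V$ is homotopy flat (Lemma \ref{hflat}), in characteristic zero $H(\Sym^k V)$ is concentrated in degrees $\le km\le m$. Hence $H^{m+1}(\Sym^k V)=0$ for all $k\ge 1$.
\item Consequently every cocycle $v\in V^m$ can be corrected successively by elements of $I^k$, $k\ge 2$. This yields a cocycle of the completion $\widehat{I}=\varprojlim I/I^k$ lifting $v$.
\item If $\widehat{I}$ is acyclic, reducing a primitive of that cocycle modulo $I^2$ shows that $v$ is exact. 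So $H^m(V)=0$, and the induction proceeds.
\end{itemize}

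This repaired argument still needs two inputs that you only assert.
\begin{itemize}
\item You need $\widehat{I}$ to be acyclic, i.e.\ functions on $\mathcal{P}$ flat along $\sigma(N)$ must form an acyclic complex. This is unavoidable on this route: without completeness, vanishing of $H^{m+1}(I^2)$ does not follow from its associated graded.
\item You need a trivial fibration to admit a section $\sigma$ at all. This is not a formal consequence of the fibrant-objects axioms. Pulling back along an arbitrary weak equivalence into $\mathcal{P}$ does not give you an ideal to filter by.
\end{itemize}
As written, Step 3 does not prove the lemma.
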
	
	

	\section{\Linfty\ algebroids over dg manifolds}
	
	In this section, we review the notion of $L_\infty$ algebroids over dg manifolds from the Lie algebraic picture. We then introduce a commutative algebraic picture and prove its equivalence to the Lie algebraic one.
	
	\subsection{Lie algebraic perspective}

	Let $\sM$ be a graded manifold and $\sL$ a graded vector bundle over $\sM$. Set
	\[
	\Omega^\bullet_\sL \coloneqq \Sym^\bullet_{\sO_\sM}\!\big(\sL[1]^\vee\big).
	\]
	This graded commutative algebra carries a canonical descending filtration
	\[
	F^p \Omega_\sL \coloneqq \bigoplus_{n \ge p} \Omega^n_\sL, \qquad p \in \Z.
	\]
	Its completion with respect to this filtration is
	\[
	\wOmega_\sL \coloneqq \wSym_{\sO_\sM}\!\big(\sL[1]^\vee\big),
	\]
	where $\wSym_{\sO_\sM}$ denotes the completed symmetric power. When $\sL = \sT_\sM$, we write $\wOmega_{\sT_\sM}$ simply as $\wOmega_\sM$. In this case, $\wOmega_\sM$ is the sheaf of completed differential forms on $\sM$.

	\begin{defn}\label{LAlgd}
		Let $\sM$ be a dg manifold. An \emph{$L_\infty$ algebroid over $\sM$} is a graded vector bundle $\sL$ over $\sM$, together with a filtration preserving differential $D_\sL$ on $\wOmega_\sL$, whose associated graded operator satisfies
		\[
		\Gr D_\sL|_{\sO_\sM} = Q_\sM.
		\]
		The filtered cdga $\CE(\sL)\coloneqq(\wOmega_\sL, D_\sL)$ is called the \emph{Chevalley--Eilenberg algebra} of $\sL$.
	\end{defn}
	
	By a slight abuse of notation, we denote an $L_\infty$ algebroid by $\sL/\sM$, or simply by $\sL$ when the base $\sM$ is clear from context. 
		
	The differential $D_\sL$ determines 
	\begin{itemize}
		\item a sequence of $\sO_\sM$-linear maps of degree $0$
		\[
		\rho_n \colon \Sym^n_{\sO_\sM}(\sL[1]) \longrightarrow \sT_\sM[1], \qquad n \ge 0,
		\]
		called the \emph{multi-anchors} of $\sL$, with $\rho_0 = Q_\sM$;
		
		\item a sequence of $\mathbb{R}$-linear maps of degree $1$
		\[
		l_n \colon \Sym^n_{\sO_\sM}(\sL[1]) \longrightarrow \sL[1], \qquad n \ge 1,
		\]
		called the \emph{multi-brackets} of $\sL$.
	\end{itemize}
	
	More precisely, these maps are determined by the action of $D_\sL$ on the generators of $\wOmega_\sL$:
	\begin{align*}
		&\Omega^0_\sL \longrightarrow \wOmega_\sL,
		\quad
		f \mapsto D_\sL f = \sum_{n \ge 0} \rho_n^\vee(\mathrm{d}f), \\
		&\Omega^1_\sL \longrightarrow \wOmega_\sL,
		\quad
		\alpha \mapsto D_\sL \alpha = \sum_{n \ge 1} l_n^\vee(\alpha), 
	\end{align*}
	where $\rho_n^\vee\colon \Omega^1_\sM \rightarrow \Omega^n_\sL$ is specified by
	\begin{align*} 
		\rho_n^\vee(\alpha)(X_1,\dots,X_n) + (-1)^{|\alpha|} \alpha \left(\rho_n(X_1,\dots,X_n)\right) = 0,
	\end{align*}
	and $l_n^\vee\colon \Omega^1_\sL \rightarrow \Omega^n_\sL$ is specified by
	\begin{align*} 
		&l_n^\vee(\alpha)(X_1,\dots,X_n) + (-1)^{|\alpha|}\alpha \left(l_n(X_1,\dots,X_n)\right) \\
		&= \sum_{i=1}^n (-1)^{|X_i|(|X_1| + \cdots + |X_{i-1}|)} d (\alpha(X_i)) \left(\rho_{n-1}(X_1,\dots,\widehat{X_i},\dots,X_n)\right).
	\end{align*}
	
	They are characterized by the following conditions:
	\begin{itemize}
		\item The failure of $\sO_\sM$-linearity of the $n$-th multi-bracket $l_n$ is measured by the $(n-1)$-th multi-anchor $\rho_{n-1}$:
		\[
		l_n(X_1, \dots, fX_n) = \rho_{n-1}(X_1,\dots, X_{n-1})(f) X_n + (-1)^{|f|(\sum_{i=1}^{n-1} |X_i|+1)} fl_n(X_1, \dots, X_n).
		\]
		For $n=1$, one has 
		\[
		l_1(fX) = Q_\sM(f)X + (-1)^{|f|}f l_1(X).
		\]
		\item The multi-brackets $\{l_n\}_{n \geq 1}$ satisfy the \emph{strong homotopy Jacobi identities}:
		\[
		\sum_{p+q=n} \sum_{\sigma \in \Sigma_n} \frac{\epsilon(\sigma)}{p!q!}	l_{q+1}(l_p(X_{\sigma(1)},\dots),\dots,X_{\sigma(n)}) = 0.
		\]
		For $n=1$, one has
		\[
		l_1 \circ l_1 = 0.
		\] 
		Thus, equipped with $l_1$, $\sL$ becomes a dg vector bundle over $\sM$.
		\item The multi-anchors $\{\rho_n\}_{n \geq 0}$ intertwine the multi-brackets $l_n$ on $\sL[1]$ with the Lie bracket $[\cdot, \cdot]$ on $\sT_\sM[1]$:
		\begin{align*}
			&\sum_{p+q=n} \sum_{\sigma \in \Sigma_n} \frac{\epsilon(\sigma)}{p!q!} 
			\rho_{q+1} \big(
			l_p(X_{\sigma(1)},\dots), \dots,X_{\sigma(n)}
			\big)\\
			&= \frac{1}{2}\sum_{p+q=n} \sum_{\sigma \in \Sigma_n} \frac{\epsilon(\sigma)}{p!q!} 
			\big[
			\rho_{p}(X_{\sigma(1)},\dots),
			\rho_{q}(\dots,X_{\sigma(n)})
			\big],
		\end{align*}
		where $\epsilon(\sigma)$ is the Koszul sign of the permutation $\sigma$.
		For $n=0$, this reduces to $[Q_\sM, Q_\sM]=0$.
		For $n=1$, one has 
		\[
		\rho_1(l_1(X)) = [Q_\sM, \rho_1(X)].
		\]
		In other words, $\rho_1\colon \sL \to \sT_\sM$ is a dg vector bundle morphism.
	\end{itemize}
	
    In fact, the first condition follows from the fact that $D_\sL$ is a derivation, while the second and third follow from the vanishing of $D_\sL^2$ on $\Omega^1_\sL$ and $\Omega^0_\sL$, respectively.
    
    \begin{exmp}
    	A finite-dimensional $L_\infty$ algebra can be seen as an $L_\infty$ algebroid over a point.
    \end{exmp}
    
    \begin{exmp}
    	Let $\sM$ be a dg manifold. There are two canonical $L_\infty$ algebroids over $\sM$:
    	\begin{itemize}
    		\item The zero vector bundle $0_\sM$, equipped with zero multi-brackets and zero multi-anchors, defines an $L_\infty$ algebroid, called the \emph{zero algebroid} on $\sM$. Its Chevalley--Eilenberg algebra is given by $(\sO_\sM, Q_\sM)$.
    		
    		\item Recall that the de Rham differential $\mathrm{d}_{dR}$ is the unique differential on $\wOmega_\sM$ extending
    		\[
    		\Omega^0_\sM \longrightarrow \Omega^1_\sM, \qquad f \mapsto \mathrm{d}f,
    		\]
    		where $\mathrm{d}f(X) = (-1)^{|X||f|} X(f)$ for all vector fields $X$. The Lie derivative $L_X$ is the unique derivation on $\wOmega_\sM$ extending
    		\[
    		\Omega^0_\sM \longrightarrow \Omega^0_\sM, \qquad f \mapsto X(f).
    		\]
    		Now consider the total differential
    		\[
    		D_\sM \coloneqq \mathrm{d}_{dR} + L_{Q_\sM}
    		\]
    		on $\wOmega_\sM$. Obviously, $D_\sM$ preserves the filtration on $\wOmega_\sM$. Moreover,
    		\[
    		\Gr D_\sM|_{\sO_\sM} = Q_\sM.
    		\]
    		Thus, $\sT_\sM$ is an $L_\infty$ algebroid, called the \emph{tangent algebroid} of $\sM$. In terms of multi-anchors and multi-brackets, we have
    		\[
    		\rho_0 = Q_\sM, \quad \rho_1 = \id_{\sT_\sM}, \quad l_1 = [Q_\sM, \cdot], \quad l_2 = [\cdot,\cdot],
    		\]
    		and $\rho_n = l_{n+1}=0$ for all $n \geq 2$.
    	\end{itemize}
    \end{exmp}

	\begin{defn}
		An $L_\infty$ algebroid is called \emph{strict} if $\rho_n = 0$ for all $n \ge 2$. A strict $L_\infty$ algebroid is called an \emph{$L_\infty$ algebra bundle} if, in addition, $\rho_1 = 0$.
	\end{defn}
	
	The compatibility conditions between the multi-brackets $\{l_n\}_{n=1}^\infty$ and the unary anchor $\rho_1$ of a strict $L_\infty$ algebroid $\sL/\sM$ simplify considerably due to the vanishing of the higher multi-anchors. One has
	\begin{align*}
		\rho_1(l_1(X)) = [Q_\sM, \rho_1(X)], \quad
		\rho_{1}\big(l_2(X_{1},X_{2})\big) = \big[\rho_{1}(X_{1}), \rho_{1}(X_{2})\big],
	\end{align*}
	and for all $n \geq 3$,
	\[
	\rho_{1}\big(l_n(X_{1}, \dots, X_{n})\big) = 0.
		\]
	
	\begin{defn}\label{LAlgdM}
		A \emph{(base-preserving) morphism} between two $L_\infty$ algebroids $\sL_1/\sM_1$ and $\sL_2/\sM_2$ consists of a pair $\Phi=(f, \Phi^\sharp)$ where
		\[
		(f, f^\sharp)\colon \sM_1 \rightarrow \sM_2
		\]
		is a dg manifold morphism, and
		\[
		\Phi^\sharp \colon \CE(\sL_2)
		\longrightarrow f_*\CE(\sL_1)
		\]
		is a filtered cdga morphism satisfying
		\[
		\Phi^\sharp|_{\sO_{\sM_2}} = f^\sharp.
		\]
		Such $\Phi$ is called \emph{base-fixing} if $f=\id$.
	\end{defn}

	The filtered cdga morphism $\Phi^\sharp$ is uniquely determined by a sequence $\sO_{\sM_1}$-linear maps of degree $0$
	\[
	\phi^\sharp_n \colon \Sym^n_{\sO_{\sM_1}}(\sL_1[1]) \longrightarrow f^*\sL_2[1], \quad n \ge 1.
	\]
	Here, we use the canonical identifications
	\begin{align*}
		\Hom_{\sO_{\sM_2}}\bigl(\sL_2[1]^\vee, f_* \Sym^n_{\sO_{\sM_1}}(\sL_1[1]^\vee)\bigr)
		&\cong \Hom_{\sO_{\sM_1}}\bigl(f^*\sL_2[1]^\vee, \Sym^n_{\sO_{\sM_1}}(\sL_1[1]^\vee)\bigr) \\
		&\cong \Hom_{\sO_{\sM_1}}\bigl(\Sym^n_{\sO_{\sM_1}}(\sL_1[1]), f^*\sL_2[1]\bigr).
	\end{align*}
	The second isomorphism follows from the fact that pullback commutes with duals for graded vector bundles, together with the canonical identification of a graded vector bundle with its double dual. The latter can be proved using Proposition~\ref{kerrho}.
	
	One can easily check that the condition
	\[
	\Phi^\sharp \circ D_{\sL_2}|_{\Omega^0_{\sL_2}} =  f_*D_{\sL_1} \circ \Phi^\sharp|_{\Omega^0_{\sL_2}}
	\]
	is equivalent to the compatibility of $\{\phi^\sharp_n\}_{n=1}^\infty$ with the multi-anchors on $\sL_1$ and $\sL_2$:
	\[
	Tf^\sharp(\rho_n(X_1,\dots, X_n))=  
	\sum_{j=0}^\infty \sum_{k_1+\cdots+k_j=n} \sum_{\sigma\in \Sigma_n} \frac{\epsilon(\sigma)}{j!k_1!\cdots k_j!}
	f^*\rho_j \big(
	\phi^\sharp_{k_1}(X_{\sigma(1)},\dots),
	\dots,
	\phi^\sharp_{k_j}(\dots,X_{\sigma(n)})
	\big).
	\]
	For $n=0$, this reduces to $Tf^\sharp(Q_{\sM_1})=f^*Q_{\sM_2}$. For $n=1$, one has
	\[
	Tf^\sharp(\rho_1(X))=f^*\rho_1(\phi_1^\sharp(X)).
	\]
	
	The condition
	\[
	\Phi^\sharp \circ D_{\sL_2}|_{\Omega^1_{\sL_2}}
	=
	f_*D_{\sL_1} \circ \Phi^\sharp|_{\Omega^1_{\sL_2}}
	\]
	is more delicate to interpret. If $\sL_1$ and $\sL_2$ are both $L_\infty$ algebra bundles, or if $f=\id$, then it is equivalent to the compatibility of $\{\phi_n^\sharp\}_{n=1}^\infty$ with the multi-brackets on $\sL_1$ and $\sL_2$, namely
	\begin{align*}
		&\sum_{p+q=n}\sum_{\sigma\in\Sigma_n}
		\frac{\epsilon(\sigma)}{p!\,q!}\,
		\phi^\sharp_{q+1}\bigl(
		l_p(X_{\sigma(1)},\dots),\dots,X_{\sigma(n)}
		\bigr) \\
		&=
		\sum_{j=0}^\infty
		\sum_{k_1+\cdots+k_j=n}\sum_{\sigma\in\Sigma_n}
		\frac{\epsilon(\sigma)}{j!\,k_1!\cdots k_j!}\,
		f^*l_j\bigl(
		\phi^\sharp_{k_1}(X_{\sigma(1)},\dots),
		\dots,
		\phi^\sharp_{k_j}(\dots,X_{\sigma(n)})
		\bigr).
	\end{align*}
	For $n=1$, one has
	\[
	\phi_1^\sharp\bigl(l_1(X)\bigr)
	=
	f^*l_1\bigl(\phi_1^\sharp(X)\bigr).
	\]
	In general, however, for $f \neq \id$ and $n>1$, the pullback multi-brackets $f^*l_n$ are not well defined due to the presence of multi-anchors.
	
	\begin{defn}
		An $L_\infty$ algebroid morphism $\Phi\colon \sL_1/\sM_1 \rightarrow \sL_2/\sM_2$ is a \emph{weak equivalence} if the linear term $\phi_1=(f, \phi_1^\sharp)$ is a weak equivalence of the underlying dg vector bundles.
	\end{defn}

	\begin{exmp}
		Let $f\colon\sM_1 \to \sM_2$ be a morphism of dg manifolds. There is a canonical $\sO_{\sM_2}$-linear map
		\[
		f_{dR}^\sharp\colon\wOmega_{\sM_2} \longrightarrow f_*\wOmega_{\sM_1},
		\]
		defined by
		\[
		f_{dR}^\sharp(g)=f^\sharp(g), \qquad
		f_{dR}^\sharp(dg)=f_*d(f^\sharp(g))
		\]
		for $g\in \sO_{\sM_2}$.
		One can easily check that $f_{dR}^\sharp$ defines a filtered cdga morphism from $(\wOmega_{\sM_2}, D_{\sM_2})$ to $f_*(\wOmega_{\sM_1}, D_{\sM_1})$. It follows that $f_{dR} \coloneqq (f, f_{dR}^\sharp)$ defines an $L_\infty$ algebroid morphism between the tangent algebroids.
	\end{exmp}
	
	By Lemma \ref{twe}, $f_{dR}$ is a weak equivalence of tangent algebroids whenever $f$ is a weak equivalence of dg manifolds.
	
	\begin{defn}
		An $L_\infty$ algebroid morphism $\Phi\colon \sL_1/\sM_1 \rightarrow \sL_2/\sM_2$ is called \emph{strict} if $\phi_n^\sharp = 0$ for all $n \geq 2$.
	\end{defn}
	
	The multi-anchors $\{\rho_n\}_{n \geq 1}$ of an $L_\infty$ algebroid $\sL/\sM$ define a base-fixing $L_\infty$ algebroid morphism from $\sL$ to the tangent algebroid $\sT_\sM$, which is strict if and only if $\sL$ is strict. In other words, there is a canonical morphism
	\[
	\CE(\sT_\sM)=(\wOmega_\sM, D_\sM) \longrightarrow \CE(\sL)=(\wOmega_\sL, D_\sL)
	\]
	between filtered cdgas.
	
	\begin{defn}
		An $L_\infty$ algebroid $\sL/\sM$ is called \emph{transitive (or fibrant)} if its unary anchor 
		\[
		\rho_1\colon \sL \longrightarrow \sT_\sM
		\]
		is surjective.
	\end{defn}
	
	\begin{lem}\label{transtr}
		Every transitive $L_\infty$ algebroid is strict up to an isomorphism.
	\end{lem}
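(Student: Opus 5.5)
Given a transitive $L_\infty$ algebroid $\sL/\sM$, we will construct a base-fixing $L_\infty$ algebroid isomorphism $\Psi\colon \sL' \to \sL$ whose linear part is the identity of the underlying graded vector bundle. The transported structure $\sL'$ will have vanishing higher anchors. In terms of Chevalley--Eilenberg algebras, we seek a filtered cdga automorphism $\Psi^\sharp$ of $\wOmega_\sL$, restricting to the identity on $\sO_\sM$ and with $\Gr\Psi^\sharp=\id$. Conjugating $D_\sL$ by $\Psi^\sharp$ then gives a new differential $D_{\sL'}$, and we want its associated multi-anchors $\rho'_n$ to vanish for $n\ge 2$.

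The first step is to produce a splitting. Since $\rho_1\colon\sL\to\sT_\sM$ is a surjective morphism of graded vector bundles, Proposition~\ref{kerrho} shows that $\ker\rho_1$ is a graded vector bundle. Using this local freeness together with a partition of unity on $M$ (locally the sequence splits because $\sT_\sM$ is free, and $\Hom$-sheaves are fine), we choose an $\sO_\sM$-linear degree-$0$ section
\[
\sigma\colon \sT_\sM \to \sL, \qquad \rho_1\circ\sigma=\id_{\sT_\sM}.
\]
No compatibility of $\sigma$ with differentials is needed, since we only use it to correct the components $\phi_n^\sharp$ of a morphism.

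Next we build $\Psi$ inductively via its components $\psi^\sharp_n\colon \Sym^n_{\sO_\sM}(\sL[1])\to\sL[1]$, starting from $\psi_1^\sharp=\id$. By the compatibility formula with multi-anchors recorded after Definition~\ref{LAlgdM} (with $f=\id$), the anchors $\rho'_n$ of the transported structure satisfy
\[
\rho'_n=\sum_{j}\sum_{k_1+\cdots+k_j=n}\frac{1}{j!\,k_1!\cdots k_j!}\,\rho_j\circ(\psi^\sharp_{k_1}\odot\cdots\odot\psi^\sharp_{k_j})
\]
after symmetrization. The only term involving $\psi_n^\sharp$ is $\rho_1\circ\psi^\sharp_n$, and all remaining terms involve only $\psi^\sharp_k$ with $k<n$. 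Hence, once $\psi^\sharp_1,\dots,\psi^\sharp_{n-1}$ are fixed, we set
\[
\psi^\sharp_n\coloneqq-\,\sigma\circ(\text{remaining terms}).
\]
This choice gives $\rho'_n=0$. Here we use $\sigma$ to lift from $\sT_\sM[1]$ to $\sL[1]$, and the lift is $\sO_\sM$-linear and of degree $0$ as required. Because each $\psi_n^\sharp$ only adds terms of filtration degree at least $n$, the resulting $\Psi^\sharp$ is a well-defined filtered automorphism of the completed algebra $\wOmega_\sL$. It is invertible because its linear part is the identity: the inverse is constructed term by term by the same filtration argument. Setting $D_{\sL'}\coloneqq(\Psi^\sharp)^{-1}D_\sL\Psi^\sharp$ then defines an $L_\infty$ algebroid $\sL'$ with $\Gr D_{\sL'}|_{\sO_\sM}=Q_\sM$ and $\rho'_n=0$ for $n\ge2$. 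Thus $\sL'$ is strict and isomorphic to $\sL$.

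The main obstacle is bookkeeping of direction and signs: we must check that the conjugated differential's anchors really are given by the displayed composition formula. This amounts to pulling back the canonical morphism $\CE(\sT_\sM)\to\CE(\sL)$ along $\Psi^\sharp$: the anchors of $\sL'$ are precisely the components of the composite $\CE(\sT_\sM)\to\CE(\sL)\xrightarrow{\Psi^\sharp}\CE(\sL')$. Composition of morphisms corresponds to the stated formula, and the unknown $\psi_n^\sharp$ enters linearly through $\rho_1$. A secondary point is the global existence of $\sigma$. If the partition-of-unity argument is delicate for sheaves over the non-locally-ringed space $\sM$, we will instead use the Batchelor identification of Lemma~\ref{geomvecbund} to reduce to splitting a surjection of graded vector bundles over $M$.
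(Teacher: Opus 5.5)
Your argument is correct and is the paper's proof written on the Lie side: the paper solves the same triangular system $\sum_{m=1}^n\phi_n^m\circ\rho_m^\vee=0$ ($n\ge 2$) for the components $\phi_n^1\colon\Omega^1_\sL\to\Omega^n_\sL$ of an $\sO_\sM$-linear automorphism of $\wOmega_\sL$. It uses a left inverse $h_1$ of the injective map $\rho_1^\vee$, which is just the dual of your section $\sigma$ of $\rho_1$. There is one slip to fix: since your displayed anchor formula and your final paragraph take $\Psi^\sharp\colon\CE(\sL)\to\CE(\sL')$, the transported differential must be $D_{\sL'}\coloneqq\Psi^\sharp D_\sL(\Psi^\sharp)^{-1}$ (as in the paper), not $(\Psi^\sharp)^{-1}D_\sL\Psi^\sharp$, which would in general reintroduce a nonzero $\rho'_2$.
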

	The proof follows by adapting a standard argument from the $L_\infty$ algebra literature.
	\begin{proof}
		Let $\sL/\sM$ be a transitive $L_\infty$ algebroid. We need to construct an $\sO_\sM$-linear automorphism $\Phi^\sharp$ of the filtered graded commutative algebra $\wOmega_\sL$ such that
		\[
		(\Phi^\sharp \circ D_\sL \circ (\Phi^\sharp)^{-1})(f) \in \Omega^0_\sL \oplus \Omega^1_\sL
		\]
		for all $f \in \sO_\sM$.
		
		For $n \ge m \ge 1$, let $\phi_n^m$ denote the composition
		\[
		\Omega^m_\sL \xrightarrow{\Phi^\sharp|_{\Omega^m_\sL}} \wOmega_\sL \longrightarrow \Omega^n_\sL.
		\]
		Recall that $D_\sL(f) = Q_\sM(f)  +  \sum_{m \ge 1} \rho_m^\vee(\mathrm{d}f)$. Then
		\[
		\Phi^\sharp \circ D_\sL(f)
		=
		Q_\sM(f) + \sum_{m \ge 1} \sum_{n \ge m} \phi_n^m\big(\rho_m^\vee(\mathrm{d}f)\big).
		\]
		Note that each $\phi_n^m$ is uniquely determined by $\phi_1^1,\dots,\phi_{n+1-m}^1$. We are therefore led to solve the following equations for $\phi^1_k$'s
		\[
		\sum_{m=1}^n \phi_n^m(\phi_1^1,\dots,\phi_{n+1-m}^1) \circ \rho_m^\vee = 0, \qquad n \ge 2.
		\]
		We impose the initial condition $\phi_1^1 = \mathrm{Id}_{\Omega^1_\sL}$, which implies $\phi_n^n = \mathrm{Id}_{\Omega^n_\sL}$ for all $n \ge 2$. The equations then become
		\[
		\phi_n^1 \circ \rho_1^\vee + \sum_{1<m<n} \phi_n^m \circ \rho_m^\vee = - \rho_n^\vee, \qquad n \ge 2.
		\]
		In particular, for $n=2$, this reduces to
		\[
		\phi_2^1 \circ \rho_1^\vee = -\rho_2^\vee.
		\]
		Since $\sL$ is transitive, the injective map $\rho_1^\vee \colon \Omega^1_\sM \to \Omega^1_\sL$ admits a splitting $h_1 \colon \Omega^1_\sL \to \Omega^1_\sM$. Thus
		\[
		\phi_2^1 = - \rho_2^\vee \circ h_1
		\]
		solves the equation.
	    Proceeding inductively, one constructs $\phi_n^1$ for all $n$, and hence $\Phi^\sharp$.
	\end{proof}
	
	Let $\sL$ be a strict transitive $L_\infty$ algebroid over $\sM$, and let
	\[
	\sL_{red}\coloneqq \ker \rho_1.
	\]
	By Proposition \ref{kerrho}, $\sL_{red}$ is a graded vector bundle over $\sM$.
	\begin{lem}
		For all $n \geq 1$, one has
		\[
		\rho_1(l_n(X_1, \dots, X_n))=0
		\]
		whenever at least one of the elements $X_i \in \sL$ belongs to $\sL_{red}$. 
	\end{lem}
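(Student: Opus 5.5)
The plan is to use the strict compatibility identities between $\rho_1$ and the brackets, recorded above for strict $L_\infty$ algebroids:
\[
\rho_1(l_1(X)) = [Q_\sM,\rho_1(X)],\qquad \rho_1(l_2(X_1,X_2)) = [\rho_1(X_1),\rho_1(X_2)],\qquad \rho_1(l_n(X_1,\dots,X_n))=0 \ (n\ge 3).
\]
I will treat the three ranges of $n$ separately.

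For $n \ge 3$ there is nothing to prove: $\rho_1\circ l_n$ vanishes identically, whether or not any argument lies in $\sL_{red}$.

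For $n=1$, suppose $X\in\sL_{red}$, so $\rho_1(X)=0$. Then
\[
\rho_1(l_1(X))=[Q_\sM,\rho_1(X)]=[Q_\sM,0]=0.
\]
In particular $l_1$ preserves $\sL_{red}$, so $\sL_{red}$ is a dg subbundle.

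For $n=2$, suppose some $X_i\in\sL_{red}$. The bracket $[\rho_1(X_1),\rho_1(X_2)]$ contains the factor $\rho_1(X_i)=0$ in one slot and is bilinear, so it vanishes. By graded symmetry of $l_2$ on $\sL[1]$ (the signs are irrelevant for a vanishing statement), it does not matter which slot $X_i$ occupies.

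The only point needing care is that these identities are genuinely those of the strict case. They follow from the general intertwining identity with $\rho_m=0$ for $m\ge2$, read at arity $n$. On the left, only $\rho_1$ applied to $l_n$ survives: the other summands involve $\rho_{q+1}$ with $q\ge1$, or $\rho_1$ composed with $l_p$ for $p<n$ in a term with extra arguments, which cannot occur. On the right, $[\rho_p,\rho_q]$ with $p+q=n$ and $p,q\le 1$ forces $n\le 2$, together with the $\rho_0=Q_\sM$ terms at $n=1$. I would check this bookkeeping against the Koszul-sign conventions once. Beyond that I expect no real obstacle: the lemma is essentially a direct reading of the strict compatibility identities.
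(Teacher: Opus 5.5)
Your proof is correct and matches the paper's argument: it quotes the strict identities $\rho_1\circ l_n=0$ for $n\ge 3$, $\rho_1(l_1(X))=[Q_\sM,\rho_1(X)]$ and $\rho_1(l_2(X_1,X_2))=[\rho_1(X_1),\rho_1(X_2)]$, and notes that the last two vanish once an argument lies in $\ker\rho_1$. Your extra bookkeeping, deriving these from the general intertwining identity with $\rho_m=0$ for $m\ge 2$, is sound but unnecessary, since the paper states them directly for strict $L_\infty$ algebroids.
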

	\begin{proof}
		Recall that for all $n \geq 3$, one has $\rho_1 \circ l_n = 0$. While for $n=1,2$, we have
		\begin{align*}
			\rho_1(l_1(X)) = [Q_\sM, \rho_1(X)], \quad
			\rho_{1}\big(l_2(X_{1},X_{2})\big) = \big[\rho_{1}(X_{1}), \rho_{1}(X_{2})\big].
		\end{align*}
		These expressions vanish whenever $X$ or at least one of $X_1, X_2$ belongs to $\sL_{red}$.
	\end{proof}
	
	In particular, equipped with the induced multi-brackets from $\sL$, $\sL_{red}$ is an $L_\infty$ algebra bundle over $\sM$, called the \emph{isotropy $L_\infty$ algebra bundle}.
	
	\subsection{Commutative algebraic perspective}
	
	Let $V$ be a non-positively graded vector space of finite total rank. Let $(V_0, \sO_V)$ denote the associated ringed space as defined in the previous section. Let $\bbV$ be a graded $\sO_V$-module free of finite total rank. We denote by $(V_0, \sO_\bbV)$ the ringed space whose structure sheaf is given by
	\[
	\sO_\bbV \coloneqq \wSym_{\sO_V}(\bbV^\vee),
	\]
	where $\wSym_{\sO_V}(\bbV^\vee)$ is the completion of $\Sym_{\sO_V}(\bbV^\vee)$ with respect to its canonical descending filtration by symmetric degree.
	\begin{defn}\label{CdgM}
		A \emph{(projected) filtered graded manifold} is a ringed space $\sC =(C, \sO_\sC)$, where $C$ is a smooth manifold and $\sO_\sC$ is a sheaf of graded commutative algebras on $C$, together with an ideal $\sI_\sC$ and a  splitting of the short exact sequence 
		\[
		\begin{tikzcd}
			0 \arrow[r] 
			& \sI_\sC \arrow[r] 
			& \sO_\sC \arrow[r]
			& \sO_\sC/\sI_\sC \arrow[r] 
			& 0
		\end{tikzcd}
		\]
		of sheaves of graded commutative algebras.

		These data are required to satisfy the following condition: for every $x \in C$, there exists an open neighborhood $U \ni x$ and a commutative diagram of ringed spaces
		\[
		\begin{tikzcd}
			(U, \sO_\sC|_U) \arrow[r] \arrow[d] & (V_0, \sO_\bbV) \arrow[d]\\
			(U, (\sO_\sC/\sI_\sC)|_U)  \arrow[r] & (V_0, \sO_V) 
		\end{tikzcd}
		\]
		where the horizontal arrows are isomorphisms.
	\end{defn}
	By definition, $\sO_\sC$ is complete with respect to the $\sI_\sC$-adic filtration
	\[
	F^p \sO_\sC = \sI_\sC^p, \qquad p \in \N.
	\]
	Moreover, the ringed space 
	\[
	\Gr^0 \sC \coloneqq(C, \Gr^0 \sO_\sC = \sO_\sC/\sI_\sC)
	\]
	is a graded manifold, called the \emph{base} of $\sC$.
	
    Let $\sC$ be a filtered graded manifold. For each open $U \subset C$, define
    \[
    \sT_\sC(U) \coloneqq \operatorname{Der}(\sO_\sC(U)).
    \]
    The assignment $\sT_\sC$ defines a sheaf, called the \emph{tangent sheaf} of $\sC$. Global sections of $\sT_\sC$ are called \emph{vector fields} on $\sC$. Note that $\sT_\sC$ carries a natural filtration given by
    \[
    F^p \sT_\sC = \{X \in \sT_\sC\colon X(F^\bullet \sO_\sC) \subset F^{\bullet + p} \sO_\sC \}, \qquad p \in \Z.
    \]
	\begin{defn}
		A \emph{quasi-dg manifold} is a filtered graded manifold $\sC$ equipped with a degree $1$ vector field $Q_\sC$ that satisfies
		\begin{itemize}
			\item $[Q_\sC, Q_\sC] = 0$;
			\item $Q_\sC(\sI_\sC) \subseteq \sI_\sC$, or equivalently,  $Q_\sC \in F^0 \sT_\sC(C)$.
		\end{itemize}
	\end{defn}
	
	Equipped with $Q_\sC$ and the $\sI_\sC$-adic filtration, $\sO_\sC$ becomes a sheaf of filtered cdgas. It follows that $\Gr^0 \sC$, endowed with $\Gr^0 Q_\sC$, is a dg manifold. Note that the morphism of graded commutative algebras $\Gr^0 \sO_\sC \to \sO_\sC$ is not, in general, a morphism of cdgas.
	
	\begin{exmp}
		Let $\sM=(M, \sO_\sM)$ be a dg manifold. 
		\begin{itemize}
			\item $\sM$ equipped with the trivial dg ideal defines a quasi-dg manifold over itself.
			\item The ringed space $\sM_{dR}\coloneqq(M, \wOmega_\sM)$, equipped with the differential $D_\sM = \mathrm{d}_{dR} + L_{Q_\sM}$ and the dg ideal $\wOmega_\sM^{>0}$ of completed differential forms whose projection to functions vanishes, defines a quasi-dg manifold over $\sM$, called the \emph{de Rham space} of $\sM$.
		\end{itemize}
	\end{exmp}

	\begin{defn}
		A \emph{morphism} between quasi-dg manifolds $\sC_1$ and $\sC_2$ consists of a pair 
		$F=(f, F^\sharp)$, where $f\colon C_1 \to C_2$ is a smooth map and $F^\sharp\colon \sO_{\sC_2} \longrightarrow f_*\sO_{\sC_1}$ is a morphism of sheaves of filtered cdgas. 
		
		Such $F$ is called \emph{base-preserving} if the following diagram of sheaves of graded commutative algebras 
		\[
		\begin{tikzcd}
			\sO_{\sC_2} \arrow[r, "F^\sharp"] & f_*\sO_{\sC_1} \\
			\Gr^0 \sO_{\sC_2} \arrow[u] \arrow[r, "\Gr^0 F^\sharp"] & f_*\Gr^0 \sO_{\sC_1} \arrow[u]
		\end{tikzcd}
		\]
		commutes. A base-preserving $F$ is called \emph{base-fixing} if $\Gr^0 F^\sharp = \id$.
	\end{defn}
	We denote by $\Gr^0 F = (f, \Gr^0 F^\sharp)$ the dg manifold morphism between $\Gr^0 \sC_1$ and $\Gr^0 \sC_2$. 
	
	\begin{defn}
		A quasi-dg manifold morphism $F\colon \sC_1 \rightarrow \sC_2$ is called a \emph{weak equivalence} if $F^\sharp_{C_2}\colon \sO_{\sC_2}(C_2) \rightarrow \sO_{\sC_1}(C_1)$ is a filtered quasi-isomorphism; that is, if $\Gr\, F^\sharp_{C_2 }\colon \Gr\, \sO_{\sC_2}(C_2) \rightarrow \Gr\, \sO_{\sC_1}(C_1)$ is a quasi-isomorphism.
	\end{defn}
	
	\begin{lem}
		There exists a unique base-fixing morphism between quasi-dg manifolds
		\[
		H\colon \sC \longrightarrow (\Gr^0 \sC)_{dR}.
		\]
		
		Moreover, for any base-preserving morphism $F\colon \sC_1 \to \sC_2$ between quasi-dg manifolds, the following diagram of quasi-dg manifolds
		\[
		\begin{tikzcd}[column sep=50pt]
			\sC_1 \arrow[r,"F"] \arrow[d, "H"]&\sC_2 \arrow[d, "H"]\\
			(\Gr^0 \sC_1)_{dR} \arrow[r, "dR(\Gr^0 F)"]  &(\Gr^0 \sC_2)_{dR} 
		\end{tikzcd}
		\]
		commutes.
	\end{lem}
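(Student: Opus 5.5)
We construct $H^\sharp$ on generators and then check uniqueness and naturality. Write $\sM \coloneqq \Gr^0\sC$, so that $\sO_\sM = \sO_\sC/\sI_\sC$. The splitting in Definition \ref{CdgM} gives an inclusion of graded commutative algebras $s\colon \sO_\sM \to \sO_\sC$. The completed algebra $\wOmega_\sM = \wSym_{\sO_\sM}(\sT_\sM[1]^\vee)$ is locally freely generated by $\sO_\sM$ and the exact one-forms $\mathrm{d}g$. A base-fixing morphism must therefore satisfy $H^\sharp(g) = s(g)$ for $g \in \sO_\sM$. Compatibility with the differentials, $H^\sharp \circ D_\sM = Q_\sC \circ H^\sharp$, then forces
\[
H^\sharp(\mathrm{d}g) = Q_\sC(s(g)) - s(Q_\sM(g)),
\]
because $D_\sM g = Q_\sM g + \mathrm{d}g$. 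This already gives uniqueness, provided the cdga morphism is determined by its values on $g$ and $\mathrm{d}g$. That holds by $\sO_\sM$-multiplicativity together with continuity for the filtrations; the latter requires $H^\sharp(\wOmega^{\ge 1}_\sM) \subset \sI_\sC$, which we verify below.

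For existence, we first check that the proposed value of $H^\sharp(\mathrm{d}g)$ is well defined. The map $\delta \coloneqq Q_\sC\circ s - s\circ \Gr^0 Q_\sC$ is a derivation along $s$, since both terms are. It lands in $\sI_\sC$ because $Q_\sC$ preserves $\sI_\sC$ and $\Gr^0 Q_\sC$ is by definition the induced map on $\sO_\sC/\sI_\sC$. Derivations of $\sO_\sM$ with values in an $\sO_\sM$-module correspond to $\sO_\sM$-linear maps out of $\Omega^1_\sM$, using the local structure of Definition \ref{CdgM} and the dualities from Proposition \ref{kerrho}. Hence $\delta$ defines an $\sO_\sM$-linear map $\Omega^1_\sM[-1] \to \sI_\sC$. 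Its multiplicative extension to $\Sym_{\sO_\sM}$ maps $\Omega^n_\sM$ into $\sI_\sC^n$. Since $\sO_\sC$ is $\sI_\sC$-adically complete, the extension passes to $\wOmega_\sM$ and yields a filtered graded-algebra map $H^\sharp$ with $\Gr^0 H^\sharp = \id$.

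Next we check the chain-map property. Both $H^\sharp\circ D_\sM$ and $Q_\sC\circ H^\sharp$ are derivations along $H^\sharp$, so it suffices to check them on generators. On $g$ they agree by construction. On $\mathrm{d}g$ we have $D_\sM \mathrm{d}g = L_{Q_\sM}\mathrm{d}g = -\mathrm{d}(Q_\sM g)$, with the sign fixed by $D_\sM^2=0$. Then
\[
H^\sharp(-\mathrm{d}Q_\sM g) = -Q_\sC s Q_\sM g + s Q_\sM^2 g = -Q_\sC s Q_\sM g,
\]
while $Q_\sC H^\sharp(\mathrm{d}g) = Q_\sC^2 s g - Q_\sC s Q_\sM g = -Q_\sC s Q_\sM g$. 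The two agree. The main obstacle is making the identification of $\delta$ with a linear map on $\Omega^1_\sM$ rigorous sheaf-theoretically, given that $\sM$ is not locally ringed and $\wOmega$ is completed. We would do this locally in the charts $(V_0,\sO_\bbV)$, where everything is explicit, and glue by uniqueness.

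Naturality follows by the same uniqueness principle. Both composites $H\circ F$ and $dR(\Gr^0F)\circ H$ are filtered cdga morphisms $\wOmega_{\Gr^0\sC_2} \to f_*\sO_{\sC_1}$. Base-preservation, meaning $F^\sharp\circ s_2 = s_1\circ \Gr^0F^\sharp$, shows that they agree on functions $g$. Differentials of cdga maps then force them to agree on $\mathrm{d}g$: both send $\mathrm{d}g$ to $Q_{\sC_1}\phi(g) - \phi(Q g)$, where $\phi$ is the common value on functions. Multiplicativity and continuity then give equality everywhere.
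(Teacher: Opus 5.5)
Your proposal is correct and follows the paper's proof. The paper likewise defines $H^\sharp$ on generators by $f \mapsto f$ and $\mathrm{d}f \mapsto (Q_\sC - \Gr^0 Q_\sC)f$, and leaves the rest as a straightforward verification. That verification is what you supply: $Q_\sC\circ s - s\circ \Gr^0 Q_\sC$ is a derivation along the splitting with values in $\sI_\sC$, the chain-map identity holds on $\mathrm{d}g$, and naturality follows because a cdga map out of $(\wOmega_\sM, D_\sM)$ is determined by its restriction to functions.
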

	\begin{proof}
		As a filtered graded commutative algebra, $\wOmega_{\Gr^0 \sC}$ is generated by functions and exact $1$-forms.	We define $H^\sharp\colon \wOmega_{\Gr^0 \sC} \to \sO_\sC$ by specifying its action on the generators:
		\[
		H^\sharp(f) = f, \qquad H^\sharp(\mathrm{d}f) = (Q_\sC - \Gr^0 Q_\sC) f, 
		\]
		for all $f \in \Gr^0 \sO_\sC$. It is straightforward to verify that $H$ satisfies the desired properties.
	\end{proof}
	
	\begin{defn}
		A quasi-dg manifold $\sC$ is called \emph{fibrant} if 
		\[
		\Gr\, H^\sharp\colon \Omega_{\Gr^0 \sC} \to \Gr\, \sO_\sC
		\]
	    is injective.
	\end{defn}
	
	Let $\mathbf{L_\infty Algd}$ denote the category of $L_\infty$ algebroids over dg manifolds. Let $\mathbf{QdgM}$ denote the category of quasi-dg manifolds, and $\mathbf{QdgM}^{\mathrm{bp}} \subset \mathbf{QdgM}$ denote the subcategory with the same objects and base-preserving morphisms.
	
	Consider the assignment
	\[
	\bfCEAlgd\colon \mathbf{L_\infty Algd} \longrightarrow \mathbf{QdgM}^{\mathrm{bp}}
	\]
	defined on objects $\sL/\sM$ by
	\[
	\bfCEAlgd(\sL/\sM) = (M, \CE(\sL)),
	\]
	and on morphisms $\Phi=((f, f^\sharp),\Phi^\sharp)\colon \sL_1/\sM_1 \rightarrow \sL_2/\sM_2$ by
	\[
	\bfCEAlgd(\Phi) = (f, \Phi^\sharp)\colon (M_1, \CE(\sL_1)) \longrightarrow (M_2, \CE(\sL_2)).
	\]
	$\bfCEAlgd$ is obviously a well-defined functor, called the \emph{Chevalley--Eilenberg functor}.
	
	\begin{thm}\label{CE1}
		The Chevalley--Eilenberg functor $\bfCEAlgd$ yields an equivalence of categories. Moreover, it detects weak equivalences and fibrant objects.
	\end{thm}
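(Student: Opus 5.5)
We will show that $\bfCEAlgd$ is fully faithful and essentially surjective, and then compare the notions of weak equivalence and fibrancy on both sides.

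\emph{Well-definedness.} First we check that $(M,\CE(\sL))$ is a quasi-dg manifold. Take the ideal $\sI = \wOmega_\sL^{>0}$ and the obvious splitting $\sO_\sM \hookrightarrow \wOmega_\sL$. Locally, trivializing $\sM \cong (V_0,\sO_V)$ and $\sL$ as a free $\sO_V$-module $\bbV'$, we get $\wOmega_\sL \cong \wSym_{\sO_V}(\bbV)$ with $\bbV=\bbV'[1]$. This matches the local model of Definition \ref{CdgM}. The differential $D_\sL$ preserves the filtration, so $Q_\sC=D_\sL$ preserves $\sI$. We also have $\Gr^0\sC=\sM$ as dg manifolds, since $\Gr D_\sL|_{\sO_\sM}=Q_\sM$.

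\emph{Full faithfulness.} Morphisms on both sides are pairs $(f,\Phi^\sharp)$ with $\Phi^\sharp$ a filtered cdga morphism. On the algebroid side the constraint is $\Phi^\sharp|_{\sO_{\sM_2}}=f^\sharp$. On the quasi-dg side the constraint is that $\Phi^\sharp$ is base-preserving, i.e.\ it commutes with the splittings $\Gr^0\sO\to\sO$. Because $\Gr^0$ of a filtered cdga morphism is a cdga morphism, $\Gr^0\Phi^\sharp$ is a dg manifold morphism $\sM_1\to\sM_2$. Under the splittings, base-preservation says precisely that $\Phi^\sharp|_{\sO_{\sM_2}}=\Gr^0\Phi^\sharp=:f^\sharp$. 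Hence the two Hom-sets coincide.

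\emph{Essential surjectivity.} Given a quasi-dg manifold $\sC$, put $\sM=\Gr^0\sC$. We must show that $\sI_\sC/\sI_\sC^2$ is a graded vector bundle over $\sM$, call it $\sL[1]^\vee$, and that the splitting identifies $\sO_\sC\cong\wSym_{\sO_\sM}(\sI/\sI^2)$ globally. This is the step we expect to be the main obstacle. The local charts give such isomorphisms, but a global choice requires splitting the surjection $\sI\to\sI/\sI^2$ as $\sO_\sM$-modules. We would do this with a partition of unity on $C$: the local splittings form an affine space and $\sO_\sM$ is a sheaf of $C^\infty(M)$-modules, so the local splittings can be glued. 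A splitting $s$ induces a filtered algebra map $\wSym_{\sO_\sM}(\sI/\sI^2)\to\sO_\sC$ which is an isomorphism on $\Gr$, hence an isomorphism by completeness. Transporting $Q_\sC$ along this isomorphism gives a filtration-preserving differential $D$ on $\wOmega_\sL$ with $\Gr D|_{\sO_\sM}=Q_\sM$. This yields an $L_\infty$ algebroid whose image is isomorphic to $\sC$.

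\emph{Weak equivalences.} Let $\Phi$ be an algebroid morphism. Then
\[
\Gr\CE(\sL)(M)=\bigoplus_n \Gamma\bigl(\Sym^n_{\sO_\sM}(\sL[1]^\vee)\bigr),
\]
and the differential on this graded piece is induced by $l_1$ and $Q_\sM$. Moreover $\Gr\Phi^\sharp$ is $\Sym^n$ of the dual of $\phi_1^\sharp$, combined with pullback. If $\phi_1$ is a weak equivalence, Lemmas \ref{pbqi}, \ref{sdtspw} and \ref{globpull} show that each $\Gr^n$ map is a quasi-isomorphism. Conversely, the degree $0$ and degree $1$ graded pieces recover $f^\sharp$ and $(\phi_1^\sharp)^\vee$. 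Applying Lemma \ref{sdtspw} to duals, together with double duality, shows that these being quasi-isomorphisms forces $\phi_1$ to be a weak equivalence.

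\emph{Fibrant objects.} Under the identification, $\Gr H^\sharp\colon\Omega_\sM\to\Gr\CE(\sL)$ is $\Sym(\rho_1^\vee)$. This map is injective if and only if $\rho_1^\vee$ is injective. By Proposition \ref{kerrho} and duality of bundles, that holds if and only if $\rho_1$ is surjective, i.e.\ $\sL$ is transitive.
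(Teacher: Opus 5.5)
Your proposal is correct and follows essentially the same route as the paper: full faithfulness by unwinding the definitions, essential surjectivity by splitting $F^1\sO_\sC\to\Gr^1\sO_\sC$ and transporting $Q_\sC$, weak equivalences via Lemmas \ref{pbqi} and \ref{sdtspw} on the graded pieces, and fibrancy via $\Gr^1 H^\sharp=\rho_1^\vee$ (your partition-of-unity gluing only makes the paper's ``smooth setting'' remark explicit). As in the paper, the fibrancy step requires reading ``injective'' as injective as a map of vector bundles, i.e.\ fiberwise: the action algebroid of $x\partial_x$ on $\R$ has $\rho_1^\vee$ injective on sections but is not transitive.
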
	
	\begin{proof}
		By definition, $\bfCEAlgd$ is fully faithful.	We need to show that it is essentially surjective. Let $\sC$ be a quasi-dg manifold. Since we are in the smooth setting, the short exact sequence of locally free graded $\Gr^0 \sO_\sC$-modules
		\[
		0 \longrightarrow F^2\sO_\sC \longrightarrow F^1 \sO_\sC \longrightarrow \Gr^1 \sO_\sC \longrightarrow 0
		\]
		admits a splitting $\Gr^1 \sO_\sC  \rightarrow F^1 \sO_\sC$. Let $\sL = (\Gr^1 \sO_\sC[1])^\vee$. By the universal property of the symmetric algebra, the splitting induces a graded commutative $\Gr^0 \sO_\sC$-algebra morphism
		\[
		\Omega_\sL = \Sym_{\Gr^0 \sO_\sC}(\sL[1]^\vee) \longrightarrow \sO_{\sC},
		\]
		which extends uniquely to a morphism of filtered graded commutative algebras
		\[
		\wOmega_{\sL} \longrightarrow \sO_{\sC}.
		\]
		This morphism is locally an isomorphism and therefore an isomorphism globally. We can use it to transfer the differential $Q_\sC$ on $\sO_\sC$ to a differential $D_\sL$ on $\wOmega_{\sL}$. $\sL$ is then an $L_\infty$ algebroid over $\Gr^0 \sC$. 
		
		Let $\Phi=(f,\{\phi_n^\sharp\}_{n=1}^\infty)\colon \sL_1/\sM_1 \to \sL_2/\sM_2$ be a morphism of $L_\infty$ algebroids. The functor detects weak equivalences: by Lemmas \ref{pbqi} and \ref{sdtspw}, the map
		\[
		\Gr\,  \Phi^\sharp_{M_2}\colon\Sym_{\sO_{\sM_2}(M_2)}\!\big(\sL_2[1]^\vee(M_2)\big)\longrightarrow \Sym_{\sO_{\sM_1}(M_1)}\!\big(\sL_1[1]^\vee(M_1)\big)
		\]
		is a quasi-isomorphism if and only if both
		\[
		f^\sharp_{M_2}\colon\sO_{\sM_2}(M_2)\rightarrow \sO_{\sM_1}(M_1)
		\quad\text{and}\quad
		\phi^\sharp_{M_1}\colon\sL_1[1](M_1)\rightarrow f^*\sL_2[1](M_1)
		\]
		are quasi-isomorphisms.
		
		The functor detects fibrant objects: $\Gr\, H^\sharp\colon\Omega_{\Gr^0 \sC}\to \Gr\,  \sO_\sC$ is injective if and only if 
		\[
		\Gr^1 H^\sharp\colon\Omega^1_{\Gr^0 \sC} \cong (\sT_{\Gr^0 \sC}[1])^\vee\to \Gr^1 \sO_\sC \cong \sL[1]^\vee
		\]
		is injective, which holds precisely when the dual map $\sL \to \sT_{\Gr^0 \sC}$ is surjective.
	\end{proof}

	\section{\Linfty\ spaces over dg manifolds}
	
	In this section, we adapt Costello's notion of $L_\infty$ spaces \cite{costello2011geometric} to the dg manifold setting and prove an equivalence of the categories of transitive $L_\infty$ algebroids and $L_\infty$ spaces over dg manifolds.
	
	\subsection{Curved $L_\infty$ algebras}

	Let $(R,D_R)$ be a cdga and $I\subset R$ a proper dg ideal. Assume that $R$ is complete with respect to the $I$-adic filtration $F^pR=I^p$. Let $\fg$ be a graded $R$-module, endowed with the induced filtration
	\[
	F^p\fg \coloneqq I^p \cdot \fg.
	\]
	Assume further that $\fg$ is complete with respect to this filtration.
	
	\begin{defn}
		A \emph{curved $L_\infty$ algebra} structure on $\fg$ consists of the following data:
		\begin{itemize}
			\item an element of degree $1$
			\[
			l_0 \in F^1 \fg[1];
			\]
			\item a filtration-preserving $\R$-linear map of degree $1$
			\[
			l_1 \colon \fg[1] \to \fg[1];
			\]
			\item for each $n \ge 2$, a filtration-preserving $R$-linear map of degree $1$
			\[
			l_n \colon \Sym_R^n(\fg[1]) \longrightarrow \fg[1].
			\]
		\end{itemize}
		These operations satisfy the following identities:
		\begin{itemize}
			\item (Leibniz rule) for every $r \in R$ and $x \in \fg[1]$,
			\[
			l_1(r x)
			=
			D_R(r)\, x
			+
			(-1)^{|r|} r\, l_1(x);
			\]
			\item (Strong homotopy Jacobi identities) for every $n \ge 0$ and  $x_1,\dots,x_n \in \fg[1]$,
			\[
			\sum_{p+q=n}
			\sum_{\sigma \in \Sigma_n}
			\frac{\epsilon(\sigma)}{p!\, q!}
			\,
			l_{q+1}\big(
			l_p(x_{\sigma(1)},\dots,x_{\sigma(p)}),
			x_{\sigma(p+1)},\dots,x_{\sigma(n)}
			\big)
			=
			0.
			\]
		\end{itemize}
		In particular, one has 
		\[
		l_1(l_0) = 0, \quad \text{and} \quad l_2(l_0, x) + l_1(l_1(x))=0.
		\]
		Accordingly, $l_0$ is called the \emph{curvature} of $\fg$. 
	\end{defn}	
	
	\begin{lem}
		The associated $\N$-graded operator
		\[
		\Gr\, l_1 \colon \Gr\, \fg[1] \longrightarrow \Gr\, \fg[1]
		\]
		is a differential.
	\end{lem}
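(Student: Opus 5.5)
First I check that $\Gr\, l_1$ is well defined. Since $l_1$ is filtration preserving, it maps $F^p\fg[1]$ into $F^p\fg[1]$. It therefore descends to a degree $1$ map $\Gr^p l_1 \colon F^p\fg[1]/F^{p+1}\fg[1] \to F^p\fg[1]/F^{p+1}\fg[1]$ for each $p \in \N$. The Leibniz rule $l_1(rx) = D_R(r)x + (-1)^{|r|} r\, l_1(x)$, combined with $D_R(I) \subset I$ (because $I$ is a dg ideal), shows that $\Gr\, l_1$ is a derivation over the graded cdga $\Gr\, R = \bigoplus_p I^p/I^{p+1}$ with its induced differential. This observation is not needed for $(\Gr\, l_1)^2=0$, but it shows that $\Gr\, l_1$ makes $\Gr\,\fg[1]$ into a dg $\Gr\, R$-module.

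The main step is to show $(\Gr\, l_1)^2 = 0$. I use the $n=1$ strong homotopy Jacobi identity stated in the definition, $l_1(l_1(x)) = -\,l_2(l_0, x)$ for all $x \in \fg[1]$. Now $l_0 \in F^1\fg[1]$, and $l_2$ is $R$-linear and filtration preserving, meaning $l_2(F^p \otimes F^q) \subset F^{p+q}$. I will write $l_0 = \sum_i r_i y_i$ with $r_i \in I$ and $y_i \in \fg[1]$. If $x \in F^p\fg[1]$, then $R$-linearity gives $l_2(l_0, x) = \sum_i \pm r_i\, l_2(y_i, x)$, and $l_2(y_i,x) \in F^p\fg[1]$. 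Hence $l_2(l_0,x) \in I\cdot F^p\fg[1] \subset F^{p+1}\fg[1]$. So $l_1^2$ maps $F^p$ into $F^{p+1}$, and $(\Gr^p l_1)^2 = \Gr^p(l_1^2) = 0$.

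The only delicate point is the meaning of ``filtration-preserving'' for $l_2$. If it means only $l_2(F^p,\cdot)\subset F^p$, then I fall back on $R$-linearity as above. The one remaining issue is that $F^1\fg = I\cdot\fg$ might contain infinite convergent sums rather than finite ones. I handle this by completeness: $l_0 \in F^1$ can be written as $\sum_i r_i y_i$, possibly as a convergent series, and continuity of $l_2$ with respect to the filtration still places $l_2(l_0,x)$ in $F^{p+1}$, because $F^{p+1}$ is closed in the complete filtered module.
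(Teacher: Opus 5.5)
Your proposal is correct and follows the paper's argument. The $n=1$ Jacobi identity gives $l_1\circ l_1=-l_2(l_0,\cdot)$, and since $l_0\in F^1\fg[1]$ and $l_2$ is compatible with the filtration, $l_1^2$ sends $F^p\fg[1]$ into $F^{p+1}\fg[1]$, so $(\Gr\, l_1)^2=0$. Your extra remarks are harmless but unnecessary: the $\Gr R$-derivation property is not used, and the worry about convergent sums is moot because $I\cdot\fg$ consists of finite sums.
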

	
	\begin{proof}
		Since $l_0 \in F^1 \fg[1]$ and $l_n$ preserves the filtration on $\fg[1]$, we have
		\[
		l_1(l_1(x)) = -\, l_2(l_0, x) \in F^{p+1} \fg[1]
		\]
		for any $x \in F^p \fg[1]$. Thus, $\Gr\, l_1 \circ \Gr\, l_1 = 0$.
	\end{proof}
	
	Let $\fg^\vee$ denote the $R$-dual of $\fg$. The filtration on $\fg$ induces a filtration on $\fg^\vee$:
	\[
	F^p \fg^\vee = \{\alpha \in \fg^\vee \mid \alpha(F^q \fg) \subset F^{p+q} R \text{ for all } q \}.
	\]
	\begin{lem}
		If $\fg$ is projective and finitely generated over $R$, then the filtration on $\fg^\vee$ is also induced by the filtration on $R$, i.e.,
		\[
		F^p \fg^\vee = F^p R \cdot  \fg^\vee.
		\]
	\end{lem}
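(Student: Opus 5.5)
The plan is to reduce to the free case and then use that the filtration on $\fg$ is $I$-adic. One inclusion is formal: if $r \in F^pR = I^p$ and $\alpha \in \fg^\vee$, then for $x \in F^q\fg = I^q\fg$ we have $(r\alpha)(x) = \pm r\,\alpha(x)$. Writing $x = \sum s_j y_j$ with $s_j \in I^q$, $R$-linearity gives $\alpha(x) = \sum s_j \alpha(y_j) \in I^q$. Hence $(r\alpha)(x) \in I^{p+q}$, so $F^pR\cdot\fg^\vee \subset F^p\fg^\vee$. This inclusion needs no hypothesis on $\fg$.

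For the reverse inclusion I first treat $\fg$ free of finite rank, with homogeneous basis $e_1,\dots,e_k$ and dual basis $e^1,\dots,e^k \in \fg^\vee$. Any $\alpha \in \fg^\vee$ can be written $\alpha = \sum_i \pm\alpha(e_i)\, e^i$. Since $e_i \in F^0\fg = \fg$, the condition $\alpha \in F^p\fg^\vee$ applied with $q=0$ gives $\alpha(e_i) \in F^pR$. Hence $\alpha \in F^pR\cdot\fg^\vee$.

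For general finitely generated projective $\fg$, choose a complement $\fg'$ with $\fg \oplus \fg' \cong R^k$ free; this can be done with graded modules, since a graded projective is a summand of a graded free module. Let $\pi\colon R^k \to \fg$ be the projection and $\iota\colon \fg \to R^k$ the inclusion. The key point is that both maps are $R$-linear, so they preserve the $I$-adic filtrations: $\pi(I^qR^k) = I^q\fg$ and $\iota(I^q\fg) \subset I^qR^k$.

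Given $\alpha \in F^p\fg^\vee$, the composite $\alpha\circ\pi \in (R^k)^\vee$ lies in $F^p(R^k)^\vee$. Indeed, for $x \in I^qR^k$ we have $\pi(x) \in I^q\fg$, so $\alpha(\pi(x)) \in I^{p+q}$. By the free case, $\alpha\circ\pi = \sum_j r_j\beta_j$ with $r_j \in I^p$ and $\beta_j \in (R^k)^\vee$. Restricting along $\iota$ gives $\alpha = \alpha\circ\pi\circ\iota = \sum_j r_j(\beta_j\circ\iota)$, which lies in $I^p\cdot\fg^\vee$.

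There is no genuinely hard step here. The only point requiring care is the graded bookkeeping: keeping the Koszul signs in the dual basis expansion, and ensuring the splitting is degree-preserving. Completeness of $R$ and $\fg$ is not needed for this lemma.
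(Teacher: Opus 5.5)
Your proof is correct and follows the paper's argument: the paper also reduces to the free case and expands $\alpha$ in the dual basis, using $\alpha(v_i)\in F^pR$. You additionally write out the easy inclusion and the direct-summand reduction from projective to free, both of which the paper leaves implicit (the latter under ``without loss of generality'').
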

	\begin{proof}
		We only need to show that $F^p \fg^\vee \subset F^p R \cdot \fg^\vee$.
		Without loss of generality, assume that $\fg$ is free of finite rank with basis $\{v_i\}$. If $\alpha \in F^p \fg^\vee$, then $\alpha(v_i) \in F^p R$ for all $i$, so
		\[
		\alpha = \sum_i \alpha(v_i) \cdot v_i^\vee \in F^p R \cdot \fg^\vee,
		\]
		where $\{v_i^\vee\}$ denotes the dual basis.
	\end{proof}
	
	For our purposes, we will henceforth assume that $\fg$ is finitely generated and projective. Consider the following two natural filtrations on  $\Sym_R(\fg[1]^\vee)$:
	\begin{itemize}
		\item Internal filtration defined by
		\[
		F_I^p \Sym_R(\fg[1]^\vee) \coloneqq F^p R \cdot \Sym_R(\fg[1]^\vee) 
		\]
		\item Symmetric filtration defined by
		\[
		F_S^p \Sym_R(\fg[1]^\vee) \coloneqq \bigoplus_{q \geq p} \Sym_R^{q}(\fg[1]^\vee).
		\]
	\end{itemize}
	
	These two filtrations can be combined into a single filtration by convolution:
	\[
	F^p \Sym_R(\fg[1]^\vee) \coloneqq \sum_{i+j = p} F_I^i \Sym_R(\fg[1]^\vee) \odot F_S^j \Sym_R(\fg[1]^\vee),
	\]
	which is the smallest filtration containing both. We use $\wSym_R(\fg^\vee[-1])$ to denote the completion with respect to this filtration. 
	
	The multi-brackets $\{l_n\}_{n=0}^\infty$ on $\fg$ determine a filtration-preserving differential
	\[
	D_\fg \colon \wSym_R(\fg[1]^\vee) \longrightarrow \wSym_R(\fg[1]^\vee).
	\]
	More precisely, $D_\fg$ is determined by its action on the generators
	\[
	D_\fg(r) = D_R(r), \qquad D_\fg(\alpha) = \sum_{n=0}^\infty l_n^\vee(\alpha)
	\]
	for $r \in R$ and $\alpha \in \fg[1]^\vee$. Here $l_n^\vee$ denotes the $R$-linear dual of $l_n$ for $n\neq 1$, while for $n=1$ we define
	\[
	(l_1^\vee(\alpha))(x) = D_R(\alpha(x)) - (-1)^{|\alpha|}\alpha(l_1(x)).
	\]
	The condition $D_\fg^2 = 0$ is equivalent to the strong homotopy Jacobi identities of $\{l_n\}_{n=0}^\infty$.
	\begin{defn}
		We call the filtered cdga
		\[
		\CE(\fg)\coloneqq(\wSym_R(\fg^\vee[-1]), D_\fg)
		\]
		the \emph{Chevalley--Eilenberg algebra} of $\fg$.
	\end{defn}
	
	\begin{defn}
		A \emph{(base-fixing) morphism} between curved $L_\infty$ algebras $\fg_1$ and $\fg_2$ over $(R, D_R)$ is an $R$-linear morphism between filtered cdgas
		\[
		\Phi \colon \CE(\fg_2) \longrightarrow \CE(\fg_1).
		\]
	\end{defn}
	
	Such $\Phi$ is uniquely determined by a degree $0$ element
	\[
	\phi_0 \in F^1 \fg_2[1],
	\]
	and a family of degree $0$ $R$-linear maps
	\[
	\phi_n\colon \Sym_R^n (\fg_1[1]) \rightarrow \fg_2[1], \quad n \geq 1.
	\]
	$\{\phi_n\}_{n=0}^\infty$ intertwines the multi-brackets on $\fg_1$ and $\fg_2$:
	\begin{align*}
		&\sum_{p+q=n} \sum_{\sigma \in \Sigma_n}\frac{\epsilon(\sigma)}{p!q!} \,
		\phi_{q+1}\big(
		l_p(x_{\sigma(1)},\dots),\dots,x_{\sigma(n)}
		\big) \\
		&= \sum_{j=0}^\infty  \sum_{k_1+\cdots+k_j=n} \sum_{\sigma\in \Sigma_n} \frac{\epsilon(\sigma)}{j!k_1!\cdots k_j!}
		\, l_j \big(
		\phi_{k_1}(x_{\sigma(1)},\dots),
		\dots,
		\phi_{k_j}(\dots,x_{\sigma(n)})
		\big).
	\end{align*}
	In particular, for $n=0$ and $n=1$, one has 
	\begin{align*}
		&\phi_1(l_0) = l_0 + \sum_{j=1}^\infty \frac{1}{j!}l_j(\phi_0^{\otimes j}), \\
		&\phi_1(l_1(x)) + \phi_2(l_0,x)
		=
		l_1(\phi_1(x))
		+ \sum_{j=1}^\infty \frac{1}{j!}\, l_{j+1}(\phi_0^{\otimes j}, \phi_1(x)).
	\end{align*}
	\begin{lem}
		The associated $\N$-graded morphism
		\[
		\Gr\, \phi_1\colon\Gr\, \fg_1[1]\longrightarrow \Gr\, \fg_2[1]
		\]
		commutes with the differentials $\Gr\, l_1$.
	\end{lem}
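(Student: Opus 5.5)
Taking the $n=1$ intertwining identity displayed just above as the starting point, the plan is to show that every term other than $\phi_1(l_1(x))$ and $l_1(\phi_1(x))$ lies one filtration step higher than $x$. Fix $x \in F^p\fg_1[1]$. The identity reads
\[
\phi_1(l_1(x)) - l_1(\phi_1(x)) = -\,\phi_2(l_0,x) + \sum_{j\ge 1}\frac{1}{j!}\, l_{j+1}(\phi_0^{\otimes j},\phi_1(x)).
\]
If the right-hand side lies in $F^{p+1}\fg_2[1]$, then passing to $\Gr^p$ gives $\Gr\,\phi_1\circ \Gr\, l_1 = \Gr\, l_1\circ \Gr\,\phi_1$, since $\phi_1$ and $l_1$ preserve filtrations and so induce maps on the associated graded.

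First, $\phi_1$ respects filtrations because $\Phi$ is a morphism of filtered cdgas and $\phi_1$ is, up to duality, its linear component. Concretely, $F^p\fg_1 = I^p\fg_1$ and $\phi_1$ is $R$-linear, so $\phi_1(I^p\fg_1[1])\subset I^p\fg_2[1]$. The same argument applies to $\phi_2$ and to $l_n$ for $n\ge 2$. For these $R$-multilinear maps I will use the stronger statement that filtration degrees add: $\phi_2(F^a,F^b)\subset F^{a+b}$. This holds because, after pulling out scalars, $\phi_2(r x, s y)=\pm rs\,\phi_2(x,y)$ with $r\in I^a$ and $s\in I^b$.

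With this additivity in hand, $\phi_2(l_0,x)\in F^{1+p}$ since $l_0\in F^1\fg_1[1]$. Likewise each $l_{j+1}(\phi_0^{\otimes j},\phi_1(x))$ lies in $F^{j+p}\subset F^{p+1}$ for $j\ge1$, since $\phi_0\in F^1\fg_2[1]$ and $\phi_1(x)\in F^p$. The infinite sum converges because $\fg_2$ is complete, and it lies in $F^{p+1}$ because $F^{p+1}$ is closed. I expect it to be closed here since $\fg_2$ is finitely generated projective over a complete $R$. This settles the claim.

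The main obstacle is justifying the additivity of filtration degrees for the $R$-multilinear operations; the paper only states that they are filtration-preserving. I would derive it from $R$-multilinearity and the description $F^p = I^p\cdot\fg$ as above, taking care with Koszul signs but nothing deeper. A secondary point is that $l_1$ is only $\R$-linear. It still preserves $F^p$: $l_1(rx)=D_R(r)x\pm r\,l_1(x)$ and $D_R(I^p)\subset I^p$ because $I$ is a dg ideal. So $\Gr\, l_1$ is well defined, as the preceding lemma already uses.
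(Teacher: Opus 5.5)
Your proposal is correct and is essentially the paper's argument: the paper also reads off from the $n=1$ intertwining identity that $\phi_1(l_1(x)) - l_1(\phi_1(x)) \in F^{p+1}\fg_2[1]$ for $x \in F^p\fg_1[1]$, because $l_0 \in F^1\fg_1[1]$ and $\phi_0 \in F^1\fg_2[1]$. You spell out what the paper leaves implicit, namely additivity of filtration degrees for the $R$-multilinear maps $\phi_2$ and $l_{j+1}$, and convergence of the tail. Your hedge about closedness is unnecessary, since $F^{p+1}\fg_2[1]$ is closed in any complete filtration topology without projectivity: the image of the series in $\fg_2[1]/F^{p+1}\fg_2[1]$ is determined by its partial sums, and every term vanishes there.
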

	
	\begin{proof}
		Since $l_0 \in F^1 \fg_1[1]$ and $\phi_0 \in F^1\fg_2[1]$, we have
		\[
		\phi_1(l_1(x)) - l_1(\phi_1(x)) \in F^{p+1}\fg_2[1]
		\]
		for any $x \in F^p \fg_1[1]$. Thus, $\Gr\, \phi_1 \circ \Gr\, l_1 = \Gr\, l_1 \circ \Gr\, \phi_1$.
	\end{proof}
	\begin{defn}
		$\Phi$ is called a \emph{weak equivalence} if 
		\[
		\Gr\, \phi_1 \colon (\Gr\, \fg_1[1], \Gr\, l_1) \longrightarrow(\Gr\, \fg_2[1], \Gr\, l_1)
		\]
		is a quasi-isomorphism.
	\end{defn}
	
	Equivalently, a curved $L_\infty$ algebra morphism $\Phi\colon \fg_1 \rightarrow \fg_2$ can be regarded as a degree $0$ element
	\[
	\alpha \in  F^1\CE(\fg_1)\,\wotimes_R\, \fg_2,
	\]
	where $\wotimes_R$ denotes the completed tensor product of filtered graded $R$-modules. $\alpha$ satisfies the \emph{Maurer–Cartan equation}:
	\[
	\MC(\alpha)\coloneqq 1 \otimes l_0 + (D_{\fg_1} \otimes 1 + 1 \otimes l_1)(\alpha) + \sum_{n\geq 2}^\infty \frac{1}{n!}(1 \otimes l_n)(\alpha^{\otimes n})  = 0.\footnote{The Maurer--Cartan equation is well-defined because for $n \geq 2$,
		\[
		(1 \otimes l_n)(\alpha^{\otimes n}) \in F^n(\CE(\fg_1) \, \wotimes_R \,\fg_2), 
		\]
		and the filtration on $F^1\CE(\fg_1)\, \wotimes_R\, \fg_2$ is complete.}
	\]
	
	The following definition (and Definition \ref{c-pt}) will not be used in the rest of the paper and may be skipped on a first reading.
	\begin{defn}
		Let $(S,D_S)$ be a cdga equipped with a dg ideal $J \subset S$, complete with respect to the $J$-adic filtration, together with a cdga morphism $(R,D_R) \rightarrow (S,D_S)$ sending the dg ideal $I$ of $R$ into $J$.
		
		The set of \emph{$S$-points of $\fg$} is defined as
		\[
		\MC(\fg ; S)\coloneqq \left\{ \alpha \in (J\,\wotimes_R\, \fg)_0\big|\MC(\alpha) = 0 \right\},
		\]
		where $	\MC(\alpha)\coloneqq 1 \otimes l_0 + (D_S \otimes 1 + 1 \otimes l_1)(\alpha) + \sum_{n\geq 2}^\infty \frac{1}{n!}(1 \otimes l_n)(\alpha^{\otimes n})$.
	\end{defn}

	\subsection{$L_\infty$ spaces}
	
	Let $\sM$ be a dg manifold.
	\begin{defn}
		An \emph{$L_\infty$ space} over $\sM$ is a pair $B\fg=(\sM,\fg)$, where $\fg$ is a sheaf of graded $\wOmega_\sM$-modules, locally free of finite total rank, equipped with a curved $L_\infty$ algebra structure over $(\wOmega_\sM,D_\sM)$ with dg ideal $\wOmega_\sM^{>0}$.
	\end{defn}
	
	\begin{exmp}
		An $L_\infty$ space over a point is simply a finite-dimensional $L_\infty$ algebra.
	\end{exmp}
	
	Unlike $L_\infty$ algebroids over dg manifolds, $L_\infty$ spaces over dg manifolds admit pullbacks along morphisms of dg manifolds. Let $f \colon \sM_1 \to \sM_2$ be a morphism of dg manifolds, and let $B\mathfrak{g}_2$ be an $L_\infty$ space over $\sM_2$. We define the pullback $L_\infty$ space $Bf^*\mathfrak{g}_2 = (\sM_1, f^*\fg_2)$ over $\sM_1$ by
	\[
	f^*\fg_2 \coloneqq \wOmega_{\sM_1} \,\wotimes_{f^{-1}\wOmega_{\sM_2}}\, f^{-1}\fg_2.
	\]
	
	\begin{defn}
		A \emph{morphism} $B\fg_1 \to B\fg_2$ of $L_\infty$ spaces is a pair $\Phi=(f, \Phi^\sharp)$, where 
		\begin{itemize}
			\item $f \colon \sM_1 \rightarrow \sM_2$ is a dg manifold morphism;
			\item $\Phi^\sharp\colon \fg_1 \rightarrow f^*\fg_2$ is a morphism of sheaves of curved $L_\infty$ algebras.
		\end{itemize}
		
		Such $\Phi$ is a \emph{weak equivalence} if $f$ is a weak equivalence of dg manifolds and
		\[
		\Phi^\sharp_{M_1}\colon \fg_1(M_1) \rightarrow f^*\fg_2(M_1)
		\]
		is a weak equivalence of curved $L_\infty$ algebras.
	\end{defn}

	A morphism of $L_\infty$ spaces $\Phi\colon B\fg_1 \to B\fg_2$ is equivalent to a degree $0$ element 
	\[
	\alpha \in F^1 \CE(\fg_1)\, \wotimes_{\wOmega_{\sM_1}}\, f^*\fg_2,
	\]
	satisfying the Maurer--Cartan equation.
	\begin{defn}\label{c-pt}
		Let $\sC=(C, \sO_\sC)$ be a quasi-dg manifold. Let $B\fg=(\sM, \fg)$ be an $L_\infty$ space. The set of \emph{$\sC$-points of $B\fg$} is 
		\[
		\MC(B\fg ; \sC)\coloneqq \bigsqcup_{f\colon \Gr^0 \sC \to \sM} \MC(f^*\fg; \sO_\sC),
		\]
		where the disjoint union is taken over all dg manifold morphisms $f\colon \Gr^0 \sC \to \sM$.
	\end{defn}
	
	Let $\mathbf{L_\infty Sp}$ denote the category of $L_\infty$ spaces over dg manifolds. Let $\mathbf{QdgM}^{\mathrm{bp}}_{\mathrm{fib}}$ denote the category of fibrant quasi-dg manifolds with base-preserving morphisms.
	
	Consider the assignment
	\[
	\bfCESp\colon \mathbf{L_\infty Sp} \longrightarrow \mathbf{QdgM}^{\mathrm{bp}}_{\mathrm{fib}}
	\]
	defined on objects $B\fg=(\sM, \fg)$ by
	\[
	\bfCESp(B\fg) = (M, \CE(\fg)), 
	\]
	and on morphisms $\Phi=((f, f^\sharp), \Phi^\sharp)\colon B\fg_1 \rightarrow B\fg_2$ by
	\[
	\bfCESp(\Phi) = (f, \Phi^\sharp)\colon (M_1,  \CE(\fg_1)) \longrightarrow (M_2,  \CE(\fg_2)),
	\]
	where we use the identification
	\[
	\Hom_{\wOmega_{\sM_1}}\bigl(f^*\fg_2[1]^\vee, \CE(\fg_1)\bigr)
	\cong \Hom_{\wOmega_{\sM_2}}\bigl(\fg_2[1]^\vee, f_*\CE(\fg_1)\bigr).
	\]
	$\bfCESp$ is obviously a well-defined functor, called the \emph{Chevalley--Eilenberg functor}.
	
	\begin{thm}\label{CE2}
		$\bfCESp$	yields an equivalence of categories and detects weak equivalences.
	\end{thm}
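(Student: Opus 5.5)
The plan is to treat the three parts of the statement in turn: well-definedness on objects, full faithfulness, and essential surjectivity together with the detection of weak equivalences. I expect essential surjectivity to be the main obstacle.

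\emph{Well-definedness and full faithfulness.} First I check that $(M,\CE(\fg))$ is a fibrant quasi-dg manifold. Take $\sI$ to be the kernel of the projection $\CE(\fg)\to\sO_\sM$. This is the ideal generated by $\wOmega^{>0}_\sM$ and $\fg[1]^\vee$, and its $\sI$-adic filtration is exactly the convolution of the internal and symmetric filtrations. Locally, $\fg$ is free over $\wOmega_\sM$, so $\CE(\fg)$ is of the form $\sO_\bbV$ over $\sO_V$. The differential $D_\fg$ preserves $\sI$ because $l_0\in F^1\fg[1]$ and $D_\sM$ preserves $\wOmega^{>0}_\sM$. The base is $\sM$, and the inclusion $\wOmega_\sM\hookrightarrow\CE(\fg)$ is a filtered cdga map restricting to the identity on $\sO_\sM$; by the uniqueness in the lemma on $H$, it must be $H^\sharp$. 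Since $\Gr^1\CE(\fg)\supset\Omega^1_\sM$ as a direct summand, $\Gr H^\sharp$ is injective, so $\CE(\fg)$ is fibrant. For full faithfulness, the identification
\[
\Hom_{\wOmega_{\sM_1}}\bigl(f^*\fg_2[1]^\vee,\CE(\fg_1)\bigr)\cong\Hom_{\wOmega_{\sM_2}}\bigl(\fg_2[1]^\vee,f_*\CE(\fg_1)\bigr)
\]
shows that morphisms of $L_\infty$ spaces correspond to filtered cdga maps $\CE(\fg_2)\to f_*\CE(\fg_1)$ whose restriction to $\wOmega_{\sM_2}$ equals $f^\sharp_{dR}$. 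Conversely, let $F$ be any base-preserving morphism between such quasi-dg manifolds. The commutative square in the lemma on $H$ forces $F^\sharp\circ H^\sharp=H^\sharp\circ dR(\Gr^0F)^\sharp$. Hence $F^\sharp$ is $\wOmega_{\sM_2}$-linear along $f^\sharp_{dR}$, which means it comes from a unique morphism of curved $L_\infty$ algebras $\fg_1\to f^*\fg_2$.

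\emph{Essential surjectivity.} This is the step I expect to be hardest. Let $\sC$ be a fibrant quasi-dg manifold.
\begin{enumerate}
\item By Theorem \ref{CE1}, $\sC\cong\bfCEAlgd(\sL/\sM)$ with $\sM=\Gr^0\sC$ and $\sL$ transitive.
\item By Lemma \ref{transtr}, I may replace $\sL$ by an isomorphic strict one.
\item By Proposition \ref{kerrho}, $\sL_{red}=\ker\rho_1$ is a graded vector bundle. Choose a splitting $\sL\cong\sL_{red}\oplus\sT_\sM$ as graded vector bundles, using the same smooth splitting argument as in the proof of Theorem \ref{CE1}. Dually, $\sL[1]^\vee\cong\Omega^1_\sM\oplus\sL_{red}[1]^\vee$, with the $\Omega^1_\sM$ summand embedded via $\rho_1^\vee=\Gr^1H^\sharp$.
\item Set $\fg\coloneqq\wOmega_\sM\otimes_{\sO_\sM}\sL_{red}$. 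This gives a filtered isomorphism of graded algebras
\[
\wSym_{\wOmega_\sM}\bigl(\fg[1]^\vee\bigr)\cong\wSym_{\sO_\sM}\bigl(\sL[1]^\vee\bigr)\cong\sO_\sC,
\]
which extends $H^\sharp$ on $\wOmega_\sM$.
\item Transport $Q_\sC$ along this isomorphism. Since $H^\sharp$ is a cdga map, the transported differential restricts to $D_\sM$ on $\wOmega_\sM$. It is therefore a derivation over $(\wOmega_\sM,D_\sM)$, so its Taylor components on $\fg[1]^\vee$ define curved brackets $l_n$.
\item The condition $Q_\sC(\sI_\sC)\subseteq\sI_\sC$ gives $l_0\in F^1\fg[1]$, and filtration-preservation of each $l_n$ follows from $Q_\sC$ being filtered.
\end{enumerate}
The delicate point is checking that the $\sI_\sC$-adic filtration agrees with the convolution filtration on $\wSym_{\wOmega_\sM}(\fg[1]^\vee)$. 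It does, because both are generated in weight one by the images of $\Omega^1_\sM$ and $\sL_{red}[1]^\vee$. This gives $\bfCESp(B\fg)\cong\sC$.

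\emph{Weak equivalences.} The convolution filtration gives
\[
\Gr\,\CE(\fg)(M)\cong\Sym_{\Gr\wOmega_\sM(M)}\bigl(\Gr\,\fg[1]^\vee(M)\bigr),
\]
with $\Gr\wOmega_\sM=\Omega_\sM$ carrying the differential induced by $L_{Q_\sM}$. I then argue exactly as in the proof of Theorem \ref{CE1}, using Lemmas \ref{pbqi}, \ref{sdtspw} and \ref{twe}. By these, $\Gr F^\sharp$ is a quasi-isomorphism if and only if both of the following hold: $f^\sharp$ is a quasi-isomorphism (whence $f_{dR}$ is one, by Lemma \ref{twe}), and $\Gr\,\phi_1\colon\Gr\,\fg_1[1](M_1)\to\Gr\,f^*\fg_2[1](M_1)$ is a quasi-isomorphism. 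For the converse direction, I recover the weight-one part as the direct summand $\Gr^1$ and take the quotient by the Chevalley--Eilenberg algebra of the tangent algebroid, i.e.\ by the image of $H^\sharp$.
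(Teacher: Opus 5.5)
Your proofs of well-definedness, full faithfulness (via the uniqueness of $H$) and essential surjectivity are correct and follow the paper's construction. In fact they are more detailed. Deriving the Leibniz rule from the fact that the splitting isomorphism extends the cdga map $H^\sharp$ (which is exactly where strictness from Lemma \ref{transtr} enters) is a clean rephrasing of the paper's ``all higher anchors vanish''.

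\textbf{The gap.} The gap is in the direction ``$f$ a weak equivalence and $\Gr\,\phi_1$ a quasi-isomorphism $\Rightarrow$ $\Gr\,F^\sharp$ a quasi-isomorphism''. Copying the proof of Theorem \ref{CE1} presupposes that $\Gr\,\CE(\fg)$ is the symmetric algebra over $(\Omega_\sM,L_{Q_\sM})$ of the dg module $(\Gr\,\fg[1]^\vee,(\Gr\,l_1)^\vee)$. That is false in general:
\begin{itemize}
\item For $\alpha\in\sL_{red}[1]^\vee$ (convolution degree $1$), the $\Omega^1_\sM$-component of $l_0^\vee(\alpha)$ also has convolution degree $1$, so it survives in $\Gr$.
\item Under your identification with $\CE(\sL)$, this component is $\alpha\mapsto\pm\alpha\circ(l_1\sigma-\sigma[Q_\sM,\cdot])$. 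It measures the failure of $\sigma$ to be a morphism of dg vector bundles.
\item It cannot be removed in general, since $0\to\sL_{red}\to\sL\to\sT_\sM\to 0$ need not split as dg vector bundles.
\end{itemize}
So $\Gr^1\CE(\fg)$ is the possibly non-split extension $\sL[1]^\vee$ of $\sL_{red}[1]^\vee$ by $\Omega^1_\sM$. Consequently $\Gr\,\phi_1$, which does not see $l_0$, does not by itself control $\Gr\,F^\sharp$. Note also that Lemmas \ref{pbqi} and \ref{sdtspw} are stated over $\sO_\sM$, not over $\Omega_\sM$, and over $\sO_\sM$ this extension is exactly what appears.

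\textbf{The repair.} It uses the idea you already apply in the other direction (quotienting $\Gr^1$ by the image of $H^\sharp$), and it is the paper's argument.
\begin{itemize}
\item Since $\Gr\,\CE(\fg)\cong\Gr\,\CE(\sL)=\Sym_{\sO_\sM}(\sL[1]^\vee)$ as complexes, Theorem \ref{CE1} gives: $\Gr\,F^\sharp$ is a quasi-isomorphism iff $f$ and $\sL_1\to f^*\sL_2$ are weak equivalences.
\item Next, $\Gr\,\fg_i\cong\bigoplus_k\Omega^k_{\sM_i}\otimes(\sL_i)_{red}$, and $\Gr\,\phi_1$ is the $\Omega_{\sM_1}$-linear extension of its restriction $\phi_1^{red}\colon(\sL_1)_{red}\to f^*(\sL_2)_{red}$. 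By Lemma \ref{hflat}, $\Gr\,\phi_1$ is a quasi-isomorphism iff $\phi_1^{red}$ is a weak equivalence.
\item Finally, the morphism of short exact sequences $0\to(\sL_i)_{red}\to\sL_i\to\sT_{\sM_i}\to 0$, together with Lemma \ref{twe} and the five lemma, identifies the two conditions.
\end{itemize}
Alternatively, filter each $\Gr^p\CE(\fg)$ further by form degree. This filtration is finite, and the twist raises form degree, so it disappears on the associated graded.
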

	\begin{proof}
		By definition, $\bfCESp$ is fully faithful.	We need to show that it is essentially surjective. Let $\sC$ be a fibrant quasi-dg manifold. By Theorem \ref{CE1} and Lemma \ref{transtr}, we know that there exists a strict transitive $L_\infty$ algebroid $\sL$ over $\Gr^0 \sC$ such that
		\[
		(\sO_\sC, Q_\sC) \cong (\CE(\sL), D_\sL)
		\]
		as filtered cdga over $(\wOmega_{\Gr^0 \sC}, D_{\Gr^0 \sC})$. We now set
		\[
		\fg \coloneqq \wOmega_{\Gr^0 \sC} \, \wotimes_{\Gr^0 \sO_\sC}\, \sL_{red},
		\]
		where $\sL_{red} = \ker \rho_1$ and $\rho_1\colon \sL \rightarrow \sT_{\Gr^0 \sC}$ is the unary anchor of $\sL$. Fix a splitting 
		\[
		\sigma\colon \sT_{\Gr^0 \sC}  \longrightarrow \sL
		\]
		of the short exact sequence of graded vector bundles
		\[
		0 \longrightarrow \sL_{red} \longrightarrow \sL \longrightarrow \sT_{\Gr^0 \sC} \longrightarrow 0.
		\]
		We can use $\sigma$ to define a $\Gr^0 \sO_\sC$-linear map
		\[
		\sL[1]^\vee \cong \Omega^1_{\Gr^0 \sC} \oplus \sL_{red}[1]^\vee \hookrightarrow \Sym_{ \wOmega_{\Gr^0 \sC}}(\fg[1]^\vee)
		\]
		By the universal property of the symmetric algebra, this map induces a graded commutative $\Gr^0 \sO_\sC$-algebra morphism
		\[
		\Omega_\sL=\Sym_{\Gr^0 \sO_\sC}(\sL[1]^\vee) \longrightarrow \Sym_{ \wOmega_{\Gr^0 \sC}}(\fg[1]^\vee)
		\]
		which extends uniquely to a morphism of filtered graded commutative algebras
		\[
		\wOmega_\sL=\wSym_{\Gr^0 \sO_\sC}(\sL[1]^\vee) \longrightarrow \wSym_{ \wOmega_{\Gr^0 \sC}}(\fg[1]^\vee),
		\]
	    which is locally an isomorphism and therefore an isomorphism globally. We can use it to transfer the differential $D_{\sL}$ on $\wOmega_{\sL}$ to a differential on $\wSym_{\wOmega_{\Gr^0 \sC}}(\fg[1]^\vee)$. Since all higher anchor maps of $\sL$ vanish, the induced differential $D_{\fg}$ satisfies the desired Leibniz rule
		\[
		D_{\fg}(\omega \alpha) =
		D_{\Gr^0 \sC}(\omega)\,\alpha
		+
		(-1)^{|\omega|}\,\omega\, D_{\fg}(\alpha),
		\]
		where $\omega \in \wOmega_{\Gr^0 \sC}$ and $\alpha \in \wSym_{\wOmega_{\Gr^0 \sC}}(\fg[1]^\vee)$. Thus, $\fg$ is a sheaf of curved $L_\infty$ algebras over $(\wOmega_{\Gr^0 \sC}, D_{\Gr^0 \sC})$, and $B\fg \coloneqq (\Gr^0 \sC, \fg)$ defines an $L_\infty$ space over $\Gr^0 \sC$.
		
		Let $\Phi\colon B\fg_1 \to B\fg_2$ be a morphism of $L_\infty$ spaces, and let $\widetilde{\Phi}\colon \sL_1/\sM_1 \to \sL_2/\sM_2$ be the corresponding morphism of $L_\infty$ algebroids. By Theorem \ref{CE1}, to show that the functor $\bfCESp$ detects weak equivalences, it suffices to show that $\widetilde{\Phi}$ is a weak equivalence of $L_\infty$ algebroids whenever $\Phi$ is a weak equivalence of $L_\infty$ spaces, which follows from Lemma \ref{twe}. 
		
		More precisely, let $\phi_1^\sharp$ denote the linear component of $\Phi^\sharp$. Using the identifications
		\[
		\fg_i \cong \wOmega_{\sM_i} \,\wotimes_{\sO_{\sM_i}}\, (\sL_i)_{red},
		\qquad i=1,2,
		\]
		one sees that
		\[
		\Gr\,\phi_1^\sharp \colon \Gr\,\fg_1[1] \longrightarrow \Gr\,f^*\fg_2[1]
		\]
		is a quasi-isomorphism on global sections if and only if
		\[
		(\sL_1)_{red} \longrightarrow (\sL_2)_{red} 
		\]
		is a weak equivalence of dg vector bundles. Note that $\Phi$ induces a morphism of short exact sequences of dg vector bundles
		\[
		\begin{tikzcd}
			0 \arrow[r] \arrow[d]& (\sL_1)_{red} \arrow[r] \arrow[d] & \sL_1 \arrow[r] \arrow[d] & \sT_{\sM_1} \arrow[r] \arrow[d] & 0 \arrow[d] \\
			0 \arrow[r] & (\sL_2)_{red} \arrow[r] & \sL_2 \arrow[r] & \sT_{\sM_2} \arrow[r] & 0
		\end{tikzcd}.
		\]
		It follows from Lemma \ref{twe} that $(\sL_1)_{red} \rightarrow (\sL_2)_{red}$ is a weak equivalence if and only if $\sL_1 \rightarrow \sL_2$ is a weak equivalence.
	\end{proof}
	\begin{cor}\label{main1}
		There is an equivalence between the category of $L_\infty$ spaces over dg manifolds and that of transitive $L_\infty$ algebroids over dg manifolds, which detects weak equivalences.
	\end{cor}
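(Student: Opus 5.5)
The corollary follows by composing the two Chevalley--Eilenberg equivalences already established, restricted appropriately. I would first restrict $\bfCEAlgd$ from Theorem~\ref{CE1} to the full subcategory $\mathbf{L_\infty Algd}_{\mathrm{fib}}$ of transitive $L_\infty$ algebroids. Since Theorem~\ref{CE1} says $\bfCEAlgd$ detects fibrant objects, $\sL/\sM$ is transitive if and only if $\bfCEAlgd(\sL/\sM)$ is a fibrant quasi-dg manifold. Hence the restriction lands in $\mathbf{QdgM}^{\mathrm{bp}}_{\mathrm{fib}}$ and is essentially surjective onto it. It is fully faithful because $\mathbf{QdgM}^{\mathrm{bp}}_{\mathrm{fib}}$ is a full subcategory of $\mathbf{QdgM}^{\mathrm{bp}}$, as only the objects are restricted. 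This gives an equivalence $\mathbf{L_\infty Algd}_{\mathrm{fib}} \simeq \mathbf{QdgM}^{\mathrm{bp}}_{\mathrm{fib}}$, and it still detects weak equivalences, since detection is a property of individual morphisms.

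Next, Theorem~\ref{CE2} gives an equivalence $\bfCESp\colon \mathbf{L_\infty Sp} \to \mathbf{QdgM}^{\mathrm{bp}}_{\mathrm{fib}}$ that detects weak equivalences. I would choose a quasi-inverse $G$ of $\bfCESp$, for instance the one built in the essential surjectivity part of that proof: take a strict model via Lemma~\ref{transtr}, then set $\fg = \wOmega \wotimes \sL_{red}$. The composite $G \circ \bfCEAlgd|_{\mathrm{fib}}\colon \mathbf{L_\infty Algd}_{\mathrm{fib}} \to \mathbf{L_\infty Sp}$ is then an equivalence of categories, with quasi-inverse $(\bfCEAlgd|_{\mathrm{fib}})^{-1} \circ \bfCESp$.

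The only point needing care is that the composite detects weak equivalences. Here the issue is that $G$ is a quasi-inverse, not $\bfCESp$ itself. For a morphism $\Phi$ of transitive algebroids, $G(\bfCEAlgd\Phi)$ is a weak equivalence if and only if $\bfCESp G(\bfCEAlgd\Phi)$ is one in $\mathbf{QdgM}$. That morphism is conjugate to $\bfCEAlgd\Phi$ by the natural isomorphism $\bfCESp G \cong \id$. Isomorphisms of quasi-dg manifolds are weak equivalences, and filtered quasi-isomorphisms satisfy 2-out-of-3, so $\bfCESp G(\bfCEAlgd\Phi)$ is a weak equivalence exactly when $\bfCEAlgd\Phi$ is. That in turn holds exactly when $\Phi$ is a weak equivalence, by Theorem~\ref{CE1}.
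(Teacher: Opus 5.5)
Your proposal is correct and follows the paper's route: the paper obtains Corollary~\ref{main1} by "combining the last two" equivalences. These are the restriction of $\bfCEAlgd$ to transitive algebroids, which yields $\mathbf{L_\infty Algd}_{\mathrm{fib}} \cong \mathbf{QdgM}^{\mathrm{bp}}_{\mathrm{fib}}$ because Theorem~\ref{CE1} detects fibrant objects, and $\bfCESp$ from Theorem~\ref{CE2}. Your extra care about going through a quasi-inverse of $\bfCESp$, using the natural isomorphism and 2-out-of-3, fills in a detail the paper leaves implicit.
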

	
	At first glance, Corollary \ref{main1} may seem surprising, since $L_\infty$ spaces and $L_\infty$ algebroids appear to be of quite different nature. In particular, one may wonder how the curvature of an $L_\infty$ space arises under this equivalence. The following example helps dispel the mystery.
	
	\begin{exmp}
		Let $M$ be a manifold. Let $L$ be a transitive Lie algebroid over $M$; that is, a strict transitive $L_\infty$ algebroid over $M$ whose only non-vanishing multi-bracket is the binary bracket
		\[
		[\cdot, \cdot]_L\colon L \times_M L \rightarrow L.
		\]
		Let $L_{red}$ be the isotropy Lie algebra bundle of $L$, i.e., the kernel of $\rho_L\colon L \rightarrow T_M$. 
		
	    To construct the corresponding $L_\infty$ space over $M$, we follow the proof of Theorem \ref{CE2} to choose a vector bundle splitting
		\[
		\sigma\colon T_M \longrightarrow L
		\]
		of the short exact sequence of Lie algebroids
		\[
		0 \longrightarrow L_{red} \longrightarrow L \xlongrightarrow{\rho_L} T_M \longrightarrow 0.
		\]
		
		This splitting induces a connection on $L_{red}$ by
		\[
		\nabla^\sigma_X(s)\coloneqq [\sigma(X), s]_L.
		\]
		The connection $\nabla^\sigma$ is well-defined since
		\[
		[\nabla^\sigma_X, f](s)= \rho_L \circ \sigma(X)(f) s = X(f)s, \quad  \rho_L(\nabla^\sigma_X(s)) = [X, \rho_L(s)] = 0.
		\]
		Moreover, it is compatible with the bracket 
		\[
		\nabla^\sigma_X([s_1, s_2]_L) = [\nabla^\sigma_X(s_1), s_2]_L + [s_1, \nabla^\sigma_X(s_2)]_L.
		\]
		Its curvature $R_{\nabla^\sigma}$ measures the failure of $\sigma$ to be a Lie algebroid splitting. Explicitly,
		\[
		R_{\nabla^\sigma}(X, Y) = [\sigma(X), \sigma(Y)]_L - \sigma([X,Y]).
		\] 
		
		We are now ready to describe the corresponding $L_\infty$ space $B\fg^\sigma = (M, \fg^\sigma)$ over $M$. For an open $U \subset M$, we set
		\[
		\fg^\sigma(U) \coloneqq \Omega_M(U) \otimes_{C^\infty(U)} \Gamma\bigl(L_{red}|_U\bigr).
		\]		
		The non-vanishing multi-brackets on $\fg^\sigma$ are 
		\[
		l_0 = R_{\nabla^\sigma} \in F^1 \fg^\sigma[1], \qquad l_1 = \mathrm{d}_{\nabla^\sigma}\colon \fg^\sigma[1] \longrightarrow \fg^\sigma[1],
		\]
		and 
		\[
		l_2\colon \Sym^2_{\Omega_M}(\fg^\sigma[1]) \longrightarrow \fg^\sigma[1],
		\]
	    defined by
		\[
		l_2(\alpha \otimes s, \beta \otimes t) = (\alpha \wedge \beta) \otimes [s,t]_L.
		\]
	
	    One can readily verify that the strong homotopy Jacobi identities for $\{l_n\}_{n=0}^2$ hold precisely by virtue of the second Bianchi identity for $R_{\nabla^\sigma}$, the compatibility between $\nabla^\sigma$ and $[\cdot,\cdot]_L$, and the Jacobi identity for $[\cdot,\cdot]_L$.
	\end{exmp}
	
	More generally, let $\sL/\sM$ be a strict transitive $L_\infty$ algebroid, and let $\sigma \colon \sT_{\sM} \to \sL$ be a graded vector bundle splitting of the short exact sequence of strict $L_\infty$ algebroids
	\[
	0 \longrightarrow \sL_{red} \longrightarrow \sL \xlongrightarrow{\rho_1} \sT_{\sM} \longrightarrow 0.
	\]
	Then the curvature of the associated $L_\infty$ space is precisely the obstruction to $\sigma$ being a strict morphism of strict $L_\infty$ algebroids.

	\section{The fibrant replacement functor}
	
	In this section, we construct a faithful functor
	\[
	\mathbf{Fib}\colon \mathbf{L_\infty Algd} \longrightarrow  \mathbf{L_\infty Sp} 
 	\]
	satisfying the following properties:
	\begin{itemize}
		\item it restricts to the identity on $L_\infty$ algebras;
		\item it assigns to each dg manifold its Fedosov resolution;
		\item it detects weak equivalences.
	\end{itemize}
	
	More precisely, $\mathbf{Fib}$ is obtained as the composition
	\[
	\mathbf{L_\infty Algd} \xrightarrow{\bfCEAlgd} \mathbf{QdgM}^{\mathrm{bp}} \xrightarrow{~\mathbf{J}^\infty}~ \mathbf{QdgM}^{\mathrm{bp}}_{\mathrm{fib}} \xrightarrow{\bfCESp^{-1}} \mathbf{L_\infty Sp},
	\]
	where $\mathbf{J}^\infty$ is the main functor we will construct in this section, called the \emph{jet space functor}.
	
	\subsection{Infinite jets and $D$-modules}
	
	Let $\sM$ be a dg manifold.	
	
	\begin{defn}
		A \emph{filtered graded vector bundle} on $\sM$ is a graded (left) $\sO_\sM$-module $\sE$ equipped with a descending filtration
		\[
		\sE \supset \cdots \supset F^{p-1}\sE \supset F^p \sE \supset F^{p+1}\sE \supset \cdots \supset 0,
		\]
		satisfying the following conditions:
		\begin{itemize}
			\item the filtration is exhaustive, separated, and complete, i.e.,
			\[
			\bigcup_{p \in \Z} F^p \sE = \sE,
			\qquad
			\bigcap_{p \in \Z} F^p \sE = 0,
			\qquad
			\sE \cong \varprojlim_p \sE / F^p \sE;
			\]
			\item for each $p \in \Z$, the graded $\sO_\sM$-module
			\[
			\Gr^p \sE = F^p \sE / F^{p+1} \sE
			\]
			is a graded vector bundle, i.e., locally free of total finite rank.
		\end{itemize}
	\end{defn}
	
	We recommend that our readers consult the appendices to familiarize themselves with our notions of differential operators, infinite jet bundles, and $D$-modules on graded manifolds before reading the rest of this section.
	
	\begin{defn}
		A \emph{quasi-dg vector bundle} on $\sM$ is a filtered graded vector bundle $\sE$ equipped with a differential operator
		\[
		D_\sE \colon \sE \to \sE,
		\] 
		of cohomological degree $1$, order $1$, and filtration degree $0$, such that
		\begin{itemize}
			\item $D_\sE$ is a differential, i.e., $D_\sE \circ D_\sE = 0$;
			\item the principal symbol 
			\[
			\sigma_1^0(D_\sE) \in \sT_\sM \otimes_{\sO_\sM} \Gr^0 \End_\sE
			\]
			of $D_\sE$ is given by $ Q_\sM \otimes \id_{\Gr\, \sE}$; or equivalently, 
			\[
			[D_\sE, f]s = Q_\sM(f)s \mod F^{p+1} \sE
			\]
			for all $f \in \sO_\sM$ and $s \in F^p \sE$.
		\end{itemize}
		
		Such $\sE$ is called a \emph{filtered dg vector bundle} if
		\[
		[D_\sE, f] = Q_\sM(f)
		\]
		for all $f \in \sO_\sM$.
	\end{defn}
	
	By a slight abuse of notation, we denote a quasi-dg vector bundle by $\sE/\sM$, or simply by $\sE$ when the base $\sM$ is clear from context. 
	
	\begin{lem}
		Let $\sE$ be a quasi-dg vector bundle. Then
		\[
		\Gr^\bullet D_\sE\colon \Gr^\bullet \sE \rightarrow \Gr^\bullet \sE
		\]
	    makes $\Gr^\bullet \sE$ into a dg vector bundle.
	\end{lem}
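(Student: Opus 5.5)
First I check that $D_\sE$ induces a well-defined operator on each graded piece. Since $D_\sE$ has filtration degree $0$, we have $D_\sE(F^p\sE)\subset F^p\sE$ for every $p$. Hence $D_\sE$ descends to an $\R$-linear map
\[
\Gr^p D_\sE\colon \Gr^p\sE = F^p\sE/F^{p+1}\sE \longrightarrow \Gr^p\sE
\]
of cohomological degree $1$. The identity $D_\sE\circ D_\sE=0$ passes immediately to the quotients, so $\Gr^p D_\sE\circ \Gr^p D_\sE = 0$.

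Next I verify that $\Gr^p D_\sE$ satisfies the Leibniz rule over $(\sO_\sM,Q_\sM)$. This is precisely what the principal symbol condition provides. For $f\in\sO_\sM$ and $s\in F^p\sE$ we have
\[
[D_\sE,f]s = Q_\sM(f)s \mod F^{p+1}\sE .
\]
Writing out the graded commutator gives
\[
D_\sE(fs) - (-1)^{|f|} f\,D_\sE(s) \equiv Q_\sM(f)\,s \mod F^{p+1}\sE .
\]
In $\Gr^p\sE$ this becomes $\Gr^p D_\sE(f\bar s) = Q_\sM(f)\bar s + (-1)^{|f|} f\,\Gr^p D_\sE(\bar s)$. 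So each $\Gr^p\sE$, equipped with $\Gr^p D_\sE$, is a dg $\sO_\sM$-module.

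By the definition of a filtered graded vector bundle, each $\Gr^p\sE$ is locally free of finite total rank. It is therefore a dg vector bundle over $\sM$, and $\Gr^\bullet\sE=\bigoplus_p \Gr^p\sE$ is a direct sum of dg vector bundles, with the differential acting degreewise in $p$.

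The one subtle point is the meaning of ``dg vector bundle'' for the total object $\Gr^\bullet\sE$. If infinitely many $\Gr^p\sE$ are nonzero, $\Gr^\bullet\sE$ need not have finite total rank. In that case I read the statement as saying that each graded piece is a dg vector bundle, equivalently that $\Gr^\bullet\sE$ is a graded direct sum of dg vector bundles. The computation above gives exactly this.

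The only other thing to confirm is that $\Gr^p D_\sE$ is well defined as a map of sheaves, i.e.\ compatible with restrictions. This holds because $D_\sE$ is a differential operator and hence local.
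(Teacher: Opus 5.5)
Your proof is correct and follows the paper's own argument: the paper says only that $(\Gr^\bullet D_\sE)^2=0$ and $[\Gr^\bullet D_\sE,f]=Q_\sM(f)$ hold by definition, and you spell out the descent to the quotients and the derivation of the Leibniz rule from the symbol condition. Your remark that $\Gr^\bullet\sE$ need not have finite total rank, so the conclusion should be read piece by piece in $p$, is a fair point that the paper passes over without comment.
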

	\begin{proof}
		By definition, one has $(\Gr^\bullet D_\sE)^2 = 0$, and
		\[
		[\Gr^\bullet D_\sE, f] = Q_\sM(f)
		\]
		for all $f \in \sO_\sM$.
	\end{proof}
	
	 \begin{exmp}
		A dg vector bundle $\sE$ can be regarded as a filtered dg vector bundle with the trivial filtration
		\[
		F^p \sE =
		\begin{cases}
			\sE, & p \leq 0,\\
			0, & p > 0.
		\end{cases}
		\]
	\end{exmp}
	
	\begin{exmp}\label{FMV1}
		Let $\sC$ be a quasi-dg manifold. The structure sheaf $\sO_\sC$ of $\sC$ defines a quasi-dg vector bundle over its base dg manifold $\Gr^0 \sC$.
	\end{exmp}
    
    \begin{defn}
    	Let $\sE_1$ and $\sE_2$ be quasi-dg vector bundles over a dg manifold $\sM$. A \emph{(base-fixing) morphism}
    	\[
    	\Phi \colon \sE_1 \longrightarrow \sE_2
    	\]
    	is a morphism of graded $\sO_\sM$-modules which preserves the filtrations and the differentials.
    	
    	Such $\Phi$ is called a \emph{weak equivalence} if $\Gr^\bullet \Phi$ is a weak equivalence of dg vector bundles.
    \end{defn}
    
    One can further introduce a category of quasi-dg vector bundles over a quasi-dg manifold. This level of generality becomes necessary when constructing fibrant replacements for representations of $L_\infty$ algebroids. We will not pursue this direction in the present paper.
    
    \begin{exmp}
    	Every base-fixing morphism of dg vector bundles is a morphism of quasi-dg vector bundles (with respect to the trivial filtration).
    \end{exmp}
    \begin{exmp}\label{FMV2}
    	Viewing the structure sheaf of a quasi-dg manifold as a quasi-dg vector bundle over its base dg manifold, every base-fixing morphism $\sC_1 \rightarrow \sC_2$ of quasi-dg manifolds yields a morphism of quasi-dg vector bundles $\sO_{\sC_2} \rightarrow \sO_{\sC_1}$ in the opposite direction.
    \end{exmp}
    Let $\mathbf{QdgM}(\sM)$ and $\mathbf{QdgVB}(\sM)$ denote the categories of quasi-dg manifolds and quasi-dg vector bundles over $\sM$ with base-fixing morphisms, respectively. By Examples \ref{FMV1} and \ref{FMV2}, there is a faithful functor
    \[
    \mathbf{QdgM}(\sM)^{op} \longrightarrow \mathbf{QdgVB}(\sM).
    \]
    \begin{lem}
    	$\mathbf{QdgM}(\sM)^{op} \rightarrow \mathbf{QdgVB}(\sM)$ detects weak equivalences.
    \end{lem}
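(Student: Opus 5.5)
We must show: a base-fixing morphism $F\colon \sC_1 \to \sC_2$ of quasi-dg manifolds over $\sM$ is a weak equivalence (i.e.\ $\Gr\, F^\sharp_{M}\colon \Gr\,\sO_{\sC_2}(M)\to \Gr\,\sO_{\sC_1}(M)$ is a quasi-isomorphism) if and only if the induced morphism of quasi-dg vector bundles $F^\sharp\colon \sO_{\sC_2}\to \sO_{\sC_1}$ is a weak equivalence, i.e.\ $\Gr^\bullet F^\sharp$ is a weak equivalence of dg vector bundles over $\sM$. Both conditions concern the same associated graded map. The plan is therefore to show that the two notions of ``$\Gr$'' and ``global sections'' agree, and that the two notions of quasi-isomorphism agree.

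\textbf{Step 1: identify the filtrations.} For a quasi-dg manifold $\sC$ over $\sM=\Gr^0\sC$, the filtration on $\sO_\sC$ viewed as a quasi-dg vector bundle (Example \ref{FMV1}) is the $\sI_\sC$-adic one. Each $\Gr^p\sO_\sC=\sI_\sC^p/\sI_\sC^{p+1}$ is a graded vector bundle over $\sM$; locally it is $\Sym^p_{\sO_V}(\bbV^\vee)$ by Definition \ref{CdgM}. The induced differential $\Gr^p Q_\sC$ makes it a dg vector bundle, and the base-fixing condition $\Gr^0F^\sharp=\id$ says that $\Gr^p F^\sharp$ is a base-fixing morphism of dg vector bundles over $\sM$.

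\textbf{Step 2: commute $\Gr$ with global sections.} I want $(\Gr\,\sO_\sC)(M)\cong \Gr\,(\sO_\sC(M))$ degreewise in $p$, compatibly with $F^\sharp$. This reduces to exactness of the global sections functor on the short exact sequences $0\to\sI^{p+1}\to\sI^p\to\Gr^p\to 0$. These are sheaves of $C^\infty(M)$-modules, hence fine, so the functor is exact. Via Theorem \ref{CE1} we can also write $\sO_\sC\cong\wOmega_\sL$ with $\sI^p$ corresponding to $\prod_{n\ge p}\Omega^n_\sL$. Then $\sI^p(M)$ is the $p$-th filtration piece of $\sO_\sC(M)$, and the quotient is $\Omega^p_\sL(M)$. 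I expect this identification of the global filtration with the sheaf filtration to be the only point needing real care: the statement that $\sI_\sC(M)^p=\sI^p_\sC(M)$ should hold because locally $\sI^p/\sI^{p+1}$ is finitely generated, and a partition of unity argument globalizes it. In any case, the paper's filtered quasi-isomorphism condition is naturally read with the filtration $F^p\sO_\sC(M)=\sI^p_\sC(M)$.

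\textbf{Step 3: conclude.} Since the base morphism is $\id_\sM$, which is trivially a weak equivalence of dg manifolds, $\Gr^p F^\sharp$ is a weak equivalence of dg vector bundles exactly when it is a quasi-isomorphism on global sections $\Gr^p\sO_{\sC_2}(M)\to\Gr^p\sO_{\sC_1}(M)$. Here $f^*\sE=\sE$ because $f=\id$. Taking the direct sum over $p$ and applying Step 2, this is equivalent to $\Gr\,F^\sharp_M$ being a quasi-isomorphism, i.e.\ to $F$ being a weak equivalence of quasi-dg manifolds. Both directions follow simultaneously, which gives detection.
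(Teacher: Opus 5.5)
Your proof is correct and follows the same route as the paper, which simply declares the lemma tautological from the definitions. Your Step 2 makes explicit the one point the paper leaves unstated. Because the sheaves $\sI^p_\sC$ are modules over $C^\infty_M$, they are fine, so taking global sections commutes with passing to the associated graded. Hence the two notions of $\Gr$ in the two definitions of weak equivalence agree.
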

    \begin{proof}
    	This is tautological from the definitions.
    \end{proof}
    
    Let $\sE$ be a quasi-dg vector bundle over $\sM$. Let $\sJ^\infty_\sE$ be the infinite jet bundle of $\sE$ (viewed as a filtered graded vector bundle). By Proposition \ref{liftE}, the differential $D_{\sE}$ lifts to a differential $j^\infty(D_{\sE})$ on $\sJ^\infty_{\sE}$. Moreover, since $D_{\sE}$ is of order $1$ and filtration degree $0$, $j^\infty(D_{\sE})$ decreases the filtration on $\sJ^\infty_\sE$ by one level.

    Let $\nabla^G$ denote the Grothendieck connection on $\sJ^\infty_\sE$. Since $\nabla^G$ is flat, we have
    \[
    [\nabla^G_{Q_\sM}, \nabla^G_{Q_\sM}] = \nabla^G_{[Q_\sM, Q_\sM]} = 0.
    \] 
    Thus, $\nabla^G_{Q_\sM}$ is a differential on $\sJ^\infty_\sE$, which also lowers the filtration by one level. Since $j^\infty(D_\sE)$ is compatible with the left $\sD_\sM$-action on $\sJ^\infty_\sE$, we have
    \[
    [\nabla^G_{Q_\sM}, j^\infty(D_{\sE})]=0.
    \]
    \begin{prop}\label{jetfdgv}
    	Equipped with the total differential 
    	\[
    	D_{\sJ^\infty_\sE}\coloneqq\nabla^G_{Q_\sM} + j^\infty(D_{\sE}),
    	\]
    	$\sJ^\infty_{\sE}$ becomes a filtered dg vector bundle, called the \emph{infinite jet bundle} of $\sE$.
    \end{prop}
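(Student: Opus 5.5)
To show that $\sJ^\infty_\sE$ is a filtered dg vector bundle, I would check the conditions of the definition one at a time: the underlying filtered graded vector bundle structure, the order and degree of $D_{\sJ^\infty_\sE}$, the identity $D_{\sJ^\infty_\sE}^2=0$, and the strict Leibniz rule $[D_{\sJ^\infty_\sE},f]=Q_\sM(f)$.

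\textbf{Underlying bundle.} I take the $\sO_\sM$-module structure on $\sJ^\infty_\sE$ from the appendix: the one for which $\nabla^G$ is a connection, i.e. the left $\sD_\sM$-module structure. The filtration is the one induced from the jet (order) filtration combined with the filtration of $\sE$. Its graded pieces are of the form $\Sym^k_{\sO_\sM}(\Omega^1_\sM)\otimes_{\sO_\sM}\Gr^p\sE$, hence locally free of finite total rank. Completeness, exhaustiveness and separatedness are inherited from the inverse-limit description $\sJ^\infty_\sE=\varprojlim_k \sJ^k_\sE$ together with the corresponding properties of $\sE$. I take these facts from the appendices.

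\textbf{The square vanishes.} Expanding gives
\[
D_{\sJ^\infty_\sE}^2=(\nabla^G_{Q_\sM})^2+[\nabla^G_{Q_\sM},j^\infty(D_\sE)]+j^\infty(D_\sE)^2 .
\]
The first term is $\tfrac12\nabla^G_{[Q_\sM,Q_\sM]}=0$ by flatness of $\nabla^G$. The second term vanishes by the compatibility of $j^\infty(D_\sE)$ with the $\sD_\sM$-action, as noted just before the statement. The third term is $j^\infty(D_\sE^2)=0$, by the functoriality of the lift in Proposition~\ref{liftE}: $j^\infty$ respects composition because it is the functor $\sD$-linearly extending differential operators to jets.

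\textbf{Strict Leibniz rule.} The key point is that $j^\infty(D_\sE)$ is $\sO_\sM$-linear for the $\sD_\sM$-module structure on jets. This holds because the lift of a differential operator acts on the ``source'' factor of $\sJ^\infty_\sE$, and that factor commutes with the left $\sO_\sM$-action defining $\nabla^G$. Hence $[j^\infty(D_\sE),f]=0$. On the other hand, $\nabla^G$ is a connection, so $[\nabla^G_{Q_\sM},f]=Q_\sM(f)$. Summing the two gives $[D_{\sJ^\infty_\sE},f]=Q_\sM(f)$ exactly, with no filtration correction. This strict identity also forces the principal symbol to be $Q_\sM\otimes\id$.

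\textbf{Order, degree and filtration.} $D_{\sJ^\infty_\sE}$ has cohomological degree $1$ because $Q_\sM$ and $D_\sE$ do. It is a first-order operator: it is the sum of an $\sO_\sM$-linear map and a connection. Both summands lower the filtration by one level, and I would regrade the filtration by one step to obtain filtration degree $0$, if the conventions require it.

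\textbf{Main obstacle.} I expect the delicate step to be the $\sO_\sM$-linearity of $j^\infty(D_\sE)$ in the graded setting. One has to keep track of which of the two $\sO_\sM$-module structures on jets is in use, and handle Koszul signs carefully. I would verify it locally, in a frame $\sJ^\infty_\sE\cong\wSym(\Omega^1)\otimes\sE$ in coordinates, where $j^\infty(D)$ acts as $D$ applied to the Taylor variable. The other key step is multiplicativity $j^\infty(D\circ D')=j^\infty(D)\circ j^\infty(D')$, which I would read off from the universal property in Proposition~\ref{liftE}.
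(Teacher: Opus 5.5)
Your arguments for $D_{\sJ^\infty_\sE}^2=0$ and for the strict rule $[D_{\sJ^\infty_\sE},f]=Q_\sM(f)$ match the paper, including the point that $j^\infty(D_\sE)$ has order $0$. The gap is filtration degree $0$, which the definition of a quasi-dg (hence filtered dg) vector bundle requires.

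Your step ``both summands lower the filtration by one level, and I would regrade the filtration by one step'' cannot work. Take a shifted filtration $\widetilde F^p\coloneqq F^{p+c}$. An operator sending $F^p$ into $F^{p-1}$ still sends $\widetilde F^p$ into $\widetilde F^{p-1}$. The filtration is also not at your disposal: its $\Gr^0$ must recover $\sE$, and its graded pieces are $\Sym^k_{\sO_\sM}(\sT_\sM^\vee)\otimes_{\sO_\sM}\Gr\,\sE$. (Incidentally, in the paper's conventions this is $\sT_\sM^\vee$, not $\Omega^1_\sM=\sT_\sM[1]^\vee$.)

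The strict Leibniz rule does not help either. On $\sJ^\infty_\sM$, $\nabla^G_{Q_\sM}$ alone squares to zero and satisfies $[\nabla^G_{Q_\sM},f]=Q_\sM(f)$. Yet locally $\nabla^G_{Q_\sM}=Q_\sM^\mu\partial_{x^\mu}-Q_\sM^\mu\partial_{y^\mu}$ sends $y^\nu\in F^1$ to $-Q_\sM^\nu$, which is in general nonzero in $\Gr^0$. So your remark that the strict identity ``forces the principal symbol to be $Q_\sM\otimes\id$'' presupposes exactly the property that is missing.

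The missing idea is a cancellation of leading terms. This is precisely where the hypothesis $\sigma^0_1(D_\sE)=Q_\sM\otimes\id_{\Gr\,\sE}$ on $\sE$ is used, and your proposal never invokes it. Since both summands map $F^\bullet$ into $F^{\bullet-1}$, it suffices to prove $\Gr\,\nabla^G_{Q_\sM}+\Gr\,j^\infty(D_\sE)=0$ as maps $\Gr^\bullet\sJ^\infty_\sE\to\Gr^{\bullet-1}\sJ^\infty_\sE$.

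From $\Gr\,\mathrm{d}_{\nabla^G}=-\deltaK\otimes\id_{\Gr\,\sE}$ one gets $\Gr\,\nabla^G_{Q_\sM}=-\iota_{Q_\sM}\otimes\id_{\Gr\,\sE}$. Proposition~\ref{liftE}, applied with order $1$ and filtration degree $0$, identifies $\Gr\,j^\infty(D_\sE)$ with contraction by $\sigma^0_1(D_\sE)=Q_\sM\otimes\id_{\Gr\,\sE}$. The two cancel.

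In coordinates, $D^\mu_\sE=Q^\mu_\sM+(\text{terms of filtration degree}\ge 1)$. So the leading part of $j^\infty(D_\sE)$ contains $+Q^\mu_\sM\partial_{y^\mu}$, which cancels the $-Q^\mu_\sM\partial_{y^\mu}$ in $\nabla^G_{Q_\sM}$. With this step replacing the regrading, the rest of your argument goes through as in the paper.
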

    \begin{proof}
    	$\nabla^G_{Q_\sM}$ is a differential operator of order $1$, while $j^\infty(D_{\sE})$ is of order $0$. Their sum $D_{\sJ^\infty_\sE}$ is therefore a differential operator of order $1$. 
    	
    	We now show that $D_{\sJ^\infty_\sE}$ preserves the filtration on $\sJ^\infty_\sE$. Since both $\nabla^G_{Q_\sM}$ and $j^\infty(D_{\sE})$ lower the filtration by one level, it suffices to show that the sum
    	\[
    	\Gr\, \nabla^G_{Q_\sM}  + \Gr\, j^\infty(D_\sE) \colon  \Gr^\bullet \sJ^\infty_\sE \longrightarrow \Gr^{\bullet -1} \sJ^\infty_\sE
    	\]
    	vanishes. 
    	
    	Recall from Appendix \ref{sec:doj} that 
    	\[
    	\Gr\, \sJ^\infty_\sE \cong \Sym_{\sO_\sM} (\sT_\sM^\vee) \otimes_{\sO_\sM} \Gr\, \sE,
    	\]
    	and that the associated graded operator $\Gr\, \mathrm{d}_{\nabla^G}$ of the exterior covariant derivative $\mathrm{d}_{\nabla^G}$ on
    	\[
    	\Sym_{\sO_\sM} (\sT_\sM[1]^\vee) \otimes_{\sO_\sM} \Sym_{\sO_\sM} (\sT_\sM^\vee) \otimes_{\sO_\sM} \Gr\, \sE,
    	\]
    	is given by minus the universal Koszul differential
    	\[
    	\Gr\, \mathrm{d}_{\nabla^G} = - \delta_K \otimes \id_{\Gr \sE}.
    	\]
    	It follows that
    	\[
    	\Gr\, \nabla^G_{Q_\sM} = - \iota_{Q_\sM} \otimes \id_{\Gr \sE},
    	\]
    	where $\iota_{Q_\sM}\colon \Sym_{\sO_\sM} (\sT_\sM^\vee)  \rightarrow \Sym_{\sO_\sM} (\sT_\sM^\vee)$ denotes the contraction with $Q_\sM$. On the other hand, by Proposition \ref{liftE}, $\Gr\, j^\infty(D_\sE)$ is given by contraction with the principal symbol $\sigma_1^0(D_\sE)$ of $D_\sE$, which precisely cancels $\Gr\, \nabla^G_{Q_\sM}$.
    	
    	To complete the proof, observe that
    	\[
    	[D_{\sJ^\infty_\sE}, f] = [\nabla^G_{Q_\sM}, f] = Q_\sM(f)
    	\]
    	for all $f \in \sO_\sM$.
    \end{proof}
    
    \begin{defn}
    	A \emph{(left) $D$-module} on $\sM$ is a filtered dg vector bundle $\sE$ on $\sM$, together with a $D$-module structure on $\sE$ (viewed as a filtered graded vector bundle) satisfying
    	\[
    	D_\sE(D \cdot s) = L_{Q_\sM}(D) \cdot s + (-1)^{|D|}D \cdot D_\sE(s),
    	\]
    	for all $D \in \sD_\sM$ and $s \in \sE$, where $L_{Q_\sM}(D) = [Q_\sM, D]$ is the Lie derivative of $D$ along $Q_\sM$.
    \end{defn}
    
    \begin{rmk}
    	Equivalently, the compatibility between the left $\sD_\sM$-action and $D_\sE$ reads
    	\[
    	[D_\sE, f] = L_{Q_\sM}(f)
    	\]
    	for $f \in \sO_\sM$, which holds automatically by the definition of a filtered dg vector bundle, together with the condition
    	\[
    	[D_\sE, X \cdot] = [Q_\sM, X]\cdot 
    	\]
    	for $X \in \sT_\sM$, since $\sD_\sM$ is generated by functions and vector fields.
    \end{rmk}
    
    Recall from Appendix~\ref{sec: Dmod} that the de Rham complex of a $D$-module $\sE$ on $\sM$ (viewed as a graded manifold) is given by 
    \[
    dR(\sE)
    =
    \bigl(\wOmega_\sM \,\wotimes_{\sO_\sM}\, \sE,\; \mathrm{d}_{\nabla} \bigr),
    \]
    where $\nabla$ is the flat connection on $\sE$ induced by left $\sD_\sM$-action. Explicitly,
    \[
    \nabla_X(s) = X \cdot s.
    \]
    We now adapt this construction from the graded manifold setting to the dg manifold setting.
    
    \begin{defn}
    	The \emph{de Rham complex} of a $D$-module $\sE$ on $\sM$ is the following quasi-dg vector bundle on $\sM$:
    	\[
    	dR(\sE)\coloneqq \bigl(\wOmega_\sM \,\wotimes_{\sO_\sM}\, \sE,\; \mathrm{d}_{\nabla} + L_{D_\sE} \bigr),
    	\]
    	where $\nabla$ is the flat connection on $\sE$ induced by left $\sD_\sM$-action, and $L_{D_\sE}$ is given by
    	\[
    	L_{D_\sE}(\alpha \otimes s) = L_{Q_\sM}(\alpha) \otimes s + (-1)^{|\alpha|} \alpha \otimes D_\sE(s).
    	\]
    \end{defn}
    
    \begin{prop}
    	The de Rham complex $dR(\sE)$ is well-defined.
    \end{prop}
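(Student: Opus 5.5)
We must show that $dR(\sE)$ is a quasi-dg vector bundle on $\sM$. Concretely, this means four things. First, $\wOmega_\sM \wotimes_{\sO_\sM} \sE$ is a filtered graded vector bundle for a suitable filtration. Second, $D \coloneqq \mathrm{d}_\nabla + L_{D_\sE}$ is a well-defined differential operator of degree $1$, order $1$ and filtration degree $0$. Third, $D^2=0$. Fourth, the principal symbol of $D$ is $Q_\sM\otimes \id$.

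For the filtration, take the convolution of the form-degree filtration on $\wOmega_\sM$ with the filtration on $\sE$:
\[
F^p\,dR(\sE)=\sum_{i+j=p}\wOmega^{\ge i}_\sM\,\wotimes\, F^j\sE .
\]
Its associated graded is $\Gr^p = \bigoplus_{i+j=p}\Omega^i_\sM\otimes_{\sO_\sM}\Gr^j\sE$. Each summand is locally free of finite rank by Lemma \ref{globtens}. Only finitely many summands contribute, since $\Gr^j \sE$ is nonzero only for $j$ bounded below (locally, by finite rank considerations). Completeness holds by construction.

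Next we check that $D$ is well-defined and preserves the filtration. The operator $L_{D_\sE}$ is defined on $\wOmega_\sM\otimes_\R\sE$. It descends to the tensor product over $\sO_\sM$ because
\[
L_{Q_\sM}(\alpha f)\otimes s + (-1)^{|\alpha|+|f|}\alpha f\otimes D_\sE s
\]
agrees with the expression for $\alpha\otimes fs$. This uses $[D_\sE,f]=Q_\sM(f)$, which holds since $\sE$ is a filtered dg vector bundle. The operator $\mathrm{d}_\nabla$ is the standard exterior covariant derivative of the flat connection, as in Appendix \ref{sec: Dmod}, so it is well defined. The operator $\mathrm{d}_\nabla$ raises form degree by one and preserves the $\sE$-filtration, because the $\sD_\sM$-action is filtration preserving for a $D$-module on a filtered graded vector bundle. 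The operator $L_{D_\sE}$ preserves both filtrations. Hence $D$ has filtration degree $0$.

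The main step is $D^2=0$. Expanding,
\[
D^2=\mathrm{d}_\nabla^2+[\mathrm{d}_\nabla,L_{D_\sE}]+L_{D_\sE}^2 .
\]
We have $\mathrm{d}_\nabla^2=0$ by flatness of $\nabla$. We have $L_{D_\sE}^2=0$ since $L_{Q_\sM}^2=\tfrac12 L_{[Q_\sM,Q_\sM]}=0$ and $D_\sE^2=0$, the cross terms cancelling by the sign $(-1)^{|\alpha|}$. The remaining graded commutator is a derivation over the action of $[\mathrm{d}_{dR},L_{Q_\sM}]=0$ on $\wOmega_\sM$. Therefore it suffices to check it on elements $1\otimes s$. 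There,
\[
\mathrm{d}_\nabla(1\otimes s)=\sum_i \mathrm{d}x^i\otimes \partial_i\cdot s
\]
in local coordinates, and the commutator reduces to the expression
\[
\sum_i L_{Q_\sM}(\mathrm{d}x^i)\otimes \partial_i s \pm \mathrm{d}x^i\otimes\bigl(D_\sE(\partial_i s)-(-1)^{|\partial_i|}\partial_i D_\sE s\bigr).
\]
By the $D$-module compatibility, the bracket term equals $[Q_\sM,\partial_i]\cdot s$. Writing $L_{Q_\sM}\mathrm{d}x^i=\pm\mathrm{d}(Q^i)=\pm\sum_j \mathrm{d}x^j\,\partial_jQ^i$ and $[Q_\sM,\partial_i]=-\pm(\partial_iQ^j)\partial_j$, the two contributions cancel. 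This is the coordinate-free identity $[\mathrm{d}_\nabla,L_{D_\sE}]=0$, equivalently that $\nabla$ is $Q_\sM$-equivariant. I expect the sign bookkeeping here to be the only real obstacle. To avoid it, I would prefer the intrinsic argument: regard the commutator as an $\sO_\sM$-linear map $\sE\to\Omega^1_\sM\otimes\sE$, evaluate it on a vector field $X$, and obtain $[D_\sE,X\cdot]-[Q_\sM,X]\cdot=0$.

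Finally, for the symbol condition, compute for $f\in\sO_\sM$:
\[
[D,f](\alpha\otimes s)=\mathrm{d}f\wedge\alpha\otimes s+Q_\sM(f)\,\alpha\otimes s .
\]
The first term raises form degree and so lies in $F^{p+1}$. Hence $[D,f]\equiv Q_\sM(f)$ modulo $F^{p+1}$, which is exactly the quasi-dg condition. The same formula shows that $D$ has order $1$. Note that $dR(\sE)$ is only quasi-dg and not filtered dg, because of the term $\mathrm{d}f\wedge$.
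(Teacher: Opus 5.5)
Your argument is essentially the paper's. $\mathrm{d}_\nabla^2=0$ by flatness, and $L_{D_\sE}^2=0$. The cross term $[\mathrm{d}_\nabla,L_{D_\sE}]$ is reduced to sections $s\in\sE$ by $\wOmega_\sM$-linearity, and then vanishes because the compatibility $[D_\sE,X\cdot]=[Q_\sM,X]\cdot$ cancels against $L_{Q_\sM}(\mathrm{d}x^\mu)$. The symbol condition comes from $[D,f]=\mathrm{d}f\wedge{}+Q_\sM(f)$. The paper reduces to sections via $[\mathrm{d}_\nabla,L^\nabla_{Q_\sM}]=0$ rather than via your observation that the commutator is a derivation over $[\mathrm{d}_{dR},L_{Q_\sM}]=0$; the two are interchangeable. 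Your contraction-with-$X$ formulation handles the signs more cleanly than the coordinate computation.

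Two side justifications in your filtration paragraph are wrong, although the conclusions you draw from them still hold.

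First, the $\sD_\sM$-action on a $D$-module is not filtration preserving. In the paper's conventions, $F^p\sD_\sM\otimes F^q\sE\to F^{p+q}\sE$ and vector fields lie in $F^{-1}\sD_\sM$, so $X\cdot F^\bullet\sE\subset F^{\bullet-1}\sE$. Hence $\mathrm{d}_\nabla$ raises form degree by one and lowers the $\sE$-filtration by one. Its net filtration degree is $0$, but its associated graded is in general nonzero: for jets, $\Gr\,\mathrm{d}_{\nabla^G}=-\delta_K$. Your claim would instead force $\Gr\,\mathrm{d}_\nabla=0$. $D$ is still filtration preserving. For the symbol you only need that $[\mathrm{d}_\nabla,f]=\mathrm{d}f\wedge$ raises the filtration, which you verify correctly.

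Second, finite rank of the $\Gr^j\sE$ does not force the filtration of $\sE$ to be bounded below. For example, $F^p=t^p\sO_\sM[[t]]$ on $\sO_\sM((t))$ is exhaustive, separated and complete with rank-one graded pieces. As soon as $\sM$ has coordinates of degree $-1$, we have $\Omega^i_\sM\neq 0$ for every $i$. Finite rank of $\Gr^p\,dR(\sE)$ therefore genuinely needs boundedness below as an extra hypothesis. It holds in the cases the paper uses, where filtrations start in degree $0$, such as $\sJ^\infty_{\sO_\sC}$. It is not, however, a consequence of the definitions as you suggest.
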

    \begin{proof}
    	By definition, $L_{D_\sE}$ is well-defined and squares to zero. Moreover,
    	\[
    	[L_{D_\sE}, f] s = Q_\sM(f)s, \quad \mathrm{d}_{\nabla}(s) = 0 \mod F^{p+1} dR(\sE),
    	\]
    	for all $s \in F^p dR(\sE)$. It remains to verify that $[\mathrm{d}_\nabla, L_{D_\sE}] = 0$. Since $[\mathrm{d}_\nabla, L_{Q_\sM}^{\nabla}] = 0$, it suffices to show that
    	\[
    	\nabla(D_\sE(s))+ L_{D_\sE} \nabla(s) = 0.
    	\]
    	Let $x^\mu$ be local coordinates on $\sM$. We compute
    	\begin{align*}
    		dx^\mu  \otimes \nabla_{\frac{\partial}{\partial x^\mu}}(D_\sE(s)) + L_{D_\sE} (dx^\mu \otimes  \nabla_{\frac{\partial}{\partial x^\mu}}(s)) &= L_{Q_\sM}(dx^\mu) \otimes \nabla_{\frac{\partial}{\partial x^\mu}}(s)  - dx^\mu \otimes [D_\sE, \nabla_{\frac{\partial}{\partial x^\mu}}](s) \\
    		&= L_{Q_\sM}(dx^\mu) \otimes \nabla_{\frac{\partial}{\partial x^\mu}}(s) - dx^\mu \otimes \nabla_{[Q_\sM, \frac{\partial}{\partial x^\mu}]}(s) \\
    		&=0,
    	\end{align*}
    	which completes the proof.
    \end{proof}
    
    One readily sees that the infinite jet bundle $\sJ^\infty_\sE$ of a quasi-dg vector bundle $\sE$ carries a compatible $D$-module structure. More precisely, we have
    \[
    [D_{\sJ^\infty_\sE}, X \cdot] =  [D_{\sJ^\infty_\sE}, \nabla^G_X] = [\nabla^G_{Q_\sM}, \nabla^G_X] = \nabla^G_{[Q_\sM, X]} = [Q_\sM, X] \cdot,
    \]
    where we use Proposition \ref{liftE} and the flatness of $\nabla^G$. For simplicity, we denote by 
    \[
    (\fJ^\infty_{\sE} \coloneqq \wOmega_\sM \,\wotimes_{\sO_\sM}\, \sJ^\infty_{\sE},\; D_{\fJ^\infty_\sE} \coloneqq \mathrm{d}_{\nabla^G} + L_{D_{\sJ^\infty_\sE}} )
    \] 
    the de Rham complex of $\sJ^\infty_\sE$. Note that the differential $L_{D_{\sJ^\infty_\sE}}$ can be written as
    \[
    L_{D_{\sJ^\infty_\sE}} = L_{Q_\sM}^{\nabla^G} + \id_{\wOmega_\sM}  \otimes j^\infty(D_\sE).
    \]
       
    \begin{prop}\label{drjetfdgv}
    	The associated graded operator $\Gr\, D_{\fJ^\infty_\sE}$ on
    	\[
    	\Gr\, \fJ^\infty_\sE \cong \sK_\sM \otimes_{\sO_\sM} \Gr\, \sE, \quad \text{with} \quad \sK_\sM\coloneqq \Omega_\sM \otimes_{\sO_\sM} \Sym_{\sO_\sM} (\sT_\sM^\vee),
    	\]
    	is given by
    	\[
    	\Gr\, D_{\fJ^\infty_\sE} = - \delta_K \otimes \id_{\Gr\, \sE} + L_{Q_\sM} \otimes \id_{\Gr\, \sE} + \id_{\sK_\sM} \otimes \Gr\, D_\sE,
    	\]
    	where $L_{Q_\sM}$ denotes the Lie derivative on $\sK_\sM$ along $Q_\sM$.
    \end{prop}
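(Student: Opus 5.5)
The plan is to compute $\Gr\, D_{\fJ^\infty_\sE}$ term by term, using the decomposition
\[
D_{\fJ^\infty_\sE} = \mathrm{d}_{\nabla^G} + L_{Q_\sM}^{\nabla^G} + \id_{\wOmega_\sM}\otimes j^\infty(D_\sE),
\]
and identifying each associated graded piece on $\Gr\,\fJ^\infty_\sE$. The filtration on $\fJ^\infty_\sE = \wOmega_\sM \wotimes_{\sO_\sM} \sJ^\infty_\sE$ is the convolution of the form-degree filtration with the jet filtration on $\sJ^\infty_\sE$. Using $\Gr\,\sJ^\infty_\sE \cong \Sym_{\sO_\sM}(\sT_\sM^\vee)\otimes_{\sO_\sM}\Gr\,\sE$ from Appendix~\ref{sec:doj}, together with Lemma~\ref{globtens}-type compatibility of tensor products with associated gradeds for locally free modules, we get $\Gr\,\fJ^\infty_\sE \cong \sK_\sM\otimes_{\sO_\sM}\Gr\,\sE$. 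Here a $p$-form tensored with a jet of filtration level $q$ sits in level $p+q$, shifted according to the convention making $\mathrm{d}_{\nabla^G}$ and $D_{\fJ^\infty_\sE}$ filtration-preserving.

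\emph{The $\mathrm{d}_{\nabla^G}$ term.} Appendix~\ref{sec:doj} shows that $\Gr\,\mathrm{d}_{\nabla^G} = -\delta_K\otimes\id_{\Gr\,\sE}$. This was already used in the proof of Proposition~\ref{jetfdgv}. Its leading part raises form degree by one and lowers jet degree by one, so it preserves total level.

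\emph{The $L_{Q_\sM}^{\nabla^G}$ term.} Write $L^{\nabla^G}_{Q_\sM} = [\iota_{Q_\sM},\mathrm{d}_{\nabla^G}]$, which equals $L_{Q_\sM}\otimes 1$ on forms plus $\nabla^G_{Q_\sM}$ on jets. Taking associated gradeds, the $\nabla^G_{Q_\sM}$ part contributes $-\iota_{Q_\sM}\otimes\id$ on the $\Sym(\sT^\vee_\sM)$ factor, again as in Proposition~\ref{jetfdgv}. This lowers level by one, so it must be combined with the next term.

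\emph{The $\id_{\wOmega_\sM}\otimes j^\infty(D_\sE)$ term.} Proposition~\ref{liftE} says $j^\infty(D_\sE)$ has two parts: a leading part, contraction with the symbol $\sigma^0_1(D_\sE)=Q_\sM\otimes\id$, which lowers level; and a level-preserving part equal to $\Gr\,D_\sE$ on the $\Gr\,\sE$ factor plus the Lie derivative $L_{Q_\sM}$ acting on the $\Sym(\sT^\vee_\sM)$ factor by derivation. The level-lowering parts cancel exactly as in Proposition~\ref{jetfdgv}. What remains at level zero is:
\begin{itemize}
\item $-\delta_K\otimes\id$;
\item the Lie derivative $L_{Q_\sM}$ acting on $\Omega_\sM$ and on $\Sym(\sT_\sM^\vee)$, which together form $L_{Q_\sM}$ on $\sK_\sM$;
\item $\id\otimes\Gr\,D_\sE$.
\end{itemize}
I would check these claims in local coordinates $x^\mu$, where $\nabla^G = \mathrm{d} - \mathrm{d}x^\mu\otimes\partial_{y^\mu}$ on formal Taylor series in fiber coordinates $y^\mu$.

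\emph{Main obstacle.} The hard step is extracting the level-preserving part of $j^\infty(D_\sE)$. One must verify that the coefficient-derivative terms of $Q_\sM$ (the $\partial Q^\nu/\partial x^\mu$ pieces arising from $[\nabla^G_{Q_\sM}, y^\mu]$) combine with those in $L_{Q_\sM}$ on $\mathrm{d}x^\mu$ to give precisely the Lie derivative on $\sK_\sM$, with no extra terms. I would verify this on generators $\mathrm{d}x^\mu$, $y^\mu$ and sections of $\Gr\,\sE$, then extend by the derivation property.
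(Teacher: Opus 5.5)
Your route is the paper's own: local coordinates $x^\mu,y^\mu$, $\Gr\,\mathrm{d}_{\nabla^G}=-\delta_K\otimes\id$, cancellation of the level-lowering parts of $L^{\nabla^G}_{Q_\sM}$ and $\id\otimes j^\infty(D_\sE)$ as in Proposition \ref{jetfdgv}, then reading off the level-preserving remainder. The step you flag as the main obstacle contains a genuine error, and the paper's proof makes the same one.

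Proposition \ref{liftE} only identifies the top (level $-1$) graded piece of $j^\infty(D_\sE)$. It says nothing about its level-$0$ piece. Write $[D_\sE,f]=A^\mu\,\partial f/\partial x^\mu$ (up to Koszul signs), with $A^\mu$ sections of $\End_\sE$. The lift contains $A^\mu\,\partial/\partial y^\mu$, and the quasi-dg condition only gives $A^\mu\equiv Q^\mu_\sM \bmod F^1\End_\sE$. The remainder $(A^\mu-Q^\mu_\sM)\,\partial/\partial y^\mu$ raises the $\sE$-filtration by one and lowers the $y$-degree by one. Its net filtration degree is therefore $0$, not $\ge 1$. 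The paper's proof writes $D^\mu_\sE=Q^\mu_\sM+\cdots$ with $\cdots$ of filtration degree $\ge 1$ and then forgets the $-1$ coming from $\partial/\partial y^\mu$. As a result there is an extra term $\iota_\beta$ in $\Gr\,D_{\fJ^\infty_\sE}$: contraction with the class $\beta$ of $A-Q_\sM\otimes\id$ in $\sT_\sM\otimes_{\sO_\sM}\Gr^1\End_\sE$. This term is neither of the form $\id_{\sK_\sM}\otimes(\cdot)$ nor part of $L_{Q_\sM}$ on $\sK_\sM$.

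Here is a concrete counterexample. Let $M$ be an ordinary manifold and $\sE=\sO_\sC$ for $\sC=M_{dR}$, a quasi-dg vector bundle by Example \ref{FMV1}. So $Q_\sM=0$, $\sE=\wOmega_M$ filtered by form degree, and $D_\sE=\mathrm{d}_{dR}$. Locally $\fJ^\infty_\sE=C^\infty(x)[[y]][\mathrm{d}x,\theta]$, where the $\theta^\mu$ are the one-forms of the $\sE$ factor. Checking on $j^\infty(f)\theta^I$ gives $j^\infty(\mathrm{d}_{dR})=\theta^\mu\,\partial/\partial y^\mu$. Hence $\Gr\,D_{\fJ^\infty_\sE}=(\theta^\mu-\mathrm{d}x^\mu)\,\partial/\partial y^\mu$, while the stated formula gives only $-\mathrm{d}x^\mu\,\partial/\partial y^\mu$, since $\Gr\,\mathrm{d}_{dR}=0$. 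Your planned check on the generators $y^\mu$ would expose this.

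So the identity as stated is false in general, and no argument can close it. It holds when $\beta=0$, for example for filtered dg vector bundles such as $\sO_\sM$. It fails for $\sO_\sC$ whenever the unary anchor is nonzero, which is the main application. The correct formula adds $\iota_\beta$.

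The downstream Proposition \ref{fqifjse} survives. First filter by the $\Gr\,\sE$-degree, which $\iota_\beta$ strictly raises; this filtration is finite in each total degree when $\sE$ is $\mathbb{N}$-filtered, as for $\sO_\sC$. The resulting $E_0$ differential is then the stated one, and the paper's double-complex argument applies.
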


    \begin{proof}
    	It suffices to show that 
    	\[
    	\Gr\, (L_{Q_\sM}^{\nabla^G} + \id_{\wOmega_\sM}  \otimes j^\infty(D_\sE)) =  L_{Q_\sM} \otimes \id_{\Gr\, \sE} + \id_{\sK_\sM} \otimes \Gr\, D_\sE.
    	\]
    	
    	The most straightforward way to verify this is by working in local coordinates. Let $x^\mu$ be local coordinates on $\sM$, $dx^\mu$ and $y^\mu$ be the corresponding fiber coordinates on $\sT_\sM[1]^\vee$ and     $\sT_\sM^\vee$, and $u^a$ be local fiber coordinates on $\sE$. Locally identifying $\sJ^\infty_\sE$ with $\wSym_{\sO_\sM}(\sT_\sM^\vee) \, \wotimes_{\sO_\sM}\, \sE$ using the trivial connection, we can write
    	\[
    	\mathrm{d}_{\nabla^G} = dx^\mu \frac{\partial}{\partial x^\mu} - \left(dx^\mu \frac{\partial}{\partial y^\mu}\right) \otimes \id_\sE.
    	\]
    	Thus, $L^{\nabla^G}_{Q_\sM}$ can be expressed as
    	\[
    	L^{\nabla^G}_{Q_\sM} = Q_\sM^\mu \frac{\partial}{\partial x^\mu} - \left(dx^\nu \frac{\partial Q_\sM^\mu}{\partial x^\nu} \frac{\partial}{\partial dx^\mu} + Q_\sM^\mu \frac{\partial}{\partial y^\mu}\right) \otimes \id_\sE.
    	\]
    	
    	On the other hand, $D_\sE$ can be locally written as
    	\[
    	D_\sE = D_\sE^\mu(x,u) \frac{\partial}{\partial x^\mu} + D_\sE^0(x, u),
    	\]
    	with $D_\sE^0$ denoting the zeroth-order part of $D_\sE$. By definition, we have
    	\[
    	D_\sE^\mu(x,u) = Q_\sM^\mu(x) + \cdots,
    	\]
    	where $\cdots$ denotes extra terms of filtration degrees $\geq 1$. Then we can write
    	\[
    	\Gr\, D_\sE = Q_\sM^\mu \frac{\partial}{\partial x^\mu} + \Gr\, D_\sE^0,
    	\qquad
    	j^\infty(D_\sE) = \sum_{\alpha} \frac{y^\alpha}{\alpha!} \left( \frac{\partial  D_\sE^\mu}{\partial x^\alpha} \frac{\partial}{\partial y^\mu} + \frac{\partial  D_\sE^0}{\partial x^\alpha} \right).\footnote{See Appendix \ref{sec:doj} for the multi-index notation.}
    	\]
    	Since $y^\mu$ has filtration degree $1$, we have
    	\[
    	\id_{\wOmega_\sM}  \otimes j^\infty(D_\sE) =  \id_{\wOmega_\sM}  \otimes \left(Q_\sM^\mu \frac{\partial}{\partial y^\mu} + y^\nu \frac{\partial Q_\sM^\mu}{\partial x^\nu} \frac{\partial}{\partial y^\mu} \right)  \otimes \id_\sE + \id_{\wOmega_\sM} \otimes  \Gr\, D^0_\sE  + \cdots,
    	\]
    	where $\cdots$ denotes extra terms of filtration degrees $\geq 1$. 
    	
    	To sum up, we have
    	\begin{align*}
    		\Gr\, (L_{Q_\sM}^{\nabla^G} + 1 \otimes j^\infty(D_\sE))  &= \left(Q_\sM^\mu \frac{\partial}{\partial x^\mu} - dx^\nu \frac{\partial Q_\sM^\mu}{\partial x^\nu} \frac{\partial}{\partial dx^\mu} + y^\nu \frac{\partial Q_\sM^\mu}{\partial x^\nu} \frac{\partial}{\partial y^\mu} \right) \otimes \id_{\Gr\, \sE}  \\
    		& + \id_{\sK_\sM} \otimes \left(Q_\sM^\mu \frac{\partial}{\partial x^\mu} +  \Gr\, D_\sE^0 \right) \\
    		&=  L_{Q_\sM} \otimes \id_{\Gr\, \sE} + \id_{\sK_\sM} \otimes \Gr\, D_\sE,
    	\end{align*}
    	which completes the proof.
    \end{proof}
    
    Let $j^\infty\colon \sE \to \sJ^\infty_\sE$ denote the infinite jet prolongation of $\sE$. By abuse of notation, we also denote by
    \[
    j^\infty\colon \sE \longrightarrow \fJ^\infty_\sE
    \]
    the composite $\sE \xrightarrow{j^\infty} \sJ^\infty_\sE \hookrightarrow \fJ^\infty_\sE$.
    
    \begin{prop}\label{fqifjse}
    	The $\R$-linear map
    	\[
    	j^\infty_M\colon \sE(M) \longrightarrow \fJ^\infty_\sE(M)
    	\]
    	is a filtered quasi-isomorphism; that is, 
    	\[
    	\Gr\, j^\infty_M\colon \Gr\, \sE(M) \longrightarrow \Gr\, \fJ^\infty_\sE(M) \cong \sK_\sM(M) \otimes_{\sO_\sM(M)} \Gr\, \sE(M)
    	\]
    	is a quasi-isomorphism.
    \end{prop}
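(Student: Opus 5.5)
We will prove that $\Gr\, j^\infty_M$ is a quasi-isomorphism by identifying $\Gr\, \fJ^\infty_\sE(M)$ with a Koszul-type double complex and using a filtration argument that reduces to the acyclicity of the universal Koszul complex. By Proposition \ref{drjetfdgv}, the differential on
\[
\Gr\, \fJ^\infty_\sE \cong \sK_\sM \otimes_{\sO_\sM} \Gr\, \sE
\]
is $-\delta_K \otimes \id + L_{Q_\sM}\otimes \id + \id \otimes \Gr\, D_\sE$. Under this identification, $\Gr\, j^\infty$ is the inclusion of $\Gr\,\sE$ as the component of form degree $0$ and polynomial degree $0$, i.e. $s \mapsto 1\otimes 1\otimes s$.

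The key input is that the universal Koszul complex $(\sK_\sM, \delta_K)$, with $\sK_\sM = \Omega_\sM \otimes_{\sO_\sM} \Sym_{\sO_\sM}(\sT_\sM^\vee)$, is a resolution of $\sO_\sM$. Concretely, $\delta_K$ sends $y^\mu \mapsto dx^\mu$ (up to sign) and preserves the total weight (form degree plus polynomial degree). In each positive weight there is an $\sO_\sM$-linear contraction $h$, given by the Euler-type homotopy $y^\mu\,\partial/\partial dx^\mu$ divided by the weight. This $h$ is defined globally, because $\delta_K$ and this homotopy are the canonical maps between $\Sym$ and $\Lambda$ of $\sT_\sM^\vee$ and do not depend on coordinates. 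Hence $\sK_\sM \to \sO_\sM$ is an $\sO_\sM$-linear deformation retract. (If the signs or gradings make the Euler formula awkward in the graded setting, we will instead use Proposition \ref{wcc}: each positive-weight piece is a weakly contractible graded vector bundle under $\delta_K$ alone, hence contractible.)

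Next we filter $\Gr\, \fJ^\infty_\sE(M)$ by weight. Here $\delta_K$ preserves weight, while $L_{Q_\sM}\otimes\id$ and $\id\otimes \Gr D_\sE$ also preserve weight: $L_{Q_\sM}$ sends $dx^\mu \mapsto d x^\nu\,\partial_\nu Q^\mu$ and $y^\mu \mapsto y^\nu\,\partial_\nu Q^\mu$. So weight is actually a grading compatible with the whole differential, though not with the splitting into the individual pieces. On each weight-$w$ summand we then filter by form degree, or equivalently by the polynomial degree in $y$. This filtration is finite in each weight because $\sT_\sM$ has finite total rank, although $\Lambda$ is not bounded in odd directions. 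The caveat is that $\Omega_\sM$ of a graded manifold involves $\Sym$ of $\sT_\sM[1]^\vee$, so form degree need not be bounded. However, for fixed weight $w$ the pieces $\Omega^k\otimes\Sym^{w-k}$ with $0\le k\le w$ are finitely many, so the filtration is finite. With respect to this filtration, the associated graded differential is $-\delta_K\otimes\id$ only if $L_{Q_\sM}$ and $\Gr D_\sE$ raise the filtration. They do not: they preserve both degrees.

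So instead we use homological perturbation. We take the $\sO_\sM$-linear contraction $h\otimes\id$ of $(\sK_\sM\otimes\Gr\sE, -\delta_K\otimes\id)$ onto $\Gr\sE$ in positive weight, and treat $L_{Q_\sM}\otimes \id + \id\otimes\Gr D_\sE$ as a perturbation. Alternatively and more simply, observe that in weight $w\ge 1$ the total complex is a finite double complex with one differential $\delta_K$ (changing bidegree $(k,w-k)\to(k+1,w-k-1)$) and the other preserving bidegree. The column filtration by $k$ is finite, and its $E_1$-page is the $\delta_K$-cohomology. On global sections this cohomology vanishes, since the $\sO_\sM$-linear contraction passes to global sections. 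Hence each weight $w\ge1$ is acyclic, and the weight-$0$ part is exactly $(\Gr\sE(M), \Gr D_\sE)$, which is the image of $\Gr\, j^\infty_M$. Since $\Gr\, \fJ^\infty_\sE(M)$ is the direct sum (not a product) over weights, which is legitimate because $\Gr$ of a completed module is graded, $\Gr\, j^\infty_M$ is a quasi-isomorphism.

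The main obstacle is confirming that $\Gr\, j^\infty$ lands exactly in weight $0$, so that the identification in Proposition \ref{drjetfdgv} matches $j^\infty$ with the inclusion. It also requires checking that the Koszul contraction is global and $\sO_\sM$-linear in the graded setting, with its signs and the Euler-operator normalization in characteristic $0$ handled correctly.
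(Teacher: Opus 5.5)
Your overall strategy matches the paper's. You treat $\Gr\,\fJ^\infty_\sE(M)$ as a double complex with $d_h$ the $\delta_K$-part and $d_v=L_{Q_\sM}\otimes\id+\id\otimes\Gr\,D_\sE$, and use Koszul acyclicity to cut it down to $(\Gr\,\sE(M),\Gr\,D_\sE)$. Splitting by weight $w=k+l$ is a genuine improvement. It makes the relevant filtrations finite, which the paper never checks, and it exhibits $\Gr\,\sE(M)$ as a direct-summand subcomplex. So $\Gr\,j^\infty_M$ is a quasi-isomorphism as soon as each weight $w\ge 1$ part is acyclic.

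However, the argument you actually give for that acyclicity is wrong. Filter by form degree, $F^j=\bigoplus_{k\ge j}$. Then $d_v$ preserves $k$ while $\delta_K$ raises it, so the $E_0$-differential is $d_v$ and $E_1=H(d_v)$, not $H(\delta_K)$. You noticed exactly this a few sentences earlier and then asserted the opposite for the ``column filtration by $k$.'' With this filtration nothing forces $E_2=H(\delta_K,H(d_v))$ to vanish. The homological perturbation alternative is only named, and its key hypothesis is not checked.

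There are two easy repairs.

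First, filter instead by $q=n-k$, where $n$ is the internal cohomological degree. The total differential raises $n$ by one, so $\delta_K$ preserves $q$ and $d_v$ raises it by one. Hence the $E_0$-differential is $\delta_K$ and $E_1=H(\delta_K)$, which is $0$ in weight $w\ge 1$ by your $\sO_\sM$-linear Euler contraction. In fixed weight $w$ and fixed total degree $n$ only $q\in\{n-w,\dots,n\}$ occur, so the filtration is bounded and the spectral sequence converges. This is the filtration implicit in the paper's choice of $\delta_K$ as the ``horizontal'' differential.

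Second, make the perturbation remark precise. Let $h$ be the Euler homotopy, normalized so that $d_h h+h d_h=\id$ on the weight-$w$ part, and set $D=d_h+d_v$. Then $Dh+hD=\id+N$ with $N=d_v h+h d_v$. Since $h$ lowers $k$ by one and $d_v$ preserves $k$, $N$ lowers $k$ by one, so $N^{w+1}=0$ and $\id+N$ is invertible. Moreover $Dh+hD$ commutes with $D$, so $h(\id+N)^{-1}$ is a contraction for $D$ in weight $w$.

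With either fix, the rest of your proof goes through.
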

    \begin{proof}
    	$\Gr\, j^\infty$ is given by the canonical inclusion of $\Gr\, \sE$. It suffices to prove that the complex
    	\[
    	(\Gr\, \fJ^\infty_\sE(M) \cong \sK_\sM(M) \otimes_{\sO_\sM(M)} \Gr\, \sE(M), \Gr\, D_{\fJ^\infty_\sE})
    	\]
    	is quasi-isomorphic to $(\Gr\, \sE(M), \Gr\, D_\sE)$. The above complex can be regarded as a double complex with horizontal and vertical differentials 
    	\[
    	d_h\coloneqq \delta_K \otimes \id_{\Gr\, \sE}, \qquad d_v \coloneqq L_{Q_\sM} \otimes \id_{\Gr\, \sE} + \id_{\sK_\sM} \otimes \Gr\, D_\sE.
    	\]
    	Since the cohomology of $\delta_K$ is $\sO_\sM(M)$ and $\Gr^\bullet \sE(M)$ is finitely generated and projective over $\sO_\sM(M)$, we have
    	\[
    	H_{d_h}(\Gr\, \fJ^\infty_\sE(M)) \cong \Gr\, \sE(M).
    	\]
    	Now observe that
    	\[
    	d_v|_{\Gr\, \sE(M)} = \Gr\, D_\sE.
    	\]
    	Thus, the $E_1$ page of the spectral sequence of this double complex is isomorphic to
    	\[
    	(\Gr\, \sE(M), \Gr\, D_\sE),
    	\]
    	which completes the proof.
    \end{proof}
    
	\subsection{The jet space functor}
	
	Let $\sM$ be a dg manifold. Let $\sJ^\infty_\sM$ be the sheaf of infinite jets on $\sM$, i.e., the infinite jet bundle of the structure sheaf $\sO_\sM$.  Applying the construction in the previous subsection to $\sO_\sM$, regarded as a quasi-dg vector bundle, we obtain the quasi-dg vector bundle
	\[
	\fJ^\infty_\sM\coloneqq \wOmega_\sM \,\wotimes_{\sO_\sM}\, \sJ^\infty_\sM,
	\]
	equipped with the total differential
	\[
	D_{\fJ^\infty_\sM} = \mathrm{d}_{\nabla^G} + L_{Q_\sM},
	\]
	where $L_{Q_\sM}$ denotes the (total) Lie derivative on $\fJ^\infty_\sM$ along $Q_\sM$
	\begin{equation} \label{total_Lie}
		L_{Q_\sM} = L^h_{Q_\sM} + L^v_{Q_\sM},
	\end{equation}
	with the horizontal and vertical Lie derivatives given by the formulas
	\[
	L^h_{Q_\sM} = L_{Q_\sM}^{\nabla^G}, \qquad L^v_{Q_\sM} = \id_{\wOmega_\sM} \otimes j^\infty(Q_\sM),
	\]
	respectively.
	
	Recall from Appendix \ref{sec:doj} that $\sJ^\infty_\sM$ is a filtered graded commutative algebra. It follows that $\fJ^\infty_\sM$ is also a filtered graded commutative algebra. It is straightforward to verify that $D_{\fJ^\infty_\sM}$ is a derivation of $\fJ^\infty_\sM$. Therefore, $\fJ^\infty_\sM$ is a filtered cdga.

	\begin{prop}
		The pair $\sM_{jet}\coloneqq(M, \fJ^\infty_\sM)$ defines a fibrant quasi-dg manifold over $\sM$, called the \emph{jet space} of $\sM$. Moreover, the morphism
		\[
		\sM_{jet}=(M, \fJ^\infty_\sM) \xlongrightarrow{(\id_M, j^\infty)} \sM=(M, \sO_\sM)
		\]
		is a weak equivalence in $\mathbf{QdgM}$.\footnote{Note that $(\id_M, j^\infty)$ is not base-preserving.}
	\end{prop}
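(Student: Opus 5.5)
The plan is to check the axioms of Definition \ref{CdgM} and of a quasi-dg manifold directly for $(M,\fJ^\infty_\sM)$, using the local trivialization from the proof of Proposition \ref{drjetfdgv}. After that, fibrancy and the weak equivalence should reduce to computations on associated graded objects.

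\emph{Step 1: the filtered graded manifold structure.} Take $\sI \subset \fJ^\infty_\sM$ to be $F^1\fJ^\infty_\sM$. This is the (closure of the) ideal generated by forms of positive degree and by jets vanishing at the diagonal; in local coordinates it is generated by the $dx^\mu$ and $y^\mu$. Then $\fJ^\infty_\sM/\sI \cong \sO_\sM$. For the algebra splitting I would use the left $\sO_\sM$-module structure $f \mapsto f\cdot 1$, which is a morphism of graded commutative algebras. Choose local coordinates $x^\mu$ on $\sM$ and identify $\sJ^\infty_\sM \cong \wSym_{\sO_\sM}(\sT_\sM^\vee)$ via the trivial connection. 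Then $\fJ^\infty_\sM$ becomes $\wSym_{\sO_V}(\bbV^\vee)$ with $\bbV^\vee$ free on the $dx^\mu$ and $y^\mu$, and $f\cdot 1$ corresponds to $f(x)$ with no $y$-dependence. This gives the local diagram required in Definition \ref{CdgM}. It also shows that the $\sI$-adic filtration coincides with the combined form-degree and jet-order filtration used in Propositions \ref{drjetfdgv} and \ref{fqifjse}. I expect this identification of filtrations to be the point needing the most care, since everything afterwards relies on it.

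\emph{Step 2: the quasi-dg structure.} The paper has already observed that $D_{\fJ^\infty_\sM}$ is a derivation squaring to zero. By Proposition \ref{drjetfdgv} it has filtration degree $0$, so $Q(\sI)\subset \sI$. By the same proposition, restricted to $\Gr^0$, we get $\Gr^0 D = Q_\sM$, so the base is $\sM$.

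\emph{Step 3: fibrancy.} By definition, $H^\sharp(\mathrm{d}f) = (D-\Gr^0 D)(f\cdot 1)$. In local coordinates $f\cdot 1 = f(x)$, and $\mathrm{d}_{\nabla^G}(f(x)) = dx^\mu\,\partial_\mu f$ because the $\partial_{y}$-term kills it. Hence $\Gr^1 H^\sharp(\mathrm{d}f) = \mathrm{d}f \otimes 1$. Consequently
\[
\Gr\, H^\sharp \colon \Omega_\sM \to \Gr\,\fJ^\infty_\sM \cong \Omega_\sM \otimes_{\sO_\sM} \Sym_{\sO_\sM}(\sT_\sM^\vee)
\]
is the algebra map given by the inclusion $\omega \mapsto \omega\otimes 1$, and this is injective.

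\emph{Step 4: the weak equivalence.} First, $j^\infty$ is a morphism of graded algebras, and $j^\infty Q_\sM f = D_{\fJ^\infty_\sM}(j^\infty f)$ because $j^\infty f$ is $\nabla^G$-flat and $j^\infty(Q_\sM)$ lifts $Q_\sM$. Second, the filtration on $\sO_\sM$ is trivial, and by Step 1 the filtration on $\fJ^\infty_\sM$ is the one used earlier. With these two facts, being a filtered quasi-isomorphism on global sections is exactly Proposition \ref{fqifjse} applied to $\sE=\sO_\sM$ with the trivial filtration.
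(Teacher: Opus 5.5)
Your proposal is correct and follows the paper's own proof. Both arguments identify $\Gr^0\fJ^\infty_\sM\cong\sO_\sM$, observe that the splitting and $H^\sharp$ are the canonical inclusions so that $\Gr H^\sharp$ is injective, and deduce the weak equivalence from Lemma \ref{prol} and Proposition \ref{fqifjse}. The only difference is that you write out the verifications the paper leaves implicit: the local model required by Definition \ref{CdgM}, the explicit computation of $\Gr H^\sharp$ on $\mathrm{d}f$, and the compatibility of $j^\infty$ with the differentials.
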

	
	\begin{proof}
		We need to show that $\sM_{jet}$ is well-defined. We have
		\[
		\Gr^0 (\fJ^\infty_\sM) \cong \Omega^0_\sM \otimes_{\sO_\sM} \Gr^0(\sJ^\infty_\sM) \cong \sO_\sM.
		\]
		Both the splitting $\sO_\sM \rightarrow \fJ^\infty_\sM$ and the morphism $H^\sharp\colon \wOmega_\sM \rightarrow \fJ^\infty_\sM$ are given by the canonical inclusions. $\Gr\, H^\sharp$ is obviously injective.
	
		The morphism $(\id_M, j^\infty)$ is well-defined by Lemma~\ref{prol}. It is a weak equivalence by Proposition~\ref{fqifjse}.
	\end{proof}
	
	\begin{rmk}
		Using a formal exponential map 
		\[
		\mathrm{fem}\colon \sJ^\infty_\sM \longrightarrow \wSym_{\sO_\sM}(\sT_\sM^\vee)
		\]
		of $\sM$ to identify $\fJ^\infty_\sM$ with $\wSym_{\sO_\sM}(\sT_\sM[1]^\vee \oplus \sT_\sM^\vee)$, the jet space $\sM_{jet}$ is then identified with the Fedosov resolution of $\sM$ as defined in \cite{liao2025formal}.
	\end{rmk}
	
	Now let $\sC$ be a quasi-dg manifold. Applying again the construction in the previous subsection to $\sO_\sC$, regarded as a quasi-dg vector bundle, we obtain a quasi-dg vector bundle
	\[
	\fJ^\infty_{\sO_\sC} = \wOmega_\sM \,\wotimes_{\sO_\sM}\, \sJ^\infty_{\sO_\sC},
	\]
	equipped with the total differential
	\[
	D_{\fJ^\infty_{\sO_\sC}} = \mathrm{d}_{\nabla^G} + L_{Q_\sM}^{\nabla^G} + \id_{\wOmega_\sM}  \otimes j^\infty(Q_\sC).
	\]
	
	Similarly, one can show that
	\begin{prop}
		The pair $\sC_{jet}\coloneqq(M, \fJ^\infty_{\sO_\sC})$ defines a fibrant quasi-dg manifold over $\sM$, called the \emph{jet space} of $\sC$. Moreover, the morphism
		\[
		\sC_{jet}=(M, \fJ^\infty_{\sO_\sC}) \xlongrightarrow{(\id_M, j^\infty)} \sC=(M, \sO_\sC)
		\]
		is a weak equivalence in $\mathbf{QdgM}$.
	\end{prop}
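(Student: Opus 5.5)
We follow the proof of the preceding proposition for $\sM_{jet}$, replacing $\sO_\sM$ by $\sO_\sC$ viewed as a quasi-dg vector bundle over $\sM \coloneqq \Gr^0\sC$ (Example \ref{FMV1}). The argument has three steps: show that $\fJ^\infty_{\sO_\sC}$ is a filtered cdga, verify the quasi-dg manifold and fibrancy axioms, and prove the weak equivalence.

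\emph{The filtered cdga structure.} Since $\sO_\sC$ is a sheaf of filtered graded commutative algebras, the jet bundle $\sJ^\infty_{\sO_\sC}$ inherits a filtered graded commutative algebra structure, exactly as $\sJ^\infty_\sM$ does (Appendix \ref{sec:doj}). Hence $\fJ^\infty_{\sO_\sC} = \wOmega_\sM \wotimes_{\sO_\sM} \sJ^\infty_{\sO_\sC}$ is a filtered graded commutative algebra. The total differential is $D = \mathrm{d}_{\nabla^G} + L^{\nabla^G}_{Q_\sM} + \id \otimes j^\infty(Q_\sC)$. Each summand is a derivation:
\begin{itemize}
\item $\nabla^G$ is compatible with products, since the jet prolongation of a product is the product of jets;
\item $j^\infty(Q_\sC)$ is a derivation because $Q_\sC$ is one and $j^\infty$ respects compositions of differential operators.
\end{itemize}
The identity $D^2=0$ and filtration preservation are Proposition \ref{jetfdgv}, combined with the well-definedness of the de Rham complex of a $D$-module.

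\emph{Quasi-dg manifold and fibrancy.} Since $\Gr^0 \wOmega_\sM = \sO_\sM$, we have $\Gr^0 \fJ^\infty_{\sO_\sC} \cong \Gr^0 \sJ^\infty_{\sO_\sC} \cong \Gr^0 \sO_\sC = \sO_\sM$. The splitting $\sO_\sM \to \fJ^\infty_{\sO_\sC}$ is the composite of the given splitting $\sO_\sM \to \sO_\sC$ with $j^\infty$, or equivalently the canonical inclusion $\sO_\sM \to \sJ^\infty_\sM \to \sJ^\infty_{\sO_\sC}$.

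For the local model, choose a local splitting $\sO_\sC|_U \cong \wSym_{\sO_V}(\bbV^\vee)$. Trivializing the jets with a flat connection gives the local form $\wSym_{\sO_V}(\sT[1]^\vee \oplus \sT^\vee \oplus \bbV^\vee)$, which is of the required form.

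For fibrancy, $H^\sharp\colon \wOmega_\sM \to \fJ^\infty_{\sO_\sC}$ is the canonical inclusion $\omega \mapsto \omega\otimes 1$. I would check this by verifying $(Q-\Gr^0 Q)f = \mathrm{d}f + \cdots$ on generators. Then $\Gr\,H^\sharp$ is the inclusion $\Omega_\sM \hookrightarrow \sK_\sM\otimes \Gr\,\sO_\sC$, which is injective.

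\emph{Weak equivalence.} The morphism $(\id_M, j^\infty)$ is a morphism of sheaves of filtered cdgas:
\begin{itemize}
\item $j^\infty$ is multiplicative (Lemma \ref{prol});
\item it intertwines $Q_\sC$ with $D$, because $\mathrm{d}_{\nabla^G}$ and $\nabla^G_{Q_\sM}$ kill horizontal sections and $j^\infty(Q_\sC)\circ j^\infty = j^\infty\circ Q_\sC$;
\item it preserves filtrations.
\end{itemize}
That $\Gr\, j^\infty_M$ is a quasi-isomorphism is Proposition \ref{fqifjse} applied to $\sE = \sO_\sC$. That proposition only requires $\Gr^\bullet \sO_\sC(M)$ to be finitely generated projective over $\sO_\sM(M)$ in each filtration degree, which holds by the local model.

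The main obstacle is the second step, specifically checking that $H^\sharp$ is really the canonical inclusion. This requires computing $(D - \Gr^0 D)(f)$ for $f\in\sO_\sM$ embedded via the splitting. This involves $\mathrm{d}_{\nabla^G} f = \mathrm{d}f$ (up to sign) together with contributions from $j^\infty(Q_\sC)$ acting on the splitting. Any non-canonical part lies in higher jet or $\sI_\sC$-filtration. I would show it vanishes on $\Gr^1$, which is all that injectivity of $\Gr\,H^\sharp$ needs.
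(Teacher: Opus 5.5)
Your overall plan matches the paper's, which proves this ``similarly'' to the case of $\sM_{jet}$. That argument identifies $\Gr^0$ with $\sO_\sM$, notes that the splitting and $H^\sharp$ are the canonical inclusions so that $\Gr\,H^\sharp$ is injective, and gets the weak equivalence from the analogues of Lemma \ref{prol} and Proposition \ref{fqifjse}. Your treatment of the cdga structure, the local model and the weak equivalence is fine.

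The gap is in the fibrancy step, which you yourself flag as the main obstacle. Write $s\colon \sO_\sM\to\sO_\sC$ for the given splitting. The two splittings you call equivalent are different, and the choice decides whether the statement holds. The unit map $\tau_0(f)=f\otimes 1\otimes 1$ comes from the left $\sO_\sM$-module structure. The composite $\tau_1=j^\infty\circ s$ differs from $\tau_0$ by Taylor terms lying in $F^1$.

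Suppose you use $\tau_1$. Then $(\id_M,j^\infty)$ becomes base-preserving. Either the naturality part of the lemma constructing $H$, or the direct computation $D(j^\infty s f)=j^\infty(Q_\sC s f)$ (both $\mathrm{d}_{\nabla^G}$ and $\nabla^G_{Q_\sM}$ kill horizontal sections), gives $H^\sharp_{\sC_{jet}}=j^\infty\circ H^\sharp_\sC$. Since $\Gr\, j^\infty$ is injective, $\sC_{jet}$ would be fibrant if and only if $\sC$ is. For instance, for $\sC=\sM$ with the zero ideal you get $H^\sharp(\mathrm{d}f)=0$. So with $\tau_1$ the claim is false, and no estimate ``on $\Gr^1$'' can rescue it.

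With $\tau_0$ there is no obstacle and no correction term at all:
\begin{itemize}
\item $\nabla^G(1)=0$, because $1=j^\infty(1)$;
\item $j^\infty(Q_\sC)$ is left $\sO_\sM$-linear and satisfies $j^\infty(Q_\sC)(j^\infty(1))=j^\infty(Q_\sC(1))=0$.
\end{itemize}
Hence $D(f\otimes 1\otimes 1)=\mathrm{d}f\otimes 1\otimes 1+Q_\sM(f)\otimes 1\otimes 1$ exactly, and $H^\sharp(\mathrm{d}f)=\mathrm{d}f\otimes 1\otimes 1$.

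Your sketch of this computation mixes the two pictures. The $\mathrm{d}f$ term comes only from $\mathrm{d}_{\nabla^G}$, and only when you use $\tau_0$. The contributions of $j^\infty(Q_\sC)$ appear only when you use $\tau_1$, and there the $\mathrm{d}f$ term is absent.

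Once you fix $\tau_0$, the map $(\id_M,j^\infty)$ is not base-preserving, as the paper's footnote notes for $\sM_{jet}$. This is harmless, because the weak equivalence is claimed in $\mathbf{QdgM}$. The rest of your argument then goes through unchanged.
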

	
	Let $F=(f,F^\sharp)\colon \sC_1=(M_1, \sO_{\sC_1}) \rightarrow \sC_2=(M_2, \sO_{\sC_2})$ be a morphism of quasi-dg manifolds. 
	Using Propositions \ref{pullbackE}, \ref{resE}, and \ref{liftE}, one can straightforwardly verify that
	\begin{prop}
		The following diagram of filtered cdgas
		\[
		\begin{tikzcd}[column sep=80pt]
			\fJ^\infty_{\sO_{\sC_2}} = \wOmega_{\Gr^0 \sC_2} \,\wotimes_{\Gr^0 \sO_{\sC_2}}\, \sJ^\infty_{\sO_{\sC_2}}
			\arrow[r, "(\Gr^0 F)_{dR}^\sharp \otimes j^\infty(F^\sharp)"] 
			& f_* \fJ^\infty_{\sO_{\sC_1}} = f_*\left(\wOmega_{\Gr^0 \sC_1} \,\wotimes_{\Gr^0 \sO_{\sC_1}}\, \sJ^\infty_{\sO_{\sC_1}}\right) \\
			\sO_{\sC_2} \arrow[u, "j^\infty"] \arrow[r, "F^\sharp"] 
			& f_* \sO_{\sC_1} \arrow[u, "f_* j^\infty"].
		\end{tikzcd}
		\]
		commutes, where $j^\infty(F^\sharp)$ is defined in Proposition \ref{pullbackE}.
	\end{prop}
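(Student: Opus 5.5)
Write $\sM_i=\Gr^0\sC_i$. I will prove commutativity by checking it on generators and along a dense subalgebra, using that all maps in the square are morphisms of filtered graded commutative algebras between complete objects. Since $F^\sharp$ and $j^\infty$ are algebra maps, both composites $\sO_{\sC_2}\to f_*\fJ^\infty_{\sO_{\sC_1}}$ are algebra morphisms. So it suffices to compare them on elements of $\sO_{\sC_2}$ directly, without passing to generators. The upper map is defined as a tensor product of two pieces: the de Rham pullback $(\Gr^0 F)^\sharp_{dR}$ on the $\wOmega$ factor, and $j^\infty(F^\sharp)$ on the jet factor. I first need it to be well defined on $\wOmega_{\sM_2}\wotimes_{\Gr^0\sO_{\sC_2}}\sJ^\infty_{\sO_{\sC_2}}$. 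This holds if the two pieces agree on $\Gr^0\sO_{\sC_2}$, where they act through $\Gr^0 F^\sharp$, respectively its jet prolongation, via the $\sO$-module structure. Base-preservation of $F$ is exactly what is needed here; for general $F$ one uses the splitting compatibility encoded in Proposition~\ref{pullbackE}.

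The key step is the identity
\[
j^\infty(F^\sharp)\circ j^\infty = f_*j^\infty\circ F^\sharp
\]
as maps $\sO_{\sC_2}\to f_*\sJ^\infty_{\sO_{\sC_1}}$. I expect this to be the content of Proposition~\ref{pullbackE}, where $j^\infty(F^\sharp)$ is constructed as the unique morphism of jet bundles, compatible with the Grothendieck connections (i.e.\ with the left $\sD$-actions along $f$), that intertwines the prolongations. Granting this, and since $j^\infty\colon \sO_{\sC}\to\fJ^\infty_{\sO_\sC}$ lands in the degree-zero form part $\Omega^0\otimes\sJ^\infty$, on which $(\Gr^0F)^\sharp_{dR}\otimes j^\infty(F^\sharp)$ restricts to $f^\sharp\otimes j^\infty(F^\sharp)$, the square commutes as a diagram of graded algebras.

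It remains to check that the top arrow is a morphism of filtered cdgas, that is, that it intertwines $D_{\fJ^\infty_{\sO_{\sC_i}}}=\mathrm{d}_{\nabla^G}+L^{\nabla^G}_{Q_{\sM_i}}+\id\otimes j^\infty(Q_{\sC_i})$. I will treat the three terms separately:
\begin{enumerate}
\item $\mathrm{d}_{\nabla^G}$ is intertwined because $(\Gr^0F)_{dR}^\sharp$ commutes with $\mathrm{d}_{dR}$ and $j^\infty(F^\sharp)$ is flat with respect to $\nabla^G$ (Proposition~\ref{pullbackE}).
\item $L^{\nabla^G}_{Q}$ is intertwined because $\Gr^0F$ is a dg manifold morphism, so $Tf^\sharp(Q_{\sM_1})=f^*Q_{\sM_2}$, together with the corresponding statement for $f_{dR}$.
\item $j^\infty(Q_{\sC})$ is intertwined because $F^\sharp Q_{\sC_2}=Q_{\sC_1}F^\sharp$, and prolongation is functorial on differential operators (Proposition~\ref{liftE}, with Proposition~\ref{resE} for localizing to opens).
\end{enumerate}
Preservation of filtrations follows from that of $F^\sharp$ and of the de Rham map.

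The main obstacle is the middle compatibility, i.e.\ $j^\infty(F^\sharp)\circ j^\infty=f_*j^\infty\circ F^\sharp$ and flatness along a non-identity base map, since jets do not pull back naively. If Proposition~\ref{pullbackE} does not state it directly, I would verify it in local coordinates. There $j^\infty(s)=\sum_\alpha \frac{y^\alpha}{\alpha!}\partial_x^\alpha s$, and $j^\infty(F^\sharp)$ is given by the formal Taylor expansion of $F^\sharp$ composed with the formal Taylor expansion of $f$, via the chain rule/Fa\`a di Bruno. Both composites then equal the Taylor series of $F^\sharp(s)$, which proves the identity.
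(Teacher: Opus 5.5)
Your proposal is correct and is the paper's argument made explicit. The square is just the diagram of Proposition~\ref{pullbackE}, since the image of $j^\infty$ lies in form degree zero, and compatibility of the top arrow with the differentials comes from $(\Gr^0F)^\sharp_{dR}$ being a dg map, from $F^\sharp Q_{\sC_2}=Q_{\sC_1}F^\sharp$, and from Proposition~\ref{liftE}.

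Two small corrections. First, horizontality of $j^\infty(F^\sharp)$ for $\nabla^G$ is not part of Proposition~\ref{pullbackE}; it follows from Lemma~\ref{prolongdenseE} together with $\nabla^G j^\infty(s)=0$. Second, Proposition~\ref{pullbackE} needs $F^\sharp$ to be $\Gr^0\sO_{\sC_2}$-linear, which for structure sheaves means exactly that $F$ is base-preserving, so your appeal to a ``splitting compatibility'' for general $F$ has no backing (the paper does not address that case either).
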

	
	Consider the assignment
	\[
	\bfJ^\infty\colon \mathbf{QdgM} \longrightarrow \mathbf{QdgM}_{\mathrm{fib}} 
	\]
	defined on objects $\sC$ by
	\[
	\bfJ^\infty(\sC) = \sC_{jet},
	\]
	and on morphisms $F=(f,F^\sharp)\colon \sC_1=(M_1, \sO_{\sC_1}) \rightarrow \sC_2=(M_2, \sO_{\sC_2})$ by
	\[
	\bfJ^\infty(F) \coloneqq \bigl(f,\, (\Gr^0 F)_{dR}^\sharp \otimes j^\infty(F^\sharp)\bigr).
	\]	
	Our previous results fit together to yield the following theorem.
	\begin{thm}\label{jetfun}
		$\bfJ^\infty$ is a well-defined functor, called the \emph{jet space functor}. Moreover, there is a natural transformation
		\[
		j^\infty\colon \bfJ^\infty \Rightarrow \id_{\mathbf{QdgM}},
		\]
		which is a component-wise weak equivalence.\footnote{Here, we regard $\bfJ^\infty$ as a functor from $\mathbf{QdgM}$ to $\mathbf{QdgM}$.} 
	\end{thm}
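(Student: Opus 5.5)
To prove Theorem \ref{jetfun}, I will split the argument into three parts: well-definedness of $\bfJ^\infty$ on objects and morphisms, functoriality, and naturality plus the weak equivalence property of $j^\infty$.

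\textbf{Objects.} The previous proposition already shows that $\sC_{jet}$ is a fibrant quasi-dg manifold over $\Gr^0\sC$. So $\bfJ^\infty$ indeed lands in $\mathbf{QdgM}_{\mathrm{fib}}$.

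\textbf{Morphisms.} For $F=(f,F^\sharp)$ I must check that $(\Gr^0 F)_{dR}^\sharp\otimes j^\infty(F^\sharp)$ is a morphism of sheaves of filtered cdgas $\fJ^\infty_{\sO_{\sC_2}}\to f_*\fJ^\infty_{\sO_{\sC_1}}$. I would proceed in four steps.
\begin{enumerate}
\item Well-definedness of the tensor product. Since $F^\sharp$ covers $\Gr^0F^\sharp$ modulo the ideals, $j^\infty(F^\sharp)$ from Proposition \ref{pullbackE} is linear over $\sJ^\infty$ of the bases, and so over $\Gr^0\sO_{\sC_2}\to f_*\Gr^0\sO_{\sC_1}$. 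Hence the tensor product over $\Gr^0\sO_{\sC_i}$ makes sense.
\item Multiplicativity. The map is multiplicative because both factors are algebra maps.
\item Filtration. It preserves filtrations because $(\Gr^0F)_{dR}^\sharp$ preserves form degree and $j^\infty(F^\sharp)$ preserves the jet/ideal filtration.
\item Compatibility with differentials. For $\mathrm{d}_{\nabla^G}+L^{\nabla^G}_{Q}$, I use that $(\Gr^0F)_{dR}^\sharp$ is a cdga map (the tangent-algebroid example) and that $j^\infty(F^\sharp)$ intertwines Grothendieck connections (Propositions \ref{pullbackE}, \ref{resE}). For the term $\id\otimes j^\infty(Q_\sC)$, I use that $j^\infty$ is functorial for differential operators (Proposition \ref{liftE}) and $F^\sharp Q_{\sC_2}=Q_{\sC_1}F^\sharp$.
\end{enumerate}

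\textbf{Functoriality.} Preservation of identities and composition follows from functoriality of $(\cdot)_{dR}$, of $\Gr^0$, and of $F\mapsto j^\infty(F^\sharp)$. The last of these I expect to be part of Proposition \ref{pullbackE}, or to follow by checking on generators.

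\textbf{Naturality and weak equivalence.} Naturality of $j^\infty\colon\bfJ^\infty\Rightarrow\id$ is exactly the commutative square in the preceding proposition. Each component $(\id_M,j^\infty)\colon\sC_{jet}\to\sC$ is a weak equivalence by the proposition on $\sC_{jet}$, which in turn rests on Proposition \ref{fqifjse} applied to the quasi-dg vector bundle $\sO_\sC$ on global sections.

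\textbf{Main obstacle.} The hard step is compatibility of differentials for non-base-fixing $F$. There the Grothendieck connections live on different bases, so one must show
\[
\bigl(\mathrm{d}_{\nabla^G}\otimes 1\bigr)\circ j^\infty(F^\sharp)=\bigl((\Gr^0F)^\sharp_{dR}\otimes j^\infty(F^\sharp)\bigr)\circ \mathrm{d}_{\nabla^G}.
\]
I would first try to verify this in local coordinates, on generators $j^\infty(s)$ and on functions, where horizontality of jet prolongations reduces it to $j^\infty(F^\sharp)\circ j^\infty=j^\infty\circ F^\sharp$. The identity then extends by the Leibniz rule and completeness.
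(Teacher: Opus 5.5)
Your route is the paper's. There, Theorem \ref{jetfun} is proved only by assembling three ingredients: the proposition on $\sC_{jet}$ (fibrancy, and $(\id_M,j^\infty)$ a weak equivalence via Proposition \ref{fqifjse}), the definition of $\bfJ^\infty(F)$, and the commutative square that gives naturality. Your generator-level checks and the uniqueness argument for functoriality fill in what the paper calls straightforward.

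The one step that fails as written is Step 1. Proposition \ref{pullbackE} requires $F^\sharp\colon\sO_{\sC_2}\to f_*\sO_{\sC_1}$ to be a morphism of $\Gr^0\sO_{\sC_2}$-modules. Here $\Gr^0\sO_{\sC_i}$ acts on $\sO_{\sC_i}$ through the chosen splitting $s_i$; this is the module structure used by the inner tensor product in $\sJ^\infty_{\sO_{\sC_i}}=\sJ^\infty_{\Gr^0\sC_i}\wotimes\sO_{\sC_i}$. Because $F^\sharp$ is multiplicative, this linearity holds if and only if $F^\sharp\circ s_2=s_1\circ\Gr^0F^\sharp$, i.e.\ exactly when $F$ is base-preserving. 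Agreeing with $\Gr^0F^\sharp$ modulo the ideals does not suffice. The theorem concerns all of $\mathbf{QdgM}$, which contains non-base-preserving morphisms; the components $(\id_M,j^\infty)$ of your own natural transformation are examples. For such $F$, your Step 1 leaves $\bfJ^\infty(F)$ undefined, and the expression $(\Gr^0F)^\sharp_{dR}\otimes j^\infty(F^\sharp)$ has no meaning yet.

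To close the gap, construct the map directly. By Lemma \ref{prolongdenseE}, $\sJ^\infty_{\sO_{\sC_2}}$ is generated over $\Gr^0\sO_{\sC_2}$ (acting on the left, independently of the splitting) by $j^\infty(\sO_{\sC_2})$. So define $g\,j^\infty(a)\mapsto\Gr^0F^\sharp(g)\,j^\infty(F^\sharp(a))$. Then prove well-definedness by the chain-rule argument of Proposition \ref{pullback}, using that $F^\sharp$ is an algebra morphism rather than a module morphism.

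In local coordinates $(x_i,t_i)$ adapted to $s_i$, with $F^\sharp(x_2)=X(x_1,t_1)$, this map sends the jet coordinate $y_2$ to $X(x_1+y_1,t_1)-\Gr^0F^\sharp(x_2)$. This lies in $F^1$ and picks up $t_1$-dependence from the non-base-preserving part of $F^\sharp$. With the map in hand, your verifications in Step 4 and in your final paragraph go through verbatim, since they only test on these generators. The paper is equally terse at this point. In the base-preserving case, which is all that Corollary \ref{main2} uses, your argument is complete as it stands.
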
	
	
	It then follows from the $2$-out-of-$3$ property of weak equivalences that the jet space functor $\bfJ^\infty$ detects weak equivalences. 
	
	\begin{cor}\label{main2}
		The following composition
		\[
		\mathbf{Fib}\colon  \mathbf{L_\infty Algd} \xrightarrow{\bfCEAlgd} \mathbf{QdgM}^{\mathrm{bp}} \xrightarrow{~\mathbf{J}^\infty}~ \mathbf{QdgM}^{\mathrm{bp}}_{\mathrm{fib}} \xrightarrow{\bfCESp^{-1}} \mathbf{L_\infty Sp}
		\]
		detects weak equivalences. 
	\end{cor}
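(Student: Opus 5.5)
The plan is to reduce the statement to the three detection results already available: Theorem \ref{CE1} for $\bfCEAlgd$, Theorem \ref{jetfun} together with the remark following it for $\bfJ^\infty$, and Theorem \ref{CE2} for $\bfCESp$. A composite of functors each of which detects weak equivalences (a morphism is a weak equivalence if and only if its image is) again detects weak equivalences. So it suffices to check that each of the three arrows detects weak equivalences, with the notions of weak equivalence matching at each stage.

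\textbf{Step 1.} By Theorem \ref{CE1}, a morphism $\Phi$ of $L_\infty$ algebroids is a weak equivalence if and only if $\bfCEAlgd(\Phi)$ is a weak equivalence in $\mathbf{QdgM}^{\mathrm{bp}}$, i.e.\ a filtered quasi-isomorphism on global sections.

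\textbf{Step 2.} We check that $\bfJ^\infty$ detects weak equivalences. Let $F\colon \sC_1\to\sC_2$ be a base-preserving morphism. Naturality of $j^\infty$ (Theorem \ref{jetfun}) gives the commutative square
\[
F\circ j^\infty_{\sC_1} = j^\infty_{\sC_2}\circ \bfJ^\infty(F)
\]
in $\mathbf{QdgM}$. The vertical maps are weak equivalences, i.e.\ their global sections are filtered quasi-isomorphisms. Applying $\Gr$ to global sections turns this into a commutative square of cochain complexes whose vertical arrows are quasi-isomorphisms. By 2-out-of-3 for quasi-isomorphisms, $\Gr\,F^\sharp$ is a quasi-isomorphism exactly when $\Gr\,\bfJ^\infty(F)^\sharp$ is. Some care is needed here, since this functoriality only takes place in $\mathbf{QdgM}$: the maps $j^\infty$ are not base-preserving. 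However, weak equivalence is defined purely through global sections of the structure sheaves, so base-preservation plays no role in this argument. I also need that $\bfJ^\infty$ sends base-preserving morphisms to base-preserving ones. This follows from the explicit formula $(\Gr^0 F)^\sharp_{dR}\otimes j^\infty(F^\sharp)$, whose $\Gr^0$ part is $\Gr^0 F^\sharp$ and which respects the canonical splittings $\Gr^0\sO_{\sC_i}\cong \sO_{\sM_i}\hookrightarrow \fJ^\infty_{\sO_{\sC_i}}$.

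\textbf{Step 3.} By Theorem \ref{CE2}, $\bfCESp$ is an equivalence that detects weak equivalences. Hence so does any quasi-inverse $\bfCESp^{-1}$: the image of a morphism under $\bfCESp\circ\bfCESp^{-1}$ is the conjugate of that morphism by natural isomorphisms, and weak equivalences are invariant under composition with isomorphisms. Combining Steps 1–3 yields the corollary.

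The main obstacle is Step 2. There one must confirm that the source and target categories line up, namely that $\bfJ^\infty$ restricts to $\mathbf{QdgM}^{\mathrm{bp}}\to\mathbf{QdgM}^{\mathrm{bp}}_{\mathrm{fib}}$, and that the 2-out-of-3 argument for \emph{filtered} quasi-isomorphisms is legitimate. Both morphisms in the naturality square preserve filtrations, so $\Gr$ is functorial on the square, and the argument reduces to ordinary 2-out-of-3 for quasi-isomorphisms.
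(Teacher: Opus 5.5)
Your proposal is correct and follows the paper's own argument. The paper likewise derives the corollary from Theorems \ref{CE1} and \ref{CE2}, together with the remark that $\bfJ^\infty$ detects weak equivalences by 2-out-of-3 applied to the naturality square of $j^\infty$ from Theorem \ref{jetfun}. Your additional checks make explicit points the paper leaves implicit: that $\bfJ^\infty(F)$ is base-preserving, and that a quasi-inverse of $\bfCESp$ still detects weak equivalences.
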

	\begin{proof}
		This follows from Theorems \ref{CE1}, \ref{CE2}, and \ref{jetfun}.
	\end{proof}
	
	
	\appendix 
	
	\section{Differential operators and jets}
	\label{sec:doj}
	
	Let $\sM = (M, \sO_\sM)$ be a graded manifold.  In this appendix, we collect and prove some basic facts about differential operators and jets on $\sM$.
	
	\begin{defn}\label{DM}
		The \emph{sheaf of differential operators} $\sD_{\sM}$ on $\sM$ is defined by assigning to each open set $U \subset M$ the subalgebra of $\mathrm{End}_{\R}(\sO_{\sM}(U))$ generated by left multiplications by $\sO_{\sM}(U)$ and by vector fields $\sT_\sM(U)$. 
	\end{defn}
	$\sD_\sM$ is naturally a graded $(\sO_\sM,\sO_\sM)$-algebra. It carries a compatible descending filtration 
	\[
	\sD_\sM \supset \cdots \supset F^{-1} \sD_\sM \supset F^0 \sD_\sM \supset F^1 \sD_\sM = 0,
	\]
	given by $F^0 \sD_\sM= \sO_\sM$, and inductively,
	\[
	F^{-k} \sD_\sM = \left\{ D \in \sD_\sM \,\middle|\, [D, f] \in F^{-k+1} \sD_\sM \text{ for all } f \in \sO_\sM \right\}
	\]
	for $k > 0$, where $[D, f] = D \circ f - (-1)^{|f||D|}f \circ D$. In other words, $F^{-k}\sD_\sM$ consists of differential operators \emph{of order at most $k$}. There is a canonical isomorphism of graded $\sO_\sM$-modules
	\[
	\Gr^{-k} \sD_\sM \cong \Sym^k_{\sO_\sM} (\sT_\sM).
	\]
	This identification induces a map
	\[
	\sigma_k \colon F^{-k} \sD_\sM \longrightarrow \Sym^k_{\sO_\sM} (\sT_\sM),
	\]
	called the \emph{symbol map}. For $D \in F^{-k} \sD_\sM$, if $\sigma_k(D) \neq 0$, then $D$ is said to be \emph{of order $k$}. The element $\sigma_k(D) \in \Sym^k_{\sO_\sM} \sT_\sM$ is called the \emph{principal symbol} of $D$.
	
	$\sD_\sM$ is also a graded cocommutative $(\sO_\sM,\sO_\sM)$-coalgebra. The counit map $\epsilon\colon \sD_\sM \to \sO_\sM$ is defined by 
	\[
	\epsilon(D) \coloneqq D(1),
	\]
	and the coproduct map $\Delta\colon \sD_\sM \to \sD_\sM \otimes_{\sO_\sM} \sD_\sM$ is defined by 
	\[
	\Delta(D)(f \otimes_\R g) \coloneqq D(fg) 
	\]
	where we identify $\sD_\sM \otimes_{\sO_\sM} \sD_\sM$ with the set of bi-differential operators. Equivalently, $\Delta$ is characterized by the following formulas \cite{xu2001quantum}
	\begin{align}
		&\Delta(1) = 1 \otimes_{\sO_\sM} 1, \label{Delta1} \\
		&\Delta(X) =1 \otimes_{\sO_\sM} X + X \otimes_{\sO_\sM}1, \quad  \text{for~all~} X \in \sT_\sM, \label{DeltaX}\\
		&\Delta(D_1 \circ D_2) = \Delta(D_1) \circ \Delta(D_2), \quad \text{for~all~}D_1, D_2 \in \sD_\sM. \label{compprod}
	\end{align}
	Moreover, $\epsilon$ and $\Delta$ are compatible with the filtration on $\sD_\sM$:
	\[
	\epsilon(F^k \sD_\sM) \subset F^k \sO_\sM,
	\qquad 
	\Delta(F^k \sD_\sM)\subset \sum_{i+j=k} F^i \sD_\sM \otimes_{\sO_\sM} F^j \sD_\sM,
	\]
	where $\sO_\sM$ is equipped with the trivial filtration. Indeed, $F^\bullet \sD_\sM$ is precisely the $\sI$-adic filtration determined by the coideal $\sI \coloneqq F^0 \sD_\sM$:
	\[
	F^{-p} \sD_\sM
	=
	\left\{
	D \in \sD_\sM \;\middle|\;
	\Delta^{(p)}(D)
	\in \sum_{k=0}^p 
	\sD_\sM^{\otimes k}
	\otimes \sI
	\otimes \sD_\sM^{\otimes (p-k)}
	\right\}, 
	\]
	for all $p \in \N$,	where $\Delta^{(p)}$ denotes the $p$-fold iteration of $\Delta$.
	
	\begin{lem}
		There is a one-to-one correspondence between splittings
		\[
		\mathrm{pbw}^1 \colon \Sym^2_{\sO_\sM}(\sT_\sM) \longrightarrow F^{-2} \sD_\sM
		\]
		of the short exact sequence of graded (left) $\sO_\sM$-modules
		\[
		0 \longrightarrow F^{-1} \sD_\sM \longrightarrow F^{-2} \sD_\sM \longrightarrow \Sym^2_{\sO_\sM}(\sT_\sM) \longrightarrow 0,
		\]
		and torsion-free affine connections $\nabla$ on $\sM$.
	\end{lem}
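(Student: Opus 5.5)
The plan is to construct both directions of the correspondence explicitly and then check that they are mutually inverse. Throughout, write $\sigma_2\colon F^{-2}\sD_\sM \to \Sym^2_{\sO_\sM}(\sT_\sM)$ for the symbol map.

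\emph{From a splitting to a connection.} Given a splitting $\pbw^1$, set
\[
\nabla_X Y \coloneqq X\circ Y - \pbw^1(X\odot Y) \qquad \text{for } X,Y\in\sT_\sM,
\]
where $X\circ Y \in F^{-2}\sD_\sM$ is the composite and $X\odot Y$ is the graded symmetric product. The first check is that $\sigma_2(X\circ Y)=X\odot Y$, which follows from the identification $\Gr^{-2}\sD_\sM\cong\Sym^2_{\sO_\sM}(\sT_\sM)$. Hence $\nabla_XY \in F^{-1}\sD_\sM = \sO_\sM\oplus\sT_\sM$. Applying the operator to $1$ gives $(X\circ Y)(1)=0$, while $\pbw^1(X\odot Y)(1)$ need not vanish. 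So we must either normalize splittings so that $\epsilon\circ\pbw^1=0$ (natural for PBW-type maps), or project onto the $\sT_\sM$ component by subtracting $D(1)$. I expect the intended statement uses splittings compatible with the counit, or equivalently splittings modulo $\sO_\sM$. I would make this explicit by noting that $F^{-1}\sD_\sM\cong\sO_\sM\oplus\sT_\sM$ via $D\mapsto (D(1),\,D-D(1))$, and then taking the $\sT_\sM$-part.

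\emph{The connection axioms.} Left $\sO_\sM$-linearity of $\pbw^1$ gives $\nabla_{fX}Y=f\nabla_XY$. For the Leibniz rule,
\[
X\circ (fY) = X(f)\,Y + (-1)^{|X||f|} f\, X\circ Y,
\]
and $\pbw^1\bigl(X\odot fY\bigr)=(-1)^{|X||f|}f\,\pbw^1(X\odot Y)$. Subtracting yields
\[
\nabla_X(fY)=X(f)\,Y+(-1)^{|X||f|}f\,\nabla_XY.
\]
Torsion-freeness follows from graded symmetry: $\pbw^1(X\odot Y)=(-1)^{|X||Y|}\pbw^1(Y\odot X)$, so
\[
\nabla_XY-(-1)^{|X||Y|}\nabla_YX=[X,Y].
\]

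\emph{From a connection to a splitting.} Conversely, given a torsion-free $\nabla$, define
\[
\pbw^1(X\odot Y)\coloneqq \tfrac12\bigl(X\circ Y+(-1)^{|X||Y|}Y\circ X\bigr)-\tfrac12\bigl(\nabla_XY+(-1)^{|X||Y|}\nabla_YX\bigr).
\]
Torsion-freeness lets us rewrite this as $X\circ Y-\nabla_XY$. The right-hand side is $\R$-bilinear and graded symmetric, so the main step is to show it is $\sO_\sM$-bilinear, which is the same Leibniz computation run in reverse. Once that is done, the map descends to $\Sym^2_{\sO_\sM}(\sT_\sM)$. Locally $\sT_\sM$ is free, so this defines a well-defined module map, and its symbol is the identity, so it is a splitting.

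That the two constructions are mutually inverse is immediate from the formulas. The main obstacle is the counit/$\sO_\sM$-component issue in the first direction. I would resolve it by restricting to splittings with $\epsilon\circ\pbw^1=0$. Note that this does not lose generality if one reads the sequence modulo $F^0$, and I would state that convention.
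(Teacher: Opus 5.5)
Your proposal is correct and follows the same route as the paper, whose proof consists only of the explicit formula $\pbw^1(X\odot Y)=\tfrac12\bigl((X\circ Y-\nabla_XY)+(-1)^{|X||Y|}(Y\circ X-\nabla_YX)\bigr)$; your checks of $\sO_\sM$-bilinearity, of the symbol, and of the inverse $\nabla_XY=X\circ Y-\pbw^1(X\odot Y)$ supply the details the paper leaves implicit. Your normalization caveat is also genuinely needed: arbitrary splittings form an affine space over $\Hom_{\sO_\sM}(\Sym^2_{\sO_\sM}(\sT_\sM),\sO_\sM\oplus\sT_\sM)$, which is strictly larger than the space of torsion-free connections, and the paper's formula tacitly produces only splittings with $\epsilon\circ\pbw^1=0$ (since $(X\circ Y)(1)=(\nabla_XY)(1)=0$), so the bijection holds exactly with that convention, which is also the one forced by $\pbw$ being a coalgebra map.
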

	\begin{proof}
		The correspondence is given explicitly by the formula
		\begin{equation}\label{pbw1}
			\pbw^1(X \odot Y) = \frac{1}{2}\left((X \circ Y - \nabla_X Y) + (-1)^{|X||Y|}(Y \circ X - \nabla_Y X)\right),
		\end{equation}
		where $X \odot Y$ denotes the symmetric product between $X$ and $Y$.
	\end{proof}
	
	The formula \eqref{pbw1} extends inductively to maps
	\[
	\pbw^n \colon \Sym^{n+1}_{\sO_\sM}(\sT_\sM) \longrightarrow F^{-n-1} \sD_\sM.
	\]
	The map $\pbw^0 \colon \sT_\sM \to F^{-1} \sD_\sM \cong \sO_\sM \oplus \sT_\sM$ is the canonical splitting. For $n \ge 1$, we define
	\[
	\pbw^n(X_0 \odot X_1 \odot \cdots \odot X_n)
	=
	\frac{1}{n+1}
	\sum_{k=0}^n \epsilon_k \left(
	X_k \circ \pbw^{n-1}(X^{\{k\}})
	- \pbw^{n-1}\bigl(\nabla_{X_k}(X^{\{k\}})\bigr)
	\right),
	\]
	where $\nabla$ is a torsion-free affine connection on $\sM$, $X^{\{k\}} = X_0 \odot \cdots \odot X_{k-1} \odot X_{k+1} \odot \cdots \odot X_n$, and $\epsilon_k = (-1)^{|X_k| (|X_0| + \cdots + |X_{k-1}|)}$.

    \begin{prop}[Theorem 3.3 in \cite{liao2019formal}]\label{pbw}
    	The map
    	\[
    	\pbw\colon \Sym_{\sO_\sM} (\sT_\sM) \to \sD_\sM,
    	\] 
    	induced by a torsion-free affine connection $\nabla$ on $\sM$, is a filtered graded cocommutative $\sO_\sM$-coalgebra isomorphism.
    \end{prop}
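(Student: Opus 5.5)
The plan is to establish three properties of $\pbw$ in turn: it is compatible with the filtrations, it is a bijection, and it respects the coalgebra structures. The main tool is induction on the symmetric degree $n+1$, using the recursive definition of $\pbw^n$. $\sO_\sM$-linearity is checked along the way. The connection terms $\nabla_{X_k}$ are there precisely to cancel the non-tensorial pieces that arise when some $X_j$ is multiplied by a function $f$. Concretely, $X_k\circ \pbw^{n-1}(\cdots fX_j\cdots)$ produces $X_k(f)\,\pbw^{n-1}(\cdots)$, and this is cancelled by the Leibniz term in $\pbw^{n-1}(\nabla_{X_k}(fX_j))$. Linearity in the $X_k$ slot itself follows from the left $\sO_\sM$-module structure on $\sD_\sM$ and the tensoriality of $\nabla$ in its lower argument. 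Graded symmetry of $\pbw^n$ holds because the definition averages over $k$ with the Koszul signs $\epsilon_k$, together with the inductive symmetry of $\pbw^{n-1}$.

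For filtration and bijectivity, one checks by induction that $\pbw^n$ lands in $F^{-n-1}\sD_\sM$. The first term has order at most $n+1$, and the correction term has order at most $n$. One then shows that the symbol satisfies $\sigma_{n+1}\circ\pbw^n=\id$ on $\Sym^{n+1}_{\sO_\sM}(\sT_\sM)$. This holds because the principal symbol of $X_k\circ \pbw^{n-1}(X^{\{k\}})$ is $X_k\odot X^{\{k\}}=X_0\odot\cdots\odot X_n$, and the $n+1$ averaged terms give exactly that. Hence $\Gr\,\pbw$ is the identity under $\Gr^{-k}\sD_\sM\cong\Sym^k_{\sO_\sM}(\sT_\sM)$. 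Since both filtrations are exhaustive and bounded below in each order, a filtered map with bijective associated graded is bijective. This gives a filtered isomorphism of graded left $\sO_\sM$-modules.

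The coalgebra compatibility $\Delta\circ\pbw=(\pbw\otimes\pbw)\circ\Delta_{\Sym}$, where $\Delta_{\Sym}$ is the shuffle coproduct with $X$ primitive, is the main obstacle. I would prove it by induction using \eqref{DeltaX} and \eqref{compprod}. Since $\Delta(X_k)=X_k\otimes1+1\otimes X_k$ and $\Delta$ is multiplicative, $\Delta(X_k\circ D)$ splits into the action of $X_k$ on the left factor plus its action on the right factor of $\Delta(D)$. Assume inductively that $\Delta(D)=(\pbw\otimes\pbw)\Delta_{\Sym}(X^{\{k\}})$. The correction term $\pbw^{n-1}(\nabla_{X_k}X^{\{k\}})$ distributes because $\nabla_{X_k}$ acts as a derivation of $\Sym(\sT_\sM)$, which is compatible with the shuffle coproduct. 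Combining these, each summand $X_k\circ\pbw(S)-\pbw(\nabla_{X_k}S)$ for $S\subseteq X^{\{k\}}$ regroups into $(n+1)$-fold averages matching $|S|+1$. This is the delicate point: an unaveraged summand is not itself a $\pbw$ value, so the regrouping needs the combinatorial identity that averaging over which variable is peeled off commutes with splitting into two subsets. I would verify that identity first on monomials $X^{\odot(n+1)}$, where the recursion reduces to $\pbw(X^{n+1})=X\circ\pbw(X^n)-\pbw(\nabla_X X^n)$. I would then recover the general case by polarization, using the fact that $\Sym$ is spanned by powers over $\R$ locally in even/odd coordinates with appropriate sign care. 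Finally, counit compatibility is immediate: $\pbw^n(\cdots)(1)=0$ for $n\ge0$, since vector fields kill $1$. This agrees with the counit of $\Sym$ vanishing in positive degree.
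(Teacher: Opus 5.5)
\textbf{Overall assessment.} Your architecture is sound. It is essentially the standard inductive argument for PBW maps of this kind; the paper gives no proof of its own and cites Theorem~3.3 of Liao--Sti\'enon. The filtration and symbol steps are correct: $\pbw^n$ lands in $F^{-n-1}\sD_\sM$ with $\sigma_{n+1}\circ\pbw^n=\id$, so $\Gr\,\pbw$ is the identity and $\pbw$ is a filtered isomorphism. The $\sO_\sM$-linearity and graded symmetry follow by induction exactly as you describe. Coalgebra compatibility correctly reduces to regrouping terms, using $\Delta(X)=X\otimes 1+1\otimes X$, multiplicativity of $\Delta$, and the fact that $\nabla_X$ is a coderivation of the shuffle coproduct. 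The counit check is also fine.

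\textbf{The polarization step fails as written.} You plan to verify the regrouping on powers $X^{\odot(n+1)}$ and then polarize. On a graded manifold, $\sT_\sM$ has odd sections, and for odd $X$ one has $X\odot X=0$. So $\Sym^{n+1}_{\sO_\sM}(\sT_\sM)$ is not spanned by $(n+1)$-st powers. For example, for odd $X,Y$ every square $(aX+bY)^{\odot 2}$ with $a,b\in\R$ vanishes, while $X\odot Y\neq 0$. Rescuing polarization would require adjoining auxiliary odd parameters and extending $\pbw$, $\Delta$ and $\nabla$ linearly over them, which you have not set up.

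\textbf{The detour is unnecessary.} The regrouping you isolated closes immediately in the averaged form. Write $X_A$ for the ordered product $\bigodot_{i\in A}X_i$, and for $k\in A$ set
\[
Q_k(A)\coloneqq \epsilon_k(A)\bigl(X_k\circ\pbw(X_{A\setminus\{k\}})-\pbw(\nabla_{X_k}X_{A\setminus\{k\}})\bigr),
\]
where $\epsilon_k(A)$ is the Koszul sign for moving $X_k$ to the front. The defining formula then reads $\pbw(X_A)=|A|^{-1}\sum_{k\in A}Q_k(A)$.

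Now feed the inductive hypothesis into your expansion of $\Delta$ on the $k$-th summand. Each unshuffle $(A,B)$ of $\{0,\dots,n\}$ receives contributions from $k\in A$ (left factor) and from $k\in B$ (right factor). They all carry the same unshuffle sign $\epsilon(A,B)$ of $\Delta_{\Sym}$, because the two ways of reaching $X_kX_SX_T$ or $X_SX_kX_T$ realize the same permutation. The total contribution is
\[
\sum_{k\in A}Q_k(A)\otimes\pbw(X_B)+\sum_{k\in B}\pbw(X_A)\otimes Q_k(B)=\bigl(|A|+|B|\bigr)\,\pbw(X_A)\otimes\pbw(X_B).
\]
Since $|A|+|B|=n+1$, this cancels the prefactor $\frac{1}{n+1}$. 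That proves the induction step directly on arbitrary graded monomials, with all Koszul signs handled automatically and no polarization needed.
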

   The converse of the above statement does not hold.
	
	\begin{defn}
		The \emph{sheaf of $k$-jets} $\sJ^k_\sM$ on $\sM$ is defined as 
		\[
		\sJ^k_\sM \coloneqq F^{-k} \sD_\sM^\vee = \Hom_{\sO_\sM}(F^{-k} \sD_\sM, \sO_\sM),
		\]
		where $\Hom_{\sO_\sM}$ denotes the internal Hom in the category of graded left $\sO_\sM$-modules. 
		
		$\{\sJ^k_\sM\}_{k \geq 0}$ together with the canonical epimorphisms $\{\sJ^j_\sM \to \sJ^i_\sM\}_{j \geq i \geq 0}$ forms an inverse system. The \emph{sheaf of $\infty$-jets} $\sJ^\infty_\sM$ on $\sM$ is defined as the inverse limit
		\[
		\sJ^\infty_\sM \coloneqq \varprojlim \sJ^k_\sM =  \Hom_{\sO_\sM}(\varinjlim F^{-k} \sD_\sM, \sO_\sM) = \sD_\sM^\vee.
		\]
	\end{defn}
	
	By definition, $\sJ^\infty_\sM$ carries an $(\sO_\sM, \sO_\sM)$-bimodule structure induced by the one on $\sD_\sM$. It also carries a descending filtration
	\[
	\sJ^\infty_\sM = F^0 \sJ^\infty_\sM \supset F^1 \sJ^\infty_\sM \supset \cdots,
	\]
	induced by the filtration on $\sD_\sM$. More precisely, 
	\[
	F^k \sJ^\infty_\sM = \{ J \in \sJ^\infty_\sM\colon J(F^l \sD_\sM) \subset F^{k+l} \sO_\sM\}.
	\] 
	There are canonical graded $\sO_\sM$-module isomorphisms:
	\[
	\Gr^k \sJ^\infty_\sM \cong \Sym^k_{\sO_\sM}(\sT_\sM^\vee).
	\]
	
	Moreover, $ \sJ^\infty_\sM $ is a filtered graded commutative $(\sO_\sM, \sO_\sM)$-algebra, whose product map $\sJ^\infty_\sM \,\wotimes_{\sO_\sM}\, \sJ^\infty_\sM \to \sJ^\infty_\sM$ is defined as the dual of the coproduct $ \Delta $ on $ \sD_\sM $
	\[
    (J_1 \cdot J_2)(D) = (J_1 \,\wotimes_{\sO_\sM}\, J_2) (\Delta D),
	\]
	where $\wotimes_{\sO_\sM}$ denotes the completed tensor product over $\sO_\sM$, and we use the identification
	\[
	\Hom_{\sO_\sM}(\sD_\sM \otimes_{\sO_\sM} \sD_\sM, \sO_\sM) = \sJ^\infty_\sM \,\wotimes_{\sO_\sM}\, \sJ^\infty_\sM.
	\]
	Likewise, $F^\bullet \sJ^\infty_\sM$ is precisely the $\sI$-adic filtration determined by the ideal $\sI \coloneqq F^1 \sJ^\infty_\sM$:
	\[
	F^p \sJ^\infty_\sM = \sI^p,
	\]
	for all $p \in \N$.
	
	\begin{defn}
		A \emph{formal exponential map} on $\sM$ is a filtered graded commutative $\sO_\sM$-algebra isomorphism
		\[
		\fem\colon \sJ^\infty_\sM \xlongrightarrow{\cong} \widehat{\Sym}_{\sO_\sM} (\sT_\sM^\vee).
		\]
	\end{defn}
	
	Each formal exponential map on $\sM$ corresponds to a filtered graded cocommutative $\sO_\sM$-coalgebra isomorphism 
	\[
	\Sym_{\sO_\sM} (\sT_\sM) \xrightarrow{\cong} \sD_\sM,
	\]
    and vice versa. In particular, each torsion free affine connection on $\sM$ determines a formal exponential map on $\sM$.
	
	\begin{rmk}
		A formal exponential map on an ordinary manifold $M$ can also be defined as an equivalence class $[\phi]$ of generalized exponential maps; we follow the exposition of \cites{bonechi2012poisson,cattaneo2019globalization}.
		
		Recall that a \emph{generalized exponential map} on $M$ is a smooth map  
		\[
		\phi\colon U \to M, \quad (x, y \in U_x) \mapsto \phi_x(y),
		\]
		where $U$ is an open neighborhood of the zero section in $TM$, such that for all $x \in M$:
		\[
		\phi_x(0) = x, \quad d_y \phi_x|_{y=0} = \mathrm{id}.
		\]
		Two generalized exponential maps are said to be equivalent if, at each point of $M$, all their partial derivatives in the vertical directions agree at the zero section. 
		
		Given such an equivalence class $[\phi]$, one obtains a morphism
		\[
		\fem\colon J^\infty_M \to \widehat{\Sym}_{C^\infty_M} (T_M^\vee)
		\]
		by setting
		\[
		\fem([f])(X_1, \dots, X_n)(x) = \frac{d}{dt_1}\bigg|_{t_1=0} \cdots \frac{d}{dt_n}\bigg|_{t_n=0} f_x(\phi_x(t_1 X_1(x) + \cdots + t_n X_n(x))),
		\]
		where $[f]$ is a section of $\sJ^\infty_M$, and $f_x$ is a local smooth function representing $[f]$ at $x \in M$.\footnote{Such an $f$ exists by Borel's lemma.}
		It is straightforward to check that $\fem$ is a filtered graded commutative algebra isomorphism over $C^\infty_M$.
	\end{rmk}

	\begin{defn}
		The \emph{$k$-th (resp. infinite) jet prolongation} is the $\R$-linear morphism
		\[
		j^k\colon \sO_\sM \to \sJ^k_\sM \quad \text{(resp. } j^\infty\colon \sO_\sM \to \sJ^\infty_\sM\text{)},
		\]
		defined by the evaluation map
		\[
		f \mapsto (D \mapsto (-1)^{|D||f|} D(f)),
		\]
		where $D$ ranges over differential operators of order $\leq k$ (resp. arbitrary order).
	\end{defn}
	
	\begin{lem}\label{prol}
		$j^\infty$ is a filtered graded commutative algebra morphism. Moreover, it induces the canonical right $\sO_\sM$-module structure of $\sJ^\infty_\sM$, i.e.,
		\begin{align}\label{jdf}
			(Jj^\infty(f))(D)=(-1)^{|D||f|}J(D \circ f),
		\end{align}
		where $J$ is an $\infty$-jet, $D$ is a differential operator, and $f$ is a function on $\sM$.
	\end{lem}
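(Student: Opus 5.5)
The plan is to verify the three claims of Lemma \ref{prol} directly from the definitions. First I check that $j^\infty$ takes values in $\sJ^\infty_\sM$: the map $D \mapsto (-1)^{|D||f|}D(f)$ is $\sO_\sM$-linear for the left module structure. This holds because $(gD)(f)=g\,D(f)$, and the sign $(-1)^{|g||f|}$ coming from $|gD|=|g|+|D|$ is exactly the Koszul sign for moving $g$ past $f$. The same computation restricted to $F^{-k}\sD_\sM$ shows that $j^k$ is compatible with the inverse system, so $j^\infty=\varprojlim j^k$.

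\textbf{Multiplicativity.} I need
\[
j^\infty(fg)(D)=\bigl(j^\infty(f)\,\wotimes\, j^\infty(g)\bigr)(\Delta D).
\]
By definition of $\Delta$ through bi-differential operators, $\Delta(D)(f\otimes g)=D(fg)$. The left side is $(-1)^{|D|(|f|+|g|)}D(fg)$. Writing $\Delta D=\sum D'\otimes D''$, the right side is
\[
\sum \pm D'(f)\,D''(g),
\]
and the Koszul signs from the pairing together with the signs in the definition of $j^\infty$ should combine to $(-1)^{|D|(|f|+|g|)}$. To make this check transparent I would verify it on generators using \eqref{Delta1}, \eqref{DeltaX} and \eqref{compprod}: for $D=1$ it is trivial, for $D=X$ it is the graded Leibniz rule, and the compatibility of $\Delta$ with composition propagates the identity to all of $\sD_\sM$. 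Unitality is immediate, since $j^\infty(1)(D)=D(1)=\epsilon(D)$ is the unit of $\sJ^\infty_\sM$ as the dual of the counit.

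\textbf{Filtration.} I will argue that $j^\infty$ preserves filtrations, with $\sO_\sM$ trivially filtered, which amounts to $j^\infty(\sO_\sM)\subset F^0\sJ^\infty_\sM=\sJ^\infty_\sM$. Since $F^0$ is the whole sheaf, this is automatic.

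\textbf{Formula \eqref{jdf}.} The right $\sO_\sM$-module structure on $\sJ^\infty_\sM$ is dual to the right structure on $\sD_\sM$, namely $(J\cdot f)(D)=(-1)^{|D||f|}J(D\circ f)$. So I must show that multiplying by $j^\infty(f)$ in the algebra $\sJ^\infty_\sM$ agrees with this action, i.e.
\[
(J_1 \,\wotimes\, j^\infty(f))(\Delta D)=(-1)^{|D||f|}J(D\circ f).
\]
I would use induction on the order of $D$, or reduce to generators $D=g\,X_1\circ\cdots\circ X_k$ via \eqref{compprod}. The key identity is the dual statement in $\sD_\sM$:
\[
D\circ f=\sum \pm\, D'(f)\,D'',
\]
where $D'(f)$ acts by left multiplication. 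This is exactly how the coproduct encodes the generalized Leibniz rule: apply both sides to $g$ and use $\Delta(D)(f\otimes g)=D(fg)$. Pairing this identity with $J$ and using left $\sO_\sM$-linearity of $J$ gives \eqref{jdf}.

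\textbf{Expected obstacle.} I expect the main difficulty to be sign bookkeeping in the graded setting, in particular the consistent use of the identification $\Hom(\sD_\sM\otimes\sD_\sM,\sO_\sM)\cong \sJ^\infty_\sM\wotimes\sJ^\infty_\sM$. I would fix its sign convention once and check the degree-one case carefully.
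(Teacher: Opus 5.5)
Your proof is correct, but it is organized differently from the paper's. The paper proves \eqref{jdf} first, by induction on the order of $D$. The case $D=g$ comes from \eqref{Delta1}, and the passage from $D$ to $D\circ X$ uses \eqref{DeltaX} and \eqref{compprod}. Multiplicativity is then a one-line corollary, obtained by taking $J=j^\infty(g)$ in \eqref{jdf}.

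You instead use the bi-differential description $\Delta(D)(f\otimes g)=D(fg)$ directly. Multiplicativity is this identity after matching Koszul signs; with the natural conventions they do combine to $(-1)^{|D|(|f|+|g|)}$. For \eqref{jdf}, you use the operator identity $D\circ f=\sum\pm D''(f)\,D'$, where $\Delta D=\sum D'\otimes D''$. You check it by evaluating both sides on functions, which is legitimate because $\sD_\sM$ is by definition a subalgebra of $\End_\R(\sO_\sM)$. You then pair with $J$ and use its left $\sO_\sM$-linearity.

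One detail on factor order: the factor evaluated on $f$ has to be the one paired with $j^\infty(f)$, i.e.\ the second tensor factor. So either state the identity with $D''(f)D'$ rather than $D'(f)D''$, or invoke cocommutativity of $\Delta$ to swap factors. The $D''(f)D'$ version also follows by evaluation, using $D(fg)=(-1)^{|f||g|}D(gf)$.

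Comparing the two routes: yours avoids induction and relies on the faithfulness of the action on $\sO_\sM$. The paper's needs only the recursive characterization \eqref{Delta1}--\eqref{compprod} and gets multiplicativity for free. You could shorten your argument by deducing multiplicativity from \eqref{jdf} in the same way. You also make explicit two points the paper leaves implicit: that $j^\infty(f)$ is left $\sO_\sM$-linear, hence a jet, and that the filtration condition is vacuous for the trivial filtration on $\sO_\sM$.
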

	\begin{proof}
		Let $g$ be a function and $X$ a vector field over $\sM$. Using \eqref{Delta1}, we obtain
		\[
		(Jj^\infty(f))(g)=J \otimes j^\infty(f)(\Delta g) = J(1)j^\infty(f)(g)= J(1)fg = (-1)^{|g||f|}J(gf).
		\]
		Now assume that $D$ satisfies \eqref{jdf} for all $\infty$-jets and functions. Using \eqref{DeltaX} and \eqref{compprod}, we compute
		\begin{align*}
			(Jj^\infty(f))(D \circ X)&= J \otimes_\R j^\infty(f)(\Delta D \circ (1 \otimes  X + X \otimes 1) )\\
			&=(-1)^{|X|(|D|+|f|)}\left((Jj^\infty(X(f)))(D) + (J_Xj^\infty(f))(D)\right) \\
			&= (-1)^{(|D|+|X|)|f|} (J(D \circ X(f))+(-1)^{|X||f|}D \circ f \circ X) \\
			&= (-1)^{(|D|+|X|)|f|}J(D\circ X \circ f),
		\end{align*}
		where $J_X(\cdot)\coloneqq J(\cdot \circ X) \in \sJ^\infty_\sM$. By induction, \eqref{jdf} holds for all differential operators. It follows that
		\[
		(j^\infty(g)j^\infty(f))(D)=(-1)^{|D||f|}j^\infty(g)(D\circ f) = (-1)^{|D||f|+|g|(|D|+|f|)}D(fg) = j^\infty(gf)(D).
		\]
		This completes the proof.
	\end{proof}
	
	It is useful to give a local description of $j^\infty$. Let $\{x^\mu\}$ be local coordinates on $\sM$, and let $\{y^\nu\}$ denote the corresponding fiber coordinates on $\sT_\sM^\vee$. Then, for a local function $f(x)$,
	\begin{equation}\label{taylor}
		j^\infty(f)(x,y)
		=
		\sum_{\alpha}
		\frac{1}{\alpha!}\,\frac{\partial^\alpha f(x)}{\partial x^\alpha}\, y^\alpha,
	\end{equation}
	where $\alpha = (k_1,\dots,k_n; k_{n+1},\dots,k_{n+m}) \in \mathbb{N}^n \times \{0,1\}^m$ is a multi-index, with $n|m$ the superdimension of $\sM$. (We assume that the coordinates $x^1,\dots,x^n$ are even, while $x^{n+1},\dots,x^{n+m}$ are odd.) We set
	\[
	\alpha! = k_1!\cdots k_n!,
	\qquad
	\frac{\partial^\alpha}{\partial x^\alpha}
	=
	\frac{\partial^{k_{n+m}}}{(\partial x^{n+m})^{k_{n+m}}}
	\circ \cdots \circ
	\frac{\partial^{k_1}}{(\partial x^1)^{k_1}},
	\qquad
	y^\alpha = (y^1)^{k_1}\cdots (y^{n+m})^{k_{n+m}}.
	\]
	
	In other words, under the local identification of $\sJ^\infty_\sM$ with $\widehat{\Sym}_{\sO_\sM}(\sT_\sM^\vee)$ induced by a trivial local affine connection, $j^\infty(f)$ is the Taylor expansion of $f$.
	
	\begin{lem}\label{prolongdense}
		The graded $\sO_\sM$-module $\sJ^\infty_\sM$ is generated by $j^\infty(\sO_\sM)$ over $\sO_\sM$.
	\end{lem}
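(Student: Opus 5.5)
The claim is that $\sJ^\infty_\sM$ is generated over $\sO_\sM$ by $j^\infty(\sO_\sM)$. I will work locally and then glue. Since $\sJ^\infty_\sM$ is complete, "generated" should be read in the completed sense: topologically generated for the filtration $F^\bullet\sJ^\infty_\sM$, i.e.\ the $\sO_\sM$-submodule spanned by $j^\infty(\sO_\sM)$ is dense. This matches the label of the lemma. If a finite statement is wanted, I will also aim for it in coordinates, as explained below.

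The first step is to fix a coordinate chart $U$ with coordinates $\{x^\mu\}$. Using the trivial local affine connection, I identify $\sJ^\infty_\sM|_U \cong \wSym_{\sO_\sM}(\sT_\sM^\vee)|_U$, with fiber coordinates $y^\mu$. By the Taylor formula \eqref{taylor} we have $j^\infty(x^\mu) = x^\mu + y^\mu$. Here I use the left $\sO_\sM$-module structure, under which $x^\mu$ acts as multiplication by the function $x^\mu$. Hence
\[
y^\mu = j^\infty(x^\mu) - x^\mu\cdot j^\infty(1) \in \sO_\sM\cdot j^\infty(\sO_\sM).
\]
By Lemma \ref{prol}, $j^\infty$ is an algebra morphism. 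Therefore
\[
\prod_\mu\bigl(j^\infty(x^\mu)-x^\mu\bigr)^{k_\mu} = j^\infty\Bigl(\prod_\mu (x-x_0)^{\ldots}\Bigr),
\]
but with $x$ appearing as a coefficient. More carefully: expanding $y^\alpha=\prod(j^\infty(x^\mu)-x^\mu)^{k_\mu}$ binomially gives a finite sum of terms $c(x)\, j^\infty(x^\beta)$ with $c\in\sO_\sM$, using again that $j^\infty$ is multiplicative and that the left and right $\sO_\sM$-actions commute. Thus every monomial $y^\alpha$ lies in the $\sO_\sM$-span of $j^\infty(\sO_\sM)$. Consequently every element of $\Sym_{\sO_\sM}(\sT_\sM^\vee)|_U$, and hence every finite-order truncation of a jet, does as well. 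This already gives that each $\sJ^k_\sM$ is generated by $j^k(\sO_\sM)$.

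The next step is the passage to $\sJ^\infty_\sM$. A general jet is $\sum_\alpha c_\alpha(x) y^\alpha$, an infinite sum. For the dense statement, the finite truncations converge in the complete $F^\bullet$-filtration, which finishes the local argument. To obtain honest finite generation, I would instead use a Borel-type argument, as in the footnote to the remark on generalized exponential maps. The even part is handled by Borel's lemma with parameters: given coefficients $c_\alpha(x)$, choose a smooth function $g(x,z)$ whose Taylor coefficients in $z$ at $z=0$ are $c_\alpha(x)$. The odd part is polynomial and so causes no difficulty. Then realize the jet as the restriction to the diagonal of $j^\infty$ applied in the second variable. Expressing this restriction as an $\sO_\sM$-combination of finitely many $j^\infty(f_i)$ is the step I expect to be the main obstacle, since it amounts to a Malgrange/Whitney-type division. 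Because of this, I would settle for the topological (dense) statement unless the subsequent use requires more.

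Finally, I glue. The subsheaf generated by $j^\infty(\sO_\sM)$ is defined intrinsically, and the local statement holds on a cover. For global sections I would use a partition of unity on $M$: write $J=\sum_i \chi_i J$, and express each $\chi_i J$ via local generators extended by the cutoff functions. The generation statement then holds at the level of sheaves, which is exactly what Lemma \ref{prolongdense} asserts.
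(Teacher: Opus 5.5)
Your argument is essentially the paper's: in a chart you use the Taylor formula \eqref{taylor} to show that every monomial $y^\alpha$ lies in the $\sO_\sM$-span of $j^\infty(\sO_\sM)$. The paper gets this from the triangularity $j^\infty(x^\alpha)=y^\alpha+\cdots$, you get it by expanding $\prod_\mu(j^\infty(x^\mu)-x^\mu)^{k_\mu}$, and in both cases the conclusion is generation in the filtration-complete (dense) sense. Stopping there is the right call, and you should drop the Borel/Malgrange detour, because finite generation is in fact false: on an interval $U\subset\R$, an identity $\sum_{i=1}^N g_i f_i^{(n)}=n!\,c_n$ with constant $c_n$ differentiates to $\sum_i g_i^{(k)}f_i^{(n)}=(-1)^k(n+k)!\,c_{n+k}$, which forces the Hankel matrix $\bigl((n+k)!\,c_{n+k}\bigr)_{k,n}$ to have rank at most $N$, and this fails for the constant jet $\sum_m y^m/(m!)^2$.
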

	
	\begin{proof}
		Since everything is smooth, it suffices to verify the statement locally. By \eqref{taylor}, all monomials $y^\alpha$ lie in the local $\sO_\sM$-span of $j^\infty(\sO_\sM)$, since
		\[
		j^\infty(x^\alpha) = y^\alpha + \cdots.
		\]
		This completes the proof.
	\end{proof}
	
	Let $f\colon \sM \rightarrow \sN$ be a morphism of graded manifolds.
	
	\begin{prop}\label{pullback}
		There exists a unique filtered graded commutative algebra morphism
		\[
		j^\infty(f^\sharp)\colon \sJ^\infty_\sN \longrightarrow f_* \sJ^\infty_\sM,
		\]
		such that the following diagram 
		\[
		\begin{tikzcd}[column sep=50 pt]
			\sJ^\infty_\sN \arrow[r, "j^\infty(f^\sharp)"] & f_*\sJ^\infty_\sM \\
			\sO_\sN \arrow[u, "j^\infty"] \arrow[r, "f^\sharp"] & f_*\sO_\sM \arrow[u, "f_* j^\infty"]
		\end{tikzcd}
		\]
		commutes.
	\end{prop}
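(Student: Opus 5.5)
Uniqueness comes first, and it is immediate from Lemma \ref{prolongdense}. Suppose $\Psi\colon \sJ^\infty_\sN \to f_*\sJ^\infty_\sM$ is a filtered graded commutative algebra morphism making the square commute. By the requirement of Definition-level compatibility with the $\sO$-module structures, we take $\Psi$ to be $f^\sharp$-semilinear for the left $\sO_\sN$-action, i.e.\ $\Psi(gJ)=f^\sharp(g)\Psi(J)$. Then $\Psi$ is determined on the $\sO_\sN$-span of $j^\infty(\sO_\sN)$, where it must equal $g\,j^\infty(h)\mapsto f^\sharp(g)\,j^\infty(f^\sharp h)$. Lemma \ref{prolongdense} says this span is all of $\sJ^\infty_\sN$. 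Locally, and after passing to the completion, continuity with respect to the filtration suffices if one only has density.

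For existence, I would dualize. The morphism $f^\sharp$ induces an $\sO_\sM$-linear map on differential operators along $f$,
\[
Tf^{(\infty)}\colon \sD_\sM \longrightarrow f^*\sD_\sN \cong \sO_\sM\otimes_{f^{-1}\sO_\sN} f^{-1}\sD_\sN,
\]
characterized by $D\mapsto \bigl(h\mapsto D(f^\sharp h)\bigr)$. Here we identify $f^*\sD_\sN$ with $\sO_\sM$-valued differential operators along $f$. This identification can be checked locally in coordinates, exactly as in \eqref{pulltan}, because $\sD_\sN$ is filtered by locally free pieces $F^{-k}\sD_\sN$ and Lemma \ref{globpull} applies to each of them. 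We then define
\[
j^\infty(f^\sharp)(J)(D) \coloneqq \bigl(f^*J\bigr)\bigl(Tf^{(\infty)}(D)\bigr),
\]
where $f^*J\colon f^*\sD_\sN\to\sO_\sM$ is the $\sO_\sM$-linear extension of $J$ composed with $f^\sharp$.

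The following are routine checks:
\begin{enumerate}
\item Order is preserved: $Tf^{(\infty)}(F^{-k}\sD_\sM)\subset f^*F^{-k}\sD_\sN$, by induction via $[D,g]$. This gives filtration preservation.
\item The square commutes: $j^\infty(f^\sharp)(j^\infty h)(D)=\pm D(f^\sharp h)=j^\infty(f^\sharp h)(D)$.
\item Multiplicativity holds. It is dual to the statement that $Tf^{(\infty)}$ is a coalgebra map, $\Delta\circ Tf^{(\infty)}=(Tf^{(\infty)}\otimes Tf^{(\infty)})\circ\Delta$. This follows from $D(f^\sharp(h_1h_2))=D(f^\sharp h_1\, f^\sharp h_2)$ and the description of $\Delta$ by bi-differential operators.
\end{enumerate}

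A shortcut for multiplicativity is to check it only on the generators $g\,j^\infty(h)$, using Lemma \ref{prol}, where it is immediate.

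The main obstacle is the identification of $f^*\sD_\sN$ (and its completed dual) with operators along $f$, including the signs. I expect the most solid route to be local: in coordinates $x$ on $\sM$ and $z$ on $\sN$, define $j^\infty(f^\sharp)$ as the substitution $y_\sN \mapsto$ (Taylor expansion of $f^\sharp z$ at $x+y_\sM$) minus $f^\sharp z$, using \eqref{taylor}. This is clearly a filtered algebra map compatible with $j^\infty$, since it is composition of formal power series. By the uniqueness already shown, the local definitions glue to a global morphism.
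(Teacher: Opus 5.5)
Your proposal is correct, but for existence it takes a genuinely different route from the paper.

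The paper works on the generator side. It sets $j^\infty(f^\sharp)(g)=f^\sharp(g)$ and $j^\infty(f^\sharp)(j^\infty(h))=j^\infty(f^\sharp h)$. It then checks that every relation $\sum_i g_i\,j^\infty(h_i)=0$, i.e.\ $\sum_i g_i\,\partial_z^\alpha h_i=0$ for all $\alpha$, is sent to a relation. For this it uses the chain rule, displayed only at first order, to write $\partial_x^\alpha(f^\sharp h)$ as an $\sO_\sM$-combination of the $f^\sharp(\partial_z^\beta h)$. Multiplicativity and preservation of the filtration are left implicit.

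You work on the dual side and transpose $D\mapsto D\circ f^\sharp$, viewed as a map $\sD_\sM\to f^*\sD_\sN$. The analytic input is the same chain rule, which is exactly what puts $D\circ f^\sharp$ in $\sO_\sM\cdot(f^\sharp\circ\sD_\sN)$. Once this is packaged as a coalgebra map that does not raise order, three things come for free:
\begin{itemize}
\item well-definedness is automatic;
\item filtration preservation becomes ``order does not go up'';
\item multiplicativity is the transpose of $\Delta\circ Tf^{(\infty)}=(Tf^{(\infty)}\otimes Tf^{(\infty)})\circ\Delta$.
\end{itemize}

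The cost is the identification of $f^*\sD_\sN$, and of its tensor square, with (bi)differential operators along $f$. This needs injectivity as well as the chain rule. Injectivity is a local check: test $\sum_\beta c_\beta\,f^\sharp\circ\partial_z^\beta=0$ on $(z-f^\sharp z)^\gamma$ after extending $\sO_\sM$-linearly to $\sO_\sM[z]$. It can even be bypassed, because $J(D)$ depends only on a finite truncation of $J$, and that truncation lies in the span of the $g\,j^\infty(h)$. Lemma~\ref{globpull} concerns global sections and is not the relevant input here. Your local fallback, $y_\sN\mapsto j^\infty(f^\sharp z)-f^\sharp z$, is precisely the formula in the paper's remark after the proposition, and gluing it by uniqueness is sound.

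Your explicit $f^\sharp$-semilinearity hypothesis in the uniqueness step is not merely a convenience: without it, uniqueness fails. Take $\sJ^\infty_{\R}\cong C^\infty(\R)[[y]]$ with $f=\id$, and the filtered algebra automorphism $F(x,y)\mapsto F(x-y,2y)$, with coefficients Taylor-expanded in $y$. It fixes every $j^\infty(h)=h(x+y)$ but moves the left copy of $C^\infty(\R)$. So the statement is really about morphisms that restrict to $f^\sharp$ on $\sO_\sN$. The paper tacitly imposes this by declaring $j^\infty(f^\sharp)(g)=f^\sharp(g)$. Your remark that density plus continuity suffices is also apt, since the proof of Lemma~\ref{prolongdense} only exhibits the polynomial jets in the span.
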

	
	\begin{proof}
		Since the image $j^\infty(\sO_\sN)$ generates $\sJ^\infty_\sN$, we define $j^\infty(f^\sharp)$ simply by setting
		\[
		j^\infty(f^\sharp)(g) = f^\sharp(g), 
		\qquad 
		j^\infty(f^\sharp)\bigl(j^\infty(g)\bigr) = j^\infty\bigl(f^\sharp(g)\bigr),
		\]
		for all $g \in \sO_\sN$. To check well-definedness, it suffices to show that
		\[
		\sum_{i \in I} f^\sharp(g_i)\, j^\infty\bigl(f^\sharp(h_i)\bigr) = 0
		\quad \text{whenever} \quad
		\sum_{i \in I} g_i\, j^\infty(h_i) = 0.
		\]
		This follows from the compatibility of $f^\sharp$ with derivations. Indeed, locally,
		\[
		f^\sharp(g)\, \frac{\partial}{\partial x_\sM^\mu} f^\sharp(h_i)
		=
		f^\sharp(g)\, \frac{\partial f^\sharp(x_\sN^\nu)}{\partial x_\sM^\mu}\,
		f^\sharp\!\left(\frac{\partial h_i}{\partial x_\sN^\nu}\right)
		=
		\pm \frac{\partial f^\sharp(x_\sN^\nu)}{\partial x_\sM^\mu}\,
		f^\sharp\!\left(g\, \frac{\partial h_i}{\partial x_\sN^\nu}\right),
		\]
		which implies the claim. Here, $x_\sM^\mu$ and $x_\sN^\nu$ are coordinates on $\sM$ and $\sN$, respectively.
	\end{proof}
	
	\begin{rmk}
		Let $y_\sM^\mu$ and $y_\sN^\nu$ denote the fiber coordinates on $\sT_\sM^\vee$ and $\sT_\sN^\vee$, respectively. By Proposition \ref{pullback}, one obtains the local expression
		\[
		j^\infty(f^\sharp)(y_\sN^\mu)
		=
		j^\infty\bigl(f^\sharp(x_\sN^\mu)\bigr) - f^\sharp(x_\sN^\mu)
		=
		\sum_{|\alpha| > 0} \frac{y_\sM^\alpha}{\alpha!}
		\frac{\partial^\alpha f^\sharp(x_\sN^\mu)}{\partial x_\sM^\alpha}.
		\]
	\end{rmk}
	
	 Let $\sE$ be a filtered graded vector bundle on $\sM$.	 
	 \begin{defn}
	 	The \emph{sheaf of differential operators} on $\sE$ is defined as
	 	\[
	 	\sD_\sE\coloneqq  \sE \,\wotimes_{\sO_\sM}\, \sD_\sM \,\wotimes_{\sO_\sM}\, \sE^\vee.
	 	\]
	 \end{defn}
	 
	 Let $k \in \N$ and $p \in \Z$. Sections of 
	 \[
	 F_O^{-k} \sD_\sE\coloneqq\sE \,\wotimes_{\sO_\sM}\, F^{-k}\sD_\sM \,\wotimes_{\sO_\sM}\, \sE^\vee 
	 \]
	 are differential operators \emph{of order at most $k$} on $\sE$, while sections of 
	 \[
	 F_I^p \sD_\sE \coloneqq \{D_\sE \in \sD_\sE\colon D_\sE(F^\bullet \sE)
	 \footnote{Let $D_\sE = e \otimes D \otimes e^\vee \in \sD_\sE$ and $s \in \sE$. Then $D_\sE$ acts on $s$ by
	 	\[
	 	D_\sE(s) =  e D(e^\vee(s)).
	 	\]} 
	 \subset F^{\bullet + p} \sE\}
	 \]
	 are differential operators \emph{of filtration degree at most $p$} on $\sE$. In particular, $F_I^0 \sD_\sE$ consists of filtration-preserving differential operators.
	 
	 $\sD_\sE$ is a filtered graded $(\sO_\sM, \sO_\sM)$-algebra, whose product is given by composition of differential operators. Moreover,  the default filtration $F^\bullet \sD_\sE$ on $\sD_\sE$ is the same as the convolution of the two filtrations $F_O^\bullet \sD_\sE$ and $F_I^\bullet \sD_\sE$:
	 \[
	 F^p \sD_\sE \coloneqq \sum_{i+j = p} F_I^i \sD_\sE \circ F_O^j \sD_\sE.
	 \]
	 There is a canonical isomorphism of graded $\sO_\sM$-modules
	 \[
	 \Gr^{p} \sD_\sE \cong \bigoplus_{q \in \N} \Sym^q_{\sO_\sM} (\sT_\sM) \otimes_{\sO_\sM} \Gr^{p+q} \End_\sE,
	 \]
	 where $\End_\sE \coloneqq \wHom_{\sO_\sM}(\sE, \sE)$ and $\wHom_{\sO_\sM}$ denotes the completed internal Hom in the category of filtered graded left $\sO_\sM$-modules. This identification induces a map
	 \[
	 \sigma_k^p \colon F_O^{-k} \sD_\sE \cap F_I^p \sD_\sE \subset F^{p-k} \sD_\sE \longrightarrow \Sym^k_{\sO_\sM} (\sT_\sM) \otimes_{\sO_\sM} \Gr^p \End_\sE,
	 \]
	 called the \emph{symbol map}. For $D_\sE \in F_O^{-k} \sD_\sE \cap F_I^p \sD_\sE$, if $\sigma_k^p(D_\sE) \neq 0$, then $D_\sE$ is said to be \emph{of order $k$ and filtration degree $p$}. The element 
	 \[
	 \sigma_k^p(D_\sE) \in \Sym^k_{\sO_\sM} (\sT_\sM) \otimes_{\sO_\sM} \Gr^p \End_\sE
	 \]
	 is called the \emph{principal symbol} of $D_\sE$.	
	 
	 \begin{defn}
	 	The \emph{$\infty$-jet bundle} of $\sE$ is the filtered graded vector bundle
	 	\[
	 	\sJ^\infty_\sE \coloneqq \sJ^\infty_{\sM}\,\wotimes_{\sO_\sM}\,\sE,
	 	\]
	 	where $\sJ^\infty_{\sM}$ is viewed as a right $\sO_\sM$-module.
	 \end{defn}
	 To see that $\sJ^\infty_\sE$ is well-defined as a filtered graded vector bundle, choose a splitting
	 \[
	 \sigma\colon\sE \longrightarrow \sJ^\infty_\sE
	 \]
	 of the short exact sequence of graded left $\sO_\sM$-modules
	 \[
	 0 \longrightarrow F^1 \sJ^\infty_\sM \,\wotimes_{\sO_\sM}\, \sE \longrightarrow \sJ^\infty_\sM \,\wotimes_{\sO_\sM}\, \sE \longrightarrow \sE \longrightarrow 0.
	 \]
	 Consider the morphism of filtered graded left $\sO_\sM$-modules
	 \begin{align*}
	 	\Phi^\sigma\colon \sE \,\wotimes_{\sO_\sM}\, \sJ^\infty_\sM &\longrightarrow \sJ^\infty_\sM \,\wotimes_{\sO_\sM}\, \sE \\
	 	s \otimes j &\longmapsto (-1)^{|j||s|}  j \cdot \sigma(s).
	 \end{align*}
	 Its associated graded morphism identifies with
	 \begin{align*}
	 	\Gr\, \Phi^\sigma\colon \Gr\, \sE \otimes_{\sO_\sM}  \Sym_{\sO_\sM}(\sT_\sM^\vee)
	 	&\longrightarrow \Sym_{\sO_\sM}(\sT_\sM^\vee) \otimes_{\sO_\sM} \Gr\, \sE \\
	 	s \otimes \alpha &\longmapsto (-1)^{|\alpha||s|} \alpha \otimes s,
	 \end{align*}
	 which is obviously an isomorphism. Since the filtrations are complete and exhaustive, it follows that $\Phi^\sigma$ itself is an isomorphism. Therefore, $\sJ^\infty_\sE$ is (non-canonically) isomorphic, as a filtered graded left $\sO_\sM$-module, to the filtered graded vector bundle $\sE \,\wotimes_{\sO_\sM}\, \sJ^\infty_\sM$.

	\begin{lem}
		Let $\{(U,\varphi_U)\}$ be a family of local trivializations of $\sE$. The local $\R$-linear morphisms $j^\infty|_U\colon \sE(U) \to \sJ^\infty_\sE(U)$, defined via the commutative diagram
		\[
		\begin{tikzcd}[column sep=50 pt]
			\sE(U) \arrow[d, "\varphi_U"'] \arrow[r, "j^\infty|_U"] 
			& \sJ^\infty_\sM(U) \,\wotimes_{\sO_\sM(U)}\, \sE(U) \\
			\sO_\sM(U) \,\wotimes_\R\, F \arrow[r, "{~j^\infty|_U \otimes 1~}"'] 
			& \sJ^\infty_\sM(U) \,\wotimes_{\sO_\sM(U)}\, (\sO_\sM(U) \, \wotimes_\R \, F) \arrow[u, "{1 \,\wotimes\, \varphi_U^{-1}}"],
		\end{tikzcd}
		\]
		where $F$ denotes the fiber of $\sE$. These local morphisms glue to a well-defined, filtration-preserving $\mathbb{R}$-linear map of degree $0$
		\[
		j^\infty \colon \sE \to \sJ^\infty_\sE,
		\]
		called the \emph{infinite jet prolongation} of $\sE$. 
	\end{lem}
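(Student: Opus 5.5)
The core claim is that the local maps $j^\infty|_U$ agree on overlaps $U\cap V$; gluing is then automatic since $\sE$ and $\sJ^\infty_\sE$ are sheaves. Filtration preservation and degree $0$ are local statements, checked at the end.

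Fix two trivializations $\varphi_U,\varphi_V$ and let $g=\varphi_V\circ\varphi_U^{-1}$ be the transition map on $U\cap V$. It is an automorphism of $\sO_\sM\wotimes_\R F$ as a filtered graded $\sO_\sM$-module. Choose a homogeneous basis $\{e_a\}$ of $F$. Then $g(1\otimes e_a)=\sum_b g_a^b\otimes e_b$ with $g_a^b\in\sO_\sM$, and by completeness a sum of this form converges even if the filtration makes it infinite. Take $s\in\sE(U\cap V)$ with $\varphi_U(s)=\sum_a f^a\otimes e_a$. Then $\varphi_V(s)=\sum_{a,b}\pm f^a g_a^b\otimes e_b$.

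The $V$-prescription sends $s$ to
\[
\textstyle\sum_b j^\infty\bigl(\sum_a \pm f^a g_a^b\bigr)\otimes \varphi_V^{-1}(1\otimes e_b).
\]
By Lemma \ref{prol}, $j^\infty$ is multiplicative, so this equals $\sum \pm j^\infty(f^a)\,j^\infty(g_a^b)\otimes\varphi_V^{-1}(1\otimes e_b)$. The key identity is
\[
j^\infty(h)\otimes t = 1\cdot(\text{right action of } h)\otimes t = 1\otimes h t
\]
in $\sJ^\infty_\sM\wotimes_{\sO_\sM}\sE$. This holds because Lemma \ref{prol} says right multiplication by $j^\infty(h)$ is exactly the right $\sO_\sM$-module structure used to form the tensor product, and the tensor product is balanced over that right structure. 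More precisely, $J\,j^\infty(h)\otimes t = J\otimes ht$. Applying this with $h=g_a^b$ moves the factor $g_a^b$ across the tensor sign. We get $\sum_a \pm j^\infty(f^a)\otimes \varphi_V^{-1}(\sum_b g_a^b\otimes e_b)=\sum_a\pm j^\infty(f^a)\otimes\varphi_U^{-1}(1\otimes e_a)$, which is the $U$-prescription. The two prescriptions therefore agree.

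The main obstacle is the bookkeeping of Koszul signs, together with making sure that the identity $J\,j^\infty(h)\otimes t=J\otimes ht$ passes to the completed tensor product. For the latter I would check the identity on finite sums and extend by continuity, using that both sides are continuous for the filtration topology. For the signs I would track the sign in \eqref{jdf} consistently with the sign conventions for $\varphi$.

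Finally, degree $0$ is immediate because $j^\infty$ on $\sO_\sM$ has degree $0$ and $\varphi_U$ is degree-preserving. For filtration preservation, I would take local trivializations adapted to the filtration, so that $F^p\sE$ corresponds to $\sO_\sM\wotimes F^p F$. The local formula then sends $F^p\sE$ into $\sJ^\infty_\sM\wotimes F^p\sE\subset F^p\sJ^\infty_\sE$, since $\sJ^\infty_\sM=F^0\sJ^\infty_\sM$.
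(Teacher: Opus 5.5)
Your proposal is correct and follows the paper's argument. On an overlap you expand in the two frames via the transition functions and use Lemma \ref{prol} (multiplicativity of $j^\infty$, and $J\,j^\infty(h)$ being the right action of $h$) to move the transition functions across the tensor sign, which is the paper's computation run in reverse; your remarks on completed sums, degree, and filtration preservation go slightly beyond the paper, which simply reduces to the case where $\sE$ is an ordinary graded vector bundle.
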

	\begin{proof}
		Without loss of generality, we assume that $\sE$ is a graded vector bundle. 
		
		Let $\{e_i\}$ be a basis of $F$. The induced sections $s_i \coloneqq \varphi_U^{-1}(1 \otimes e_i)$ form a basis for the free $\sO_\sM(U)$-module $\sE(U)$. For any local section $s = f^i s_i \in \sE(U)$, where $f^i \in \sO_\sM(U)$, the infinite jet prolongation acts by
		\[
		j^\infty|_U(s) = j^\infty|_U(f^i) s_i.
		\]
		
		Now consider another trivialization $(U', \varphi_{U'})$ with $U \cap U' \neq \emptyset$. Let $\{s'_j \coloneqq \varphi_{U'}^{-1}(1 \otimes e_j)\}$ be the corresponding basis of $\sE(U')$, and let $s_i|_{U \cap U'} = g_i^j s'_j|_{U \cap U'}$ for transition functions $g_i^j \in \sO_\sM(U \cap U')$. On the intersection $U \cap U'$, we verify the compatibility:
		\begin{align*}
			j^\infty|_{U \cap U'}(s|_{U \cap U'}) 
			&= j^\infty|_{U \cap U'}(f^i|_{U \cap U'})  s_i|_{U \cap U'} \\
			&= j^\infty|_{U \cap U'}(f^i|_{U \cap U'})  g_i^j  s'_j|_{U \cap U'} \\
			&= j^\infty|_{U \cap U'}(f^i|_{U \cap U'} g_i^j)  s'_j|_{U \cap U'} \\
			&= j^\infty|_{U \cap U'}((f')^j|_{U \cap U'})  s'_j|_{U \cap U'},
		\end{align*}
		where $(f')^j = f^i g_i^j$. We use Lemma \ref{prol} to pass to the third line.
	\end{proof}
	
	\begin{lem}\label{prolongdenseE}
		The filtered graded $\sO_\sM$-module $\sJ^\infty_\sE$ is generated by $j^\infty(\sE)$ over $\sO_\sM$.
	\end{lem}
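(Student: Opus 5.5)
The plan is to reduce to the case of the structure sheaf, using Lemma \ref{prolongdense} together with the local description of $j^\infty$ on $\sE$, and then globalize because everything is smooth. The claim is local: the $\sO_\sM$-submodule generated by $j^\infty(\sE)$ is a subsheaf of $\sJ^\infty_\sE$, and equality of sheaves can be checked on a cover. I would therefore work on an open $U$ with a local trivialization $\varphi_U$, and first treat the case where $\sE$ is an honest graded vector bundle (trivial filtration). This is the same reduction the proof of the preceding lemma uses.

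Step one: on $U$, let $s_i=\varphi_U^{-1}(1\otimes e_i)$ be the induced basis. By the completed tensor product definition, $\sJ^\infty_\sE(U)\cong\bigoplus_i \sJ^\infty_\sM(U)\, s_i$ as left $\sO_\sM(U)$-modules, the sum being finite since the total rank is finite. Each element can thus be written as $\sum_i J^i s_i$ with $J^i\in\sJ^\infty_\sM(U)$.

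Step two: by Lemma \ref{prolongdense}, write each $J^i=\sum_k g_k^i\, j^\infty(h_k^i)$ with $g^i_k,h^i_k\in\sO_\sM(U)$. The defining formula of $j^\infty$ on $\sE$ gives $j^\infty(h s_i)=j^\infty(h)s_i$. Hence $\sum_i J^i s_i=\sum_{i,k} g^i_k\, j^\infty(h^i_k s_i)$, which lies in the $\sO_\sM$-span of $j^\infty(\sE)$. Here I need to check that the left $\sO_\sM$-action on $\sJ^\infty_\sE$ is the left action on the $\sJ^\infty_\sM$ factor, which holds by definition of $\sJ^\infty_\sM\wotimes_{\sO_\sM}\sE$ with $\sJ^\infty_\sM$ viewed as a right module. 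Gluing over a cover, using partitions of unity in the smooth setting, finishes the unfiltered case.

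The main obstacle is the filtered case. There $\sJ^\infty_\sE$ is a completed tensor product, so a general element is an infinite convergent sum $\sum_p J^{(p)}$ with terms in increasingly deep filtration. The span of $j^\infty(\sE)$ should then be understood topologically, that is, as being dense with respect to the complete filtration. I would handle this by working on associated gradeds. Note that $\Gr\,\sJ^\infty_\sE\cong \Sym(\sT_\sM^\vee)\otimes\Gr\,\sE$, with each $\Gr^p\sE$ locally free, so the argument of steps one and two applies to every graded piece. Since $j^\infty$ is filtration preserving, a successive-approximation argument, justified by completeness and exhaustiveness of the filtration, then shows that $j^\infty(\sE)$ generates $\sJ^\infty_\sE$ in the completed sense. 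In the same local-splitting spirit, one can alternatively pick a local splitting $\sE|_U\cong\widehat\bigoplus_p \Gr^p\sE|_U$ and reduce directly to step two.
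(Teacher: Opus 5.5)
Your reduction matches the paper's argument, in a slightly more modular form. The paper, by analogy with Lemma~\ref{prolongdense}, works in a local frame $s_i$ and notes that $j^\infty(x^\alpha s_i)=y^\alpha s_i+\cdots$. You instead quote Lemma~\ref{prolongdense} and transport it through the identity $j^\infty(hs_i)=j^\infty(h)s_i$ from the definition of $j^\infty$ on $\sE$. Both routes are fine.

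The one thing to correct is where you place the topological issue. It does not come from a nontrivial filtration on $\sE$. Already for $\sE=\sO_\sM$ with the trivial filtration, $\sJ^\infty_\sM$ is itself a completion, and the finite decomposition $J^i=\sum_k g^i_k\,j^\infty(h^i_k)$ in your Step two does not exist in general. Here is a counterexample:
\begin{itemize}
\item On an interval $U\subset\R$ one has $\sJ^\infty_\sM(U)\cong C^\infty(U)[[y]]$ with $j^\infty(f)=\sum_m f^{(m)}y^m/m!$. Hence $\partial_x j^\infty(f)=\partial_y j^\infty(f)$, which is the statement $\nabla^G j^\infty(f)=0$.
\item Suppose $A\coloneqq\sum_{m\ge0}y^m$ equals a finite sum $\sum_{k=1}^N g_k\,j^\infty(f_k)$. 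Since $A$ is independent of $x$, differentiating $j$ times in $x$ gives $\sum_k g_k^{(j)}\,j^\infty(f_k)=(-\partial_y)^jA$.
\item Evaluating at any point $x_0$, the series $\partial_y^jA=j!\,(1-y)^{-j-1}$, for all $j\ge 0$, would lie in the span of the $N$ series $j^\infty(f_k)(x_0)$. These infinitely many series are linearly independent, which is a contradiction.
\end{itemize}

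So ``generated'' must be read, already in Lemma~\ref{prolongdense}, in the topological sense. It means that the $\sO_\sM$-span of the image of $j^\infty$ is dense for the complete filtration, i.e.\ it surjects onto every quotient by $F^p$. This is exactly what the paper's Taylor-monomial argument proves: all $y^\alpha$, and hence all $\sO_\sM$-polynomials in $y$, lie in the span.

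With this reading, your Steps one and two go through verbatim, since a finite direct sum of dense submodules is dense. The filtered case is then not a separate obstacle. It is the same density statement for the convolved filtration, obtained either by your successive approximation on $\Gr$ or, as in the paper's ``without loss of generality'' reduction, via a local splitting of $\sE$.
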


	Let $f\colon \sM_1 \rightarrow \sM_2$ be a morphism of graded manifolds. Let $\sE_1$ and $\sE_2$ be filtered graded vector bundles over $\sM_1$ and $\sM_2$, respectively. Let 
	\[
	F^\sharp\colon \sE_2 \rightarrow f_* \sE_1
	\]
	be a morphism of filtered graded $\sO_{\sM_2}$-modules. (Note that the pair $(f, F^\sharp)$ does not define a morphism of filtered graded vector bundles, since $F^\sharp$ goes in the opposite direction to the usual bundle map.)
	
	\begin{prop}\label{pullbackE}
		There exists a unique filtered graded $\sO_{\sM_2}$-module morphism
		\[
		j^\infty(F^\sharp)\colon \sJ^\infty_{\sE_2} \rightarrow f_* \sJ^\infty_{\sE_1},
		\]
		such that the following diagram
		\[
		\begin{tikzcd}[column sep=50 pt]
			\sJ^\infty_{\sE_2} \arrow[r, "j^\infty(F^\sharp)"] & f_*\sJ^\infty_{\sE_1} \\
			\sE_2 \arrow[u, "j^\infty"] \arrow[r, "F^\sharp"] & f_*\sE_1 \arrow[u, "f_*j^\infty"]
		\end{tikzcd}
		\]
		commutes.
	\end{prop}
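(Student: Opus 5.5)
Uniqueness follows from Lemma \ref{prolongdenseE}: $\sJ^\infty_{\sE_2}$ is generated over $\sO_{\sM_2}$ by $j^\infty(\sE_2)$. An $\sO_{\sM_2}$-linear map making the diagram commute is therefore forced to be
\[
j^\infty(F^\sharp)\Bigl(\sum_i g_i\, j^\infty(s_i)\Bigr) = \sum_i f^\sharp(g_i)\, j^\infty\bigl(F^\sharp(s_i)\bigr),
\]
with $g_i \in \sO_{\sM_2}$ and $s_i \in \sE_2$. The target is regarded as an $\sO_{\sM_2}$-module via $f^\sharp$ acting on the left factor of $f_*\sJ^\infty_{\sE_1}$.

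For existence, I take this formula as the definition and check that it is well defined. That is, I must show that $\sum_i g_i\, j^\infty(s_i) = 0$ implies $\sum_i f^\sharp(g_i)\, j^\infty(F^\sharp(s_i)) = 0$. The question is local on $M_2$, and on $f^{-1}$ of small opens of $M_2$. I reduce it to the scalar case, Proposition \ref{pullback}, as follows.
\begin{enumerate}
\item Choose local frames $\{e_a\}$ of $\sE_2$ over $U\subset M_2$ and $\{e'_b\}$ of $\sE_1$ over $f^{-1}(U)$, shrinking as needed. First pass to the associated graded bundles. Because all filtrations are complete and exhaustive, one can work with frames adapted to splittings, as in the construction of $\Phi^\sigma$.
\item Write $F^\sharp(e_a) = \sum_b h_a^b\, e'_b$ with $h_a^b \in \sO_{\sM_1}(f^{-1}U)$. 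For $s = \sum_a k^a e_a$, the local description of the prolongation gives $j^\infty(s) = \sum_a j^\infty(k^a)\, e_a$, using the right module structure of Lemma \ref{prol}. Applying the same lemma on $\sM_1$,
\[
j^\infty\bigl(F^\sharp(s)\bigr) = \sum_{a,b} j^\infty\bigl(f^\sharp(k^a)\, h_a^b\bigr)\, e'_b = \sum_{a,b} j^\infty\bigl(f^\sharp(k^a)\bigr)\, j^\infty(h_a^b)\, e'_b .
\]
\item From this, the candidate map factors as
\[
\sJ^\infty_{\sM_2}\otimes \sE_2 \xrightarrow{\; j^\infty(f^\sharp)\otimes \mathrm{id}\;} f_*\sJ^\infty_{\sM_1}\otimes \sE_2 \longrightarrow f_*\sJ^\infty_{\sE_1}.
\]
The second arrow sends $J\, e_a \mapsto \sum_b J\, j^\infty(h_a^b)\, e'_b$. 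It is $\sO_{\sM_2}$-linear and independent of presentation, since it is defined on a free module. The first arrow is well defined by Proposition \ref{pullback}.
\end{enumerate}
Hence the locally defined maps exist. Uniqueness then makes them agree on overlaps, so they glue to a global map.

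Filtration preservation also follows from this factorization. The map $j^\infty(f^\sharp)$ is filtered by Proposition \ref{pullback}, and multiplication by the jets $j^\infty(h_a^b) \in F^0$ preserves the filtration. When $\sE_i$ carry nontrivial filtrations, I use adapted frames, so that $h_a^b$ raises the filtration index by the appropriate amount, since $F^\sharp$ is filtered.

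I expect the main obstacle to be the well-definedness step in the filtered setting. The relation $\sum g_i j^\infty(s_i)=0$ may hold only as an infinite convergent sum, and the frames of a filtered graded vector bundle are only adapted after completion. I would handle this by working modulo $F^p$ for each $p$, where everything is finite rank. I would then prove the claim on each quotient $\sJ^\infty_{\sE_i}/F^p$ and pass to the inverse limit using completeness.
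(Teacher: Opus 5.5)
Your proposal is correct in substance. It shares the skeleton of the paper's argument, which the paper omits by pointing to the proof of Proposition~\ref{pullback}: uniqueness comes from Lemma~\ref{prolongdenseE}, and existence comes from the forced formula $g\,j^\infty(s)\mapsto f^\sharp(g)\,j^\infty(F^\sharp(s))$.

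The difference is the well-definedness step. The paper's route checks directly, by a local chain-rule computation on Taylor coefficients, that $\sum_i g_i\,j^\infty(s_i)=0$ forces $\sum_i f^\sharp(g_i)\,j^\infty(F^\sharp(s_i))=0$. That argument is frame-free and parallel to the scalar case. You instead factor the candidate map as $j^\infty(f^\sharp)\otimes\id$ followed by an explicit map out of a free module. The chain rule then enters only once, through Proposition~\ref{pullback} used as a black box.

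Besides being more economical, this buys something real, because Lemma~\ref{prolongdense} is only a density statement. On $\R$, a jet $\sum_k c_k y^k$ with constant coefficients and $|k!\,c_k|=2^{k^2}$ is not a finite sum $\sum_{i=1}^N g_i\,j^\infty(f_i)$. To see this, differentiate $k!\,c_k=\sum_i g_i f_i^{(k)}$ repeatedly in $x$ and use that $g,g',\dots,g^{(N)}$ are linearly dependent at a point. This forces $(k!\,c_k)_k$ to satisfy a nontrivial constant-coefficient linear recurrence, so it can grow at most exponentially. Consequently, a definition ``on generators'' only produces a map on a dense submodule, which must still be extended by continuity. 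Your composite, by contrast, is defined on all of $\sJ^\infty_{\sE_2}$ at once. Generation is then needed only for uniqueness, and there density suffices because filtered maps are continuous and $\sJ^\infty_{\sE_1}$ is separated.

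One small repair is needed. Shrinking $U$ does not in general make $\sE_1$ trivial over $f^{-1}(U)$; take $\sM_2$ to be a point. You can either verify the vanishing on a trivializing cover of $f^{-1}(U)$, or, better, write your second arrow without a frame of $\sE_1$ as $J\,e_a\mapsto J\cdot j^\infty(F^\sharp(e_a))$. Here $\sJ^\infty_{\sM_1}$ acts on $\sJ^\infty_{\sE_1}$ through the first tensor factor, and you use $j^\infty(h s')=j^\infty(h)\,j^\infty(s')$ for $h\in\sO_{\sM_1}$ and $s'\in\sE_1$. That identity is local on $M_1$ and follows from Lemma~\ref{prol}.

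Pushing this further, $J\otimes s\mapsto j^\infty(f^\sharp)(J)\cdot j^\infty(F^\sharp(s))$ is already balanced over $\sO_{\sM_2}$, with respect to $J\cdot g=J\,j^\infty(g)$ and up to Koszul signs. It is also filtration preserving on the uncompleted tensor product. So no frames of $\sE_2$ are needed either. The infinite-rank filtered case you flag then reduces to extending a filtered map to the completion, with no need to pass to quotients modulo $F^p$.
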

	
	The proofs of Lemma \ref{prolongdenseE} and Proposition \ref{pullbackE} follow the same arguments as in the proofs of Lemma \ref{prolongdense} and Proposition \ref{pullback}, and are therefore omitted.
	
	\section{De Rham complexes of \D-modules}
	\label{sec: Dmod}
	
	Let $\sM = (M, \sO_\sM)$ be a graded manifold. In this appendix, we consider (left) $D$-modules on $\sM$ and their de Rham complexes.
	
	\begin{defn}
		A \emph{(left) $D$-module} on $\sM$ is a filtered graded vector bundle $\sE$ on $\sM$, together with a left action of the filtered graded $(\sO_\sM,\sO_\sM)$-algebra $\sD_\sM$ on $\sE$: 
		\[
		F^p \sD_\sM \otimes_{\sO_\sM} F^q \sE \longrightarrow F^{p+q} \sE.
		\]
	\end{defn}
	Let $\sE$ be a $D$-module. Then for every vector field $X \in \sT_\sM$, one has
	\[
	X \cdot F^\bullet \sE \subset F^{\bullet-1} \sE.
	\]
	
	\begin{defn}
		Let $\sE$ be a filtered graded vector bundle.
		A \emph{connection} on  $\sE$ is an $\R$-linear map
		\[
		\nabla\colon F^\bullet \sE \to  \sT_\sM^\vee \otimes_{\sO_\sM} F^{\bullet-1} \sE
		\]
		such that
		\[
		\nabla (f s) = \mathrm{d}f \otimes s + (-1)^{|f|} f \nabla s.
		\]
	\end{defn}
	We use the standard notation $\nabla_X$ for the composition $\sE \xrightarrow{\nabla} \sT_\sM^\vee  \otimes_{\sO_\sM} \sE \xrightarrow{\iota_X \otimes \mathrm{id}} \sE$.
	\begin{rmk}
		If $\sE$ is a graded vector bundle, regarded as a filtered graded vector bundle with the trivial filtration, then all left $\sD_\sM$-actions and connections on $\sE$ are automatically compatible with the trivial filtration.
	\end{rmk}
	
	\begin{exmp}
		A connection on the tangent bundle $\sT_\sM$ is called an \emph{affine connection}. Such a connection $\nabla$ is called \emph{torsion-free} if
		\[
		\nabla_X Y - (-1)^{|X||Y|}\nabla_Y X = [X,Y].
		\]
		Using partition of unity, one can show that there always exists a torsion-free affine connection.
	\end{exmp}
	
	Recall that the sheaf of completed forms on $\sM$ is 
	\[
	\wOmega_\sM \coloneqq \wSym_{\sO_\sM}(\sT_\sM[1]^\vee),
	\]
	where $ \widehat{\Sym}_{\sO_\sM}$ denotes the completed symmetric power over $\sO_\sM$. We define the \emph{exterior covariant derivative} of $\nabla$ as the $\R$-linear map
	\begin{align*}
		\mathrm{d}_\nabla\colon \wOmega_{\sM} \,\wotimes_{\sO_{\sM}}\,  \sE &\rightarrow \wOmega_{\sM}\,\wotimes_{\sO_{\sM}}\,  \sE \\
		\alpha \otimes e &\mapsto \mathrm{d}_{dR} \alpha \otimes e + (-1)^{|\alpha|} \alpha \wedge \nabla e.
	\end{align*}
	By definition, $\mathrm{d}_\nabla$ preserves the filtration on $\wOmega_{\sM} \wotimes_{\sO_{\sM}}\, \sE$.
	
	\begin{defn}
		A connection $\nabla$ on $\sE$ is called \emph{flat} if $\mathrm{d}_\nabla \circ \mathrm{d}_\nabla = 0$. 
	\end{defn}
	
	It is straightforward to show that
	\begin{lem}\label{flatD}
		There is a one-to-one correspondence between flat connections on $\sE$ and $D$-module structures on $\sE$.
	\end{lem}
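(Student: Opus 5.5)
The plan is to show that both sides of the correspondence in Lemma~\ref{flatD} are determined by the same data, namely the action of vector fields on $\sE$. Given a $D$-module structure, I would define $\nabla_X(s) \coloneqq X\cdot s$ and check that this is a connection in the sense above. The Leibniz rule comes from the relation $X\circ f = X(f) + (-1)^{|X||f|} f\circ X$ in $\sD_\sM$. Compatibility with filtrations comes from $X\cdot F^\bullet\sE\subset F^{\bullet-1}\sE$. Because the action is $\sO_\sM$-linear in $X$, I would then package $\nabla$ as a map into $\sT_\sM^\vee\otimes_{\sO_\sM}F^{\bullet-1}\sE$, using that $\sT_\sM$ is a graded vector bundle and hence reflexive. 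Flatness would follow from the usual identity
\[
\mathrm{d}_\nabla^2 = \text{curvature}, \qquad R_\nabla(X,Y)=[\nabla_X,\nabla_Y]-\nabla_{[X,Y]}.
\]
Since the action is a left action of an algebra, $[X\cdot,Y\cdot]=[X,Y]\cdot$, so $R_\nabla=0$. I would verify $\mathrm{d}_\nabla^2 = R_\nabla\wedge$ locally in coordinates $x^\mu$, writing $\mathrm{d}_\nabla=\mathrm{d}x^\mu\otimes\nabla_{\partial_\mu}$ plus the de Rham part. Signs are controlled there because the coordinate vector fields $\partial_\mu$ supercommute.

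Conversely, given a flat connection, I would define the $\sD_\sM$-action on generators: $f\cdot s=fs$ and $X\cdot s=\nabla_X s$. The main step, and the main obstacle, is showing that this extends to a well-defined action of $\sD_\sM$, which is presented only as the subalgebra of $\End_\R(\sO_\sM)$ generated by functions and vector fields. I would handle this with the filtered isomorphism $\pbw\colon\Sym_{\sO_\sM}(\sT_\sM)\to\sD_\sM$ of Proposition~\ref{pbw}, or more concretely locally: in a coordinate chart $\sD_\sM|_U$ is a free left $\sO_\sM$-module on the ordered monomials $\partial^\alpha$. The only relations in $\sD_\sM$ are then
\[
[\partial_\mu,f]=\partial_\mu f, \qquad [\partial_\mu,\partial_\nu]=0,
\]
together with $\sO_\sM$-linearity in $X$. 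So $\sD_\sM|_U$ is the universal enveloping algebra of the Lie--Rinehart pair $(\sO_\sM,\sT_\sM)$. The assignment $\partial^\alpha\mapsto\nabla_{\partial}^\alpha$ respects these relations exactly because of the Leibniz rule and flatness $[\nabla_{\partial_\mu},\nabla_{\partial_\nu}]=0$. I would then check that the resulting local actions are independent of coordinates, since they are characterized intrinsically by their values on functions and vector fields, and glue them into a global action.

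Filtration compatibility, $F^{-k}\sD_\sM\cdot F^q\sE\subset F^{q-k}\sE$, follows by induction on the order, since each vector field lowers the filtration by at most one. Finally, the two constructions are mutually inverse. Passing from a connection to a $D$-module and back returns $\nabla$ by construction. Passing from a $D$-module to a connection and back returns the original action, because any action is determined by its restriction to the generators $\sO_\sM$ and $\sT_\sM$.
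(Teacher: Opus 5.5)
Your proposal is correct. The paper gives no proof here and only says the lemma is ``straightforward''; your argument supplies exactly the standard details it leaves implicit: vector fields act through $\nabla_X$, flatness $\mathrm{d}_\nabla^2=0$ is matched with $[\nabla_X,\nabla_Y]=\nabla_{[X,Y]}$ by a local coordinate sign check, and a flat connection is extended to all of $\sD_\sM$ through the local normal form $\sum_\alpha f_\alpha\partial^\alpha$ (equivalently Proposition~\ref{pbw}). The one step to state more carefully is filtration compatibility. Read it off from the normal form $\sum_{|\alpha|\le k} f_\alpha\partial^\alpha$ of an operator of order at most $k$, not from an arbitrary word in functions and vector fields, since such a word may contain more than $k$ vector fields.
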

	Let $\sE$ be a $D$-module. Let $\nabla$ be the corresponding flat connection on $\sE$. Explicitly,
	\[
	\nabla_X (s) = X \cdot s.
	\]
	\begin{defn}
		We call the filtered dg module 
		\[
		dR(\sE)\coloneqq(\wOmega_{\sM} \,\wotimes_{\sO_\sM}\, \sE, \mathrm{d}_{\nabla})
		\]
		over $(\wOmega_\sM, \mathrm{d}_{dR})$ the \emph{de Rham complex} of $\sE$.
	\end{defn}
	\begin{defn}\label{cld}
		Let $X$ be a vector field on $\sM$.
		The \emph{covariant Lie derivative} on $dR(\sE)$ 
		\[
		L^\nabla_X\colon F^\bullet dR(\sE) \longrightarrow F^{\bullet - 1} dR(\sE)
		\]
		along $X$ is defined by
		\[
		L^\nabla_X (\alpha \otimes s) = L_X(\alpha) \otimes s + (-1)^{|X||\alpha|} \alpha \otimes \nabla_X(s),
		\]
		for $\alpha \in \wOmega_\sM$ and $s \in \sE$, where $L_X(\alpha)$ is the Lie derivative of $\alpha$.
	\end{defn}
	\begin{rmk}
		It is straightforward to verify that 
		\[
		L^\nabla_X = [\iota_X \otimes \id,  \mathrm{d}_\nabla],
		\]
		where $\iota_X\colon \wOmega_\sM \to \wOmega_\sM$ is the contraction by $X$. Moreover, since $\nabla$ is flat, one has 
		\[
		[\mathrm{d}_\nabla, L_X^\nabla] = 0, \quad 
		[L_X^\nabla, L_Y^\nabla] = L_{[X,Y]}^\nabla.
		\]
	\end{rmk}
	
	Let $\sJ^\infty_\sM$ be the sheaf of infinite jets on $\sM$. 
	
	\begin{defn}
		The \emph{Grothendieck connection} on $\sJ^\infty_\sM$, denoted by $\nabla^G$, is the flat connection defined by dualizing the canonical left $\sD_\sM$-module structure of $\sD_\sM$.  Explicitly, 
		\begin{align*}
			(\nabla^G_X J)(D) = X(J(D)) - (-1)^{|X||J|} J(X \circ D).
		\end{align*}
	\end{defn}
	
	For any function $f$ on $\sM$, we have
	\begin{align*}
		\nabla^G_X(j^\infty(f))(D)
		&= (-1)^{|X||f|}\bigl( X(D(f)) - (\nabla^G_X D)(f) \bigr) \\
		&= (-1)^{|X||f|}\bigl( X(D(f)) - (X \circ D)(f) \bigr) \\
		&= 0.
	\end{align*}
	We use this observation to determine a local formula for $\nabla^G$. Indeed,
	\[
	\nabla^G\bigl(j^\infty(x^\mu)\bigr)
	= \nabla^G(x^\mu + y^\mu)
	= dx^\mu+ \nabla^G(y^\mu) = 0.
	\]
	Since $\nabla^G$ is a derivation, it follows that it must take the local form
	\begin{equation}\label{localG}
		\nabla^G = dx^\mu \frac{\partial}{\partial x^\mu} - dx^\mu \frac{\partial}{\partial y^\mu}.
	\end{equation}
	
	\begin{lem}
		The associated $\mathbb{N}$-graded operator of the differential $\mathrm{d}_{\nabla^G}$ on $dR(\sJ^\infty_\sM)$ is minus the \emph{universal Koszul differential}
		\[
		\deltaK\colon \Omega^k_\sM \otimes_{\sO_\sM} \Sym_{\sO_\sM}^l (\sT_\sM^\vee) 
		\longrightarrow 
		\Omega^{k+1}_\sM \otimes_{\sO_\sM} \Sym_{\sO_\sM}^{l-1} (\sT_\sM^\vee).
		\]
	\end{lem}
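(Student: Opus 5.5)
The proof will be carried out in local coordinates. Since $\mathrm{d}_{\nabla^G}$ is defined globally, it is enough to identify its associated graded operator on a coordinate chart, provided the local identification of $\Gr$ with $\Omega_\sM \otimes_{\sO_\sM} \Sym_{\sO_\sM}(\sT_\sM^\vee)$ is canonical. Let $x^\mu$ be local coordinates, $y^\mu$ the induced fiber coordinates on $\sT_\sM^\vee$, and identify $\sJ^\infty_\sM|_U \cong \wSym_{\sO_\sM}(\sT_\sM^\vee)|_U$ via the Taylor expansion \eqref{taylor}, i.e.\ the trivial local connection. Under this identification the class of $y^\mu$ in $\Gr^1\sJ^\infty_\sM \cong \sT_\sM^\vee$ is the canonical one, namely the class of $j^\infty(x^\mu)-x^\mu$. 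The key formula is \eqref{localG}:
\[
\nabla^G = dx^\mu \frac{\partial}{\partial x^\mu} - dx^\mu \frac{\partial}{\partial y^\mu},
\]
and the exterior covariant derivative is
\[
\mathrm{d}_{\nabla^G} = \mathrm{d}_{dR} \otimes 1 + (\text{the above acting on the } \sJ^\infty \text{ factor}),
\]
with total derivative $dx^\mu\,\partial/\partial x^\mu$ acting on coefficients.

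Next comes a filtration count. On $dR(\sJ^\infty_\sM)$ the filtration is the convolution of the form degree and the jet filtration, so $dx^\mu$ has weight $1$, $y^\mu$ has weight $1$, and functions have weight $0$. The term $dx^\mu\,\partial_{x^\mu}$, which comprises the de Rham part together with differentiation of the coefficient functions, raises the weight by one. The term $-dx^\mu\,\partial_{y^\mu}$ preserves the weight, since it removes one $y$ and adds one $dx$. So, with the weight conventions used in the proof of Proposition~\ref{jetfdgv}, only $-dx^\mu\,\partial_{y^\mu}$ survives in $\Gr$. This piece is $\sO_\sM$-linear and maps $\Omega^k\otimes\Sym^l$ to $\Omega^{k+1}\otimes\Sym^{l-1}$. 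Because it is $\sO_\sM$-linear, the leftover operator really is a map of graded $\sO_\sM$-modules, as required for the comparison with $\deltaK$. If the grading convention instead places $\mathrm{d}_{\nabla^G}$ as lowering the jet filtration by one, the same computation applies, with the surviving term identified as the one of top jet-degree shift.

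Then I identify the surviving term with $\deltaK$. By definition the universal Koszul differential is the unique $\sO_\sM$-linear derivation of $\Omega_\sM\otimes\Sym(\sT_\sM^\vee)$ sending a generator $\xi\in\sT_\sM^\vee$, viewed in $\Sym^1$, to its copy in $\Omega^1=\sT_\sM[1]^\vee$ via the degree shift. In coordinates it reads $\deltaK = dx^\mu\,\partial/\partial y^\mu$. The remaining check is that the coordinate expression is intrinsic: under a change of coordinates, $dx^\mu$ and the classes of $y^\mu$ transform by the same Jacobian modulo higher filtration. This follows from the Remark after Proposition~\ref{pullback}, which gives $y'^{\nu} = \frac{\partial x'^\nu}{\partial x^\mu}y^\mu + (\text{order}\geq 2)$, so $dx^\mu\otimes\partial_{y^\mu}$ is invariant on $\Gr$. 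Hence $\Gr\,\mathrm{d}_{\nabla^G} = -\deltaK$ globally.

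The main obstacle is bookkeeping rather than ideas. The difficult point is fixing sign conventions for the odd coordinates, so that the Koszul sign in $(-1)^{|\alpha|}\alpha\wedge\nabla e$ matches the convention $\deltaK = dx^\mu\partial_{y^\mu}$ with $dx^\mu$ on the left. The other subtle point is stating precisely which filtration shift defines "the associated graded operator". I would pin both down at the outset by checking the identity on the generators $y^\mu$ and on $1\otimes 1$, and then extend using the derivation property of both sides.
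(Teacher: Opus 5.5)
Your proposal is correct and follows the paper's proof: both work locally with \eqref{localG}, note that $dx^\mu\,\partial/\partial x^\mu$ raises the combined form-plus-jet filtration degree by one while $-dx^\mu\,\partial/\partial y^\mu$ preserves it, and identify the surviving term $-dx^\mu\,\partial/\partial y^\mu$ with $-\deltaK$. Your coordinate-change check via the Remark after Proposition~\ref{pullback} simply spells out the paper's one-line assertion that this local formula is independent of coordinates.
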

	
	\begin{proof}
		The associated $\mathbb{N}$-graded algebra of $dR(\sJ^\infty_\sM)$ is given by the universal Koszul algebra of the graded $\sO_\sM$-module $\sT_\sM$:
		\[
		\Gr\, dR(\sJ^\infty_\sM)
		\cong
		\Omega_\sM \otimes_{\sO_\sM} \Sym_{\sO_\sM}(\sT_\sM^\vee).
		\]
		
		Note that $dx^\mu \wedge$ increases the filtration degree by $1$, while $\frac{\partial}{\partial y^\mu}$ decreases it by $1$. Using \eqref{localG}, we obtain the local formula
		\[
		-\Gr\, \mathrm{d}_{\nabla^G}	= dx^\mu \frac{\partial}{\partial y^\mu},
		\]
		which is independent of local coordinates and is exactly the universal Koszul differential.
	\end{proof}
	
	Using \eqref{DeltaX} and \eqref{compprod}, one can easily check that $\nabla^G_X$ is a derivation of $\sJ^\infty_\sM$. It follows that $\mathrm{d}_{\nabla^G}$, as well as any covariant Lie derivative on $dR(\sJ^\infty_\sM)$, is also a derivation. Consequently, $dR(\sJ^\infty_\sM)$ is a filtered cdga over $(\wOmega_\sM, \mathrm{d}_{dR})$.
	
	\begin{prop}\label{res}
		The infinite jet prolongation
		\[
		j^\infty\colon \sO_{\sM} \longrightarrow dR(\sJ^\infty_\sM)
		\]
		is a filtered quasi-isomorphism of cdgas. More precisely, a stronger statement holds: on the $E_0$ pages of the associated spectral sequences,
		\[
		\Gr\, j^\infty\colon \sO_\sM \longrightarrow \Gr\, dR(\sJ^\infty_\sM)
		\]
		is already a quasi-isomorphism of cdgas.
	\end{prop}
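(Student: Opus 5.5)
The plan is to prove the stronger statement directly: $\Gr\, j^\infty$ is a quasi-isomorphism on the $E_0$ pages. The filtered statement then follows by a standard spectral sequence comparison. Both filtrations are complete, exhaustive and bounded below ($\sO_\sM$ carries the trivial filtration), so an $E_1$-isomorphism gives a quasi-isomorphism of the completed complexes.

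First I will identify the $E_0$ page. By the preceding lemma,
\[
\Gr\, dR(\sJ^\infty_\sM)\cong \Omega_\sM\otimes_{\sO_\sM}\Sym_{\sO_\sM}(\sT_\sM^\vee),
\]
with differential $-\deltaK$. Here I work in the graded-manifold setting of this appendix, with differential $\mathrm{d}_{\nabla^G}$ only, and no $Q_\sM$. Using the local formula \eqref{taylor}, $j^\infty(f)=f+\sum_{|\alpha|>0}\frac{1}{\alpha!}\partial^\alpha f\, y^\alpha$, the associated graded map $\Gr\, j^\infty$ is the canonical inclusion
\[
\sO_\sM=\Omega^0_\sM\otimes\Sym^0_{\sO_\sM}(\sT_\sM^\vee)\hookrightarrow \Omega_\sM\otimes_{\sO_\sM}\Sym_{\sO_\sM}(\sT_\sM^\vee).
\]
This inclusion is a morphism of cdgas, since $\deltaK$ kills $\Omega^0\otimes\Sym^0$ and Lemma \ref{prol} gives multiplicativity.

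Next I will show that the Koszul complex $(\Omega_\sM\otimes_{\sO_\sM}\Sym_{\sO_\sM}(\sT_\sM^\vee),\deltaK)$ has cohomology $\sO_\sM$ in bidegree $(0,0)$ and is acyclic elsewhere. I would build an explicit $\sO_\sM$-linear contracting homotopy $h=y^\mu\,\iota_{\partial/\partial x^\mu}$, i.e.\ the dual Koszul map $\Omega^{k+1}\otimes\Sym^{l-1}\to\Omega^k\otimes\Sym^l$. This is well defined globally, because it is the coordinate-free contraction pairing the $\sT_\sM[1]^\vee$ and $\sT_\sM^\vee$ factors via the identity of $\sT_\sM$. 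The standard Euler-type identity then gives
\[
\deltaK h+h\deltaK=(k+l)\,\id
\]
on $\Omega^k\otimes\Sym^l$. This is the super version of the Poincaré lemma: in the super setting $dx^\mu$ and $y^\mu$ have opposite parities relative to each other, so it is again a Koszul/de Rham pair and the sign check goes through. Dividing by $k+l$ when $k+l>0$ gives a contraction of the positive-weight part, which leaves exactly $\sO_\sM$ in weight $0$.

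The main obstacle is the sign bookkeeping in the graded (super) setting, namely verifying that $\deltaK h+h\deltaK$ is exactly the total weight operator with Koszul signs. I will check this on generators $dx^\mu$, $y^\mu$, using that both $\deltaK$ and $h$ are derivations of the appropriate degrees, so that their commutator is a degree-zero derivation. It suffices to evaluate it on generators, where it is the identity, which yields the weight operator. Since everything is $\sO_\sM$-linear and the weight is preserved, the argument works sheafwise and on global sections alike. This finishes the $E_0$-quasi-isomorphism, and the filtered quasi-isomorphism follows from completeness of the filtrations.
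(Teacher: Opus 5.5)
Your proposal is correct and follows the paper's proof: identify $\Gr\, j^\infty$ with the canonical inclusion $\sO_\sM\hookrightarrow\Omega_\sM\otimes_{\sO_\sM}\Sym_{\sO_\sM}(\sT_\sM^\vee)$, then use that the universal Koszul complex has cohomology $\sO_\sM$. The only differences are that you prove this Koszul fact yourself via the explicit Euler homotopy $h$ with $\deltaK h+h\deltaK=(k+l)\,\id$, where the paper simply cites it, and that you add a spectral-sequence comparison step. That step is harmless but not needed, since the paper defines a filtered quasi-isomorphism by the condition on $\Gr$.
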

	\begin{proof}
		It is straightforward to verify that $\Gr\, j^\infty$ is given by the canonical inclusion
		\[
		\sO_\sM \hookrightarrow \Omega_\sM \otimes_{\sO_\sM} \Sym_{\sO_\sM}(\sT_\sM^\vee).
		\]
		The statement then follows from the fact that the cohomology of the universal Koszul algebra of a finitely generated projective $\sO_\sM$-module is the base $\sO_\sM$.
	\end{proof}
	
	\begin{prop}\label{lift}
		Every differential operator $D$ of order $k$ on $\sM$ uniquely lifts to a morphism of $D$-modules
		\[
		j^\infty(D)\colon F^\bullet \sJ^\infty_\sM \longrightarrow F^{\bullet-k} \sJ^\infty_\sM,
		\]
		whose associated $\N$-graded operator
		\[
		\Gr\, j^\infty(D)\colon \Sym^\bullet_{\sO_\sM}(\sT_\sM^\vee) \longrightarrow   \Sym^{\bullet-k}_{\sO_\sM}(\sT_\sM^\vee)
		\]
		is given by contraction with the principal symbol $\sigma_k(D) \in  \Sym^k_{\sO_\sM}(\sT_\sM)$ of $D$.	
		
		Moreover, the following diagram
		\[
		\begin{tikzcd}
			\sJ^\infty_\sM \arrow[r, "j^\infty(D)"] & \sJ^\infty_\sM \\
			\sO_\sM \arrow[u, "j^\infty"] \arrow[r, "D"] & \sO_\sM \arrow[u, "j^\infty"]
		\end{tikzcd}
		\]
		commutes.
	\end{prop}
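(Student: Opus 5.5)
Since $\sJ^\infty_\sM = \sD_\sM^\vee = \Hom_{\sO_\sM}(\sD_\sM, \sO_\sM)$, the plan is to define the lift by dualizing right composition with $D$, with signs chosen so that the jet diagram commutes:
\[
\bigl(j^\infty(D)J\bigr)(P) \coloneqq (-1)^{|D||J|}\,J(P\circ D), \qquad J \in \sJ^\infty_\sM,\ P \in \sD_\sM.
\]
This is well defined and $\sO_\sM$-linear for the left $\sO_\sM$-module structure, since left multiplication by $f$ on $\sD_\sM$ commutes with right composition by $D$. It is a morphism of $D$-modules, i.e.\ it commutes with the Grothendieck connection, because $\nabla^G$ is dual to \emph{left} composition $P \mapsto X\circ P$ and left and right compositions commute by associativity. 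Explicitly,
\[
(\nabla^G_X J)(P)=X(J(P))\mp J(X\circ P),
\]
and both terms commute with the substitution $P\mapsto P\circ D$. The commutative square is then a one-line check: $j^\infty(D)(j^\infty f)(P)=\pm (j^\infty f)(P\circ D)=\pm P(D(f)) = j^\infty(Df)(P)$, with signs to be matched. For the filtration shift, note that $P\in F^{l}\sD_\sM$ gives $P\circ D\in F^{l-k}\sD_\sM$. Hence if $J\in F^{p}\sJ^\infty_\sM$ then $J(P\circ D)\in F^{p+l-k}\sO_\sM$, so $j^\infty(D)J\in F^{p-k}\sJ^\infty_\sM$.

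For uniqueness, I would use Lemma \ref{prolongdense}, which says $\sJ^\infty_\sM$ is generated over $\sO_\sM$ by $j^\infty(\sO_\sM)$. Any morphism of $D$-modules is in particular $\sO_\sM$-linear, so it is determined by its values on $j^\infty(g)$. The commutative square forces $j^\infty(D)(j^\infty g)=j^\infty(Dg)$. Hence the lift is unique. (Alternatively, flat sections of $\nabla^G$ are exactly $j^\infty(\sO_\sM)$ by Proposition \ref{res}, and a $D$-module map must send flat sections to flat sections.)

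For the associated graded, I would work locally with the trivial connection identification $\sJ^\infty_\sM\cong\wSym(\sT^\vee_\sM)$ and the Taylor formula \eqref{taylor}. Writing $D=\sum_{|\beta|\le k} a^\beta\partial^\beta$, the dual of $P\mapsto P\circ D$ on the basis $y^\alpha$ shows that the only piece lowering the $y$-degree by exactly $k$ is the one coming from $a^\beta\partial^\beta$ with $|\beta|=k$. That piece acts on monomials as $a^\beta\partial_y^\beta$, i.e.\ by contraction with $\sigma_k(D)=\sum_{|\beta|=k}a^\beta\partial^\beta$. The lower-order parts of $D$, and all terms in which derivatives fall on the coefficients $a^\beta$ (these come from reordering $P\circ a^\beta$), drop by fewer than $k$ and so vanish on $\Gr$. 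The main obstacle I expect is bookkeeping rather than ideas: keeping Koszul signs consistent in the odd directions, and checking that the contraction is independent of coordinates. The latter should follow because $\Gr^{-k}\sD_\sM\cong\Sym^k\sT_\sM$ canonically, and $\Gr$ of the pairing $\sJ^\infty\times\sD\to\sO$ is the canonical pairing $\Sym\sT^\vee\times\Sym\sT\to\sO$. Under that pairing, right multiplication by $\sigma_k(D)$ in $\Sym\sT_\sM$ is dual to contraction.
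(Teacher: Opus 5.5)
Your proof is correct, up to the sign bookkeeping you already flag, but it takes a different route from the paper.

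\textbf{The paper's route.} It defines $j^\infty(D)$ on generators by $g\,j^\infty(f)\mapsto \pm g\,j^\infty(D(f))$. It checks well-definedness in coordinates: from $\sum_i g_i\,j^\infty(f_i)=0$ it gets $\sum_i g_i\,\partial^\alpha f_i=0$ for all $\alpha$, hence $\sum_i g_i\,\partial^\alpha(D_\beta\,\partial^\beta f_i)=0$. It then extracts the local formula $j^\infty(D)=\sum_{|\beta|\le k}\sum_\alpha \frac{\partial^\alpha D_\beta}{\partial x^\alpha}\frac{y^\alpha}{\alpha!}\frac{\partial^\beta}{\partial y^\beta}$, from which the shift by $-k$ and $\Gr\, j^\infty(D)=\sum_{|\beta|=k}D_\beta\frac{\partial^\beta}{\partial y^\beta}$ are read off. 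The $D$-module property comes from flatness, $\nabla^G_X(g\,j^\infty(f))=X(g)\,j^\infty(f)$.

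\textbf{Your route.} You define the lift once and for all as the transpose of right composition $P\mapsto P\circ D$ on $\sD_\sM$, using $\sJ^\infty_\sM=\sD_\sM^\vee$. Then $\sO_\sM$-linearity, the filtration shift, the square, and compatibility with $\nabla^G$ become formal consequences of associativity, since both terms of $(\nabla^G_XJ)(P)=X(J(P))\mp J(X\circ P)$ commute with $P\mapsto P\circ D$. The associated graded follows from $\Gr(P\circ D)=\sigma(P)\,\sigma_k(D)$ in the commutative algebra $\Gr\,\sD_\sM\cong\Sym_{\sO_\sM}(\sT_\sM)$.

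\textbf{What each buys.} Your route is coordinate-free and produces a map on all of $\sJ^\infty_\sM$ directly, with no well-definedness check. This is a real gain, because Lemma \ref{prolongdense} only holds in the topological sense. For example, on $M=\R$ the jet $\sum_m e^{x^2}\frac{x^m}{m!}y^m$ (the $y$-expansion of $e^{x(x+y)}$) is not a finite sum $\sum_{i=1}^N g_i\,j^\infty(f_i)$. Indeed, the matrix of mixed derivatives $\partial_x^a\partial_{x'}^b$ at the origin is determined by the jet along the diagonal. For $e^{xx'}$ it equals $(a!\,\delta_{ab})_{a,b}$, which has infinite rank, whereas for $\sum_i g_i(x)f_i(x')$ it has rank at most $N$. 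So the paper's generator-wise definition implicitly relies on its local formula to extend by continuity. In exchange, the paper's route gives the explicit coordinate expression used later, e.g.\ in the proof of Proposition \ref{drjetfdgv}.

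Two points to fix when you write it up.

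First, the sign should be $(j^\infty(D)J)(P)=(-1)^{(|J|+|P|)|D|}\,J(P\circ D)$. This sign does three things:
\begin{itemize}
\item it makes $j^\infty(D)J$ left $\sO_\sM$-linear of degree $|J|+|D|$;
\item it makes the square commute exactly with $j^\infty(f)(P)=(-1)^{|P||f|}P(f)$;
\item it gives $j^\infty(D)\circ\nabla^G_X=(-1)^{|D||X|}\nabla^G_X\circ j^\infty(D)$.
\end{itemize}
With $(-1)^{|D||J|}$ alone, all three pick up spurious signs when $D$ is odd.

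Second, because of the density issue above, uniqueness needs more than $\sO_\sM$-linearity plus generation. Use that any lift maps $F^\bullet$ to $F^{\bullet-k}$. Then two lifts agreeing on the $\sO_\sM$-span of $j^\infty(\sO_\sM)$ differ on any $J$ by an element of $\bigcap_p F^{p}\sJ^\infty_\sM=0$.
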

	\begin{proof}
		Since the image $j^\infty(\sO_\sM)$ generates $\sJ^\infty_\sM$, we define $j^\infty(D)$ simply by setting
		\[
		j^\infty(D)\bigl(j^\infty(f)\bigr) \coloneqq j^\infty\bigl(D(f)\bigr).
		\]
		We need to verify that this is well-defined, i.e.,
		\[
		\sum_{i \in I} g_i\, j^\infty\bigl(D(f_i)\bigr) = 0
		\quad \text{whenever} \quad
		\sum_{i \in I} g_i\, j^\infty(f_i) = 0,
		\]
		where $I$ is some finite index set.
		
		Recall that $j^\infty(f_i) = \sum_\alpha \frac{1}{\alpha!}\, \frac{\partial^\alpha}{\partial x^\alpha}(f_i)\, y^\alpha$. It follows that
		\[
		\sum_{i \in I} g_i\, \frac{\partial^\alpha}{\partial x^\alpha}(f_i) = 0
		\]
		for all multi-indices $\alpha$. Therefore,
		\begin{align*}
			\sum_{i \in I} g_i\, j^\infty\bigl(D(f_i)\bigr)
			&= \sum_\alpha \frac{y^\alpha}{\alpha!} \sum_{i \in I} g_i\, \frac{\partial^\alpha}{\partial x^\alpha}\bigl(D(f_i)\bigr) \\
			&= \sum_{|\beta| \leq k} \sum_\alpha \frac{y^\alpha}{\alpha!} \sum_{i \in I} g_i\, \frac{\partial^\alpha}{\partial x^\alpha}\bigl(D_\beta \partial_\beta(f_i)\bigr) \\
			&= 0,
		\end{align*}
		where $D_\beta$ is some local function. In local coordinates, $j^\infty(D)$ is given by
		\[
		j^\infty(D) = \sum_{|\beta| \leq k}  \sum_{\alpha} \frac{\partial^\alpha D_\beta}{\partial x^\alpha}  \frac{y^\alpha}{\alpha!} \frac{\partial^\beta}{\partial y^\beta}.
		\]
		It follows that $j^\infty(D)$ lowers the filtration degree by $k$ and
		\[
		\Gr\,j^\infty(D) =  \sum_{|\beta| = k}  D_\beta \frac{\partial^\beta}{\partial y^\beta},
		\]
		which is exactly contraction with $\sigma_k(D)$.
		
		To see that $j^\infty(D)$ is a morphism of $D$-modules, we compute
		\[
		j^\infty(D)(\nabla^G_X(g j^\infty(f))) = j^\infty(D)(X(g) j^\infty(f)) = \pm X(g)j^\infty(D(f)) = \pm \nabla^G_X(g j^\infty(D(f))),
		\]
		which completes the proof.
	\end{proof}
	
	\begin{cor}
		Let $X$ be a vector field on $\sM$, viewed as a differential operator.  Then
		\[
		L^v_X \coloneqq 1  \otimes j^\infty(X)
		\]
		is a derivation of $dR(\sJ^\infty_\sM)$, called the \emph{vertical Lie derivative} on $dR(\sJ^\infty_\sM)$.
	\end{cor}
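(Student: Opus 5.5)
We need to prove that $L^v_X = 1\otimes j^\infty(X)$ is a derivation of $dR(\sJ^\infty_\sM)=\wOmega_\sM\,\wotimes_{\sO_\sM}\,\sJ^\infty_\sM$. The plan is to first establish that $j^\infty(X)$ is a (graded) derivation of the filtered algebra $\sJ^\infty_\sM$ of degree $|X|$. Then we check that $1\otimes j^\infty(X)$ is well defined on the tensor product over $\sO_\sM$ and extends as a derivation. The well-definedness over $\sO_\sM$ is not automatic, since $j^\infty(X)$ is not obviously $\sO_\sM$-linear. However, by Proposition \ref{lift}, $j^\infty(X)$ is a morphism of $D$-modules, hence in particular $\sO_\sM$-linear for the left module structure along which we tensor. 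Here $\wOmega_\sM$ acts through the left $\sO_\sM$-structure, and $\nabla^G$ is a connection for that structure. So we may set
\[
L^v_X(\alpha\otimes J) \coloneqq (-1)^{|X||\alpha|}\,\alpha\otimes j^\infty(X)(J),
\]
which is compatible with $\alpha f\otimes J=\alpha\otimes fJ$ precisely because $j^\infty(X)(fJ)=(-1)^{|X||f|}f\,j^\infty(X)(J)$.

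For the derivation property on $\sJ^\infty_\sM$, we use that $\sJ^\infty_\sM$ is generated as an $\sO_\sM$-module by $j^\infty(\sO_\sM)$ (Lemma \ref{prolongdense}). By $\sO_\sM$-linearity it suffices to check the Leibniz rule on products $j^\infty(f)\,j^\infty(g)$. By Lemma \ref{prol} this product equals $j^\infty(fg)$. Proposition \ref{lift} then gives
\[
j^\infty(X)\bigl(j^\infty(fg)\bigr)=j^\infty\bigl(X(fg)\bigr)=j^\infty(X(f))\,j^\infty(g)+(-1)^{|X||f|}j^\infty(f)\,j^\infty(X(g)),
\]
which is the Leibniz rule on generators. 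The general case $g_1j^\infty(f_1)\cdot g_2 j^\infty(f_2)$ follows from $\sO_\sM$-linearity and graded commutativity, keeping track of Koszul signs. Continuity with respect to the complete filtration extends the identity from the dense span to all of $\sJ^\infty_\sM$. As a sanity check, the local formula $j^\infty(X)=\sum_\alpha \frac{y^\alpha}{\alpha!}\partial^\alpha X^\mu\,\partial_{y^\mu}$, with no zeroth-order part since $X(1)=0$, is visibly a derivation in the $y$-variables that is $\sO_\sM$-linear.

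Finally, a derivation on $dR(\sJ^\infty_\sM)$ is determined by the Leibniz rule on products $(\alpha\otimes J)(\beta\otimes K)=\pm\alpha\beta\otimes JK$. Since $L^v_X$ kills $\wOmega_\sM$ and acts as a derivation on the $\sJ^\infty_\sM$ factor, the sign $(-1)^{|X||\alpha|}$ makes the rule hold, by a routine sign check.

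The main obstacle is the $\sO_\sM$-linearity and well-definedness step, namely ensuring that the left-module identification used in Proposition \ref{lift} matches the module structure along which $\wOmega_\sM$ is tensored. I would verify it directly in local coordinates, where everything is explicit.
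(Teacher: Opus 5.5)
Your proposal is correct and matches the paper's argument, which records this statement as an immediate corollary of Proposition \ref{lift}: $\sO_\sM$-linearity of $j^\infty(X)$ (it is a $D$-module morphism) makes $1\otimes j^\infty(X)$ well defined, and the Leibniz rule follows from $j^\infty(X)\circ j^\infty = j^\infty\circ X$ together with Lemmas \ref{prol} and \ref{prolongdense}. You also make explicit a step the paper leaves implicit: the identity is extended from the $\sO_\sM$-span of $j^\infty(\sO_\sM)$ to all of $\sJ^\infty_\sM$ by density and continuity.
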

	
	We refer to the covariant Lie derivative $L^{\nabla^G}_X$ on $dR(\sJ^\infty_\sM)$ as the \emph{horizontal Lie derivative} and denote it by $L^h_X$.  The sum
	\[
	L_X \coloneqq L^h_X + L^v_X,
	\]
	is called the \emph{(total) Lie derivative} on $dR(\sJ^\infty_\sM)$. Note that both $L^v_X$ and $L^h_X$, and hence also $L_X$, commute with $\mathrm{d}_{\nabla^G}$.
	
	\begin{rmk}
		The \emph{Lie derivative} $L_X$ on $\sD_\sM$ along $X$ is defined by
		\[
		L_X D \coloneqq [X, D].
		\]
		$L_X$ is a coderivation of $\sD_\sM$. It extends to $dR(\sD_\sM) = \Omega_\sM \otimes_{\sO_\sM} \sD_\sM$ via the Leibniz rule:
		\[
		L_X(\alpha \otimes D) = L_X \alpha \otimes D + (-1)^{|X||\alpha|} \alpha \otimes L_X D.
		\]
		We leave it to the reader as an exercise to verify that the Lie derivative $L_X$ on $dR(\sD_\sM)$ is dual to the Lie derivative $L_X$ on $dR(\sJ^\infty_\sM)$ under the identification
		\[
		\Hom_{\sO_\sM}(\Omega_\sM \otimes_{\sO_\sM} \sD_\sM, \sO_\sM) \cong \wOmega_\sM \,\wotimes_{\sO_\sM}\, \sJ^\infty_\sM.
		\]
	\end{rmk}
	
	Let $\sE$ be a filtered graded vector bundle on $\sM$.
	
	\begin{defn}
	    The infinite jet bundle $\sJ^\infty_\sE = \sJ^\infty_\sM \,\wotimes_{\sO_\sM}\, \sE$ of $\sE$ carries a canonical flat connection $\nabla^G$, given by
		\begin{equation}\label{dje}
			\nabla^G_X(J \otimes s) \coloneqq \nabla^G_X(J) \otimes  s.
		\end{equation}
		$\nabla^G$ is called the \emph{Grothendieck connection} of $\sJ^\infty_\sE$.
	\end{defn}
	
	We need to verify that \eqref{dje} is well-defined. We have
	\[
	\nabla^G_X(J \otimes fs) = \nabla^G_X(J) \otimes  fs = (\nabla^G_X(J)j^\infty(f)) \otimes  s
	\]
	for all functions $f$. Using Lemma \ref{prol}, we have
	\[
	\nabla^G_X(J)j^\infty(f)= \nabla^G_X(Jf).
	\]
	Thus, $\nabla^G_X(J \otimes fs) = \nabla^G_X(Jf \otimes  s)$.
	
	\begin{prop}\label{resE}
		The infinite jet prolongation
		\[
		j^\infty\colon \sE \longrightarrow dR(\sJ^\infty_\sE)
		\]
		is a filtered quasi-isomorphism. That is, on the $E_0$ pages of the associated spectral sequences,
		\[
		\Gr\, j^\infty\colon \Gr\, \sE \longrightarrow \Gr\, dR(\sJ^\infty_\sE)
		\]
		is a quasi-isomorphism.
	\end{prop}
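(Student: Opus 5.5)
The plan is to reduce Proposition \ref{resE} to Proposition \ref{res}, the case $\sE = \sO_\sM$, by identifying the associated graded objects. I first compute $\Gr\, dR(\sJ^\infty_\sE)$. The filtration on $dR(\sJ^\infty_\sE) = \wOmega_\sM \wotimes_{\sO_\sM} \sJ^\infty_\sM \wotimes_{\sO_\sM} \sE$ is the convolution of the form-degree, jet and internal filtrations. Using the splitting isomorphism $\Phi^\sigma$ constructed after the definition of $\sJ^\infty_\sE$, I identify
\[
\Gr\, dR(\sJ^\infty_\sE) \cong \Omega_\sM \otimes_{\sO_\sM} \Sym_{\sO_\sM}(\sT_\sM^\vee) \otimes_{\sO_\sM} \Gr\, \sE = \sK_\sM \otimes_{\sO_\sM} \Gr\, \sE .
\]
Since $\nabla^G$ on $\sJ^\infty_\sE$ is defined by $\nabla^G_X(J\otimes s) = \nabla^G_X(J)\otimes s$, the exterior covariant derivative is $\mathrm{d}_{\nabla^G}\otimes \id_\sE$ up to Koszul signs. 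The lemma computing $\Gr\, \mathrm{d}_{\nabla^G}$ on $dR(\sJ^\infty_\sM)$ then gives $\Gr\, \mathrm{d}_{\nabla^G} = -\deltaK \otimes \id_{\Gr\,\sE}$. The point to check is that the terms coming from the transition functions of $\sE$, which act through the right $\sO_\sM$-module structure and therefore through $j^\infty(f)$, do not contribute at the graded level. This holds because $j^\infty(f) - f \in F^1\sJ^\infty_\sM$ and $\mathrm{d}_{\nabla^G}$ annihilates $j^\infty(f)$. The cleanest way to see it is in a local trivialization $\sE|_U \cong \sO_\sM \otimes F$, where $\nabla^G = \nabla^G \otimes \id_F$ exactly, together with the fact that gluing is compatible by the lemma defining $j^\infty$ on $\sE$.

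Next I identify $\Gr\, j^\infty$. Locally $j^\infty(f^i s_i) = j^\infty(f^i)s_i$, and $j^\infty(f) \equiv f$ modulo $F^1$, so $\Gr\, j^\infty$ is the canonical inclusion $\Gr\,\sE \hookrightarrow \sK_\sM \otimes_{\sO_\sM} \Gr\,\sE$ in bidegree $(0,0)$. This is a local statement that glues because the inclusion is canonical.

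Finally, the Koszul complex $(\sK_\sM, \deltaK)$ is a resolution of $\sO_\sM$: the contracting homotopy $h = y^\mu \iota_{\partial/\partial x^\mu}$, normalized by total weight, is $\sO_\sM$-linear and defined locally, and the inclusion of $\sO_\sM$ is a quasi-isomorphism by Proposition \ref{res}. Each $\Gr^p\sE$ is locally free of finite rank, so tensoring with it preserves this. Concretely, $h\otimes \id$ is a contraction of the complement, and after normalizing with a partition of unity it defines a global homotopy. Hence $\Gr\, j^\infty$ is a quasi-isomorphism, both sheaf-wise and on global sections.

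I expect the main obstacle to be the first step. Establishing globally, and with correct signs, that the graded differential is exactly $-\deltaK\otimes\id$ requires care, because $\sJ^\infty_\sM$ is tensored with $\sE$ over the right module structure and the filtration mixes the internal filtration of $\sE$. My approach is to work entirely locally with the explicit formula \eqref{localG} and observe that all correction terms strictly raise the filtration.
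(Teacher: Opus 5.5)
Your proposal is correct and is essentially the paper's argument: the paper omits the proof of Proposition \ref{resE} and says it follows that of Proposition \ref{res}. That proof identifies $\Gr\, j^\infty$ with the canonical inclusion $\Gr\,\sE \hookrightarrow \sK_\sM \otimes_{\sO_\sM} \Gr\,\sE$, with differential $-\deltaK\otimes\id_{\Gr\,\sE}$, and uses that the Koszul complex has cohomology $\sO_\sM$, which is exactly your reduction via local trivializations. One minor point: the homotopy $y^\mu\iota_{\partial/\partial x^\mu}$ is already coordinate-independent, being the derivation extending the canonical degree $-1$ map $\sT_\sM[1]^\vee \to \sT_\sM^\vee$, so the partition of unity is unnecessary.
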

	
	\begin{prop}\label{liftE}
		Every differential operator $D_\sE\colon \sE \to \sE$ of order $k$ and filtration degree $p$ uniquely lifts to a morphism of $D$-modules 
		\[
		j^\infty(D_\sE)\colon F^\bullet \sJ^\infty_\sE \longrightarrow F^{\bullet+p-k} \sJ^\infty_\sE,
		\]
		whose associated $\N$-graded operator
		\[
		\Gr\, j^\infty(D_\sE)\colon \Sym^\bullet_{\sO_\sM}(\sT_\sM^\vee) \otimes_{\sO_\sM} \Gr^\bullet \sE \longrightarrow   \Sym^{\bullet-k}_{\sO_\sM}(\sT_\sM^\vee) \otimes_{\sO_\sM} \Gr^{\bullet+p} \sE
		\]
		is given by contraction with the principal symbol $\sigma_k^p(D_\sE) \in  \Sym^k_{\sO_\sM}(\sT_\sM) \otimes_{\sO_\sM} \Gr^p \End_\sE$ of $D_\sE$.	
		
		Moreover, the following diagram 
		\[
		\begin{tikzcd}[column sep=50 pt]
			\sJ^\infty_\sE \arrow[r, "j^\infty(D_\sE)"] & \sJ^\infty_\sE \\
			\sE \arrow[u, "j^\infty"] \arrow[r, "D_\sE"] & \sE \arrow[u, "j^\infty"]
		\end{tikzcd}
		\]
		commutes.
	\end{prop}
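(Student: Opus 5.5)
We construct $j^\infty(D_\sE)$ by the same route as in Proposition \ref{lift}, now relying on Lemma \ref{prolongdenseE}: $j^\infty(\sE)$ generates $\sJ^\infty_\sE$ over $\sO_\sM$. Since a $D$-module morphism commutes with the Grothendieck connection, and $\nabla^G$ kills jet prolongations, any $D$-module morphism $\Psi$ commuting with $j^\infty$ as in the diagram must satisfy
\[
\Psi\Bigl(\sum_i g_i\, j^\infty(s_i)\Bigr)=\pm\sum_i g_i\, j^\infty\bigl(D_\sE(s_i)\bigr).
\]
Here we use that $\Psi$ is $\sO_\sM$-linear for the left module structure, which follows because it commutes with $\nabla^G$ and preserves the $D$-module structure, and $\sO_\sM\subset\sD_\sM$. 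This formula yields uniqueness at once. For existence we take it as the definition, and the task is to verify well-definedness: $\sum_i g_i\, j^\infty(s_i)=0$ should imply $\sum_i g_i\, j^\infty(D_\sE s_i)=0$.

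We verify this locally. Choose a trivialization of $\sE$ over $U$ adapted to the filtration, with fiber coordinates $u^a$ and a frame $s_a$. Writing $s_i=f_i^a s_a$, the local description of the prolongation gives $j^\infty(s_i)=\sum_\alpha \frac{y^\alpha}{\alpha!}\partial^\alpha f_i^a\, s_a$. The vanishing hypothesis then says $\sum_i g_i\,\partial^\alpha f_i^a=0$ for all multi-indices $\alpha$ and all $a$. Locally,
\[
D_\sE=\sum_{|\beta|\le k} D^{\,b}_{\beta,a}\,\partial_\beta\otimes(s_b\, u^a),
\]
with coefficient functions $D^b_{\beta,a}$. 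Hence $\partial^\alpha$ of the components of $D_\sE(s_i)$ is an $\sO_\sM$-linear combination, via Leibniz, of terms $\partial^{\gamma}f_i^a$, and these are annihilated by $\sum_i g_i(\cdot)$. This proves well-definedness, and it also gives the explicit formula
\[
j^\infty(D_\sE)=\sum_{|\beta|\le k}\sum_\alpha \frac{y^\alpha}{\alpha!}\,\partial^\alpha D^{\,b}_{\beta,a}\,\frac{\partial^\beta}{\partial y^\beta}\otimes (s_b u^a).
\]
The diagram commutes by construction. The $D$-module property is checked exactly as in Proposition \ref{lift}, using $\nabla^G_X(g\,j^\infty(s))=X(g)\,j^\infty(s)$.

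Next we establish the filtration shift and the symbol. The filtration on $\sJ^\infty_\sE$ is the convolution of the symmetric filtration (each $y^\mu$ has weight $1$) with that of $\sE$. In the formula above, $\partial^\beta/\partial y^\beta$ lowers the $y$-weight by $|\beta|\le k$. The endomorphism part has filtration degree $p$, since $D_\sE\in F_I^p\sD_\sE$ means each coefficient $D^b_{\beta,a}$ maps $F^q$ into $F^{q+p}$, where the filtration degree of $D_\sE$ is taken in the convolved sense. Each $y^\alpha$ with $|\alpha|>0$ only raises the weight. So $j^\infty(D_\sE)$ maps $F^\bullet$ into $F^{\bullet+p-k}$. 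Modulo higher filtration, the surviving terms are those with $\alpha=0$ and $|\beta|=k$, with coefficients taken in $\Gr^p\End_\sE$. These terms are precisely contraction with $\sigma_k^p(D_\sE)$ on $\Sym(\sT_\sM^\vee)\otimes\Gr\,\sE$.

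The main obstacle is making this bookkeeping rigorous, because the convolved filtration mixes order with internal degree. A term of order $j<k$ but internal degree $p'>p$ could in principle land in the same graded piece. We will handle this by using the definition $F^{p-k}\sD_\sE=\sum_{i+j=p-k}F_I^i\circ F_O^j$: lower-order pieces have strictly higher internal degree, so they contribute to $F^{\bullet+p-k+1}$ and drop out of $\Gr$. We also need to check independence of the local trivialization. This follows from uniqueness, which forces the locally defined lifts to agree on overlaps and therefore to glue.
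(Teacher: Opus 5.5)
Your proposal follows the same route as the paper, which omits the proof because it repeats the argument for Proposition \ref{lift}: define the lift on the generating set $j^\infty(\sE)$, check well-definedness through Taylor coefficients, read off the filtration shift and symbol from the local formula, and check the $D$-module property via $\nabla^G_X j^\infty(s)=0$. Your closing ``obstacle'' paragraph is muddled but harmless: $p$ is the internal degree ($D_\sE\in F_O^{-k}\sD_\sE\cap F_I^p\sD_\sE$), so every piece of $D_\sE$ has internal degree at least $p$ (not strictly more), and a piece of order $j<k$ lies in $F^{p-j}\subset F^{p-k+1}$ whatever its internal degree, which means your earlier count already settles both the shift and the symbol.
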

	
	The proofs of Propositions \ref{resE} and \ref{liftE} follow the same arguments as in the proofs of Propositions \ref{res} and \ref{lift}, and are therefore omitted.

\end{document}